\author{Anthony Blanc\footnote{European Postdoctoral Institute, IHES. \texttt{anthony.blanc@ihes.fr}}
}
\title{Topological K-theory of complex noncommutative spaces}
\theoremstyle{plain}
\newtheorem{prop}{Proposition}[section]
\newtheorem{cor}[prop]{Corollary}
\newtheorem{lem}[prop]{Lemma}
\newtheorem{theo}[prop]{Theorem}
\newtheorem{df}[prop]{Definition}
\newtheorem{conj}[prop]{Conjecture}
\theoremstyle{definition}
\newtheorem{rema}[prop]{Remark}
\newtheorem{ex}[prop]{Example}
\newtheorem{nota}[prop]{Notation}
\newcommand{\Q}{\mathbb{Q}}
\newcommand{\Z}{\mathbb{Z}}
\newcommand{\N}{\mathbb{N}}
\newcommand{\C}{\mathbb{C}}
\newcommand{\s}{\mathbb{S}}
\newcommand{\lef}{\mathbb{L}}
\newcommand{\R}{\mathbb{R}}
\newcommand{\M}{\mathcal{M}}
\newcommand{\K}{\mathcal{K}}
\newcommand{\A}{\mathbf{A}}
\newcommand{\ocal}{\mathcal{O}}
\newcommand{\D}{\mathcal{D}}
\newcommand{\lmo}{\longrightarrow}
\newcommand{\lmos}[1]{\stackrel{#1} {\longrightarrow}}
\newcommand{\mo}{\rightarrow}
\newcommand{\mos}[1]{\stackrel{#1} {\rightarrow}}
\newcommand{\moi}{\hookrightarrow}
\def\dar[#1]{\ar@<2pt>[#1]\ar@<-2pt>[#1]}
\newcommand{\mbb}[1]{\mathbb{#1}}
\newcommand{\mbf}[1]{\mathbf{#1}}
\newcommand{\mcal}[1]{\mathcal{#1}}
\newcommand{\mrm}[1]{\mathrm{#1}}
\newcommand{\tel}{\otimes^{\mathbb{L}}}
\newcommand{\sml}{\wedge^{\mathbb{L}}}
\newcommand{\te}{\otimes}
\newcommand{\sm}{\wedge}
\newcommand{\uh}{\underline{\mrm{h}}}
\newcommand{\uk}{\underline{k}}
\newcommand{\und}[1]{\underline{#1}}
\newcommand{\map}{\mathrm{Map}}
\newcommand{\Hom}{\mathrm{Hom}}
\newcommand{\End}{\mathrm{End}}
\newcommand{\uaut}{\underline{\mathrm{Aut}}}
\newcommand{\ugl}{\underline{\mathrm{Gl}}}
\newcommand{\uend}{\underline{\mathrm{End}}}
\newcommand{\uma}{\underline{\mathrm{M}}}
\newcommand{\homi}{\underline{\mathrm{Hom}}}
\newcommand{\rhomi}{\mathbb{R}\underline{\mathrm{Hom}}}
\newcommand{\U}{\mbb{U}}
\newcommand{\V}{\mbb{V}}
\newcommand{\sps}{Sp_{S^1}}
\newcommand{\spss}{Sp_{S^2}}
\newcommand{\spcon}{Sp^{\mathrm{con}}}
\newcommand{\sinf}{\Sigma^{\infty}}
\newcommand{\mota}{\mbb{M}_{a}}
\newcommand{\mloc}{\mbb{M}_{loc}(k)}
\newcommand{\unit}{\mathbbm{1}}
\newcommand{\dgcat}{dgCat_{k}}
\newcommand{\dgcattf}{dgCat^{\mrm{tf}}_{k}}
\newcommand{\dgcatc}{dgCat_{\C}}
\newcommand{\dgmor}{dgMor_{k}}
\newcommand{\dgmorc}{dgMor_{\C}}
\newcommand{\affk}{\mathrm{Aff}_k}
\newcommand{\schk}{\mrm{Sch}_k}
\newcommand{\affc}{\mathrm{Aff}_{\C}}
\newcommand{\afflissc}{\mathrm{Aff^{liss}_{\C}}}
\newcommand{\calgc}{\mrm{CAlg}_{\C}}
\newcommand{\schc}{\mrm{Sch}_{\C}}
\newcommand{\lissc}{\mathrm{Sch^{liss}_{\C}}}
\newcommand{\spec}{\mathrm{Spec}}
\newcommand{\hlamb}{H\Lambda-Mod_\s}
\newcommand{\kc}{\tilde{\mathbf{K}}}
\newcommand{\kn}{\mathbf{K}}
\newcommand{\ka}{\mathbf{k}}
\newcommand{\ukc}{\und{\tilde{\mathbf{K}}}}
\newcommand{\ukn}{\und{\mathbf{K}}}
\newcommand{\kh}{KH}
\newcommand{\ukh}{\und{KH}}
\newcommand{\kcst}{\tilde{\mathbf{K}}^{\mathrm{st}} }
\newcommand{\kst}{\mathbf{K}^{\mathrm{st}} }
\newcommand{\ktop}{\mathbf{K}^{\mathrm{top}} }
\newcommand{\kctop}{\tilde{\mathbf{K}}^{\mathrm{top}} }
\newcommand{\ktopu}{K_{\mathrm{top}} }
\newcommand{\hc}{\mathrm{HC}}
\newcommand{\hcn}{\mathrm{HN}}
\newcommand{\hh}{\mathrm{HH}}
\newcommand{\hp}{\mathrm{HP}}
\newcommand{\hpa}{\mathrm{HP^{alg}}}
\newcommand{\uhp}{\und{\mathrm{HP}}}
\newcommand{\uhh}{\und{\mathrm{HH}}}
\newcommand{\uhcn}{\und{\mathrm{HN}}}
\newcommand{\hdrna}{\mathrm{H_{DR}^{naive}}}
\newcommand{\hdran}{\mathrm{H_{DR}^{naive,an}}}
\newcommand{\hdel}{\mathrm{H}_\mcal{D}}
\newcommand{\hb}{\mathrm{H_{B}}}
\newcommand{\hbs}{\mathrm{H_{\s,B}}}
\newcommand{\ao}{\mathbf{A}^1}
\newcommand{\po}{\mathbf{P}^1}
\newcommand{\gm}{\mathbf{G}_m}
\newcommand{\cuu}{\C[u^{\pm 1}]}
\newcommand{\re}[1]{\vert #1\vert}
\newcommand{\ret}[1]{\vert #1\vert^{\mrm{top}}}
\newcommand{\resp}[1]{\vert #1 \vert_{\s}}
\newcommand{\redel}[1]{\vert #1 \vert_{\del}}
\newcommand{\regam}[1]{\vert #1 \vert_{\gam}}
\newcommand{\resh}[1]{\vert #1 \vert_{S^2}}
\newcommand{\del}{\Delta}
\newcommand{\gam}{\Gamma}
\newcommand{\sh}{\stackrel{h}{\amalg}}
\newcommand{\ph}{\stackrel{h}{\times}}
\newcommand{\wh}{\stackrel{h}{\wedge}}
\newcommand{\cone}{\mathrm{Cone}}
\newcommand{\cocone}{\mathrm{Cocone}}
\newcommand{\chc}{\mathrm{Ch_c}}
\newcommand{\ch}{\mathrm{Ch}}
\newcommand{\chst}{\mathrm{Ch^{st}}}
\newcommand{\chutop}{\mathrm{Ch_{utop}}}
\newcommand{\chtop}{\mathrm{Ch^{top}}}
\newcommand{\chctop}{\mathrm{Ch^{top}_c}}
\newcommand{\bul}{\bullet}
\newcommand{\bu}{\mathbf{bu}}
\newcommand{\BU}{\mathbf{BU}}
\newcommand{\spr}{SPr(\affc)}
\newcommand{\sprliss}{SPr(\afflissc)}
\newcommand{\sprs}{SPr(\schc)}
\newcommand{\spret}{SPr(\affc)^{\mathrm{\acute{e}t}}}
\newcommand{\sprlisset}{SPr(\afflissc)^{\mathrm{\acute{e}t}}}
\newcommand{\sprset}{SPr(\schc)^{\mathrm{\acute{e}t}}}
\newcommand{\sprslisset}{SPr(\lissc)^{\mathrm{\acute{e}t}}}
\newcommand{\shpro}{Sh_{\mrm{pro}}(\schc)}
\newcommand{\shproliss}{Sh_{\mrm{pro}}(\lissc)}
\newcommand{\sprpro}{SPr(\schc)^{\mathrm{pro}}}
\newcommand{\sprproliss}{SPr(\lissc)^{\mathrm{pro}}}
\newcommand{\sppro}{Sp(\schc)^{\mathrm{pro}}}
\newcommand{\spafk}{Sp(\affk)}
\newcommand{\spaf}{Sp(\affc)}
\newcommand{\spafliss}{Sp(\afflissc)}
\newcommand{\spretao}{SPr^{\mathrm{\acute{e}t}, \ao}}
\newcommand{\spetao}{Sp^{\mathrm{\acute{e}t}, \ao}}
\newcommand{\splissna}{Sp^{\mathrm{Nis, \ao}}_{\mathrm{Liss}}}
\newcommand{\shc}{\mathcal{SH}_{\C}} 
\newcommand{\m}[1]{\widehat{#1}} 
\newcommand{\lpe}{\mathrm{\mathcal{L}_{perf}}}
\newcommand{\lqc}{\mathrm{\mathcal{L}_{qcoh}}}
\newcommand{\parf}{\mathrm{Perf}}
\newcommand{\uparf}{\und{\mathrm{Parf}}}
\newcommand{\pspa}{\mathrm{PsParf}}
\newcommand{\upspa}{\und{\mathrm{PsParf}}}
\newcommand{\proj}{\mathrm{Proj}}
\newcommand{\uproj}{\und{\mathrm{Proj}}}
\newcommand{\psproj}{\mathrm{PsProj}}
\newcommand{\upsproj}{\und{\mathrm{PsProj}}}
\newcommand{\vect}{\mathrm{Vect}}
\newcommand{\sspsp}{ssp_{\s}}
\newcommand{\tc}{\mbb{T}}
\newcommand{\bcl}{\mathbf{B}}
\newcommand{\drcx}{\Omega_X^*}
\begin{document}

\maketitle

\begin{abstract}
The purpose of this work is to give a definition of a topological K-theory for dg-categories over $\C$ and to prove that the Chern character map from algebraic K-theory to periodic cyclic homology descends naturally to this new invariant. This topological Chern map provides a natural candidate for the existence of a rational structure on the periodic cylic homology of a smooth proper dg-algebra, within the theory of noncommutative Hodge structures. The definition of topological K-theory consists in two steps : taking the topological realization of algebraic K-theory and inverting the Bott element. The topological realization is the left Kan extension of the functor "space of complex points" to all simplicial presheaves over complex algebraic varieties. Our first main result states that the topological K-theory of the unit dg-category is the spectrum $\BU$. For this we are led to prove a homotopical generalization of Deligne's cohomological proper descent, using Lurie's proper descent. The fact that the Chern character descends to topological K-theory is established by using Kassel's Künneth formula for periodic cyclic homology and the proper descent. In the case of a dg-category of perfect complexes on a separated scheme of finite type, we show that we recover the usual topological K-theory of complex points. We show as well that the Chern map tensorised with $\C$ is an equivalence in the case of a finite dimensional associative algebra -- providing a formula for the periodic homology groups in terms of the stack of finite dimensional modules. 
\end{abstract}

\tableofcontents

\section{Introduction}

The idea of associating algebraic invariants to geometrical objects culminated with the study of derived categories in algebraic geometry. Kontsevich's noncommutative geometry goes further in really defining a noncommutative space as being the category of functions defined on this space. Therefore by a noncommutative space we mean a differential $\Z$-graded category (or dg-category for short). This categorical point of view on geometry is a very powerful one, allowing one to treat on an equal footing objects having a priori different origin. For example in algebraic geometry the derived category of perfect complexes of quasi-coherent sheaves \cite{bondalorlovicm}, \cite{orlovdereq}; in symplectic geometry the Fukaya category of a symplectic variety \cite{fooo}, \cite{seidelfuk}; in representation theory the derived category of complexes of representations of a quiver \cite{kellerclus}, \cite{krauserep}; in singularity theory the category of matrix factorizations of an isolated hypersurface singularity \cite{orlovderivedcatsing}, \cite{tobimf}, \cite{efimovmf}; in algebraic analysis the derived category of deformation-quantization modules \cite{kontsdefq}, \cite{shapiradef}, etc. The first two examples are particulary important in the mathematical formulation of mirror symmetry, for example in the foundational paper \cite{kkp}. In this latter paper, the authors formulate the Homological Mirror Symmetry conjecture in terms of noncommutative (nc for short) Hodge structures (see §3 of loc.cit.). The main conjecture (Conj 2.24 of loc.cit.) claims that there exists a nc-Hodge structure on the periodic cyclic homology of any smooth and proper dg-algebra over $\C$, which furthermore comes from a commutative one. The definition of a rational pure nc-Hodge structure consists of different structures attached to periodic cyclic homology : the de Rham structure corresponding to the Hodge filtration and the Betti structure corresponding to the rational structure given by rational Betti cohomology (see Def 2.5 of loc.cit.). The first motivation for pursuing topological K-theory came from finding a candidate for the rational structure for a nc-Hodge structure (see §2.2.6 of loc.cit.). The severals algebraic flavoured invariants associated to dg-categories (algebraic K-theory, cyclic homology, Hochschild cohomology, Hall algebras,...) were intensively studied by Keller, Tsygan, Tabuada, Cisinski and Toën. Topological K-theory can itself be thought of as the noncommutative analog of rational Betti cohomology of algebraic varieties. 

\vspace*{.8cm}

\begin{center}
\textit{Main results.}
\end{center}

The aim of the paper is to give a construction of topological K-theory for dg-categories over $\C$ which satisfies all the expected properties. These properties are summarized in the following main theorem. We denote by $\dgcatc$ the category of $\C$-dg-categories, $Sp$ the category of symmetric spectra, $\ktopu$ is the usual nonconnective topological K-theory, $\kn$ the algebraic K-theory, $\hcn$ the negative cyclic homology, $\hp$ the periodic cyclic homology, $\BU=\ktopu(\ast)$ is the usual spectrum of topological K-theory, and $\unit$ the dg-category with one object and $\C$ as endomorphism ring. 

\begin{theo}\label{motiv}
There exists a functor $\ktop : \dgcatc \lmo Sp$, called the topological K-theory of noncommutative spaces, which satisfies the following properties. 
\begin{description}
\item[a.] $\ktop(\unit) \simeq \BU$ in the homotopy category of spectra. 
\item[b.] If $X$ is a separated $\C$-scheme of finite type, then there exists a functorial isomorphism 
$$\ktop(\lpe(X))\simeq \ktopu(X(\C)),$$ 
where $\lpe(X)$ is the dg-category of perfect complexes of quasi-coherent $\ocal_X$-modules on $X$. 
\item[c.] $\ktop$ commutes with filtered colimits, is Morita-invariant and sends exact sequences of dg-categories to distinguished triangles in the triangulated homotopy category of spectra. 
\item[d.] For any dg-category $T$, there exists a functorial commutative square in the homotopy category of spectra, 
$$\xymatrix{ \kn(T) \ar[r]^-{\ch } \ar[d]^-{} & \hcn(T) \ar[d]^-{ } \\ \ktop(T) \ar[r]^-{ \chtop} & \hp(T) }$$
such that in the case of a separated scheme of finite type $X$, the map $\chtop$ for $T=\lpe(X)$ is isomorphic to the usual topological Chern character. 
\end{description}
\end{theo}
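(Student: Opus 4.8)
The plan is to build $\ktop$ in the two stages indicated in the abstract. First, recall the nonconnective algebraic K-theory functor $\kn:\dgcatc\to Sp$, which is Morita invariant, commutes with filtered colimits, and is a localizing invariant (it sends exact sequences of dg-categories to fibre sequences) — the Thomason--Trobaugh/Schlichting/Blumberg--Gepner--Tabuada properties. Base change along $\C\to A$ turns each $T$ into a presheaf of spectra $\spec A\mapsto \kn(T\otimes_\C A)$ on $\affc$, equivalently (Toën--Vaquié) the algebraic K-theory presheaf of the derived moduli stack $\uparf(T)$ of perfect $T$-modules; its topological realization $\kst(T):=\re{\kn(T\otimes_\C -)}$ is the \emph{semi-topological K-theory} of $T$. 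The realization functor $\re{-}:SPr(\affc)\to Sp$ is the left Kan extension along Yoneda of ``singular complex of the space of complex points''; being a left adjoint it preserves all colimits, it sends $h_X$ to the homotopy type of $X(\C)$, and --- by the representable case together with universality of colimits --- it preserves finite products, hence carries group objects and bar constructions to their topological counterparts and inverts $\mathbf{A}^1$-homotopy equivalences since $\mathbf{A}^1(\C)$ is contractible. Finally the Bott element $\beta\in\pi_2\kst(\unit)$ is produced from $[\ocal(1)]-[\ocal]$ via the identification $\re{h_{\po}}\simeq S^2$, and one sets $\ktop(T):=\kst(T)[\beta^{-1}]$, the colimit of $\kst(T)\xrightarrow{\cdot \beta}\Sigma^{-2}\kst(T)\xrightarrow{\cdot \beta}\cdots$.

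The technical heart, and the step I expect to be the main obstacle, is a \emph{homotopical proper descent theorem} for $\re{-}$: if $U_\bullet\to X$ is a proper hypercovering of complex schemes in the sense of Deligne, then $\re{h_X}\simeq \mathrm{colim}_{\Delta^{\mathrm{op}}}\re{h_{U_\bullet}}$; more usefully, the realization of any presheaf of spectra on $\schc$ satisfying Nisnevich and proper ($h$-)descent is determined by its restriction to smooth projective varieties. I would derive this from Lurie's $\infty$-categorical proper descent theorem together with Deligne's classical cohomological descent, the delicate part being to set up the realization functor, the relevant Grothendieck topologies and the class of descent presheaves precisely enough that Lurie's criterion applies, and to control the passage from affine test schemes (where $SPr(\affc)$ lives) to arbitrary, possibly singular, schemes.

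Granting this, property \textbf{c} is essentially formal: since $\kn$ commutes with filtered colimits and is Morita invariant, so is $\spec A\mapsto\kn(T\otimes_\C A)$ levelwise, hence so are $\kst$ (realization being a left adjoint) and $\ktop$ (Bott inversion being a filtered colimit); for the localization property one base-changes an exact sequence $T'\to T\to T''$ --- which remains exact after $-\otimes_\C A$ --- to get a levelwise fibre sequence of presheaves of spectra because $\kn$ is localizing, and then one invokes proper descent to see that $\re{-}$ preserves \emph{this} fibre sequence, reducing exactness to presheaves represented by smooth schemes where it is clear. For \textbf{b}, and hence \textbf{a} (the case $X=\spec\C$, using Morita invariance and $\lpe(\spec\C)\simeq\unit$), note that $\kst(\lpe(X))=\re{\spec A\mapsto\kn(\lpe(X\times\spec A))}$ is the Friedlander--Walker semi-topological K-theory of $X$; as a presheaf in $X$ it inherits Nisnevich descent from $\kn$ and proper descent from the theorem above, and $X\mapsto\ktopu(X(\C))$ likewise satisfies Nisnevich and proper descent (Mayer--Vietoris and the blow-up formula for topological K-theory), so by resolution of singularities the desired functorial isomorphism $\ktop(\lpe(X))\simeq\ktopu(X(\C))$ reduces to the case of smooth projective $X$. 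There one argues via the computation on the point --- $\re{\kn(\C[-])}\simeq\bu$, using the monoidality of $\re{-}$, the identification $\re{B\,h_{GL_n}}\simeq B\,GL_n(\C)$ and the group-completion theorem, so that $\ktop(\unit)=\bu[\beta^{-1}]\simeq\BU$ --- together with the projective bundle formula (or a further cdh-descent down to points), which yields \textbf{a} along the way.

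For \textbf{d}, start from the algebraic Chern character, realized as a map of presheaves of spectra $\kn(T\otimes_\C-)\to\hcn(T\otimes_\C-)\to\hp(T\otimes_\C-)$, apply $\re{-}$ and invert $\beta$; this produces a map from $\ktop(T)$ to $\re{\hp(T\otimes_\C-)}[\beta^{-1}]$, and the remaining point is to identify the target with $\hp(T)$. Here Kassel's Künneth formula $\hp(T\otimes_\C A)\simeq \hp(T)\widehat{\otimes}_{\hp(\C)}\hp(A)$ reduces matters to computing $\re{\hp(\C[-])}$, which by an algebraic Poincaré lemma (periodic cyclic homology is $\mathbf{A}^1$-invariant and satisfies proper descent, so its realization equals its value on a point) is $\hp(\C)=\cuu$; hence $\re{\hp(T\otimes_\C-)}\simeq\hp(T)$, and since $\beta$ maps to a unit in $\pi_*\hp(\C)$ inverting it changes nothing on the right. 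This gives the functorial commutative square; its compatibility with the classical topological Chern character when $T=\lpe(X)$ with $X$ smooth is then checked on smooth projective $X$ by unwinding the identifications used for \textbf{b}, and propagated to all smooth finite-type $X$ by descent.
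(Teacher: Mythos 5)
Your overall architecture (realize the presheaf $\spec A\mapsto\kn(T\tel_\C A)$, then invert Bott), your identification of proper descent for the realization as the technical heart, your treatment of \textbf{c} (which is in fact simpler than you make it: $\resp{-}$ is a left Quillen functor to spectra, hence exact, so no descent is needed there), and your computation for \textbf{a} via $\re{\mrm{B}Gl_n}\simeq \mrm{B}Gl_n(\C)$ and group completion all match the paper, modulo the point that the group-completion argument computes the realization of the \emph{connective} K-theory presheaf, and identifying this with $\kst(\unit)$ requires the vanishing of negative K-theory of smooth algebras together with the restriction-to-smooth-schemes theorem (the realization does not commute with connective covers). But \textbf{b} has a genuine gap. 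The descent theorem you prove concerns the test variable: it says $\resp{-}$ sends proper hypercoverings (of the affine schemes over which the presheaves live) to colimits, and that realizations only depend on the restriction to smooth schemes. It does \emph{not} give descent of $X\mapsto\ktop(\lpe(X))$ in the scheme variable $X$; cdh/proper descent for Bott-inverted semi-topological K-theory in $X$ is a separate, substantial statement, not "inherited from the theorem above". (Also, "determined by its restriction to smooth \emph{projective} varieties" cannot be right: a positive-dimensional affine variety admits no proper surjection from a projective one; the correct statement is restriction to smooth schemes, and it holds for arbitrary presheaves.) Worse, even granting a reduction to smooth projective $X$, "the projective bundle formula (or a further cdh-descent down to points)" does not determine $\ktop(\lpe(X))$ for an arbitrary smooth projective variety, so the comparison with $\ktopu(X(\C))$ is never actually carried out. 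The paper's mechanism is different and is the missing idea: Riou's Spanier--Whitehead duality ($\Sigma^\infty_{T,S^1}X_+$ is strongly dualizable in $\shc$), monoidality of the realization so that it commutes with duals, and the identification of Bott inversion of $\resp{\ukn(X)}$ with $\R\Omega^\infty_{S^2}$ of the realization of the $T$-spectrum $\ukh(X)$ representing homotopy K-theory; this yields $\ktop(X)\simeq\rhomi_{Ho(Sp)}(\sinf \re{X}_+,\BU)$ directly for every smooth $X$.

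The construction of $\chtop$ in \textbf{d} also has a gap. You reduce, via Kassel's K\"unneth formula (which, note, is only available for \emph{smooth} commutative $A$, so the restriction-to-smooth theorem must be invoked there as well), to the claim $\resp{\uhp(\ast)}\simeq H\cuu$, justified by "$\ao$-invariance plus proper descent imply the realization equals the value at the point". That principle is false in general: descent statements control homotopy \emph{limits}, while the realization is a homotopy \emph{colimit} (a left adjoint), and nothing forces the counit $\resp{\hbs(E)}\to E$ to be an equivalence. The paper neither needs nor asserts such an equivalence. It only constructs a projection $\mcal{P}:\resp{\uhp(\ast)}\lmo H\cuu$, by mapping $\hpa$ through the antisymmetrisation to naive de Rham cohomology, then to analytic de Rham cohomology, and inverting the arrow $\hb(-,\C)\lmo\hdran$ in the \emph{proper-local} homotopy category of presheaves on $\schc$ (Hironaka resolution plus the proper descent theorem make it a proper-local equivalence); one then checks that the composite $\hp(T)\lmo\hp(T)\sml_{H\cuu}\resp{\uhp(\ast)}\lmo\hp(T)$ is the identity and that $\chst(\beta)=u$ is invertible, which suffices to descend the Chern map to $\ktop$. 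So your square in \textbf{d} can be produced, but not by the argument you give; you must either replace the claimed equivalence by this weaker projection-map construction, or supply an actual proof that the realization of the periodic cyclic presheaf is $H\cuu$, which is nowhere established.
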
 

Our definition of topological K-theory for dg-categories is really inspired by Friedlander--Walker's definition of semi-topological K-theory for quasi-projective complex algebraic varieties (see \cite{fwcomp}, \cite{fwsemi}), itself inspired by the work of Thomason \cite{thomet}. 
 
The definition we give of topological K-theory starts with algebraic K-theory and proceeds in two steps. If $T$ is a $\C$-dg-category, we denote by $\kn(T)$ the nonconnective algebraic K-theory of $T$ (defined by Schlichting \cite{schl}). We also have a presheaf of spectra on the site of complex affine $\C$-schemes,
$$\ukn(T) : \spec(A) \longmapsto \kn(T\te_\C A).$$
The first step consists in applying the topological realization of simplicial presheaves. Denote by $\affc$ the category of complex affine schemes of finite type, by $SSet$ the category of simplicial sets, and by $\spr$ the category of simplicial presheaves on $\affc$. Then the topological realization is a functor 
$$ssp : \spr\lmo SSet$$
It is the left Kan extension of the functor "space of complex points" $\affc\lmo SSet$ (where space is understood as simplicial set) along the Yoneda embedding $\affc\lmo \spr$. The topological realization extends naturally to presheaves of spectra and is a left Quillen functor for the $\ao$-étale local model structure on simplicial presheaves. We denote by $\re{-}:=\lef ssp$ the left derived functor with respect to this model structure. 

\begin{df} \emph{(see Def \ref{defkst})} ---
The semi-topological K-theory of a $\C$-dg-category $T$ is the spectrum $\kst(T):=\re{\ukn(T)}$. 
\end{df}

We can see that there exists a canonical map $\kn(T)\lmo \kst(T)$. When $T=\lpe(X)$ is the dg-category of perfect complexes on a smooth complex algebraic variety, the map $\kn_0(X)\lmo \kst_0(X)$ is the quotient map given by the algebraic equivalence relation, i.e. two algebraic vector bundles $E\lmo X$ and $E'\lmo X$ are algebraically equivalent if there exists a connected complex algebraic curve $C$, a vector bundle $E''\lmo C\times X$ such that we recover $E$ and $E'$ by restricting $E''$ to some $\C$-points of $C$. 

Using the proper topology (we could also have used the \emph{cdh}-topology of Voevodsky), and the proper local model structure on simplicial presheaves, we show that the topological realization of a presheaf remains unchanged if we restrict this presheaf to smooth schemes. Denote by $l^*$ the restriction to smooth schemes. 

\begin{theo}\label{restintro}\emph{(see Thm \ref{restliss})} --- 
Let $F\in SPr(\affc)$. Then there exists a canonical isomorphism $\re{l^* F} \simeq \re{F}$ in the homotopy category of simplicial sets.   
\end{theo}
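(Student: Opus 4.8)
The plan is to present the desired isomorphism as a composite of two natural steps: a compatibility of the realization $\re{-}$ with the left Kan extension $l_!$ along the inclusion $l:\afflissc\hookrightarrow\affc$ (the left adjoint of the restriction $l^*$), and the fact that the counit $l_!\,l^*F\to F$ becomes an equivalence after applying $\re{-}$. The first thing I would record is that $\re{-}$ inverts proper-local (equivalently cdh-local) weak equivalences. Indeed $ssp$ sends $\ao$-equivalences to weak equivalences, since $\ao(\C)$ is contractible, and it sends étale hypercovers to weak equivalences — this is precisely what makes $\re{-}=\lef ssp$ well defined for the $\ao$-étale local structure. By the homotopical proper descent theorem (the generalization of Deligne's cohomological proper descent obtained from Lurie's proper descent), $ssp$ moreover sends proper hypercovers, equivalently abstract blow-up squares, to weak equivalences. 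Since proper-local weak equivalences are generated by Nisnevich and proper hypercovers together with $\ao$-equivalences, $\re{-}$ can be computed with a cofibrant replacement in the proper-local model structure, and in particular inverts proper-local equivalences; the same applies to the realization on $\sprliss$.

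Next I would set up the adjunction $l_!\dashv l^*$ at the level of model structures. Since $l$ is left Quillen for the global projective structures, $l_!$ preserves cofibrations; and $l^*$ preserves local objects for both the $\ao$-étale and the proper-local structures, because any étale, Nisnevich, or proper cover of a smooth affine scheme is again such a cover in $\affc$, and $\ao\times U$ is smooth affine whenever $U$ is. Hence $l_!\dashv l^*$ is a Quillen adjunction for all the relevant structures. Moreover $ssp\circ l_!$ and the realization functor on $\sprliss$ are both colimit-preserving and agree on representables, sending $\underline U\mapsto U(\C)$ for $U$ smooth affine, so they are naturally isomorphic; passing to left derived functors this yields a natural equivalence $\re{\lef l_!\,G}\simeq\re{G}$ for $G\in\sprliss$.

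The core step is to show that the derived counit $\lef l_!\,l^*F\to F$ is a proper-local equivalence. Both $F\mapsto\lef l_!\,l^*F$ and the identity commute with homotopy colimits and every $F\in\spr$ is a homotopy colimit of representables, so it suffices to treat $F=\underline X$ with $X\in\affc$. By resolution of singularities over $\C$, together with Zariski localization to reach affine charts and an induction on dimension, $X$ admits a proper-local (cdh) hypercover $X_\bullet\to X$ in which every $X_n$ is a smooth affine $\C$-scheme of finite type; thus $\underline{X_\bullet}\to\underline X$ is a proper-local equivalence, and $\underline{X_\bullet}$ is identified with $\lef l_!$ of the corresponding simplicial object of $\sprliss$ (it is the homotopy colimit of the $\underline{X_n}=l_!\underline{X_n}^{\mathrm{liss}}$, where $\underline{X_n}^{\mathrm{liss}}$ is the representable presheaf on $\afflissc$). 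Hence $\underline X$ lies in the essential image of $\lef l_!$ in the proper-local homotopy category. As the derived unit $G\to l^*\lef l_!\,G$ is an equivalence — it is the identity on representables and both sides commute with homotopy colimits — the functor $\lef l_!$ is homotopically fully faithful, so the derived counit at any object of its essential image, in particular at $\underline X$, is an equivalence. Equivalently, this step asserts that $l_!\dashv l^*$ is a Quillen equivalence for the proper-local structures, i.e. that $\afflissc$ is a dense subsite of $\affc$ for the proper topology, which follows from the comparison lemma for sites once resolution of singularities supplies the smooth affine hypercovers.

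Finally I would assemble the pieces: applying $\re{-}$ to the derived counit and using the first step gives a natural equivalence $\re{\lef l_!\,l^*F}\xrightarrow{\ \sim\ }\re{F}$, while the second step gives $\re{\lef l_!\,l^*F}\simeq\re{l^*F}$; composing yields the claimed isomorphism $\re{l^*F}\simeq\re{F}$, natural in $F$. The step I expect to be the main obstacle is the third one, and within it the construction of smooth affine cdh-hypercovers of an arbitrary affine $\C$-variety: this is where resolution of singularities enters, and where one is forced to combine blow-ups with Nisnevich covers — that is, to work with the full cdh topology rather than merely proper covers. The other genuinely nontrivial input, that $ssp$ already inverts proper-local equivalences, is the homotopical proper descent theorem established beforehand, which I would simply cite.
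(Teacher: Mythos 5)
Your overall architecture is the same as the paper's: (i) the realization inverts the relevant local equivalences (homotopical proper descent), (ii) smooth schemes generate everything locally via resolution of singularities, (iii) $\re{-}$ commutes with the extension $l_!$ because both agree on representables and preserve homotopy colimits, and (iv) one concludes by applying $\re{-}$ to the (co)unit. However, there is a genuine gap in the pivot of your core step, caused by working with the proper topology directly on $\affc$ and by treating ``proper-local'' and ``cdh-local'' as interchangeable. A proper morphism between affine schemes of finite type is finite, so a proper covering family inside $\affc$ is a finite surjective family; resolution of singularities is proper birational but essentially never finite, and its source is essentially never affine. Consequently the assertion that $\afflissc$ is a dense subsite of $\affc$ for the proper topology --- the statement on which your claim that the derived counit $\lef l_!\, l^*F \to F$ is a local equivalence rests --- is unjustified, and false for the literal proper topology. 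This is exactly why the paper first replaces $\affc$ and $\afflissc$ by $\schc$ and $\lissc$ (using that the restriction along $\affc\hookrightarrow\schc$ is a Quillen equivalence for the \'etale-local structures and does not change the realization), so that Hironaka's resolutions, with non-affine smooth sources, do refine proper covering sieves; only then does the SGA4 comparison of topoi (Prop \ref{eqpreliss}) and the preservation of local equivalences by the restriction (Lemma \ref{preserve}) apply. Your proposal never addresses this affine-versus-all-schemes issue.

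The variant you actually run --- the cdh topology on affines, with explicit hypercovers having smooth affine terms --- could be made to work, but then the inputs change and are not supplied. The hypercovers you build mix abstract blow-ups with Zariski/Nisnevich refinements, so they are cdh-hypercovers, not proper hypercovers; the descent statement you then need is that $ssp$ sends abstract blow-up squares (and Nisnevich squares) to homotopy pushouts, equivalently that it is left Quillen for a cdh-local structure. This does not follow by merely citing Prop \ref{quipro}, which concerns proper hypercovers of objects of $\schc$; an additional (if standard) argument is required, and your sentence that proper-local equivalences are ``generated by Nisnevich and proper hypercovers together with $\ao$-equivalences'' is not a correct description of the paper's proper-local structure. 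In addition, the construction of smooth affine cdh-hypercovers of an arbitrary affine variety (resolution, induction on the dimension of the centers, affine refinements), which you yourself flag as the main obstacle, is only sketched, and your full-faithfulness argument for $\lef l_!$ uses the underived $l^*$ where one needs that restriction preserves local equivalences (the analogue of Lemma \ref{preserve}, which in the paper is itself a consequence of the topos comparison, so some care is needed to avoid circularity). In short: the two key inputs (proper descent and resolution) and the assembly are correctly identified, but the site-theoretic middle step is where the argument, as written, fails.
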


To prove this, as announced in the abstract we prove a homotopical generalization of Deligne's cohomological proper descent which can be expressed by saying that $ssp$ is a left Quillen functor with respect to the proper local model structure on simplicial presheaves.

\begin{prop} \emph{(see Prop \ref{quipro})} --- 
For every proper hypercovering $Y_\bul\lmo X$ of a scheme, the induced map, $hocolim_{\del^{op} }  \re{Y_\bul}\lmo \re{X}$ is an isomorphism in the homotopy category of simplicial sets.  
\end{prop}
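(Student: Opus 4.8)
\emph{Strategy and reduction.} The plan is to reduce, via the left Quillen property of $\re{-}$, to a descent statement for the analytic realizations, and then to deduce the latter from Lurie's proper descent for $\infty$-topoi. Since $\re{-}=\lef ssp$ is left Quillen for the $\ao$-étale local model structure on $\spr$, it preserves homotopy colimits; regarding $Y_\bul$ as a simplicial object of $\spr$ we get $\mathrm{hocolim}_{\del^{op}}\re{Y_\bul}\simeq\re{\mathrm{hocolim}_{\del^{op}}Y_\bul}$, so it suffices to show that $\re{-}$ sends the augmentation $\mathrm{hocolim}_{\del^{op}}Y_\bul\lmo X$ to an isomorphism of simplicial sets. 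This is genuine content rather than a formality: the augmentation is not an $\ao$-étale local equivalence, so nothing forces $\re{-}$ to invert it. Although the $Y_n$ need not be affine, Zariski — indeed Nisnevich — hyperdescent, which is built into the $\ao$-étale local model structure, together with the analogous descent for $Z\mapsto\mathrm{Sing}(Z(\C)^{\mathrm{an}})$, identifies $\re{Z}\simeq\mathrm{Sing}(Z(\C)^{\mathrm{an}})$ for every finite type $\C$-scheme $Z$ (write $Z$ as the homotopy colimit of the \v{C}ech nerve of an affine open cover, apply $\re{-}$, which is computed on affines by the very definition of $ssp$ as a left Kan extension of ``space of complex points'', and compare with the analytic side). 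So the statement becomes: for a proper hypercovering $Y_\bul\lmo X$, the induced map $\mathrm{hocolim}_{\del^{op}}\mathrm{Sing}(Y_\bul(\C)^{\mathrm{an}})\lmo\mathrm{Sing}(X(\C)^{\mathrm{an}})$ is a weak equivalence.

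\emph{Proper descent on analytic spaces.} A proper morphism of finite type $\C$-schemes induces a proper morphism of the associated complex analytic spaces, and the analytic space of a finite type $\C$-scheme is a finite-dimensional CW complex, in particular locally compact, paracompact and of finite covering dimension. Hence each matching map of the hypercovering, being a cover for the proper topology, analytifies either to a proper surjection or to an ordinary open cover of such spaces. For open covers one has ordinary \v{C}ech descent of sheaves of spaces; for proper surjections, Lurie's proper base change theorem (Higher Topos Theory, §7.3) shows that a proper surjection of locally compact Hausdorff spaces is a morphism of effective descent for the $\infty$-topoi of sheaves of spaces. Combining these, the augmented simplicial $\infty$-topos $\mathrm{Shv}(Y_\bul(\C)^{\mathrm{an}})\lmo\mathrm{Shv}(X(\C)^{\mathrm{an}})$ exhibits the target as a colimit; applying the shape functor, which preserves colimits, yields $\mathrm{colim}_{\del^{op}}\mathrm{Sh}(Y_\bul(\C)^{\mathrm{an}})\simeq\mathrm{Sh}(X(\C)^{\mathrm{an}})$. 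Finally, for a locally contractible paracompact finite-dimensional space $Z$ the shape $\mathrm{Sh}(Z)$ agrees with $\mathrm{Sing}(Z)$, and the colimit over $\del^{op}$ of this shape diagram is exactly $\mathrm{hocolim}_{\del^{op}}\mathrm{Sing}(Y_\bul(\C)^{\mathrm{an}})$, which gives the desired weak equivalence.

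\emph{Main obstacle.} The crux is the last part: checking that a Deligne proper hypercovering, after analytification, genuinely fits the hypotheses of Lurie's proper descent, and in particular organizing the coherence of the base-change equivalences along the whole simplicial diagram rather than for a single proper surjection (passing from effective descent along each matching map to descent for the full hypercover, using hypercompleteness of $\mathrm{Shv}$ on finite-dimensional spaces). A secondary point requiring care is the compatibility used in the reduction between the $\ao$-étale local computation of $\re{Z}$ and the analytic singular complex for singular or non-separated $Z$, which is settled by Zariski (or Nisnevich) hyperdescent on both sides.
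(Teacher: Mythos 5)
Your reduction to a purely topological statement about $\mathrm{Sing}$ of the analytifications is sound (and for representables $\re{h_Z}\simeq \mathrm{Sing}(Z(\C))$ is essentially the definition of the Kan extension, no Zariski hyperdescent needed; also note that a proper cover is a jointly surjective family of proper maps, so no "ordinary open covers" arise from the matching maps). The genuine gap is the step you yourself flag as the main obstacle: it is not a deferred technical check but the entire content of the proposition, and the reference you give does not contain it. \cite[Cor 7.3.1.18]{htt} is a base change statement for a single pullback square of locally compact Hausdorff spaces; it does not assert that a proper surjection is a morphism of effective descent for the $\infty$-topoi of sheaves of spaces. To extract even Čech descent from it you need conservativity of $p^{*}$ (hence enough points, which forces you into hypercomplete sheaves and finite covering dimension) together with a Beck--Chevalley/monadicity argument over the Čech nerve; and even granting that, a proper hypercovering in Deligne's sense is not a Čech nerve, and the passage from effective descent along each matching map to descent of the sheaf categories along an arbitrary (a priori unbounded) hypercovering is not formal and is not supplied by invoking "hypercompleteness of $\mathrm{Shv}$ on finite-dimensional spaces". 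Moreover, descent of the full $\infty$-topoi is strictly stronger than what the statement requires (only constant coefficients, i.e. shapes, are at stake), so you are proposing to prove a harder theorem as a lemma while omitting its proof.

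For comparison, the paper's proof of \ref{quipro} (via \ref{hypdescpro}) carries out exactly the reductions your sketch skips, and in a weaker form that the base change input can actually support: first a reduction to bounded hypercoverings by comparing with coskeleta (Lemma \ref{sque}), then an induction on the dimension using the bisimplicial nerve of $B_\bul\lmo \mathrm{cosk}_n^A B_\bul$ and a retract argument, which reduces everything to dimension~$0$ hypercoverings, i.e. nerves of a single proper surjection. Only there is Lurie's theorem used, and only levelwise and for \emph{truncated constant} coefficients -- truncatedness is what makes \cite[Cor 7.2.1.12, Cor 7.3.1.18]{htt} applicable (Lemmas \ref{pbc}, \ref{pbcreal}) -- and the base case over a point is settled by an explicit simplicial contraction of the nerve of a nonempty space. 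If you wish to keep the $\infty$-topos formulation, you must actually prove the proper (hyper)descent statement for $\mathrm{Shv}$ (monadicity plus a Čech-to-hypercover induction, essentially reproducing the coskeletal induction above); otherwise the shape/colimit formalism at the end has nothing to feed on.
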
 

Theorem \ref{restintro} allows us to prove the following. Denote by $\bu$ the connective cover of $\BU$. 

\begin{theo} \emph{(see Thm \ref{annupoint})} --- There exists an isomorphism $\kst(\unit)\simeq \bu$ in the homotopy category of spectra. 
\end{theo}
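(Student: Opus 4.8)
The plan is to compute $\kst(\unit)$ directly from the definition $\kst(\unit) = \re{\ukn(\unit)}$ by first simplifying the presheaf $\ukn(\unit)$, then applying the restriction-to-smooth-schemes theorem (Theorem~\ref{restintro}), and finally reducing the topological realization of a presheaf on \emph{smooth} affine schemes to a cohomological computation that identifies the answer with $\bu$. By definition, $\ukn(\unit)$ is the presheaf $\spec(A)\mapsto \kn(\unit\te_\C A) = \kn(A)$, the nonconnective algebraic $K$-theory presheaf on $\affc$. Since $\re{-}$ is a left Quillen functor for the $\ao$-étale local model structure, I may replace $\ukn$ by any $\ao$-étale-locally equivalent presheaf before realizing; in particular, on smooth affine schemes nonconnective $K$-theory agrees with connective $K$-theory (negative $K$-groups of regular rings vanish), so $l^*\ukn(\unit)$ is equivalent to the connective $K$-theory presheaf $\uk$, and moreover $K$-theory of smooth schemes is $\ao$-invariant and satisfies étale (indeed Nisnevich) descent.

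Next I would invoke Theorem~\ref{restintro} to write $\re{\ukn(\unit)} \simeq \re{l^*\ukn(\unit)}$, so it suffices to compute the topological realization of the connective algebraic $K$-theory presheaf restricted to $\afflissc$. Here the key input is that, after realization, algebraic $K$-theory of smooth affine $\C$-varieties becomes \emph{topological} $K$-theory: concretely, for a smooth affine $X$ the space $(\mathrm{Map}(-, X\times BGL_n))(\C)$-type constructions realize to the space of continuous maps, and the Grothendieck group / group-completion compares algebraic vector bundles up to algebraic equivalence with topological vector bundles. The cleanest route is to use that on $\afflissc$ the presheaf $\uk$ is, $\ao$-étale locally, the presheaf represented (after group completion and stabilization) by the constant simplicial presheaf with value $BU\times\Z$ — equivalently, one shows $\re{K\text{-theory presheaf}}$ computes the cohomology theory on the point represented by $\bu$. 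Because $\re{-}$ sends the terminal presheaf $\spec(\C)$ to a point and commutes with homotopy colimits, and because $K$-theory on smooth schemes is built (via the Thomason–Walker type results, or Friedlander–Walker's semi-topological comparison) from the spaces $Gr(n, \infty)$ whose complex points are the usual Grassmannians, the realization of each space-level model is the expected classifying space; assembling these gives $\kst(\unit)\simeq \bu$.

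In more detail, the main steps in order are: (1) identify $\ukn(\unit)$ with the nonconnective $K$-theory presheaf and, after $l^*$ and up to $\ao$-étale local equivalence, with the connective $K$-theory presheaf on $\afflissc$; (2) apply Theorem~\ref{restintro} to reduce $\re{\ukn(\unit)}$ to the realization of this smooth-site presheaf; (3) present connective $K$-theory on smooth affine schemes via the plus-construction / group completion of the symmetric monoidal category of vector bundles, so that levelwise it is modeled by $\coprod_n B\ugl_n$ and its complex points are $\coprod_n BGL_n(\C)$ with their usual topology; (4) commute $\re{-}$ with the relevant homotopy colimits (group completion, stabilization in $n$) and use that $\re{\und{\spec A}} \simeq A(\C)$ together with the fact that $\re{-}$ of the constant presheaf at a simplicial set $S$ is $S$, to get $\re{\coprod_n B\ugl_n} \simeq \coprod_n BGL_n(\C)$; (5) conclude that the realized $K$-theory presheaf is the group completion $\Z\times BU = \Omega^\infty\bu$, hence as a spectrum $\kst(\unit) \simeq \bu$.

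The main obstacle will be step~(3)–(4): controlling how the topological realization interacts with the group-completion/plus construction and with the stabilization colimit defining $K$-theory. One must check that $\re{-}$, a priori only a left Quillen functor for the $\ao$-étale local structure, really does commute past these constructions — i.e. that the levelwise complex-points functor followed by $\re{-}$ computes the group completion of $\coprod_n BGL_n(\C)$ rather than something weaker — and that algebraic equivalence of vector bundles on smooth affine varieties does not collapse more than topological isomorphism after realization. This is precisely where the Friedlander–Walker semi-topological comparison (and the fact, recorded in the discussion of $\kst_0$ above, that $\kst_0(X)$ is $K_0$ modulo algebraic equivalence) does the work: it guarantees that the semi-topological $K$-theory of the point is $\bu$, and Theorem~\ref{restintro} is what licenses passing from the affine site to the smooth affine site where that comparison lives.
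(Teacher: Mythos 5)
Your outline follows the same skeleton as the paper: restrict to smooth affine schemes via Theorem \ref{restliss}, use vanishing of negative K-theory of regular rings to replace $\ukn(\unit)$ by the connective K-theory presheaf there, and then realize the vector-bundle model $\coprod_n \mrm{B}Gl_n$ and group-complete. The genuine gap sits exactly where you locate ``the main obstacle'', in your steps (3)--(5), and your two proposed ways around it do not work. First, group completion is not a homotopy colimit: by Segal's theorem (Remark \ref{remgroup}) it is modelled by $\R\Omega_\bul$ applied to a realization, i.e. it involves a homotopy \emph{limit}, so the fact that $\re{-}$ commutes with homotopy colimits does not let you push it past the group completion, nor past the $S_\bul$-construction defining K-theory. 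Second, the appeal to Friedlander--Walker is circular in this framework: their semi-topologization is a different operation from the realization used here (the paper stresses that the only comparison with Friedlander--Walker is indirect, \emph{via} precisely the computation $\kst(\unit)\simeq\bu$), so ``FW already says the semi-topological K-theory of the point is $bu$'' cannot be quoted to prove it for this $\re{-}$.

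The paper closes this gap as follows, and something of this kind is what your step (4) needs. (i) Since all cofibrations in $\proj(A)$ are split, the K-theory $\gam$-space $K^\gam(\vect)$ is levelwise equivalent to the group completion $(\vect_\bul)^+$ of the nerve of the stack of vector bundles, so no $S_\bul$-construction ever has to be commuted past $\re{-}$. (ii) Proposition \ref{proprelplus} proves $\regam{E^+}\simeq\regam{E}^+$ for \emph{special} objects, not by a colimit argument but by checking that the right adjoint $\R\map(\re{-},F)$ of the realization preserves very special objects. (iii) Proposition \ref{proprelb} handles the delooping $\mcal{B}$, which genuinely is a homotopy colimit and so does commute with $\resp{-}$. (iv) Only then does the étale-local equivalence $\vect_\bul\simeq\coprod_n\mrm{B}Gl_n$, Theorem \ref{di}, and the classical identification $(\coprod_n\mrm{B}U_n(\C))^+\simeq\mrm{B}U_\infty\times\Z$ give $\kcst(\unit)\simeq bu$ (Theorem \ref{bu}); the passage from $\kcst(\unit)$ to $\kst(\unit)$ is then your step (1)--(2), exactly as in the paper's proof of Theorem \ref{annupoint}. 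Without an argument of type (i)+(ii), asserting that the realized presheaf ``is the group completion $\Z\times BU$'' is unjustified.
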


Using the Tabuada--Cisinski theorem on the corepresentability of K-theory inside the motivic category of dg-categories, we define a canonical structure of ring spectrum on $\ukn(\unit)$ and a structure of $\ukn(\unit)$-module on $\ukn(T)$ for any dg-category $T$. Therefore $\kst(T)$ is a $\bu$-module.
 
The second step in the process of the definition of topological K-theory is the inversion of a Bott element $\beta\in \pi_2(\bu)$. 

\begin{df} \emph{(see Def \ref{deftop})} ---
The topological K-theory of a $\C$-dg-category $T$ is the spectrum $\ktop(T)=\kst(T)[\beta^{-1}]$. 
\end{df} 

This definition is of course motivated by the agreement of the Bott inverted Friedlander--Walker semi-topological K-theory with the usual topological K-theory of complex points (see \cite[Thm 5.8]{fwrat}), which was also motivated by Thomason's work on the Bott inverted algebraic K-theory with finite coefficients. The point $b.$ in theorem \ref{motiv} is proved by using Riou's Spanier--Whitehead duality in the motivic homotopy category of smooth schemes (see prop.\ref{compvarlisse}). Finally the point $d.$ of theorem \ref{motiv}, namely the construction of the topological Chern map is achieved using Kassel's Künneth formula for periodic cyclic homology and once again Thm \ref{restintro}. 

Finally, in §\ref{sectkstmt}, we give a convenient description of the connective semi-topological K-theory of any dg-category in terms of the stack $\M^T$ of perfect $T^{op}$-dg-modules. This stack is a stack of $E_\infty$-spaces because of the sum of dg-modules. It turns out that its topological realization $\re{\M^T}$ is a group-like $E_\infty$-space and one can consider it as a connective spectrum. The proof is based on the existence of an $\ao$-equivalence between the stack in $E_\infty$-spaces $\M^T$ and the Waldhausen construction of the category of perfect $T^{op}$-dg-modules (see  Prop \ref{mk} and Thm \ref{kstmt}).

\vspace*{.8cm}

\begin{center}
\textit{Applications and related works}. 
\end{center}

As was said above, the semi-topological K-theory of quasi-projective complex algebraic varieties has already been defined by Friedlander--Walker. However let us mention that their operation of "semi-topologization" seems to differ from our topological realization. An indirect comparison is nevertheless given by the point $b.$ of theorem \ref{motiv}. 

The construction of topological K-theory of dg-categories over $\C$ and the existence of the topological Chern map can be used in order to state the lattice conjecture claiming that this new invariant gives a rational structure on the periodic homology of a smooth proper dg-category. 

\begin{conj} \emph{(Lattice conjecture)} --- For every smooth proper $\C$-dg-category $T$, the Chern map $\chtop\sm_\s H\C : \ktop(T)\sm_{\s} H\C\lmo \hp(T)$ is an isomorphism in the homotopy category of spectra. 
\end{conj}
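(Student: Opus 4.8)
The plan is to treat the conjecture as a comparison of two functors on smooth proper $\C$-dg-categories with values in $H\C[\beta^{\pm 1}]$-modules (equivalently, $\Z/2$-graded complexes of $\C$-vector spaces): the source $T\mapsto\ktop(T)\sm_\s H\C$ and the target $T\mapsto\hp(T)$, linked by the natural transformation $\chtop\sm_\s H\C$ of point d.\ of Theorem~\ref{motiv}. Both functors are Morita-invariant and send an exact sequence of dg-categories to a distinguished triangle --- for $\ktop(-)\sm_\s H\C$ by property c.\ of Theorem~\ref{motiv} (smashing with $H\C$ preserves triangles), and for $\hp$ by Keller's localization theorem for cyclic homology of dg-categories. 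Hence the class of dg-categories on which $\chtop\sm_\s H\C$ is an equivalence is closed under Morita equivalence, under admissible embeddings and semiorthogonal decompositions, and under direct summands in the Morita homotopy category; granting a Künneth formula for $\ktop(-)\sm_\s H\C$ compatible with Kassel's Künneth formula for $\hp$, it is also closed under tensor products. It therefore suffices to verify the conjecture on a class of smooth proper dg-categories that generates all of them under these operations.

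I would first dispose of the geometric case $T=\lpe(X)$ with $X$ a smooth projective $\C$-scheme. By point b.\ of Theorem~\ref{motiv}, $\ktop(\lpe(X))\simeq\ktopu(X(\C))$, so $\ktop(\lpe(X))\sm_\s H\C$ is the $2$-periodization of the singular $\C$-cohomology of the finite CW complex $X(\C)$; on the other side $\hp(\lpe(X))$ is identified, through the HKR isomorphism, the degeneration of the Hodge--de Rham spectral sequence, and Grothendieck's comparison $H^*_{\mrm{dR}}(X)\simeq H^*(X(\C);\C)$, with the $2$-periodization of $H^*(X(\C);\C)$. Point d.\ of Theorem~\ref{motiv} guarantees that under these identifications $\chtop\sm_\s H\C$ becomes the classical topological Chern character $\ktopu(X(\C))\te\C\lmo\bigoplus_n H^n(X(\C);\C)$, an isomorphism for every finite CW complex. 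By the reduction above this also settles the conjecture for every admissible subcategory of such an $\lpe(X)$.

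I would then treat a finite-dimensional associative $\C$-algebra $A$ (necessarily of finite global dimension, to be smooth and proper), with $T=\lpe(A)$, by computing both sides explicitly. Using the description of connective semi-topological K-theory through the stack $\M^T$ of perfect $T^{op}$-dg-modules (Thm~\ref{kstmt}), $\kst(A)\simeq\re{\M^A}$ is the group completion of a disjoint union of realizations $\re{B\uaut(P)}$ indexed by the finitely many indecomposable projective $A$-modules $P$; since $\uaut(P)$ is the group scheme of units of the finite-dimensional algebra $\End_A(P)$, whose unipotent radical has contractible realization, after inverting $\beta$ and smashing with $H\C$ one gets a $2$-periodic $\C$-vector space of dimension the number of simple $A$-modules in even degree and $0$ in odd degree. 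On the other side, Goodwillie's nilpotent invariance of periodic cyclic homology over $\C$ reduces $\hp(A)$ to $\hp$ of the semisimple quotient $A/\mrm{rad}\,A\simeq\prod_i M_{d_i}(\C)$, which is $\C$ in even degrees and $0$ in odd degrees, again indexed by the simple modules; point d.\ identifies the comparison with the Dennis-trace map, and the two sides match.

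The decisive obstacle is that the reduction of the first paragraph cannot presently be completed: it is not known whether every smooth proper $\C$-dg-category can be built, through Morita equivalences, admissible embeddings, tensor products and summands, from categories $\lpe(X)$ with $X$ smooth projective and from finite-dimensional algebras --- a noncommutative form of a geometricity conjecture of Kontsevich. The strategy above therefore proves the lattice conjecture for every smooth proper dg-category so generated --- in particular for all smooth projective varieties and for all finite-dimensional algebras of finite global dimension --- but the general statement remains open, and seems to require either such a structural result or a new comparison of $\ktop(-)\sm_\s H\C$ with $\hp$ valid for an arbitrary smooth proper dg-category.
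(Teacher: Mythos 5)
The statement you are addressing is stated in the paper as a \emph{conjecture}: the paper itself contains no proof of it, only the two families of special cases that you also reach. Your strategy is honest about this and essentially reproduces the paper's partial results: the geometric case $T=\lpe(X)$ is exactly what the paper proves in Proposition \ref{compvarlisse} (point b.\ of Theorem \ref{motiv} via Riou's Spanier--Whitehead duality, plus the compatibility of point d.\ with the classical Chern character), and your reduction-by-localization remarks match the paper's observation after Conjecture \ref{conjres} that the class of dg-categories satisfying the conjecture is closed under retracts, quotients, extensions and filtered colimits in the Morita homotopy category. Your concluding admission that the general smooth proper case remains open is the correct assessment of the state of the paper.

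One point in your sketch overstates what is actually available. For a finite-dimensional algebra $B$ the paper (Proposition \ref{cralgass} in \S\ref{finalg}) proves the lattice-type statement only for the \emph{pseudo-connective} invariant $\kctop(B)=\kcst(B)[\beta^{-1}]$, not for $\ktop(B)=\kst(B)[\beta^{-1}]$: the description via the stack of perfect modules (Theorem \ref{kstmt}) computes $\kcst$, and it is not known in general that $\kcst(T)\lmo\kst(T)$ is an equivalence --- the vanishing of negative semi-topological K-groups for smooth proper $T$ is itself listed as a separate conjecture in \S\ref{conj}, and Theorem \ref{annupoint} only covers smooth \emph{commutative} algebras. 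So your claim that the conjecture ``as stated'' (with $\ktop$) is settled for finite-dimensional algebras of finite global dimension needs either a proof that negative semi-topological K-theory of such algebras vanishes (e.g.\ an argument that $K_{-i}(B\te_\C A)=0$ for all smooth commutative $A$, which would feed into Theorem \ref{restliss} as in the proof of Theorem \ref{annupoint}) or should be weakened to the $\kctop$-statement, as the paper does. Apart from this, the nilpotent-invariance reduction you describe (contractibility of the unipotent kernel of $\uaut(P)\lmo\uaut(P_0)$, Goodwillie invariance for $\hp$, Morita reduction of the semisimple quotient to products of $\C$) is precisely the paper's argument.
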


By the point $b$ and $d$ in theorem \ref{motiv}, the lattice conjecture is true for dg-categories of the form $\lpe(X)$ for $X$ any separated scheme of finite type over $\C$.  

As an application in §\ref{finalg} we show the lattice conjecture is true for finite dimensional associative $\C$-algebras, but if we replace $\ktop$ by another invariant $\kctop$ which is the Bott inverted \emph{connective} semi-topological K-theory. The proof consists in showing the invariance of $\kctop$ under infinitesimal extension. It has the following consequence about the periodic homology groups of a finite dimensional algebra. If $B$ is such an algebra, denote by $\vect^B$ the stack of projective right modules of finite type over $B$, and by $\re{\vect^B}^{ST}$ the stabilization of its topological realization with respect to the $B$-module $B$.

\begin{prop} \emph{(see Cor \ref{formhp1})} --- The Chern map $\kctop(B)\lmo \hp(B)$ induces an isomorphism of $\C$-vector spaces for $i=0,1$, 
$$ colim_{k\geq 0} \pi_{i+2k} \re{\vect^B}^{ST}\te_\Z \C\simeq\hp_i(B)$$
where the colimit is induced by the action of the Bott element $\beta$ on homotopy groups, $\pi_i\re{\vect^B}^{ST} \lmos{\times \beta} \pi_{i+2} \re{\vect^B}^{ST}$.
\end{prop}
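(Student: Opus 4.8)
The plan is to deduce the statement directly from the properties of $\kctop$ established in §\ref{finalg}, together with the description of connective semi-topological K-theory in terms of the moduli stack given in §\ref{sectkstmt}. First I would unpack the two invariants appearing in the formula. On the one hand, by the application proved in §\ref{finalg}, the Chern map $\chtop\sm_\s H\C : \kctop(B)\sm_\s H\C\lmo \hp(B)$ is an isomorphism in the homotopy category of spectra; this is the ``lattice conjecture for $\kctop$'' for a finite dimensional associative algebra, whose proof rests on the invariance of $\kctop$ under infinitesimal (nilpotent) extensions. On the other hand, by Thm \ref{kstmt} the connective semi-topological K-theory $\kst^{\mathrm{con}}(B)$ is computed by $\re{\M^B}$, and in the case of a finite dimensional algebra one can replace the stack $\M^B$ of perfect $B^{op}$-dg-modules by the stack $\vect^B$ of finitely generated projective right $B$-modules (since $B$ has finite global-dimension-type control on perfect modules up to the relevant $\ao$-equivalence, or more precisely because every perfect complex is built from the generator $B$ and the Waldhausen construction on $\vect^B$ models the connective K-theory). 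Then $\re{\vect^B}^{ST}$ is precisely the stabilization with respect to the $B$-module $B$, i.e. the connective spectrum underlying $\kst^{\mathrm{con}}(B)$ as a $\bu$-module.

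Next I would identify the Bott-inverted object. By definition $\kctop(B) = \kst^{\mathrm{con}}(B)[\beta^{-1}]$, where $\beta\in\pi_2(\bu)$ acts on the $\bu$-module $\kst^{\mathrm{con}}(B)$. Inverting $\beta$ is a filtered colimit along multiplication by $\beta$, so on homotopy groups we get, for each $i$,
$$\pi_i\,\kctop(B) \simeq \operatorname{colim}_{k\geq 0}\ \pi_{i+2k}\,\kst^{\mathrm{con}}(B) \simeq \operatorname{colim}_{k\geq 0}\ \pi_{i+2k}\re{\vect^B}^{ST},$$
the transition maps being $\times\beta$ exactly as in the statement. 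Smashing with $H\C$ (equivalently, tensoring homotopy groups with $\C$, since a filtered colimit of abelian groups tensored with $\C$ is the filtered colimit of the tensored groups, and $H\C$-homology of a spectrum with free homotopy — or, more carefully, using that after rationalization the Atiyah–Hirzebruch type collapse makes $\pi_i(-\sm_\s H\C)$ computable) gives
$$\pi_i\big(\kctop(B)\sm_\s H\C\big) \simeq \operatorname{colim}_{k\geq 0}\ \pi_{i+2k}\re{\vect^B}^{ST}\te_\Z\C.$$
Finally, combining this with the isomorphism $\chtop\sm_\s H\C : \kctop(B)\sm_\s H\C\xrightarrow{\ \sim\ }\hp(B)$ and reading off $\pi_i$ for $i=0,1$ yields the desired isomorphism $\operatorname{colim}_{k\geq 0}\pi_{i+2k}\re{\vect^B}^{ST}\te_\Z\C\simeq\hp_i(B)$, and one checks the comparison is induced by the Chern map.

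The main obstacle I expect is the bookkeeping in the middle step: one must be careful that ``smashing with $H\C$'' commutes with the filtered colimit defining Bott inversion \emph{and} that it computes $\te_\Z\C$ on homotopy groups. The first is automatic since $-\sm_\s H\C$ preserves filtered homotopy colimits; the second requires knowing that $\re{\vect^B}^{ST}$ is a connective spectrum whose $H\C$-homology is concentrated (via the Hurewicz/Atiyah–Hirzebruch mechanism after rationalization) so that $\pi_i(X\sm_\s H\C)\cong\pi_i(X)\te_\Z\C$ — this uses that $\hp(B)$ is periodic and that $\hp_i(B)$ for a finite dimensional algebra is finite dimensional over $\C$, forcing the homotopy groups of $\kctop(B)\sm_\s H\C$ to be finite dimensional in each degree, hence the relevant $\mathrm{Tor}$ and lim$^1$ terms vanish after tensoring with $\C$. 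A secondary point to verify carefully is that the identification $\kst^{\mathrm{con}}(B)\simeq\re{\vect^B}$ (as opposed to $\re{\M^B}$) is legitimate for a possibly non-smooth finite dimensional algebra, which follows from the $\ao$-equivalence of Prop \ref{mk} applied to $T = B$ together with the fact that the Waldhausen $S_\bullet$-construction of perfect $B$-modules is already group-completed by $\vect^B$; once this is in hand the rest is formal.
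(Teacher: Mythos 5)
Your overall route is the paper's: combine the lattice-type statement for $\kctop(B)$ (Prop \ref{cralgass}), the identification of $\kcst(B)$ with the group completion of $\re{\vect^B}$ (formula (\ref{kvect}), via Gillet--Waldhausen and the split cofibrations in $\uproj(B)$ -- note this needs no finite global dimension hypothesis, so your parenthetical about ``finite global-dimension-type control'' is off the mark), Bott inversion as a telescope on homotopy groups, and rationalization. The rationalization step is also simpler than you make it: $H\C$ is equivalent to the Moore spectrum of $\C$, so $\pi_i(X\sm_\s H\C)\simeq \pi_i(X)\te_\Z\C$ for \emph{any} spectrum $X$; no finiteness of $\hp_i(B)$, Tor, or $\lim^1$ considerations are needed.

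The genuine gap is the sentence where you declare that $\re{\vect^B}^{ST}$ ``is precisely \dots the connective spectrum underlying'' $\kcst(B)$. What the K-theoretic identifications give you is $\pi_m\kcst(B)\simeq \pi_m\re{\vect^B}^{+}$, the \emph{group completion}; the statement of the corollary is instead in terms of the naive stabilization $\re{\vect^B}^{ST}=\mathrm{hocolim}(\re{\vect^B}\xrightarrow{\oplus B}\re{\vect^B}\to\cdots)$, and the identification $\re{\vect^B}^{+}\simeq\re{\vect^B}^{ST}$ is not automatic. In the commutative case one would invoke the group-completion theorem via cofinality of the powers of $B$, but here $\pi_0\re{\vect^B}$ is in general a noncommutative-flavoured monoid not isomorphic to $\N$, so that argument fails; the paper instead localizes the symmetric monoidal $\infty$-groupoid $\re{\vect^B}$ at the object $B$ and applies a general localization-by-a-symmetric-object result, which requires checking that the cyclic permutation of $B\oplus B\oplus B$ is homotopic to the identity inside $\re{\ugl_3(B)}$ (via the map from $\re{\ugl_3(\C)}$). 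Neither Prop \ref{mk} nor Thm \ref{kstmt}, which you cite, supplies this: they compare $\M^B$ with the $S_\bullet$-construction, not the group completion with the telescope. Without this step your formula computes $\mathrm{colim}_k\,\pi_{i+2k}\re{\vect^B}^{+}\te_\Z\C$, and the bridge to $\re{\vect^B}^{ST}$ as in the statement is missing.
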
 

In the case where $B$ is moreover of finite homological dimension, i.e. smooth as a dg-category, we prove a similar formula but in terms of the stack $\vect_B$ of finite dimensional modules over $B$, see Cor \ref{cons2}. The stack $\vect_B$ has already been thought of by mathematicians as reflecting the noncommutative geometrical property of the algebra $B$, or in other words as a noncommutative spectrum. Therefore our formula gives a somehow geometrical interpretation of the periodic cyclic homology groups in this case. 

We believe the lattice conjecture is true for objects from very diverse origins like smooth proper DM-stacks over $\C$ and categories of matrix factorizations. We notice that the latter example requires the use of $2$-periodic dg-categories which have their own homotopy theory and the topological realization has also to be adapted to this $2$-periodic context. 

Another interesting question raised by topological K-theory is the behavior of the Chern map $\ktop(T)\lmo \hp(T)$ with respect to the Gauss--Manin connection supported by the periodic homology of a family of dg-categories parametrized by a smooth affine variety (see \cite{caract}, \cite{tsygm}). We believe the map $\chtop$ is flat with respect this connection. We can also mention \cite[§2.2.5]{kkp} for the relation with the associated variation of nc-Hodge structures. 

The point $d$ in theorem \ref{motiv} allows to propose a general definition of the Deligne cohomology of smooth and proper dg-categories over $\C$. In the commutative case, Deligne cohomology of degree $2p$ of a smooth projective variety with coefficients in the $p$th Deligne complex $\Z(p)_\D$ is given by the extension of integral $(p,p)$-classes with its $p$th intermediate Griffiths's jacobian. This suggests the following definition of Deligne cohomology for a smooth and proper $\C$-dg-category $T$, 
$$\hdel(T):=\ktop(T)\ph_{\hp(T)} \hcn(T).$$
As usual, by cohomology we mean the spectrum whose stable homotopy groups give the actual cohomology groups. This new invariant comes with a map $\kn(T)\lmo \hdel(T)$ which can be thought of as a mix between usual and secondary characteristic classes. Let us mention also Marcolli--Tabuada's work \cite{tabmarjac} on intermediate jacobians for dg-categories. 

Finally let us mention another possible relation with the work of Freed \cite{freed} where for problems of dimension reduction of certains Chern--Simons theories, the author raises the question to find a refinement of Hochschild homology defined over the integers. The topological K-theory endowed with its Chern map seems to be a good candidate for this refinement.

\paragraph*{Acknowledgements ---} I want to warmly thank Bertrand Toën for having accepted me as his student, for sharing his mathematical ideas with me and for his constant support during the preparation of this work. I want to thank Denis-Charles Cisinski, Benoit Fresse, Benjamin Hennion, Dmitry Kaledin, Bernhard Keller, Pranav Pandit, Frédéric Paugam, Marco Robalo, Abd\'{o} Roig-Maranges, Carlos Simpson, Boris Tsygan, Michel Vaquié and Gabriele Vezzosi for their support and the motivating discussions we had on the subject of this paper. I thank the referee for his useful comments that did contribute to correct and improve this article from its early version. 

\vspace*{.8cm}

\begin{center}
\textit{Notations, conventions.}
\end{center}

\begin{itemize}
\item We handle set theoretical issues by choosing two Grothendieck universes $\U\in\V$ in the sense of \cite{sga4-1}. We suppose that $\U$ contains the set $\N$ of natural numbers. 

\item In general we consider objects (sets, simplicial sets, topological spaces,...) which are $\V$-small. We denote by $Set$ the category of $\V$-small sets, $SSet$ the category of $\V$-small simplicial sets, $Top$ the category of $\V$-small topological spaces, $Sp$ the category of $\V$-small symmetric spectra.
\item If $C$ is any category, we denote by $C^{op}$ the opposite category of $C$. If $D$ is any another category, we denote by $C^D$ the category of functors and natural transformations $D\lmo C$. 
\item Hovey's book \cite{hoveymod} is our reference for definitions and results on model category. If $M$ is a model category, we denote by $Ho(M)$ the homotopy category of $M$. Most of the time we call simply \emph{equivalences} the weak equivalences. If $M$ is a cofibrantly generated model category, and $C$ any category, we will use several times the projective model structure on functors $M^C$. Its existence is proven in \cite[§11.6]{hirs}. 
\item The category $SSet$ is endowed with its standard model structure (see \cite[Chap 3]{hoveymod}). The category $Sp$ is endowed with its standard stable model structure (see \cite{hss}), which is Quillen equivalent to the usual stable model category of spectra, but has the advantage to be model monoidal. The category $Top$ is endowed with its standard model structure (see e.g. \cite[§2.4]{hoveymod}). 
\item All the spectra we consider are symmetric spectra. We will practise the abuse to forget the word "symmetric" in front of spectra to light the reading, but it will always mean symmetric spectra. 
\item When we deal with symmetric ring spectra, by convention it will mean stricly associative symmetric ring spectra. If $A$ is a symmetric ring spectrum, the expression $A$-modules means by convention right $A$-modules. The notation $A-Mod_\s$ stands for the category of right $A$-modules spectra, which we sometimes call just $A$-modules. 
\item If $C$ is a category and $x,y\in C$ two objects of $C$, we denote by $\Hom_C(x,y)$ the set of morphisms or maps from $x$ to $y$ in $C$. If $C$ is enriched in a category $V$ different than the category of sets, we denote by $\homi_C(x,y)$ the object of maps from $x$ to $y$, specifying if needed the category $V$. If $C$ is moreover endowed with a model structure compatible with the enrichment, we denote by $\rhomi_C$ its derived internal hom. In the particular case of $V=SSet$, we denote the internal homs by $\map_C$ and $\R\map_C$. 
\item In a category with final object we denote "this" final object by $\ast$.
\item We denote by $\del$ the standard simplicial category of finite ordinals $[0], [1], [2], \hdots$ with increasing maps as morphisms. If nothing is specified $S^1$ refers to the standard model $S^1=\del^1/\partial \del^1$ of the simplicial circle, pointed by its zero simplex.

\item By convention, all schemes are \emph{separated of finite type over the base}. If $k$ is a commutative noetherian ring, we denote by $\affk$ the category of affine $k$-scheme of finite type over $k$, we denote by $\schk$ the category of separated $k$-schemes of finite type over $k$. 

\item If $C$ is a $\V$-small category and $V$ a locally $\V$-small category, we denote by $Pr(C, V)$ the category of presheaves on $C$ with values in $V$. In the particular case of $V=SSet$, we denote $Pr(C, V)=:SPr(C)$. In the particular case of $V=Sp$, we denote $Pr(C, Sp)=:Sp(C)$. 

\item \emph{In the particular cases of $V=SSet, Sp$ endowed with their standard model structure, the category $Pr(\affk, V)$ is by default endowed with its projective model structure, for which the equivalences and fibrations are levewise ones (cf. \cite[§11.6]{hirs}). \textbf{Unless otherwise stated, the notations $SPr(\affk)$ and $\spafk$ refer to the projective model structures, also called global model structures by opposition with the local ones}.}
\item If $M$ is a model category, and $F:I\lmo M$ a diagram in $M$ (with $I$ a category), we denote by $hocolim_I F$ the homotopy colimit of $F$, i.e. the left derived functor of the functor $colim_I : M^I\lmo M$ for the projective model structure. We denote by $holim_I F$ the homotopy limit of $F$, this is the right derived functor of the functor $lim_I : M^I\lmo M$ for the injective model structure. We denote by $A\ph_C B$ the homotopy pullback and by $A\sh_C B$ the homotopy pushout. 
\item With a left Quillen functor $f : M\lmo N$ between model categories, we will make several times the abuse to say that $\lef f : Ho(M)\lmo Ho(N)$ commutes with homotopy colimits to says that for all category $I$, and all $I$-diagram $F:I\lmo M$, the map $hocolim_I \lef f(F)\lmo \lef f(hocolim_I F)$ is an isomorphism in $Ho(N)$. We will practise the analog abuse for homotopy limits. 
\item In this text, unless stated otherwise, the sets of adjoint pairs of functors written horizontally are written such that every functor $F$ is left adjoint of the functor just below $F$. 

\end{itemize}

\section{Preliminaries}

In this section we set up some notations, definitions and results that will be used in the definition of the topological K-theory and its Chern character. The first part deals with particular models for homotopy coherently associative monoids (and its commutative analog) and its link with the homotopy theory of symmetric spectra. In the second part we set definitions and recall the properties of algebraic K-theory of noncommutative spaces (connective and nonconnective). Finally in the third part we use Cisinski--Tabuada's result on nonconnective K-theory of noncommutative spaces to redefine the ring structure on algebraic K-theory and the Chern map in a linear fashion with respect to the ring spectrum of algebraic K-theory.

\subsection{Monoids up to homotopy and connective spectra}\label{monoids}

We will use particular models for homotopy associative monoids and homotopy commutative monoids known as $\del$ and $\gam$-spaces respectively. They are quite convenient if someone wants to handle algebraic structures up to coherent homotopy while staying in the realm of model categories, without refering to Lurie's $\infty$-operads and $\infty$-categories. We recall basic results about $\del$-spaces, $\gam$-spaces, group completion and the link between the homotopy theory of very special $\gam$-spaces and the homotopy theory of connective spectra. At the end we recall how to define the Waldhausen K-theory spectrum using the group completion of $\gam$-spaces. 

Let $\Gamma$ be the skeletal category of finite pointed sets with objects the sets $\und{n}=\{0,\hdots, n\}$ with $0$ as basepoints for all $n\in\N$ and with morphisms all pointed maps of sets. 

\begin{df}\label{defdel} Let $M$ be a model category. 
\begin{itemize}
\item A $\del$-object (resp. a $\gam$-object) in $M$ is a functor $\del^{op}\lmo M$ sending $[0]$ to $\ast$ (resp. a functor $\gam\lmo M$ sending $\und{0}$ to $\ast$). Morphisms being natural transformations of functors we denote by $\del-M$ (resp. by $\gam-M$) the category of $\del$-objects (resp. of $\gam$-objects) in $M$. For $E\in \del-M$ (resp. $F\in\gam-M$), we adopt the notations $E([n])=E_n$ and $F(\und{n})=F_n$. 

\item A $\del$-object $E$ in $M$ is called special if all the Segal maps are weak equivalences in $M$, i.e. if for all $[n]\in\del$ the map 
$$ p_0^\ast \times \hdots \times p_{n-1}^\ast : E_n\lmo E_1^{\ph n} = E_1\ph \hdots \ph  E_1$$ 
is a weak equivalence in $M$ where $p_i : [1]\lmo [n]$, $p_i(0)=i$ and $p_i(1)=i+1$. We denote by $s\del-M$ the full subcategory of $\del-M$ consisting of special $\del$-objects in $M$. 

\item A $\gam$-object $F$ in $M$ is called special for all $\und{n}\in \gam$ the map 
$$ q^1_\ast \times \hdots \times q^n_\ast : F_n\lmo F_1^{\ph n}$$ 
is a weak equivalence in $M$, where $q^i : \und{n}\lmo \und{1}$, $q^i(j)=1$ if $j=i$ and $q^i(j)=0$ if $j\neq i$. We denote by $s\gam-M$ the full subcategory of $\gam-M$ consisting of special $\gam$-objects in $M$. 

\item If $E\in s\del-M$, we say that $E$ is very special if the map 
$$p_0^\ast \times d_1^\ast : E_2\lmo E_1\ph E_1$$ 
is a weak equivalence in $M$, where $d_1 : [1] \lmo [2]$ is the face map which avoids $1$ in $[2]$. We denote by $vs\del-M$ the full subcategory of $s\del-M$ consisting of very special $\del$-objects.

\item If $F\in s\gam-M$, we say that $F$ is very special if the map 
$$q^1_\ast \times \mu_\ast : F_2\lmo F_1\ph F_1$$
is a weak equivalence in $M$, where $\mu : \und{2}\lmo \und{1}$ is the map defined by $\mu(1)=1$ and $\mu(2)=1$. We denote by $vs\gam-M$ the full subcategory of $s\gam-M$ consisting of very special $\gam$-objects.
\end{itemize}
\end{df}

\begin{rema} 

\begin{itemize}
\item If we take $M=SSet$, the $\del$-objects and $\gam$-objects are usually called $\del$-spaces and $\gam$-spaces, e.g. in \cite{bf}. 

\item The special $\del$-objects in a model category $M$ are particular models for associative monoids up to coherent homotopy in $M$ (or $A_\infty$-monoids). If we take $M=Set$ the category of $\U$-small sets, with isomorphims of sets as weak equivalences, then the category $s\del-Set$ is equivalent to the category of monoids in sets. Namely a $\del$-set $E$ is map to the monoid $E_1$ with composition law given by 
$$\xymatrix{E_1\times E_1 \ar[r]^-{(d_0^*\times d_2^*)^{-1}} & E_2 \ar[r]^-{d_1^*} & E_1 }.$$
The associativity and unital conditions are recovered via the simplicial identities. 
The special $\gam$-objects are particular models for  commutative monoids up to coherent homotopy in $M$ (or $E_\infty$-monoids).  The composition law is recovered by the map $\mu$, and the commutativity is encoded by the map $\und{2}\lmo \und{2}$ interchanging $1$ and $2$. 

\item In the special case where $M$ is the projective model category of simplicial presheaves on a site, we can then replace homotopy products by products in definition \ref{defdel}. In this case, a special $\del$-object $E$ is very special if and only if the monoid $\pi_0 E_1$ is a group (i.e. every element has a two-sided inverse) (see \cite[Lem 1.3]{teze}). 

\end{itemize}
\end{rema}

Recall that we have at least three interesting model structures on $\del-M$ for any left proper combinatorial model category $M$ : 

\begin{itemize}
\item The \textit{projective} or \textit{strict model structure} for which weak equivalences and fibrations are levelwise weak equivalences and levelwise fibrations respectively. We denote by $\del-M$ this model structure. In all the sequel, by default "equivalence in $\del-M$" will refers to levelwise weak equivalence.
 
\item The \textit{special model structure} which is the Bousfield localization of the strict one with respect to the set of maps 
$$(\sqcup_{i=0}^{n-1} h_{p_i}:h_{[1]}\sqcup \cdots \sqcup h_{[1]}\lmo h_{[n]})_{n\geq 1}\quad  \square \quad (\textrm{generating cofibrations of M})$$ 
where $\square$ is the box product of \cite[Thm 3.3.2]{hoveymod}. We denote it by $\del-M^{sp}$. The fibrant objects for this model structure are the special $\del$-objects which are moreover levelwise fibrant. 

\item The \textit{very special model structure}  which is a Bousfield localization of the special one with respect to the map 
$$(h_{p_0} \sqcup h_{d_1}  : h_{[1]} \sqcup h_{[1]} \lmo h_{[2]}) \quad \square \quad (\textrm{generating cofibrations of M})$$
We denote it by $\del-M^{vsp}$. The fibrant objects for this model structure are the very special $\del$-objects which are moreover levelwise fibrant. 
\end{itemize}

We have derived identity functors
$$\xymatrix{Ho(\del-M) \ar@<2pt>[r]^{\lef id} &  Ho(\del-M^{sp}) \ar@<2pt>[r]^-{\lef id'} \ar@<2pt>[l]^{\R id} & Ho(\del-M^{vsp})\ar@<2pt>[l]^{\R id'} }.$$
We denote by $mon:=\R id \lef id$ the free homotopy associative monoid functor and by $(-)^+:=\R id'\lef id'$ the homotopy group completion. 
\\

We have similar model structures for $\gam$-objects : a projective model structure $\gam-M$, a special model structure $\gam-M^{sp}$, and a very special model structure $\gam-M^{vsp}$ with corresponding free homotopy commutative monoid and homotopy commutative group completion functors : 
$$\xymatrix{Ho(\gam-M) \ar[r]^{com}  & Ho(\gam-M^{sp}) \ar[r]^-{(-)^+}  & Ho(\gam-M^{vsp})}.$$
(We give the group completion the same notation as for associative monoids because one can show they are actually equivalent ; which can be expressed by the following). 
\\

We have a fully faithful functor from homotopy commutative monoids to homotopy associative monoids given by composition with the functor
$$\alpha : \del^{op}\lmo \gam,$$
defined on objects by $\alpha ([n])=\und{n}$. And for any map $f:[n]\lmo [m]$ in $\del$ we define \\ $\alpha(f)=g:\und{m}\lmo \und{n}$ by 
$$g(i)=\left\lbrace
\begin{array}{ccc}
0 & \text{if  }& 0\leq i\leq f(0) \\ 
j & \text{if  }& f(j-1)< i\leq f(j) \\ 
0  & \text{if  }& f(n)<i \\ 
\end{array} 
\right.$$

One can verify that $\alpha (p_i)=q^{i+1}$ for $i=0,\hdots, n-1$, and $\alpha (d_1)=\mu$ so that the fully faithful functor 
$$\alpha^\ast : \gam-M \lmo \del-M, $$
sends special $\gam$-objects to special $\del$-objects and also very special objects to such. Hence we obtain a diagram

$$\xymatrix{ Ho(\gam-M)   \ar[d]_-{\alpha^\ast} \ar[r]^-{com} &  Ho(\gam-M^{sp})  \ar[d]_-{\alpha^\ast} \ar[r]^-{(-)^+} & Ho(\gam-M^{vsp} )  \ar[d]_{\alpha^\ast} \\
 Ho(\del-M)  \ar[r]^-{mon} &  Ho(\del-M^{sp})  \ar[r]^-{(-)^+} &  Ho(\del-M^{vsp} )  }$$
The left square is not commutative anymore but we can actually show that the right square is commutative up to canonical isomorphism.

\begin{rema}\label{remgroup}
Working with $M=SSet$ or with the global model category of simplicial presheaves on a category we have the following. 
By Segal's Theorem \cite[Prop 1.5]{seg}  the group completion functor $(-)^+$ has as model the composite functor 
$$\xymatrix{Ho(\del-SSet^{sp}) \ar[r]^-{\re{-} } & Ho(SSet_\ast) \ar[r]^-{\R\Omega_\bul} & Ho(\del-SSet^{vsp})  },$$
where $\re{-}$ is the realization of bisimplicial sets and for a pointed fibrant simplicial set $(X,x)$ the simplicial set $\Omega_n X$ is the simplicial set of maps from $\del^n$ to $X$ which send all vertices on $x$. We have indeed more : the composite functor $(-)^+\circ mon$ has as model the functor $\R\Omega_\bul \circ \re{-}$. 
\end{rema}

\begin{ex}\label{exespk}
The following example will be important to us in this paper on K-theory. If $C$ is any Waldhausen category, we have a $\del$-space
$$\K_\bul (C):=NwS_\bul C$$
where $Nw$ is the nerve of weak equivalences and $S_\bul$ is Waldhausen's S-construction. The level $1$ is $NwS_1C$ which is equivalent to $NwC$. This $\del$-space is not special in general. Algebraic K-theory is indeed a way to make it very special. The algebraic K-theory space of $C$ is defined by the pointed simplicial set 
$$K(C):=\Omega\re{NwS_\bul C},$$
where $\Omega$ means $\Omega_1$ in the notation of \ref{remgroup}, i.e. the simplicial set of loops. The basepoint is taken to be the zero object of $C$. Algebraic K-theory defines a functor 
$$K : WCat \lmo SSet_\ast$$
from $\V$-small Waldhausen categories to $\V$-small simplicial sets. By Segal's theorem (see Rem.\ref{remgroup}) the algebraic K-theory of $C$ is the level $1$ of the group completion 
$$(mon \, \K_\bul (C))^+\simeq \R\Omega_\bul \re{NwS_\bul C}.$$
Moreover one can verify directly using adjoint functors that 
$$\pi_0 (mon \, \K_\bul (C))^+_1\simeq (mon \, \pi_0 \K^{(1)} (C))^+.$$
The free monoid of $\pi_0 \K^{(1)} (C)$ is the monoid in which we identify $a$ with the product of $a'$ and $a''$ each time there is a cofibration sequence $a'\moi a\twoheadrightarrow  a''$. It follows that this product is commutative and coincides with the sum in $C$, and that $\pi_0 K(C) =K_0(C)$ is the Grothendieck group of $C$. 
\end{ex} 

\vspace*{.4cm}

We recall the equivalence between the homotopy theory of very special $\gam$-spaces and the homotopy theory of connective spectra. This was first proven by Segal (\cite{seg}) and upgraded in the language of model categories in \cite{bf}. \cite[Thm 5.8]{bf} can be directly generalised from $\gam$-spaces and spectra to $\gam$-objects in $M=SPr(C)$ and presheaves of spectra on $C$. Moreover, following \cite[Ex 2.39]{schw-sym}, we can replaced ordinary spectra by symmetric spectra. We denote by $\spcon(C)$ the subcategory of presheaves of connective spectra. We have a pair of adjoint functor 
$$\xymatrix{ \gam-SPr(C) \ar@<2pt>[r]^-{\mcal{B}} & \ar@<2pt>[l]^-{\mcal{A}} \spcon(C) }$$
Recall from from \cite[§5]{bf} that a $\gam$-space can be extend to a functor from symmetric spectra to symmetric spectra. The functor $\mcal{B}$ is defined on an object $E\in \gam-SPr(C)$ by
$$\mcal{B}E=E(\s),$$
the value of $E$ on the sphere spectrum, which is a connective spectrum for every $\gam$-object $E$. This functor is really identical to Segal's functor from special $\gam$-spaces to spectra, defined using iterations of realization of simplicial spaces. The functor $\mcal{B}$ preserves weak equivalences between all $\gam$-spaces, not just cofibrants. We endow the category $Sp(C)$ of presheaves of symmetric spectra on $C$ with the projective model structure and we denote by $Ho(\spcon(C))$ the subcategory of $Ho(Sp(C))$ consisting of connective symmetric spectra. 

\begin{theo}\label{bf} The adjoint pair $(\mcal{B},\mcal{A})$ is a Quillen pair for the very special model structure on $\gam-SPr(C)$. Moreover it is a Quillen equivalence, inducing an equivalence of categories,
$$\xymatrix{ Ho(\gam-SPr(C)^{vsp} ) \ar@<2pt>[r]^-{\lef \mcal{B}}  & Ho(\spcon(C)) \ar@<2pt>[l]^-{\R \mcal{A} } }.$$
\end{theo}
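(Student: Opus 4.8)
The plan is to bootstrap the statement from the corresponding non-presheaf version of the Segal--Bousfield--Friedlander theorem (i.e.\ \cite[Thm 5.8]{bf} together with the passage to symmetric spectra of \cite[Ex 2.39]{schw-sym}) by a "levelwise" argument, using that both model structures in play are defined objectwise over $C$. First I would recall that the category $\spcon(C)$ is a subcategory of $Sp(C)$ stable under the objectwise weak equivalences and closed in $Ho(Sp(C))$: since connectivity is detected on stable homotopy presheaves, and those are computed objectwise for the projective model structure, a presheaf of spectra is connective iff each value $F(c)$ is a connective spectrum. Then I would verify that $(\mcal{B},\mcal{A})$ is a Quillen pair for the strict (projective) model structures on $\gam-SPr(C)$ and $Sp(C)$: this is immediate because $\mcal{A}$ is defined objectwise from the corresponding right adjoint on $\gam-SSet$ and $Sp$, and it preserves levelwise fibrations and levelwise trivial fibrations by the one-object case; equivalently, $\mcal{B}$ preserves projective cofibrations and trivial cofibrations because these are detected by left lifting against the objectwise (trivial) fibrations.

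Next, the key point is to identify the left Bousfield localizations. The very special model structure $\gam-SPr(C)^{vsp}$ is by construction the left Bousfield localization of $\gam-SPr(C)$ at the set of maps obtained by box-product of the Segal maps and the very-special map with the generating cofibrations; because $SPr(C)$ has its projective model structure, an object is $S$-local precisely when each of its values $E(c)$ is a very special $\gam$-space (in the derived sense), i.e.\ a levelwise-fibrant very special $\gam$-object. On the other side, $Sp(C)$ with the projective model structure already has fibrant objects the objectwise-$\Omega$-spectra, and the connective ones are cut out by the same kind of objectwise condition. The plan is therefore to invoke the general principle that a Quillen pair $(F,G)$ between cofibrantly generated model categories descends to a Quillen pair between left Bousfield localizations $L_S M \rightleftarrows L_T N$ as soon as $\lef F$ sends the class $S$ into $T$-local equivalences (equivalently $G$ sends $T$-local objects to $S$-local objects): here one checks, object by object in $C$, that $\R\mcal{A}$ of a presheaf of connective spectra is a presheaf of very special $\gam$-objects and that $\lef\mcal{B}$ of the localizing maps are stable equivalences onto connective spectra, both of which reduce to the classical one-object statement. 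This gives the Quillen pair $(\lef\mcal{B},\R\mcal{A})$ between $\gam-SPr(C)^{vsp}$ and $\spcon(C)$.

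To upgrade "Quillen pair" to "Quillen equivalence" I would argue again objectwise. The derived unit and counit of $(\mcal{B},\mcal{A})$ are natural transformations of presheaves, so it suffices to check they are objectwise weak equivalences; but evaluation at $c\in C$ intertwines $\lef\mcal{B},\R\mcal{A}$ with the classical functors on $\gam-SSet^{vsp}$ and $\spcon$ (up to the cofibrant/fibrant replacements, which for the projective model structures can be taken objectwise, or whose value at $c$ is at least a weak equivalence-preserving model of the one-object replacement). Since the classical pair is a Quillen equivalence by \cite[Thm 5.8]{bf} and \cite[Ex 2.39]{schw-sym}, the derived unit $E\to \R\mcal{A}\lef\mcal{B}E$ and derived counit $\lef\mcal{B}\R\mcal{A}F\to F$ are objectwise equivalences, hence equivalences in $\gam-SPr(C)^{vsp}$ and $\spcon(C)$ respectively. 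By the standard criterion this makes $(\lef\mcal{B},\R\mcal{A})$ an adjoint equivalence of homotopy categories, $Ho(\gam-SPr(C)^{vsp})\simeq Ho(\spcon(C))$.

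The main obstacle I anticipate is bookkeeping around the cofibrant and fibrant replacements: for the Bousfield-localized projective model structures, fibrant replacement is \emph{not} computed objectwise in a naive way (the localization changes the fibrant objects globally), so one has to be a little careful that evaluating at $c\in C$ a (very-special-)fibrant replacement of a presheaf still computes the one-object derived functor correctly. This is handled by the familiar fact that for presheaf categories the localizing set is a set of maps between cofibrant objects with finitely presentable (co)domains, so $S$-local equivalences and $S$-local objects can be detected objectwise via mapping spaces, and the derived functors $\lef\mcal{B},\R\mcal{A}$ commute with the evaluation functors $\mathrm{ev}_c$ up to natural equivalence; I would spell this out as a lemma. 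Everything else — that $\mcal{B}$ preserves levelwise cofibrations, that $\mcal{A}$ preserves objectwise $\Omega$-spectra and connectivity, that $\mcal{B}E$ is objectwise connective — is a direct transcription of \cite[§5]{bf} applied at each $c\in C$.
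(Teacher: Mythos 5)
Your proposal is correct and takes essentially the same route as the paper: the paper offers no independent proof of Theorem \ref{bf}, but simply cites \cite[Thm 5.8]{bf} together with \cite[Ex 2.39]{schw-sym} and asserts that the statement ``can be directly generalised'' to $\gam$-objects in $SPr(C)$ and presheaves of symmetric spectra, which is exactly the objectwise bootstrapping you carry out. Your write-up merely makes explicit the details the paper leaves implicit — that the local objects and local equivalences of the very special localization of the projective structure are detected objectwise, and that the derived unit and counit are therefore objectwise instances of the classical Bousfield--Friedlander equivalence.
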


\begin{rema}\label{kconspec}
Following \cite[§1.2]{teze}, we give a model for the algebraic K-theory spectrum of a Waldhausen category. This model has the advantage of being canonically delooped using the functor $\mcal{B}$ of theorem \ref{bf}, compared to the way Waldhausen has defined his K-theory spectrum. We just remind the construction and for more details and a comparison with Waldhausen's spectrum we refer to \cite[§1.2]{teze}. 

Let $C$ be any Waldhausen category. The axioms setting the structure of a Waldhausen category imply that $C$ has finite sums. We can therefore construct a special $\gam$-object in the category $WCat$ of $\V$-small Waldhausen categories denoted by $B_W C$ such that there is an equivalence of categories $(B_W C)_1 \simeq C$ and the composition law is given by the sum in $C$. Moreover there is an equivalence of categories $(B_W C)_n\simeq C^n$ for any $n\geq 1$. Since the algebraic K-theory space functor $K:WCat \lmo SSet_\ast$ commutes with finite products, we obtain a special $\gam$-space by taking $K$ levelwise : 
$$K^\gam(C):=K(B_W C).$$
Because $\pi_0K^\gam(C)_1\simeq K_0(C)$ is a group, the $\gam$-object $K^\gam(C)$ is very special and thus gives a connective symmetric spectrum
$$\kc(C):=\mcal{B} (K^\gam(C)).$$
This defines a functor 
$$\kc:WCat\lmo \spcon.$$
For any Waldhausen category $C$, there exists a map of special $\gam$-spaces, 
\begin{equation}\label{mapgam}
NwB_W C \lmo K^{\gam} (C)
\end{equation}
given by the adjoint of the map $S^1\sm  NwB_W C \lmo \re{NwS_\bul B_W C}$. 
We recall (see \cite[Lem 1.10]{teze}) that in the special case where all cofibrations are split in the Waldhausen category $C$, then the map of very special $\gam$-spaces,
$$(NwB_W C)^+ \lmo K^{\gam} (C)$$
is a levelwise equivalence. 
\end{rema}



\subsection{Algebraic K-theory of noncommutative spaces}\label{algk}

\emph{In all the sequel, $k$ is a commutative associative unital noetherian base ring.}

\vspace*{.2cm}

Actually our definition of topological K-theory is developed only over the complex field $\C$. Thus from §\ref{rea}, the base ring will be $\C$. We fix some notations for dg-categories. We denote by $C(k)$ the category of $\U$-small unbounded cochain complexes of $k$-modules. We denote by $\dgcat$ the category of $\U$-small $k$-dg-categories, i.e. of $\U$-small $C(k)$-enriched categories, and $C(k)$-enriched functors between them. By convention, the expression dg-category always refers to a $\U$-small $k$-dg-category. 

We endow $\dgcat$ with the standard model structure \cite[Thm 1.8]{tabth} for which equivalences are quasi-equivalences, we denote by $\dgcat$ this model structure. We denote by $\dgmor$ the Morita model structure on $\dgcat$ \cite[Thm 2.27]{tabth}, for which the equivalences are Morita equivalences. If $T$ is a dg-category, we denote by $T^{op}$ its opposite dg-category, and by $T^{op}-Mod$ the category of $T^{op}$-dg-modules, endowed with its projective model structure. Its homotopy category is denoted by $D(T)$ and called the derived category of $T$. A map $T\lmo T'$ in $\dgcat$ is a Morita equivalence if the induced map $D(T)\lmo D(T')$ is an equivalence of categories.

We consider $\parf(T)$ the category of cofibrant and perfect objects in $T^{op}-Mod$, i.e. the sub-category of $T^{op}-Mod$ consisting of cofibrant $T^{op}$-dg-modules which are perfect (or compact) as objects of the derived category $D(T)$. We endow $\parf(T)$ with a structure of a Waldhausen category induced by the model structure of $T^{op}-Mod$, i.e. a map is a weak equivalence (resp. a cofibration) in $\parf(T)$ if it is so in $T^{op}-Mod$. The axioms of a Waldhausen category structure are satisfied essentially because the homotopy pushout of two perfect dg-modules over a third perfect dg-module is again perfect. Moreover, the Waldhausen category $\parf(T)$ satisfies the saturation axiom, the extension axiom, has a cylinder functor which satisfies the cylinder axiom. 

Let $f:T\lmo T'$ be a map in $\dgcat$, then $f$ induces a Quillen pair
$$\xymatrix{ T^{op}-Mod \ar@<2pt>[r]^-{f_!}  &T'^{op}-Mod \ar@<2pt>[l]^-{f^*} }$$
where $f^*$ is defined on objects by composition with $f$. As a left Quillen functor, the direct image $f_!$ preserves perfect dg-modules and induces an exact functor still denoted by $f_!$
$$f_!:\parf(T)\lmo \parf(T').$$
This defines a lax functor $\dgcat \lmo WCat$ and applying the canonical strictification procedure we obtain a functor
$$\parf : \dgcat \lmo WCat.$$

\begin{df} 
\begin{description}
\item[a)] The \emph{algebraic K-theory space of a dg-category $T$} is the pointed simplicial set $K(T):=K(\parf(T))$. This defines a functor 
$$K : \dgcat \lmo SSet_\ast.$$

\item[b)] The \emph{connective algebraic K-theory spectrum of a dg-category $T$} is the spectrum $\kc(T):=\kc(\parf(T))$. This defines a functor
$$\kc: \dgcat\lmo \spcon.$$
\end{description}
\end{df}

\begin{rema}\label{algebra}
If $T=A$ is any associative $k$-algebra, one can consider vector bundles on $Spec(A)$, or in other words, projective (right) $A$-modules of finite type. This forms a Waldhausen category $Vect(A)$ with weak equivalences being isomorphisms and cofibrations being monomorphisms. One can show (using \cite[Thm 1.11.7]{tt}) that there is a weak equivalence of simplicial sets $K(Vect(A))\simeq K(\parf(A))$, and thus a weak equivalence on the associated connective K-theory spectra too. 
\end{rema}

We now recall the main properties of connective K-theory : filtered colimits, Morita invariance and additivity on split short exact sequences of dg-categories. 

Recall that 
\begin{itemize}
\item a short sequence $T'\mos{i} T\mos{p} T''$ of dg-categories is called \emph{exact} if the sequence of triangulated perfect derived categories 
$\xymatrix{D_{pe}(T')\ar[r]^-{i_!} & D_{pe}(T)\ar[r]^-{p_!} &D_{pe}(T'')}$ is exact, i.e. that $i_!$ is fully faithfull and that $p_!$ induces an equivalence up to factors between $D_{pe}(T'')$ and the Verdier quotient $D_{pe}(T)/D_{pe}(T')$.
\item  A short sequence $T'\mos{i} T\mos{p} T''$ of dg-categories is called \emph{strictly split exact} if it is exact and if moreover the functor $i_!$ has a right adjoint denoted by $i^*$, the functor $p_!$ has a right adjoint denoted by $p^*$ such that $i^*i_!\simeq id_{\parf(T')}$ and $p_!p^*\simeq id_{\parf(T'')}$ via the adjunction maps. 
\item A short sequence $T'\mos{i} T\mos{p} T''$ of dg-categories is called \textit{split exact} if it is isomorphic in $Ho(\dgmor)$ to a strictly split short exact sequence. 
\end{itemize}

\begin{prop}\label{split}
\begin{description}
\item[a.] The functor $\kc$ commutes with filtered homotopy colimits in $\dgcat$. 
\item[b.] The functor $\kc$ sends Morita equivalences in $\dgcat$ to isomorphisms in $Ho(Sp)$.  
\item[c.] Let $T'\mos{i} T\mos{p} T''$ be a \textbf{split} short exact sequence of dg-categories. Then the morphism 
$$i_!+p^\ast :\kc(T')\oplus \kc(T'')\lmo \kc(T)$$
is an isomorphism in $Ho(Sp)$.
\end{description}
\end{prop}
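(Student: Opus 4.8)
The plan is to deduce all three assertions from standard structural theorems for Waldhausen K-theory --- the approximation and additivity theorems --- using the axioms recorded above for $\parf(T)$ (saturation, extension axiom, cylinder functor satisfying the cylinder axiom), together with the compatibility of $\parf$ with filtered homotopy colimits and with Morita equivalences. For part \textbf{a.}, I would first check that for a filtered diagram $(T_i)$ in $\dgcat$ one has an equivalence of triangulated categories $D_{pe}(hocolim_i T_i)\simeq colim_i D_{pe}(T_i)$: every perfect $(hocolim_i T_i)^{op}$-dg-module, and every morphism between two such, is already defined at a finite stage by compactness, and the filtered colimit of the $D_{pe}(T_i)$ is idempotent complete because an idempotent lives, and hence splits, at a finite stage, where $D_{pe}$ is idempotent complete. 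Lifting this to the level of Waldhausen categories identifies $\parf(hocolim_i T_i)$, up to a Dwyer--Kan equivalence that K-theory does not see (approximation theorem again), with the filtered colimit of the $\parf(T_i)$. It then remains to observe that the composite $T\mapsto \mcal{B}(K^\gam(\parf(T)))$ commutes with filtered colimits, since $S_\bul$, $Nw$, $B_W$, the $\gam$-space $K^\gam$ and the Segal machine $\mcal{B}$ all commute with filtered colimits --- filtered colimits commute with the finite products appearing in the Segal and specialness conditions.

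For part \textbf{b.}, the exact functor $f_!:\parf(T)\lmo\parf(T')$ induced by a Morita equivalence $f$ induces, by definition of Morita equivalence, an equivalence on homotopy categories $D_{pe}(T)\simeq D_{pe}(T')$. I would verify that $f_!$ satisfies the hypotheses of Waldhausen's approximation theorem: it reflects weak equivalences, being an equivalence on homotopy categories; and the lifting condition holds because $f_!$ is homotopically fully faithful and essentially surjective, the cylinder functor on $\parf(T)$ allowing one to replace the lifted maps by cofibrations. Approximation then yields an equivalence of K-theory spaces, and applying it levelwise to $K^\gam$ followed by $\mcal{B}$ promotes it to an isomorphism $\kc(T)\simeq\kc(T')$ in $Ho(Sp)$.

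For part \textbf{c.}, by part \textbf{b.} the functor $\kc$ factors through $Ho(\dgmor)$, so I may assume the sequence $T'\mos{i}T\mos{p}T''$ is strictly split exact, with adjunctions $i_!\dashv i^*$ and $p_!\dashv p^*$ such that $i^*i_!\simeq id$ and $p_!p^*\simeq id$. The key input is that for every $M\in\parf(T)$ the counit triangle $i_!i^*M\lmo M\lmo C$ has its cofiber $C$ in the right orthogonal of $D_{pe}(T')$ inside $D_{pe}(T)$ --- apply $\Hom(i_!M',-)$ to the triangle and use $i^*i_!\simeq id$ --- a subcategory on which $p_!$ restricts to an equivalence with quasi-inverse $p^*$; hence $C\simeq p^*p_!M$ canonically, yielding a cofiber sequence of exact endofunctors $i_!i^*\Rightarrow id_{\parf(T)}\Rightarrow p^*p_!$. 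Waldhausen's additivity theorem (applied levelwise to $K^\gam$ and delooped through $\mcal{B}$) then gives $id_{\kc(T)}\simeq i_!\circ i^*+p^*\circ p_!$. Combining this with the orthogonality relations $p_!\circ i_!\simeq 0$ (the image of $i_!$ is the kernel of $p_!$) and $i^*\circ p^*\simeq 0$ (the image of $p^*$ lies in the right orthogonal of $D_{pe}(T')$, which is the kernel of $i^*$), and with $i^*i_!\simeq id$, $p_!p^*\simeq id$, a direct matrix computation shows that $(i^*,p_!):\kc(T)\lmo\kc(T')\oplus\kc(T'')$ is a two-sided inverse of $i_!+p^*$.

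I expect the main obstacle to be in part \textbf{c.}: producing the cofiber sequence of exact functors $i_!i^*\Rightarrow id\Rightarrow p^*p_!$ at the point-set level of the Waldhausen category $\parf(T)$, rather than merely in $D_{pe}(T)$ --- one must turn the counit into a cofibration via the cylinder functor, and one must check that the additivity theorem, stated for K-theory spaces, deloops through the $\gam$-space and the Segal machine $\mcal{B}$ to the asserted identity of maps of connective spectra (it does, because the infinite loop sum is compatible with the space-level addition of the additivity theorem). A secondary technical point, in part \textbf{a.}, is making the identification $\parf(hocolim_i T_i)\simeq colim_i\parf(T_i)$ precise at the level of Waldhausen categories rather than of homotopy categories.
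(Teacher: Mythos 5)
Your proof is correct and follows essentially the same route as the paper, which gives no argument of its own here beyond noting the facts are well known and citing Prop 2.8 of the author's thesis, where they are obtained exactly as you do: Waldhausen's approximation theorem (together with the identification of $\parf$ of a filtered colimit up to derived equivalence) for parts \textbf{a.} and \textbf{b.}, and the additivity theorem applied to the semiorthogonal decomposition coming from the splitting for part \textbf{c.}. The point you flag --- realizing $i_!i^*\Rightarrow id\Rightarrow p^*p_!$ as a cofiber sequence of exact functors on $\parf(T)$ rather than only in $D_{pe}(T)$ --- is indeed the only delicate step, and it is resolved in the standard way using functorial cones in the dg-setting and a natural zig-zag of weak equivalences between $\mathrm{Cone}(i_!i^*\to id)$ and $p^*p_!$, exactly as you anticipate.
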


\begin{proof} 
These facts are well known, but for details see \cite[Prop 2.8]{teze}. 
\end{proof}

We recall how to define nonconnective algebraic K-theory of dg-categories using Tabuada--Cisinski construction \cite{nck}. This definition coincides with Schlichting's construction thanks to \cite[Prop 6.6]{nck}.

The main ingredient of nonconnective K-theory is the countable sum completion functor or flasque envelope : 
$$\mcal{F} : \dgcat\lmo \dgcat,$$
(see \cite[§6]{nck} for this construction). It comes with a quasi-fully faithfull functor $T\lmo \mcal{F}(T)$.  The essential property of the dg-category $\mcal{F}(T)$ is that it admits countable sums, and thus verifies $\kc(\mcal{F}(T))=0$. One can define the suspension functor of dg-categories 
$$\mcal{S} : \dgcat\lmo \dgcat,$$
by $\mcal{S}(T):=\mcal{F}(T)/T$, where the quotient is calculated in $Ho(\dgcat)$. 
The sequence of spectra $(\kc(\mcal{S}^n(T))_{n\geq 0}$ forms actually a spectrum in the monoidal category $Sp$ of symmetric spectra (see \cite[Prop 7.2]{nck}) and we take the $0$th-level of the associated $\Omega$-spectrum to define nonconnective K-theory. 

\begin{df}(\cite[Prop 7.5]{nck})
The \emph{nonconnective algebraic K-theory of a dg-category $T$} is the spectrum 
$$\kn(T):=hocolim_{n\geq 0} \kc(\mcal{S}^n (T))[-n].$$
This defines a functor 
$$\kn :\dgcat \lmo Sp.$$
\end{df}

By definition, for every $T\in \dgcat$ we have a natural map $\kc(T)\lmo \kn(T)$. The main properties of nonconnective K-theory are the following. 

\begin{prop}\label{propalgk}
\begin{description}
\item[a.] For every triangulated dg-category $T$, the natural map $\kc(T)\lmo \kn(T)$ induces an isomorphism on $\pi_i$ for all $i\geq 0$. Therefore $\kc(T)$ is the connective covering of the spectrum $\kn(T)$. 
\item[b.] The functor $\kn$ commutes with filtered homotopy colimits in $\dgcat$. 
\item[c.] The functor $\kn$ sends Morita equivalences to isomorphism in $Ho(Sp)$. 
\item[d.] Let $T'\mos{i} T\mos{p} T''$ be an exact sequence of dg-categories. Then the induced sequence
$$\xymatrix{ \kn(T') \ar[r]^{i_!} & \kn(T) \ar[r]^{p_!} & \kn(T'')}$$
is a distinguished triangle in $Ho(Sp)$. 
\end{description}
\end{prop}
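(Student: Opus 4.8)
The strategy is to bootstrap all four statements from the connective case, i.e. from Proposition \ref{split}, together with the defining formula $\kn(T)=hocolim_{n\geq 0}\,\kc(\mcal{S}^n(T))[-n]$ and the basic properties of the flasque envelope $\mcal{F}$ and the suspension $\mcal{S}$ recorded in \cite[§6]{nck}. The two structural facts about $\mcal{S}$ that I would isolate first are: $\mcal{S}$ (hence each $\mcal{S}^n$) preserves filtered homotopy colimits and preserves Morita equivalences — both following from the analogous properties of $\mcal{F}$ and of the Verdier quotient as in \cite[§6]{nck} — and the bonding maps $\kc(\mcal{S}^n(T))[-n]\lmo \kc(\mcal{S}^{n+1}(T))[-n-1]$ are adjoint to the comparison maps attached to the exact sequences $\mcal{S}^n(T)\moi \mcal{F}(\mcal{S}^n(T))\twoheadrightarrow \mcal{S}^{n+1}(T)$.

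For $\mathbf{a.}$: the exact sequence $T\moi \mcal{F}(T)\twoheadrightarrow \mcal{S}(T)$ together with $\kc(\mcal{F}(T))\simeq 0$ (an Eilenberg-swindle, since $\mcal{F}(T)$ admits countable sums) shows that connective K-theory sends it to a sequence which is a homotopy fibre sequence on connective covers; since $T$ is triangulated this identifies $\kc(T)$ with the connective cover of $\Omega\kc(\mcal{S}(T))=\kc(\mcal{S}(T))[-1]$, i.e. the map $\kc(T)\lmo\kc(\mcal{S}(T))[-1]$ is an isomorphism on $\pi_i$ for $i\geq 0$. Each $\mcal{S}^n(T)$ is again triangulated, so iterating, every bonding map in the colimit computing $\kn(T)$ is a $\pi_{\geq 0}$-isomorphism; hence the canonical map from the first term $\kc(T)=\kc(\mcal{S}^0(T))[0]$ into the colimit $\kn(T)$ is a $\pi_{\geq 0}$-isomorphism, which is the claim, and the ``therefore'' is then immediate as $\kc(T)$ is connective. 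For $\mathbf{b.}$ and $\mathbf{c.}$: by Proposition \ref{split}$\mathbf{a}$ the functor $\kc$ commutes with filtered homotopy colimits and by Proposition \ref{split}$\mathbf{b}$ it sends Morita equivalences to isomorphisms in $Ho(Sp)$; since each $\mcal{S}^n$ preserves filtered homotopy colimits, respectively Morita equivalences, and since looping and shifting are exact, the functor $T\mapsto \kc(\mcal{S}^n(T))[-n]$ has both properties for every fixed $n$. One then observes that the sequential homotopy colimit over $n$ commutes with filtered homotopy colimits and preserves levelwise isomorphisms in $Ho(Sp)$, which is standard; this gives $\mathbf{b.}$ and $\mathbf{c.}$ for $\kn$.

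For $\mathbf{d.}$: this is the genuinely nontrivial point and the whole purpose of the nonconnective construction — it is the localization theorem, which I would record as a consequence of \cite[§7]{nck} rather than reprove from scratch. The formula $\kn=hocolim_n\,\kc(\mcal{S}^n(-))[-n]$ is designed precisely so that the failure of connective K-theory to be localizing — which by Proposition \ref{split}$\mathbf{c}$ and the Waldhausen additivity machinery is concentrated in degrees $\leq 0$ — is killed off in the colimit. Concretely, an exact sequence $T'\mos{i}T\mos{p}T''$ induces for each $n$ an exact sequence $\mcal{S}^n(T')\lmo\mcal{S}^n(T)\lmo\mcal{S}^n(T'')$ (the quotient and flasque-envelope functors being exact up to Morita equivalence), on which connective K-theory gives a homotopy fibre sequence on connective covers; passing to the colimit over $n$ promotes these connective-cover fibre sequences to an honest fibre sequence of spectra, hence to a distinguished triangle in $Ho(Sp)$. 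The real obstacle lies in the input that the obstruction to exactness of $\kc$ sits only in non-positive degrees and that each application of $\mcal{S}$ improves the connectivity of that obstruction by one — this is the content of Schlichting's delooping theorem as reworked in \cite{nck}; granting it, $\mathbf{d.}$ is formal.
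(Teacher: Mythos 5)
The paper itself offers no argument for this proposition: its proof is the citation ``these facts are well known and result from Schlichting's theory'', with details deferred to \cite[Prop 1.15]{teze}. Your sketch, which reduces everything to the defining colimit formula for $\kn$ together with the localization/delooping theorems of Schlichting as packaged in \cite{nck}, is therefore following exactly the route that citation points to; your treatment of \textbf{b.} and \textbf{c.} is the standard one (granted that $\mcal{F}$ and $\mcal{S}$ preserve filtered homotopy colimits and Morita equivalences), and for \textbf{d.} you correctly isolate the Thomason--Waldhausen--Schlichting localization theorem as the genuinely nontrivial input and defer to it.

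Two points in your \textbf{a.} need repair, however. First, the ``homotopy fibre sequence on connective covers'' that you extract from $T\moi\mcal{F}(T)\twoheadrightarrow\mcal{S}(T)$ and the swindle $\kc(\mcal{F}(T))\simeq 0$ is not a consequence of anything established in the paper: Proposition \ref{split} only gives additivity for \emph{split} exact sequences, so this step is itself the connective localization (plus cofinality) theorem -- the same heavy input you acknowledge only under \textbf{d.}; it enters already in \textbf{a.} Second, the assertion that ``each $\mcal{S}^n(T)$ is again triangulated'' is false in general: $\mcal{S}(T)=\mcal{F}(T)/T$ is pretriangulated but need not be idempotent complete (which is what ``triangulated'' means in the Morita setting used here), and the failure of idempotent completeness of these quotients is precisely what nonzero negative K-groups measure. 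Fortunately you do not need that claim: after the shift $[-n]$, the $n$th bonding map only has to be an isomorphism on $\pi_j$ for $j\geq 1-n$, and localization plus cofinality give the unshifted comparison $\kc(U)\lmo\Omega\kc(\mcal{S}U)$ as an isomorphism in degrees $\geq 1$ for an arbitrary dg-category $U$ (alternatively, since $\kc$ is defined via $\parf$ and is Morita invariant, one may always replace $\mcal{S}^n(T)$ by its triangulated hull); the triangulated hypothesis on $T$ is only needed for the first bonding map, at $\pi_0$. With these two corrections your sketch is sound and agrees with the standard argument lying behind the paper's citation.
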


In fact, it can be shown that there is essentially a unique way to extend $\kc$ to a nonconnective invariant satisfying all these properties (see \cite[Thm 1.9]{marco2}).

\begin{proof} 
These facts are well known and result from Schlichting's theory. For details and references, see \cite[Prop 1.15]{teze}. 
\end{proof}

\begin{nota}\label{notaksch}

If $X$ is a scheme of finite type over $k$, we denote by $\lpe(X)$ the dg-category of perfect complexes of quasi-coherent $\ocal_X$-modules. Following \cite[8.3]{dgmor}, this dg-category can be defined starting with the injective $C(k)$-model category $C(QCoh(X))$ of complexes of quasi-coherent sheaves of $\ocal_X$-modules on $X$ (in the sense of \cite{hoveysh}) and then taking the associated $k$-dg-category of cofibrant-fibrant object $\lqc(X):=Int(C(QCoh(X)))$. This latter dg-category is only $\V$-small, but its full sub-dg-category consisting of perfect complexes which is defined to be $\lpe(X)$ is equivalent to a $\U$-small dg-category. We recall that a complex of quasi-coherent $\ocal_X$-modules is said to be perfect if it is locally quasi-isomorphic to a bounded complex of locally free $\ocal_X$-modules. These objects are the compact objects of the derived category $D_{\mrm{qcoh}}(X)$ of quasi-coherent sheaves on $X$. 

The assignment $X\mapsto \lqc(X)$ can be arranged into a functor into $\V$-small dg-categories, 
$$\lqc : \schk^{op}\lmo\dgcat^{\V}.$$
In order to obtain such a functor, we adopt the definition of quasi-coherent modules given in \cite[§1.3.7]{hag2}. The definition of perfect complexes is easily formulated in this setting, these are collections of quasi-coherent complexes which are locally quasi-isomorphic to a bounded complex of (locally) free modules. 
In this way, for any morphism $f:X\lmo Y$ in $\schk$ the pullback functor $f^*:\lqc(Y)\lmo \lqc(X)$ preserves perfect complexes and we obtain a map $f^*:\lpe(Y)\lmo \lpe(X)$. This defines a functor 
$$\lpe:\schk^{op}\lmo \dgcat.$$
Following Schlichting \cite[§6.5]{schl} and the work of Thomason--Trobaugh, we define the algebraic K-theory of a scheme $X\in \schk$ to be the algebraic K-theory of its dg-category of perfect complexes $\kn(X):=\kn(\lpe(X))$. This defines a functor
$$\kn : \schk^{op} \lmo Sp.$$
Schlichting proved (Thm 5 of loc.cit) that for any quasi-compact quasi-separated scheme $X$ over $k$, the algebraic K-theory $\kn(X)$ coincides with Thomason--Trobaugh nonconnective K-theory of $X$ \cite[Def 6.4 p.360]{tt}. 
\end{nota}



\subsection{Algebraic Chern character}\label{caracalg}

We set notations for cyclic homology of noncommutative spaces. All the different versions of cyclic homology are defined out a single object called the mixed complex associated to a dg-category which was defined by Keller (\cite{kelcyclic}). The mixed complex is a functor  
$$Mix=Mix(-\mid k): \dgcat\lmo \Lambda-Mod,$$
with values in the category of $\Lambda$-dg-modules, where $\Lambda$ is the $k$-dg-algebra generated by a single element $B$ in degree $-1$ satisfying the relation $B^2=0$ and $d(B)=0$. A $\Lambda$-dg-module is commonly called a mixed complex. If $T$ is a locally cofibrant $k$-dg-category, we define a precyclic complex of $k$-module in the sense of \cite[§2.1]{kelinv}, denoted by $C(T)$ by 
$$C(T)_n=\bigoplus_{x_0,x_1,\hdots,x_n\in T} T(x_n,x_0) \te_k T(x_{n-1},x_n)\te_k T(x_{n-2},x_{n-1})\te_k \hdots \te_k T(x_0,x_1) $$
where the sum runs over all sequences of cardinal $n+1$ of objects in $T$. The differential $d:C(T)_n\lmo C(T)_{n-1}$ is defined by 
$$d(f_n\te f_{n-1}\te \hdots \te f_0) = f_{n-1}\te \hdots \te f_1\te f_0 f_n + \sum _{i=1}^n (-1)^n f_n\te \hdots\te f_{i-1}f_i \te \hdots \te f_0,$$
where the $f_i$ are homogeneous elements. The cyclic operator $t:C(T)_n\lmo C(T)_n$ is defined by 
$$t(f_n\te \hdots \te f_0) = (-1)^{n+1} f_0\te f_n\te \hdots \te f_1.$$
The mixed complex associated to the precyclic complex $C(T)$ (in the sense of  §2.1 of loc.cit.) is by definition the mixed complex of $T$, denoted by $Mix(T)$. The object $Mix(T)$ is a relative invariant in the sense that it depends crucially on the ground ring. By abuse of notation we omit to mention the ground ring in the notation $Mix(T)$ and, by default, it refers to mixed complex and cyclic homology relatively to the ground field $k$ over which our dg-categories are defined. Keller has proved that $Mix$ is a localizing invariant in the sense of Tabuada \cite{tabth}. 

\begin{theo}\label{cyclic} \emph{(Keller \cite{kelcyclic})} --- The functor $Mix$ commutes with filtered homotopy colimits in $Ho(\dgcat)$, sends Morita equivalences to equivalences and sends exact sequences of dg-categories on distinguished triangles in the derived category of mixed complexes. 
\end{theo}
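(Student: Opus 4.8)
The plan is to reduce all three assertions to the corresponding statements for the underlying Hochschild complex, and then to treat them in increasing order of difficulty. Recall that forgetting the operator $B$ turns $Mix(T)$ into the Hochschild complex $(C(T),b)$, and that the restriction-of-scalars functor $u\colon D(\Lambda\text{-}Mod)\lmo D(k)$ along $k\lmo\Lambda$ is exact, conservative, and preserves mapping cones: since $\Lambda$ is free of rank two over $k$, the functor $u$ is at once a left adjoint (to induction $-\te_k\Lambda$) and a right adjoint (to coinduction $\mathrm{Hom}_k(\Lambda,-)$), so it preserves all finite limits and colimits. Hence $u$ detects isomorphisms and detects distinguished triangles, so — because $C(f)$ for a dg-functor $f$ and all the comparison maps produced below are maps of mixed complexes — it suffices to prove that $\hh\colon\dgcat\lmo D(k)$, $T\mapsto(C(T),b)$, commutes with filtered homotopy colimits, inverts Morita equivalences, and sends exact sequences to distinguished triangles. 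Throughout I would work with locally $k$-flat (e.g. locally cofibrant) dg-categories: every dg-category is quasi-equivalent to one, and on such $T$ the complex $C(T)$ is a model for $T\tel_{T^e}T$, where $T^e=T\te_k T^{op}$; this already shows that a quasi-equivalence between locally flat dg-categories induces a quasi-isomorphism on $C(-)$ (alternatively, filter $C(T)$ by word length and induct on the set of objects).

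For filtered colimits: a filtered homotopy colimit in $\dgcat$ is computed by a strict filtered colimit of locally flat dg-categories, and $C(-)$ is built degreewise out of arbitrary direct sums over tuples of objects and finite tensor products over $k$, both of which commute with filtered colimits; so $C(\mathrm{colim}_i T_i)=\mathrm{colim}_i C(T_i)$ compatibly with $b$ and the cyclic operator $t$, i.e. $Mix$ commutes with the strict filtered colimit. Since a filtered colimit of complexes is exact, this strict colimit also represents the homotopy colimit in $\Lambda\text{-}Mod$, which gives the first assertion.

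For Morita invariance, it suffices, by the previous step, to show that the canonical map $Mix(T)\lmo Mix(\widehat T)$ to a Morita-fibrant model is a quasi-isomorphism, hence that $\hh(T)\lmo\hh(\widehat T)$ is. I would do this by dévissage, reducing to one elementary enlargement $T\subset T^+$ at a time — adjoining a shift of an object, adjoining the cone of a closed degree-$0$ morphism, splitting an idempotent — and for each of these filtering $C(T^+)$ by the number of factors involving the new object: the weight-zero part is $C(T)$, while the positive-weight part is contractible because the new object is a homotopy retract of, resp. is built from, old ones, the contracting homotopy coming from the very morphisms that witness this. (This is Keller's argument; one may instead invoke that $\hh$ is the trace of the identity endofunctor and that trace is Morita invariant.) This also lets me assume in the last step that $T'$, $T$ and $T''$ are pretriangulated and idempotent complete.

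The localization property is where the real work lies. Given an exact sequence $T'\mos{i}T\mos{p}T''$, by Morita and quasi-equivalence invariance I may arrange that $T'$ is a thick full dg-subcategory of $T$ and that $T''=T/T'$ is Drinfeld's dg-quotient, modelled by adjoining to $T$ a degree $-1$ endomorphism $h_X$ with $dh_X=\mathrm{id}_X$ for each object $X$ of $T'$. The claim to prove is that $C(T')\lmo C(T)\lmo C(T/T')$ is a homotopy cofiber sequence of mixed complexes, which by the first paragraph then yields the desired distinguished triangle in $D(\Lambda\text{-}Mod)$. The method is to filter $C(T/T')$ by the number of factors $h_X$ occurring in a cyclic tensor: the weight-zero piece is exactly $C(T)$, and one must identify the positive-weight part, relative to $C(T')$, with an acyclic complex — concretely a two-sided bar-type resolution that contracts onto $C(T')$ — so that $C(T/T')/C(T')\simeq C(T)/C(T')$, i.e. $C(T/T')\simeq\cone(C(T')\to C(T))$; one then checks that $B$ respects this filtration up to lower order, which suffices to transport the equivalence into $D(\Lambda\text{-}Mod)$. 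The delicate point, and the main obstacle, is precisely this acyclicity of the positive-weight part of the quotient cyclic complex together with the bookkeeping for the operator $B$; everything else is formal.
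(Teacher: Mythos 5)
The paper itself does not prove this statement: Theorem \ref{cyclic} is quoted as a theorem of Keller with a pointer to \cite{kelcyclic}, and it is used as a black box in the construction of the Chern character. So your proposal has to be judged on its own, essentially as a reconstruction of Keller's proof, and it is worth noting that Keller's published arguments go through his machinery of localization pairs and exact sequences of derived categories rather than a direct filtration of a Drinfeld quotient. Your preliminary reductions are fine: weak equivalences of $\Lambda$-modules are detected on underlying complexes, so a map of mixed complexes is invertible in the derived category as soon as it is a quasi-isomorphism after forgetting $B$ (with the caveat, which you state, that all comparison maps must actually be constructed as maps of mixed complexes --- the slogan that a conservative exact functor ``detects distinguished triangles'' is not true for bare candidate triangles, so the reduction really does depend on having the cone comparison at the mixed level). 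The filtered-colimit argument (strict filtered colimits of locally flat models compute homotopy colimits, and $C(-)$, built from sums and finite tensor products, commutes with them compatibly with $b$ and $t$) is correct, and the Morita-invariance d\'evissage is a reasonable, if compressed, sketch.

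The genuine gap is the localization property, which is the real content of the theorem. Your plan is to filter $C(T/T')$ by the number of contracting homotopies $h_X$ and to identify the positive-weight part, relative to $C(T')$, with an acyclic bar-type complex, so that $C(T/T')\simeq \mathrm{Cone}(C(T')\to C(T))$; but that acyclicity is exactly the statement to be proved, and you explicitly set it aside as ``the main obstacle''. The associated graded of your filtration consists of bar-type complexes of $T'$-bimodules whose identification is where flatness/cofibrancy of the morphism complexes and the hypotheses on $T'\subset T$ (thickness, closure under the operations used) actually enter, and nothing in the sketch supplies the contracting homotopy or the comparison with $C(T')$; note also that the exact sequences of the paper are only exact ``up to factors'', so idempotent-completion has to be absorbed by the Morita step before the quotient argument can even start. (A small point in your favour: the Connes operator only inserts identity morphisms and cyclically permutes factors, so it preserves your filtration strictly --- no ``up to lower order'' bookkeeping is needed once the underlying quasi-isomorphism is in place.) As it stands the proposal is a plausible plan whose decisive step is missing; to complete it you must either carry out the acyclicity computation for the Drinfeld quotient in detail or fall back on Keller's localization theorem, which is what the paper implicitly does by citing \cite{kelcyclic}.
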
 

We denote by 
$$H : C(\Z)\lmo Sp $$
the standard functor from (unbounded, cohomological) complexes of $\Z$-modules to symmetric spectra. We refer to \cite[after Cor B.1.8]{shischcat} or to \cite[§1.4.1]{teze} for a construction of $H$. We consider $C(\Z)$ and $Sp$ as monoidal categories with their usual monoidal structure given by the dg-tensor product and the smash product respectively. The functor $H$ is then a lax monoidal functor and induces a functor on monoids and modules over monoids. Therefore we have a ring spectrum $Hk$, an $Hk$-algebra spectrum $H\Lambda$ and an induced functor 
$$H: \Lambda-Mod\lmo \hlamb$$ 
with values in the category of $H\Lambda$-modules spectra. We endow $\Lambda-Mod$ and $\hlamb$ with the model structure given by \cite[sect 4]{ss-alg} where a map is an equivalence if it is an equivalence of the underlying modules and spectra respectively. By definition $H$ preserves equivalences. 

We denote by $\hh:=H\circ Mix$ the composite functor 
$$\xymatrix{ \dgcat \ar[r]^-{Mix} \ar@/^2pc/[rr]^-{\hh} & \Lambda-Mod \ar[r]^-{H} & \hlamb }$$
and we call it Hochschild homology over $k$. It is still a localizing invariant (i.e. it verifies the conditions of theorem \ref{cyclic}). It is also a lax monoidal functor because $H$ and $Mix$ are. In the following definition we consider $Hk$ as an $H\Lambda$-module via the natural augmentation $H\Lambda\lmo Hk$.

\begin{df} Let $T\in \dgcat$. 
\begin{itemize}
\item The \emph{negative cyclic homology of $T$} is the symmetric spectrum 
$$\hcn (T):=\rhomi_{H\Lambda} (Hk,\hh(T)).$$
\item The $Hk$-module $\hcn(T)$ is a module over the $Hk$-algebra $\rhomi_{H\Lambda} (Hk,Hk)\simeq Hk[u]$ with $u$ a generator of degree $-2$. We define \emph{the periodic cyclic homology of $T$} or \emph{periodic homology of $T$} as the symmetric spectrum
$$\hp(T):=\hcn(T)\wh_{Hk[u]} Hk[u,u^{-1}].$$
\end{itemize}
We obtain functors
$$\hcn : \dgcat\lmo Hk[u]-Mod_\s,$$
$$\hp : \dgcat\lmo Hk[u,u^{-1} ]-Mod_\s.$$
We have by definition a map of $Hk[u]$-modules $\hcn(T)\lmo \hp(T)$. 
\end{df}

\begin{rema}
This definition of periodic homology is not the standard definition given in \cite[p.210]{kasselcyclic}, but by \cite[Thm 6.1.24]{rosenbergk} we know that for an associative unital algebra, the two definitions coincide. 
\end{rema}

\begin{rema}\label{remhhsch}
The cyclic homology of schemes was defined and studied by Weibel \cite{weibelhcsch} using a sheafification of the Hochschild complex of an algebra. If $X$ is a scheme we denote by $\hh(X)$ Weibel's Hochschild homology of $X$ (the spectral version). Following Keller \cite[§1.10 p.10]{kelcyclic} we define the Hochschild homology of a scheme $X$ to be the Hochschild homology spectrum $\hh(\lpe(X))$ of its dg-category of perfect complexes (see remark \ref{notaksch}). By the comparison result \cite[Thm 5.2]{kellercyclicschemes}, we know that if $X$ is quasi-compact separated over a field $k$, then there exists a canonical isomorphism 
$$\hh(\lpe(X))\simeq \hh(X)$$
in the derived category of $\Lambda$-module spectra. Therefore there are no ambiguities on the meaning of the Hochschild homology of a separated scheme $X$ of finite type over a field $k$. 
\end{rema}

\begin{nota}\label{notapref}
Let $\mbf{E}:\dgcat\lmo V$ be a functor, and we define a new functor
$$\und{\mbf{E}} : \dgcat\lmo Pr(\affk, V)$$
by $\und{\mbf{E}}(T)(\spec(A))=\mbf{E}(T,A):=E(T\tel_k A):=E(QT\te_k A)$ where $Q:\dgcat\lmo \dgcat$ is a functor which satisfies the following properties : 
\begin{itemize}
\item There exists a natural transformation in $T$, $Q(T)\lmo T$ which is an equivalence (for the standard model model structure). 
\item For all $T\in \dgcat$, the dg-category $Q(T)$ is flat\footnote{A dg-category $T$ is flat over $k$ if for all $x,y\in T$, the $k$-modules $T(x,y)^n$ are flat $k$-modules for all $n$.} over $k$. 
\end{itemize}
Such a functor is for example given by a cofibrant replacement functor. In this paper, we are primarly interested in the case $k=\C$ complex numbers, so we could just take $Q$ to be the identity functor. We'll use this notation for all classical invariant of dg-categories $\mbf{E}=\kn, \kc, \hh, \hc, \hcn, \hp$. 
\end{nota}

\vspace*{.4cm}

We now use the motivic category of Tabuada (see \cite{tabhkt}, \cite{nck}) and both Tabuada's and Cisinski-Tabuada's theorem of corepresentability of K-theory inside the motivic category. Following Cisinski--Tabuada, these results enable us to define the algebraic Chern character in a $\ka$-linear fashion, where $\ka$ is the ring in presheaves of spectra given by $\ka (\spec(A))=\kn(A)$ the algebraic K-theory of the commutative algebra $A$. The corepresentability theorem gives us a natural ring structure on $\ka$ and of $\ka$-module on $\ukn(T)$ for all dg-category $T$. 

The universal localizing motivator $\mcal{M}_{loc}(k)$ of Tabuada (\cite[Def 11.2]{tabhkt}) can be described (using \cite[Prop 12.4]{tabhkt}) as the derivator associated to the projective model category of presheaves of spectra on finite type dg-categories, localized with respect to Morita equivalences and short exact sequences. If $\mrm{Hodgcat}$ stands for the derivator associated to $\dgmor$, there exists a morphism of derivators
$$\mcal{U}_l : \mrm{Hodgcat} \lmo \mcal{M}_{loc}(k) $$
called the universal localizing invariant, which is universal among functors which commute with filtered homotopy colimits, send Morita equivalences to equivalences and short exact sequences to triangles (see \cite[Thm 11.5]{tabhkt}). In \cite{tabcisym}, the authors build a symmetric monoidal structure on $\mcal{M}_{loc}(k)$ out of the Day convolution product and the derived tensor product of dg-categories, using the fact that a special class of finite type dg-categories are invariant under the latter product. Moreover this class must satisfy some flatness properties (see \cite{tabcisym} 7.1, properties a) to f)). In this paragraph, we just deal with the homotopy category $\mcal{M}_{loc}(k)(\ast)=Ho(\mloc)$. We state a purely categorical version of Tabuada's universal property, which is sufficient for our purpose. We also replace the category of symmetric spectra $Sp$ by the category $\spafk$ of presheaves of symmetric spectra. We will thus consider presheaves over dg-categories of finite type with values in the category $\spafk$. 

\vspace*{.4cm}

We denote by $\dgcattf\subseteq \dgcat$ a subcategory of $\U$-small dg-cateogories which verify properties a) to f) in \cite[§7.1]{tabcisym}. Roughly speaking, this means that any dg-category in $\dgcattf$ is of finite type in the sense of \cite[Def 2.4]{modob}, locally flat over $k$, and that the subcategory $\dgcattf$ is stable by tensor product and by the endofunctor $Q$ we fixed above in notation \ref{notapref}. We consider the category $Pr(\dgcattf, \spafk)$ of presheaves over $\dgcattf$ with values in $\spafk$. We endow the category $\spafk$ with the injective model structure for which we refer to the general existence theorem \cite[Prop A.2.8.2]{htt}. We endow the category $Pr(\dgcattf, \spafk)$ with the projective model structure. We denote by 
$$\uh : \dgcattf\lmo Pr(\dgcattf, \spafk)$$
the Yoneda embedding given by $\uh_T(T')(\spec(A))=\Hom_{\dgcat}(T',T\te_k A)$. Let $\mcal{R}$ be the set of maps in $Pr(\dgcattf, \spafk)$ which are of the form
\begin{enumerate}
\item $\uh_T\lmo \uh_{T'}$, with $T\lmo T'$ being a Morita equivalence. 
\item $\cone(\uh_i)\lmo \uh_{T''}$, for $\xymatrix{T'\ar[r]^-i & T\ar[r]^-p & T''}$ an exact sequence of dg-categories. 
\end{enumerate}
We define $\mloc:=L_{\mcal{R}} Pr(\dgcattf, \spafk)$ as the $\spafk$-enriched left Bousfield localization in the sense of \cite{barwenr}. Therefore $\mloc$ is a $\spafk$-model category in the sense of \cite[Def 4.2.18]{hoveymod}, and we denote by $\rhomi_{\mloc}$ its derived Hom enriched in $\spafk$. By \cite[§7.5]{tabcisym}, it is also a monoidal model category in the sense of \cite[Def 4.2.6]{hoveymod}, and we denote by $\sm$ its monoidal product. The unit is given by the object $\uk$ which is defined by $\uk(T)=\Hom_{\dgcat}(T, k)$ and constant as a presheave on $\affk$. The following proposition follows from general properties of left Bousfield localization (see \cite[Prop 3.3.18]{hirs}). 

\begin{prop}\label{propuni} 
The Yoneda embedding $\uh:\dgcattf\lmo \mloc$ has the following properties. 
\begin{enumerate}
\item For all $T\in \dgcat$, $\uh_T$ is cofibrant in $\mloc$. 
\item $\uh$ sends Morita equivalences to equivalences in $\mloc$. 
\item $\uh$ sends exact sequences to fibration sequences in $\mloc$. 
\end{enumerate}
Moreover $\uh$ is universal with respect to these three properties. More precisely, for all $\spafk$-model category $V$, the functor 
$$\uh^* : \Hom_!(\mloc, V) \lmo \Hom_*(\dgcattf, V)$$
is an equivalence of categories, where $\Hom_!$ is the category of $\spafk$-enriched left Quillen functors, and $\Hom_*$ is the category of functors which satisfy properties 1 to 3 above. 
\end{prop}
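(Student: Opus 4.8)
The plan is to reduce the proposition to two universal properties: that of the projective model structure on a presheaf category, and that of an $\spafk$-enriched left Bousfield localization (\cite[Prop 3.3.18]{hirs} together with its enriched refinement from \cite{barwenr}). Properties 1--3 of $\uh$ are checked first and directly. Property 1: the functor $\uh_T\te(-)$ is left Quillen (its right adjoint is evaluation at $T$, which is right Quillen for the projective structure), so $\uh_T=\uh_T\te\s$ is the image of the cofibrant object $\s$ and hence cofibrant in $Pr(\dgcattf,\spafk)$; since an $\spafk$-enriched left Bousfield localization has the same cofibrations, $\uh_T$ stays cofibrant in $\mloc$. Property 2: for a Morita equivalence $T\to T'$ the map $\uh_T\to\uh_{T'}$ is a map of type (1) in $\mcal{R}$, hence an equivalence in $\mloc=L_{\mcal{R}}Pr(\dgcattf,\spafk)$, and by Property 1 it is a map of cofibrant objects. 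Property 3: $\mloc$ is stable, being an $\spafk$-model category (\cite[§7.5]{tabcisym}), so homotopy cofiber sequences agree with homotopy fiber sequences; for an exact sequence $T'\xrightarrow{i}T\xrightarrow{p}T''$ the sequence $\uh_{T'}\to\uh_T\to\cone(\uh_i)$ is a homotopy cofiber sequence of cofibrant objects in $Pr(\dgcattf,\spafk)$, hence in $\mloc$, and since $\cone(\uh_i)\to\uh_{T''}$ is a map of type (2) in $\mcal{R}$ it is an equivalence in $\mloc$, so $\uh_{T'}\to\uh_T\to\uh_{T''}$ is a homotopy fiber sequence there.

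For the universal property, fix an $\spafk$-model category $V$; it is stable, just like $\mloc$. First, by the universal property of the projective model structure --- whose generating (trivial) cofibrations are the maps $\uh_T\te j$ for $j$ a generating (trivial) cofibration of $\spafk$ --- the $\spafk$-enriched left Kan extension along $\uh$, with right adjoint $\uh^*$, identifies the category of $\spafk$-enriched left Quillen functors $Pr(\dgcattf,\spafk)\to V$ with the category of functors $G:\dgcattf\to V$ all of whose values are cofibrant: cofibrancy of $G(T)$ is exactly what the pushout-product axiom for $V$ over $\spafk$ requires for the Kan extension $\widetilde{G}$ to send $\uh_T\te j=(\emptyset\to G(T))\,\square\,j$ to a (trivial) cofibration. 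Second, by the enriched form of \cite[Prop 3.3.18]{hirs}, $\widetilde{G}$ is left Quillen when regarded on $\mloc$ precisely when it sends every map of $\mcal{R}$ to an equivalence in $V$; no cofibrant replacement is needed since $\mcal{R}$ consists of maps of cofibrant objects. So it remains to match ``$\widetilde{G}$ inverts $\mcal{R}$'' with ``$G$ has properties 2 and 3''. Since $\widetilde{G}$ is a left adjoint, $\widetilde{G}(\uh_T\to\uh_{T'})=(G(T)\to G(T'))$, so inverting the type-(1) maps means $G$ sends Morita equivalences to equivalences (property 2); and since $\widetilde{G}$ is left Quillen it preserves homotopy cofiber sequences of cofibrant objects, so $\widetilde{G}(\cone(\uh_i)\to\uh_{T''})=(\cone(G(i))\to G(T''))$, which is an equivalence iff $G(T')\to G(T)\to G(T'')$ is a homotopy cofiber sequence --- equivalently, $V$ being stable, a homotopy fiber sequence (property 3). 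Chaining the two equivalences, and noting that both are equivalences of functor categories while the localization only cuts out a full subcategory, yields the asserted equivalence $\uh^*:\Hom_!(\mloc,V)\xrightarrow{\sim}\Hom_*(\dgcattf,V)$.

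I expect no genuine obstacle, only careful bookkeeping in the $\spafk$-enriched setting: one must work throughout with the $\spafk$-enriched left Bousfield localization of \cite{barwenr}, so that $\mloc$ is honestly an $\spafk$-model category satisfying the enriched version of \cite[Prop 3.3.18]{hirs}; one must check that the enriched left Kan extension along $\uh$ exists and is $\spafk$-enriched left Quillen under the cofibrancy hypothesis; and one must reconcile the ordinary functors comprising $\Hom_*(\dgcattf,V)$ with the $\spafk$-enriched left Quillen functors comprising $\Hom_!(\mloc,V)$ --- these match because $\dgcattf$ enters only through its free $\spafk$-enrichment, for which an enriched functor is the same as an ordinary functor and $\uh$ is the enriched Yoneda embedding. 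The sole input beyond formal nonsense is the stability of $\mloc$ and of every $\spafk$-model category, which is precisely where the choice of spectra-valued (rather than, say, simplicial-set-valued) coefficients is used to translate ``exact sequence'' into ``fiber sequence'' consistently on both sides.
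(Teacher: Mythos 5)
Your two-step skeleton (universal property of the projective structure, then the $\spafk$-enriched Bousfield localization via the enriched form of Hirschhorn's Prop 3.3.18) is exactly what the paper gestures at --- it offers no more proof than that citation --- but the step where you identify $\spafk$-enriched left Quillen functors out of $Pr(\dgcattf,\spafk)$ with cofibrant-valued functors out of $\dgcattf$ via restriction along $\uh$ rests on a false identification. You assert that $\dgcattf$ enters only through its free $\spafk$-enrichment, that $\uh$ is the corresponding enriched Yoneda embedding, that the generating projective cofibrations are the maps $\uh_T\te j$, and that evaluation at $T$ is right adjoint to $\uh_T\te(-)$. None of this holds for the paper's $\uh$: by definition $\uh_T(T')(\spec(A))=\Hom_{\dgcat}(T',T\te_k A)$, which depends nontrivially on $A$, whereas the free-enrichment Yoneda object is $\Sigma^{\infty}_{+}\Hom_{\dgcattf}(-,T)$, constant in the $\affk$-direction. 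The generating cofibrations of the projective structure on ordinary presheaves are $h_{T}\cdot j$ with $h_T$ the ordinary representable; $\uh_T$ is not of that form (at fixed $A$ it is the restriction to $\dgcattf$ of the presheaf represented by $T\te_k A$, which need not lie in $\dgcattf$), so neither your cofibrancy argument for property 1 nor the asserted equivalence between enriched left Quillen functors and cofibrant-valued ordinary functors via $\uh^{*}$ is justified as written: the restriction along $\uh$ of an enriched left adjoint is not free data, and the enriched Kan extension of $G$ along the ordinary Yoneda embedding need not satisfy $\widetilde{G}(\uh_T)\simeq G(T)$.

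The repair is to use the non-free enrichment: $\dgcattf$ carries an $\spafk$-enrichment with hom-objects $\spec(A)\mapsto \Sigma^{\infty}_{+}\Hom_{\dgcat}(T',T\te_k A)$ (composition uses the multiplication $A\te_k A\lmo A$), and one should take $\spafk$-enriched presheaves, for which $\uh$ is the genuine enriched Yoneda embedding; then the enriched representables are cofibrant (the unit of the injective structure on $\spafk$ is cofibrant), the enriched projective generating cofibrations really are the $\uh_T\te j$, and your argument --- including the matching of $\mcal{R}$-locality with properties 2 and 3 through stability of spectral model categories --- goes through verbatim. Correspondingly $\Hom_{*}(\dgcattf,V)$ must be read as enriched functors, which is harmless for the paper's applications since the invariants $\und{\mbf{E}}$, with $\und{\mbf{E}}(T)(\spec(A))=\mbf{E}(T\tel_k A)$, are canonically enriched in this sense. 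Apart from this point (which the paper itself leaves implicit), your verification of properties 1--3, the use of stability to convert cofiber sequences into fiber sequences, and the appeal to the universal property of the enriched localization are sound and constitute the intended argument.
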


\begin{rema}
Let $V$ be a $\spafk$-model category and $E : \dgcat\lmo V$ a functor which send Morita equivalences to equivalences and commutes with filtered colimits. Then $E$ restricts to a functor $E_\mid : \dgcattf\lmo V$. The extension of $E_\mid$ to $\mloc$ is denoted by 
$$E_{\mid,!} : \mloc\lmo \spafk.$$
Every dg-category $T$ can be written as a filtrant colimit of dg-categories in $\dgcattf$. Therefore there exists a filtrant diagram $\{T_\alpha\}$ of dg-categories in $\dgcattf$ and an equivalence of dg-categories $T\simeq colim_\alpha T_\alpha$ in $\dgcat$. On the other hand, the object $\uh_T\in \mloc$ is defined by 
$$\uh_T(T')(A)=\Hom_{\dgcat}(T',T\tel_k A)$$ 
and represents the dg-category $T$ in $\mloc$. Since the standard Yoneda functor is equivalent to the homotopical Yoneda embedding (see \cite[Lem 4.2.2]{hag1}), we then have an equivalence  $\uh_T\simeq colim_\alpha \uh_{T_\alpha}$ in $\mloc$. We therefore have equivalences 
$$E_{\mid,!}(\uh_T)\simeq colim_\alpha E_{\mid,!}(\uh_{T_\alpha})\simeq colim_\alpha E_\mid (T_\alpha)\simeq E(T).$$
We conclude the extension $E_{\mid,!}$ coincides up to equivalence with $E$ on $\dgcat$. This can be rephrased in the following way. We denote by $\Hom_{Mor}(\dgcattf, V)$ the projective model category of functors which send Morita equivalences to equivalences, and by $\Hom_! (\dgcat, Sp)$ the projective model category of functors which commute with filtered colimits and send Morita equivalences to equivalences. Then the inclusion functor $\dgcattf\lmo \dgcat$ induces an equivalence of categories 
\begin{equation}\label{inddg}
Ho(\Hom_! (\dgcat, Sp))\lmos{\sim} Ho(\Hom_{Mor}(\dgcattf, Sp)).
\end{equation}
Every functor $E\in \Hom_{Mor}(\dgcattf, Sp)$ can be uniquely extended up to isomorphism to an object of $Ho(\Hom_! (\dgcat, Sp))$. By abuse of notation we denote by $\uhh$ the functor $\uhh_!$. 

\end{rema}

\begin{theo}\label{repk}\emph{(Cisinski--Tabuada \cite[Thm 7.16]{nck})} ---
For all $T\in \dgcattf$, there exists a functorial isomorphism in $T$,  
$$\rhomi_{\mloc}(\und{k}, \uh_T)\simeq \ukn(T),$$
\end{theo}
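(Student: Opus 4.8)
The plan is to reduce the statement, evaluated at each point $\spec(A)$ of $\affk$, to the corepresentability of nonconnective K-theory inside the $Sp$-valued universal localizing motivator $\mcal{M}_{loc}(k)$, and then to recall the two ingredients that make the latter work. By construction of the $\spafk$-enriched model structure on $\mloc$ (see \cite[§7.5]{tabcisym}), the presheaf $\rhomi_{\mloc}(\uk,\uh_T)$ on $\affk$ can be computed point by point over $\affk$ inside $\mcal{M}_{loc}(k)$: indeed $\uh_T$ has, at $\spec(A)$, the value $\Hom_{\dgcat}(-,T\tel_k A)$ as a presheaf on $\dgcattf$, i.e. it is corepresented by $\mcal{U}_l(T\tel_k A)$ (using that $T\tel_k A$ is a filtered colimit of finite-type dg-categories and that $\mcal{U}_l$ commutes with such colimits), while the tensor unit $\uk$ sits at $\mcal{U}_l(k)$; hence $\rhomi_{\mloc}(\uk,\uh_T)(\spec A)\simeq \rhom_{\mcal{M}_{loc}(k)}(\mcal{U}_l(k),\mcal{U}_l(T\tel_k A))$. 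So everything reduces to the equivalence $\rhom_{\mcal{M}_{loc}(k)}(\mcal{U}_l(k),\mcal{U}_l(S))\simeq\kn(S)$ applied with $S=T\tel_k A$ — which is precisely \cite[Thm 7.16]{nck}.

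To make the reduction convincing I would recall why this absolute corepresentability holds. The first ingredient is the additive case (Tabuada): the connective cover of $\rhom_{\mcal{M}_{loc}(k)}(\mcal{U}_l(k),\mcal{U}_l(S))$ is naturally equivalent to connective K-theory $\kc(S)$. Here one builds a canonical map from Waldhausen's nerve $NwB_W\parf(S)$ into the relevant mapping space, in the spirit of the map \eqref{mapgam} of Remark \ref{kconspec}; this mapping space, viewed as a functor of $S$, sends split exact sequences to products and is grouplike, so by applying the $S_\bul$-construction and Waldhausen's additivity theorem one checks that the induced map out of $K(\parf(S))=\Omega\re{NwS_\bul B_W\parf(S)}$ is an equivalence. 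The second ingredient is the delooping provided by the suspension functor $\mcal{S}(S)=\mcal{F}(S)/S$: since $\kc(\mcal{F}(S))=0$, the countable-sum completion $\mcal{F}(S)$ is killed by $\mcal{U}_l$, so the exact sequence $S\lmo\mcal{F}(S)\lmo\mcal{S}(S)$ produces a distinguished triangle forcing $\mcal{U}_l(\mcal{S}(S))\simeq\mcal{U}_l(S)[1]$ in $\mcal{M}_{loc}(k)$. Combining the two, the connective cover of $\rhom_{\mcal{M}_{loc}(k)}(\mcal{U}_l(k),\mcal{U}_l(S))[n]\simeq\rhom_{\mcal{M}_{loc}(k)}(\mcal{U}_l(k),\mcal{U}_l(\mcal{S}^n(S)))$ equals $\kc(\mcal{S}^n(S))$, whence $\rhom_{\mcal{M}_{loc}(k)}(\mcal{U}_l(k),\mcal{U}_l(S))\simeq hocolim_{n\geq 0}\kc(\mcal{S}^n(S))[-n]$, which is the very definition of $\kn(S)$.

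All these equivalences are natural in $S$ — through the functoriality of $\parf$, of the $S_\bul$-construction, of $\mcal{F}$ and $\mcal{S}$, and of the motivator $\mcal{M}_{loc}(k)$ — hence natural in $T$ and in $A$, which yields the functorial isomorphism asserted in the theorem. (As a consistency check, by Proposition \ref{propalgk} the functor $\ukn$ restricts to a functor $\dgcattf\lmo\spafk$ satisfying properties 1–3 of Proposition \ref{propuni}, hence extends to a $\spafk$-enriched left Quillen functor $\mloc\lmo\spafk$; the theorem says this extension is computed by $\rhomi_{\mloc}(\uk,-)$ on representables.) The genuine obstacle is the additive corepresentability: showing that the comparison map from Waldhausen's K-theory space is an equivalence is the $S_\bul$-versus-additivity argument of Tabuada used in \cite{nck}; the delooping step and the $\affk$-wise bookkeeping of the first paragraph are formal by comparison. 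In practice I would therefore cite \cite[Thm 7.16]{nck} for the core of the argument and only write out explicitly the reduction described in the first paragraph.
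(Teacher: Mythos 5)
Your proposal is correct and takes essentially the same route as the paper: the paper gives no independent proof of Theorem \ref{repk}, but simply states it as an adaptation of Cisinski--Tabuada \cite[Thm 7.16]{nck} to the $\spafk$-enriched setting of \cite{tabcisym}, exactly as you propose to do by citing that theorem for the core corepresentability. Your pointwise reduction over $\affk$ (using that $T\tel_k A$ is a filtered colimit of finite type dg-categories) and your sketch of the additivity-plus-suspension delooping argument are faithful to the ingredients of the cited result and to the surrounding discussion in the paper (cf.\ formula (\ref{inddg})), so nothing essential is missing.
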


Because of formula (\ref{inddg}), the isomorphism of theorem \ref{repk} lifts to a unique isomorphism in $Ho(\Hom_! (\dgcat, Sp))$. 

We consider the object $\uh_k$ in $\mloc$. It is given by $\uh_k(T)(\spec(A))=\Hom_{\dgcat} (T,A)$ for all $T\in \dgcat$ and all $\spec(A)\in \affk$. The object $\uh_k$ admits a structure of monoid in $\mloc$ because by definition of the monoidal structure in $\mloc$ we have an isomorphism $\uh_k\sm \uh_k \simeq \uh_{k\tel k}$ in $\mloc$. For all $T\in \dgcattf$, we have an $\uh_k$-module $\uh_T$ with $\uh_k$-action given by the natural isomorphism $\uh_T\sm \uh_k \simeq \uh_{T\te_k k} \simeq \uh_T$. We then have : 
\begin{itemize}
\item A monoidal model category $\mcal{V}=\spafk$. 
\item Two $\spafk$-enriched monoidal model categories 
$$\mcal{M}=\mloc \quad \textrm{and} \quad \mcal{N}=Pr(\affk, \hlamb).$$
Units are denoted by $\unit_{\mcal{M}}$ and $\unit_{\mcal{N}}$. They are cofibrant objects in $\mcal{M}$ and $\mcal{N}$ respectively. Thus we have two monoidal functors $\mcal{V}\lmo \mcal{M}$ and $\mcal{V}\lmo \mcal{N}$ given by the product with units. Their right adjoints $\homi(\unit, -)$ ($\mcal{V}$-enriched Hom) are lax monoidal functors and send therefore monoids on monoids. 
\item A $\mcal{V}$-enriched left Quillen lax monoidal functor $F : \mcal{M}\lmo \mcal{N}$ given by $F=\und{\hh}$. (In this case $F(\unit_{\mcal{M}}) \simeq\unit_{\mcal{N}}$).
\item A cofibrant monoid $a=\uh_k$ in $\mcal{M}$ and a cofibrant $a$-module $m$ given by $m=\uh_T$. 
\item Two morphisms in $\mcal{V}$ given by the functoriality of $\lef F$ : 
$$t:\rhomi_{\mcal{M}} (\unit_{\mcal{M}}, a)\lmo \rhomi_{\mcal{N}}(F(\unit_{\mcal{M}}), F(a)),$$
$$u : \rhomi_{\mcal{M}} (\unit_{\mcal{M}}, m)\lmo \rhomi_{\mcal{N}}(F(\unit_{\mcal{M}}), F(m)),$$
where $\rhomi_{\mcal{M}}$ and $\rhomi_{\mcal{N}}$ are the $Ho(\mcal{V})$-enriched Homs. 
\end{itemize}

The general results of  \cite{ss-alg} on the existence of model structures on categories of monoids and modules require to verify a special axiom called the monoid axiom, which is not satisfied by the model category $\mloc$. However, by Hovey's results \cite[Thm 3.3, 2.1]{hoveymon}, monoids and modules objects in $\mloc$ can be endowed with nice homotopical properties, which are sufficient for our purpose. Roughtly speaking, monoids do not form a model category in the usual sense, but a so-called semi model category, and we can therefore consider the homotopy category of monoids. In this situation, if $M$ is a monoid, there exists a functorial cofibrant replacement $QM\lmos{\sim} M$ and if $M$ is cofibrant, there exists a functorial fibrant replacement $M\lmos{\sim} RM$. The homotopy category of monoids is then the quotient of cofibrant-fibrant monoids by the usual homotopy relation. We can therefore state the following. 

\begin{itemize}
\item The objects $\rhomi_{\mcal{M}} (\unit_{\mcal{M}}, a)$ and $\rhomi_{\mcal{N}}(F(\unit_{\mcal{M}}), F(a))$ are monoids.  
\item The object $\rhomi_{\mcal{M}} (\unit_{\mcal{M}}, m)$ is a $\rhomi_{\mcal{M}} (\unit_{\mcal{M}}, a)$-module. 
\item The object $\rhomi_{\mcal{N}}(F(\unit_{\mcal{M}}), F(m))$ is a $\rhomi_{\mcal{N}}(F(\unit_{\mcal{M}}), F(a))$-module and therefore a $\rhomi_{\mcal{M}} (\unit_{\mcal{M}}, a)$-module in $\mcal{V}$ via $t$. 
\item The map $u$ is a map of $\rhomi_{\mcal{M}} (\unit_{\mcal{M}}, a)$-modules. 
\end{itemize}
Applying this in our context, we obtain the following for any dg-category $T\in \dgcattf$. 

\begin{itemize}
\item A presheaf of ring spectra $\ka:=\ukn(\ast)\simeq \rhomi_{\mloc} (\uk, \uh_k)$. 
\item A presheaf of $\ka$-modules $\ukn(T)\simeq \rhomi_{\mloc} (\uk, \uh_T)$.
\item A presheaf of $\ka$-modules $\uhcn(T)=\rhomi (\uk, \uhh(T))\simeq \rhomi (\uhh(\uk), \uhh(\uh_T))$, where $\rhomi$ is the derived Hom of $Pr(\affk, \hlamb)$. 
\item A map of $\ka$-modules $\ukn(T)\lmo \uhcn(T)$. 
\end{itemize}
We denote by $\ka-Mod_\s$ the category of $\ka$-modules objects in $\spafk$. 

\begin{df}\label{defchernlin}
Let $T\in \dgcattf$. The algebraic Chern character associated to $T$ is the map in $Ho(\ka-Mod_\s)$,
$$\ch_T : \ukn(T)\lmo \uhcn(T)$$ 
which is defined above. Taking appropriate fibrant replacements, it defines a map in $Ho(\ka-Mod_\s^{\dgcattf})$, 
$$\ch : \ukn\lmo \uhcn.$$
By formula (\ref{inddg}), the map $\ch$ lifts to a unique map in $Ho(\ka-Mod_\s^{\dgcat})$ (up to isomorphism). 
\end{df}

\begin{rema}
There exists an additive version $\mota(k)$ of the model category $\mloc$ where we replace exact sequences of dg-categories by \emph{split} exact sequences. Connective K-theory then becomes corepresentable as a functor $\mota(k)\lmo \spafk$, and the same construction as above gives a connective Chern map,
$$\chc : \ukc \lmo \uhcn$$
in $Ho(\tilde{\ka}-Mod_\s^{\dgcat})$, where $\tilde{\ka}$ is the presheaf of connective K-theory. By construction, the map 
$$\xymatrix{\ukc\ar[r] &  \ukn \ar[r]^-{\ch} & \uhcn}$$ 
is isomorphic to $\chc$ in $Ho(Sp^{\dgcat})$. 
\end{rema}

\begin{rema}
By definition, the Chern map of definition \ref{defchernlin} coincide up to isomorphism in $Ho(\spafk^{\dgcat})$ with Tabuada Chern map as defined in \cite[p.4]{tabpro}, and we therefore know it coincides with the classic version of the Chern map by \cite[Thm 2.8]{tabpro}. 
\end{rema}

\section{Topological realization over complex numbers}\label{rea}

The topological realization of a simplicial presheaf defined over $\C$-schemes is a topological space functorially associated to this presheaf. In some way the topological realization of a simplicial presheaf is the analog of the "espace étalé" associated to a sheaf over a topological space. We begin by considering the functor which associated to any $\C$-scheme its space of complex points. This functor extends in a canonical way to all simplicial presheaves. The resulting functor is the topological realization, and could also be named Betti realization. Topological realization has already been studied by Simpson (\cite{simpsontop}), by Morel--Voevodsky (\cite{mv}) on the level of the motivic homotopy category, and by Dugger--Isaksen (\cite{hyptop}) where the authors proved its compatibility with the homotopy theory of stacks (in the étale or Nisnevich topology). In this section we set up definitions, properties and results about the topological realization. The most original part is the last one in which it is proven that the topological realization of a simplicial presheaf is unchanged if we first restrict this presheaf to smooth schemes. This uses a homotopical generalization of Deligne's cohomological proper descent. 

\subsection{Generalities about the topological realization}\label{genrel}

We recall that $\affc$ stands for the category of affine $\C$-schemes of finite type. We denote by 
$$sp : \affc \lmo Top$$
the functor which associates to an affine $\C$-scheme $X$ its topological space of complex points. Let $X=\spec(A)$ with presentation $A=\C[X_1,\hdots, X_n] /\mathfrak{a}$, with $\mathfrak{a}$ an ideal of the polynomial algebra $\C[X_1,\hdots, X_n]$. Then this presentation defines a closed immersion $i:X\hookrightarrow \A^n$ and $sp(X)=i(X)(\C)$ endowed with the induced transcendantal topology of the space $\A^n(\C)$. Composing with the singular complex functor $S : Top\lmo SSet$, we obtain a simplicial set valued realization denoted by $ssp$. We have the commutative triangle, 
$$\xymatrix{ \affc \ar[r]^-{sp} \ar[dr]_-{ssp} & Top \ar[d]^-{S} \\ & SSet }$$

\begin{df} 
We still denote by $ssp$ (resp. $sp$) and we call \emph{topological realization} the $SSet$-enriched left Kan extension
\footnote{
We briefly recall the notion of enriched left Kan extension. Let $V$ be a monoidal category, $F:C\lmo D$ a functor with $C$ any category and $D$ a $V$-enriched category, which is moreover cocomplete in the enriched sense, $G:C\lmo C'$ a functor with $C'$ another cocomplete $\V$-enriched category. The $V$-enriched left Kan extension of $F$ along $G$ is the unique $V$-enriched functor $\tilde{F} : C'\lmo D$ (up to isomorphism) which commutes to $\V$-enriched colimits and such that the triangle 
$$\xymatrix{ C\ar[r]^-G \ar[dr]_-F & C' \ar[d]^-{\tilde{F}} \\ & D }$$
commutes up to isomorphism. In our case, we don't need to consider general enriched colimits, but only colimits of the form (\ref{colimenr}) which appears below. 
}
of the functor $ssp : \affc\lmo SSet$ (resp. of the functor $sp$) along the Yoneda embedding $h:\affc\hookrightarrow SPr(\affc)$. We then obtain functors 
$$\xymatrix{ \affc \ar@{^{(}->}[r]^-h & SPr(\affc)\ar[r]^-{sp} \ar[dr]_-{ssp} & Top \ar[d]^-{S} \\ & & SSet  }$$
\end{df}

General properties of the functor $ssp$ are summarized in the following proposition. Analogous properties holds for $sp$. 

\begin{prop}\label{proprel}
\begin{enumerate}
\item The functor $ssp : SPr(\affc) \lmo SSet$ commutes with colimits and its right adjoint is the functor $R$ such that for any simplicial set $K$, $R(K) : X\mapsto \map (ssp(X), K)$. The adjoint pair $(ssp, R)$ is a Quillen pair for the global model structure on $SPr(\affc)$. 
\item The functor $ssp$ commutes with finite homotopy products. In particular $ssp$ is a symmetric monoidal functor for the cartesian model structure on $\spr$ and $SSet$. 
\item For all $K\in SSet$, and all $E\in SPr(\affc)$, there exists a canonical isomorphism $ssp(K\times E)\simeq K\times ssp(E)$. For all $K\in SSet_*$ and all $E\in SPr(\affc)_*$ there exists a canonical isomorphism $ssp(K\sm E)\simeq K\sm ssp(E)$.
\item Let $E\in \spr$. We denote by $\pi_0^{pr} E$ the presheaf obtained by applying $\pi_0$ levelwise. Then there exists a canonical isomorphism of sets $\pi_0 ssp(E)\simeq \pi_0 ssp(\pi_0^{pr} E)$. 
\end{enumerate}
\end{prop}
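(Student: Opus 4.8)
## Proof Proposal

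The plan is to establish the four assertions in order, since each one builds on the previous. The overarching observation is that $ssp$ is, by construction, an enriched left Kan extension along the Yoneda embedding, and such extensions are automatically cocontinuous; everything else is extracting consequences of this and of the fact that the building block $sp : \affc \lmo Top$ takes finite products of affine schemes to products of spaces (because $(X \times_{\C} Y)(\C) = X(\C) \times Y(\C)$ as topological spaces, the transcendental topology on a product being the product topology).

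For \textbf{(1)}, I would argue as follows. Any simplicial presheaf $E$ is canonically a colimit of representables, $E \simeq \mathrm{colim}_{(h_X \to E)} h_X$, the colimit being taken over the category of elements; since $ssp$ is defined as the enriched left Kan extension and such Kan extensions along a fully faithful embedding are cocontinuous and agree with the original functor on representables, we get $ssp(E) \simeq \mathrm{colim}\, ssp(h_X) = \mathrm{colim}\, ssp(X)$. Cocontinuity then gives the right adjoint by the usual adjoint-functor bookkeeping: $\Hom_{SSet}(ssp(E), K) \simeq \Hom_{SSet}(\mathrm{colim}\, ssp(X), K) \simeq \lim \Hom_{SSet}(ssp(X), K)$, and one recognizes the right-hand side as $\Hom_{\spr}(E, R(K))$ with $R(K)(X) = \map(ssp(X), K)$, using that $\map$ is the $SSet$-enriched internal hom and that $ssp$ on representables is $ssp(X)$. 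To see $(ssp, R)$ is a Quillen pair for the global (= projective) model structure on $\spr$: the projective model structure is cofibrantly generated with generating (acyclic) cofibrations of the form $h_X \times (\partial \Delta^n \hookrightarrow \Delta^n)$ (resp. with horns), and by assertion (3) below $ssp$ sends these to $ssp(X) \times (\partial \Delta^n \hookrightarrow \Delta^n)$, which is a cofibration (resp. acyclic cofibration) in $SSet$; so $ssp$ is left Quillen. (One can alternatively phrase this as: $R$ preserves fibrations and acyclic fibrations, which is immediate since these are detected objectwise and $ssp(X)$ is a fixed simplicial set.)

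For \textbf{(2)} and \textbf{(3)}, the point is to reduce to representables and then to the elementary product formula for $sp$. Since $ssp$ commutes with colimits (part 1) and colimits in $\spr$ are computed objectwise, while the cartesian product $- \times -$ in $\spr$ commutes with colimits in each variable, a standard "reduce to generators" argument lets me check the monoidality on representables: there $ssp(h_X \times h_Y) = ssp(h_{X \times Y}) = S(sp(X \times Y)) \simeq S(sp(X) \times sp(Y)) \simeq S(sp(X)) \times S(sp(Y)) = ssp(h_X) \times ssp(h_Y)$, where the middle isomorphism is the product formula for complex points and the singular-complex functor $S$ preserves finite products. Because $ssp$ of a representable is a CW-type simplicial set (the singular complex of a space), these products are automatically homotopy products, giving the "finite homotopy products" statement. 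For assertion (3) with a general simplicial set $K$: write $K$ as a colimit of simplices $\Delta^n$; then $K \times E \simeq \mathrm{colim}(\Delta^n \times E)$, and since $ssp$ commutes with colimits and $\Delta^n \times -$ is (via the constant-presheaf functor, which $ssp$ sends to the corresponding constant simplicial set — itself a colimit-of-points argument) compatible with $ssp$, we get $ssp(K \times E) \simeq \mathrm{colim}(\Delta^n \times ssp(E)) \simeq K \times ssp(E)$; the pointed statement follows by the same argument applied to the pushout defining $\wedge$.

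For \textbf{(4)}, I would use that $\pi_0$ of a simplicial set is a left adjoint (to the inclusion of sets as discrete simplicial sets), hence commutes with colimits, and that $ssp$ commutes with colimits. There is a natural map $E \to \pi_0^{pr} E$ of simplicial presheaves (objectwise $E(X) \to \pi_0 E(X)$), inducing $ssp(E) \to ssp(\pi_0^{pr} E)$ and hence $\pi_0 ssp(E) \to \pi_0 ssp(\pi_0^{pr} E)$; I want this to be a bijection. Applying $\pi_0 \circ ssp$ to the presentation of $E$ as a colimit of representables, and using that both $\pi_0$ and $ssp$ preserve colimits, both sides become colimits (coequalizers of coproducts) over the same indexing diagram: $\pi_0 ssp(E) \simeq \mathrm{colim}\, \pi_0 ssp(X)$, while $\pi_0^{pr} E$ is the colimit of the presheaves $\pi_0^{pr} h_X = h_X$ (representables are already objectwise discrete up to... — here one must be slightly careful: $h_X$ is a presheaf of \emph{sets} viewed in $\spr$, so $\pi_0^{pr} h_X = h_X$), giving $\pi_0 ssp(\pi_0^{pr} E) \simeq \mathrm{colim}\, \pi_0 ssp(X)$ as well, and the comparison map is the identity on the diagram level. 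The \textbf{main obstacle} I anticipate is precisely this last point — making the "reduce to representables" bookkeeping in (4) rigorous, because $\pi_0^{pr}$ does not commute with colimits of presheaves on the nose (it is a levelwise left adjoint, but the levelwise and global structures interact subtly), so one should instead argue directly: the canonical map $E \to \pi_0^{pr} E$ becomes an iso after applying $\pi_0 \circ ssp$ because $ssp(E) \to ssp(\pi_0^{pr}E)$ is $\pi_0$-surjective (every complex point of the realization of $E$ lifts, as $ssp$ is a colimit of the $ssp(X)$, which are Kan/CW) and $\pi_0$-injective (two points in the same component of $ssp(\pi_0^{pr} E)$ are connected by a zig-zag that, by compactness of $\Delta^1$ and the colimit description, factors through finitely many representables, where path-connectedness of $ssp(X) = S\,sp(X)$ agrees with connectedness of $sp(X)$). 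I would spell this out via the coequalizer presentation to keep it clean, but flag that the honest content is that $ssp$, being built from the topological-points functor, turns the presheaf-level $\pi_0$ into the space-level $\pi_0$ compatibly.
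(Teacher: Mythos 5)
Your treatment of items (1)--(3) is essentially the paper's: the adjunction is immediate from the construction, the Quillen property is best seen (as you note in your parenthetical) from the fact that $R$ preserves fibrations and trivial fibrations objectwise, and (2)--(3) follow from colimit preservation together with $sp(X\times_\C Y)\cong sp(X)\times sp(Y)$ and the fact that products commute with colimits in the presheaf topos. One recurring imprecision, though: you repeatedly invoke the presentation $E\simeq \mathrm{colim}_{(h_X\to E)}h_X$ over the ordinary category of elements. For a genuinely simplicial presheaf this is false (that colimit is levelwise discrete); what is true, and what the paper records and uses (its formula (\ref{colimenr}) in Remark \ref{remcolim}), is the enriched/weighted presentation of $E$ as a coequalizer of objects $E(X)\times h_X$. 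Since $ssp$ is the $SSet$-enriched Kan extension, it is an enriched left adjoint and preserves this presentation as well as tensors by simplicial sets, which is all that (2) and (3) require; so these parts are repairable with no change of strategy.

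In item (4) there is a genuine gap. Your first reduction rests on the false unweighted presentation above, and your own diagnosis of the difficulty is off: $\pi_0^{pr}$ \emph{does} commute with colimits of presheaves (colimits are levelwise and $\pi_0$ is a left adjoint); the real issue is only the form of the presentation of $E$. Your fallback argument --- surjectivity and injectivity of $\pi_0\, ssp(E)\to \pi_0\, ssp(\pi_0^{pr}E)$ via zig-zags, compactness of $\Delta^1$, and ``path-connectedness of $ssp(X)$ agrees with connectedness of $sp(X)$'' --- is not a proof as it stands; nothing in it controls how a path in the coequalizer $ssp(\pi_0^{pr}E)$ lifts against the quotient map, which is exactly the point at issue. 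The paper's proof is much simpler and purely formal: for any set $A$ (viewed as a discrete simplicial set) the presheaf $R(A):X\mapsto \map(ssp(X),A)$ is levelwise discrete, so $\Hom_{\spr}(\pi_0^{pr}E,R(A))\simeq \Hom_{\spr}(E,R(A))$; running the adjunctions $\Hom_{Set}(\pi_0\, ssp(-),A)\simeq \Hom_{\spr}(-,R(A))$ on both sides and applying the Yoneda lemma in $Set$ gives the bijection. Alternatively, the coequalizer route you gesture at does work once stated with the enriched presentation: applying $ssp$ and then $\pi_0$, and using $\pi_0(K\times L)\simeq \pi_0K\times\pi_0L$, the two sides are computed by literally the same coequalizer of sets; but this needs to be carried out, and your written argument neither does this nor substitutes a valid direct argument.
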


\begin{proof} 
\begin{enumerate}
\item $R$ is right adjoint to $ssp$ by definition. By its definition, $R$ sends (trivial) fibrations to (trivial) fibrations. The pair $(ssp, R)$ is therefore a Quillen pair. 
\item Comes from formula (\ref{colimenr}) of remark \ref{remcolim} below, and the fact that in a topos (here the topos of simplicial presheaves on $\affc$) products commute with colimits. 
\item The proof of the pointed case is similar of the unpointed one. It comes from the fact that $K\times E = \coprod_K E$ and that the functor $ssp$ is a left adjoint and therefore commutes with colimits. 
\item Let $A\in Set$. We have canonical isomorphisms
\begin{align*}
\Hom_{Set}(\pi_0 ssp(\pi_0^{pr} E), A) & \simeq \Hom_{SSet}(ssp(\pi_0^{pr} E), A) \\
& \simeq  \Hom_{\spr}(\pi_0^{pr} E, R(A)) \\
& \simeq \Hom_{\spr}(E, R(A)) \\
& \simeq \Hom_{SSet}(ssp(E), A)\\
& \simeq \Hom_{Set}(\pi_0 ssp(E), A).
\end{align*}
We conclude by the Yoneda lemma.   
\end{enumerate}
\end{proof}

\begin{rema}\label{remcolim}
Every preasheaves of sets $F\in Pr(\affc)$ can be written as a colimit of representable presheaves. More precisely, the map
\begin{equation}\label{yo}
\xymatrix{colim_{X\in \affc/F} h_X \ar[r]^-\sim & F}
\end{equation}
is an isomorphism of presheaves. Therefore, since $ssp$ commutes with colimits, we have the formula
\begin{equation}\label{formssp}
\xymatrix{ssp(F)\simeq colim_{X\in \affc/F} ssp(X)}.
\end{equation}
We'll use in \ref{pi0rel} a homotopical version of the previous fact. Now take $E\in \spr$ a simplicial presheaf. The formula (\ref{yo}) is not yet available. This comes from the very homotopical flavour of the object $E$ and $\affc/E$ should really be consider as an $\infty$-category in order to obtain a similar formula, with a colimit in a higher sense. An alternative way is to write $E$ as a more complicated colimit (see \cite[p.8]{duguni}). Indeed $E$ can be written as the standard coegalisator
\begin{equation}\label{colimenr}
\xymatrix{ E\simeq colim ( \coprod_{X\in \affc} E(X)\times h_X  & \coprod_{Y\lmo Z} E(Z)\times h_Y ) \ar@<3pt>[l] \ar@<-3pt>[l] }.
\end{equation}
And thus we have an equivalence
$$\xymatrix{ ssp(E)\simeq colim ( \coprod_{X\in \affc} E(X)\times ssp(X)  & \coprod_{Y\lmo Z} E(Z)\times ssp(Y) ) \ar@<3pt>[l] \ar@<-3pt>[l] }.$$
\end{rema}

\subsection{$\ao$-étale model structure}

Let $C$ be a Grothendieck site with topology denoted by $\tau$. Jardine (see \cite{jardsimp}, and also \cite{dhi}) has proven the existence of a $\tau$-local model structure on the category $SPr(C)$ of simplicial presheaves on $C$ such that the equivalences are precisely the local equivalences. Thanks to \cite{dhi} this model structure can be defined as the left Bousfield localization of the global model structure $SPr(C)$ with respect to the set of maps of the form
$$hocolim_{\del^{op}} h_{U_\bul} \lmo h_X$$
in $SPr(C)$, for all $X\in C$ and all $\tau$-hypercovering $U_\bul\lmo X$ in $C$. Morally, this construction forces simplicial presheaves to verify $\tau$-homotopical descent. We denote by $SPr(C)^\tau$ the $\tau$-local model structure. We'll use it in the particular case of $C=\affc$ with the étale topology, and call this model structure the étale local model structure. The homotopy category $Ho(\spret)$ is a model for the homotopy theory of stacks. 

We endow $\affc$ with the étale topology. Everything below is also valid for the Nisnevich topology. Following \cite{mv}, we can build the étale $\ao$-homotopy theory of schemes out of the site $\affc$. The étale $\ao$-homotopy category of schemes is by definition the left Bousfield localization of the global model category $\spr$ by the set of maps of the form 
\begin{enumerate} 
\item $hocolim_{\del^{op}}h_{U_\bul} \lmo h_X$, for $U_\bul\lmo X$ an étale hypercovering of a scheme $X\in \affc$. 
\item a projection $h_X\times h_{\ao}\lmo h_X$, for $X\in \affc$. 
\end{enumerate}
We denote by $\spretao$ this model structure and call it the \emph{$\ao$-étale structure} on simplicial presheaves. Equivalences in $\spretao$ are called $\ao$-equivalences. We denote by 
$$sph : \spretao\lmo Top$$
the corresponding topological realization, and $ssph$ its simplicial analog. We denote by $Rh$ the right adjoint to $ssph$. An important property of the topological realization is the following.

\begin{theo}\label{di} \emph{(Dugger--Isaksen \cite[Thm 5.2]{hyptop})}  --- The adjoint pair $(sph, Rh)$ is a Quillen pair
$$\xymatrix{ \spretao \ar@<3pt>[r]^-{sph}  & Top \ar@<3pt>[l]^-{Rh} },$$
for the $\ao$-étale model structure. More precisely the functor $sp$ sends relations 1. and 2. (which defines the $\ao$-étale model structure) to equivalences of spaces. 
\end{theo}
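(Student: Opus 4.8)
The plan is to deduce the theorem from the global Quillen pair of Proposition \ref{proprel} together with the universal property of left Bousfield localizations. Recall that $\spretao$ is, by construction, the left Bousfield localization of the global model category $\spr$ at the two families of maps: relation 1, the étale hypercover maps $hocolim_{\del^{op}}h_{U_\bul}\lmo h_X$; and relation 2, the projections $h_X\times h_{\ao}\lmo h_X$ for $X\in\affc$. For a left Quillen functor out of a left proper cellular model category, remaining left Quillen after such a localization is equivalent to the condition that its total left derived functor send every localizing map to a weak equivalence (see \cite[§3.3]{hirs}). Since $(ssp,R)$, and its $Top$-valued analogue, are already Quillen pairs for the global structure by Proposition \ref{proprel}, the whole statement reduces to checking that $\lef sp$ carries relations 1 and 2 to weak equivalences of spaces; this is precisely the more precise assertion of the theorem, so it suffices to prove it.

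Relation 2 is immediate. By Proposition \ref{proprel} the functor $sp$ commutes with finite (homotopy) products, so $sp(h_X\times h_{\ao})\simeq sp(X)\times sp(\ao)$ and the localizing map becomes the first projection $sp(X)\times sp(\ao)\lmo sp(X)$. As $sp(\ao)=\ao(\C)=\C\cong\R^2$ is contractible, this projection is a homotopy equivalence.

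Relation 1 is the substantial point. Since $sp$ is left Quillen for the global structure, $\lef sp$ commutes with homotopy colimits, so $\lef sp\bigl(hocolim_{\del^{op}}h_{U_\bul}\bigr)$ is identified with the geometric realization of the simplicial topological space $[n]\mapsto U_n(\C)$, and one must show that its augmentation to $sp(X)=X(\C)$ is a weak equivalence. The geometric input is that an étale morphism of finite-type $\C$-schemes induces a local homeomorphism on complex points for the transcendental topology; applied to the matching maps $U_n\lmo(\mathrm{cosk}_{n-1}U_\bul)_n$ of the hypercovering, which are étale covers, this shows that $[n]\mapsto U_n(\C)$ is a hypercover of $X(\C)$ for the Grothendieck topology on topological spaces generated by open covers: each matching map is a jointly surjective family of local homeomorphisms, hence, after choosing local sections, is refined by an honest open cover. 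One is thereby reduced to the purely topological statement of hyperdescent: for every hypercover $V_\bul\lmo Y$ of topological spaces in this sense, the canonical map from the geometric realization of $[n]\mapsto V_n$ to $Y$ is a weak equivalence.

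This last hyperdescent statement is where the real work lies, and is the technical core of \cite{hyptop}; I would prove it in two stages. First, for an ordinary open cover $\{V_i\hookrightarrow Y\}$ one shows that $Y$ is the homotopy colimit of the \v{C}ech nerve, by a Mayer--Vietoris / partition-of-unity argument; this uses that the relevant spaces are paracompact and locally contractible (equivalently, have the homotopy type of CW-complexes), which holds for the complex-analytic sets $X(\C)$. Second, one bootstraps from \v{C}ech covers to arbitrary hypercovers along the coskeletal filtration, using that the realization of $V_\bul$ is the homotopy colimit of the realizations of its coskeleta $\mathrm{cosk}_n V_\bul$ and that each step $\mathrm{cosk}_{n-1}V_\bul\to\mathrm{cosk}_n V_\bul$ is governed by a \v{C}ech cover in simplicial degree $n+1$, so an induction on $n$ reduces everything to the open-cover case. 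Combining the two families, $\lef sp$ kills both localizing classes, whence $(sph,Rh)$ — and likewise the simplicial pair — is a Quillen pair for the $\ao$-étale model structure. The main obstacle is precisely this topological hyperdescent: handling general hypercovers rather than just \v{C}ech covers, and verifying that the spaces $X(\C)$ are regular enough for the local-section and Mayer--Vietoris arguments to apply.
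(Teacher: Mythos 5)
Your argument is essentially the proof of Dugger--Isaksen that the paper cites without reproving: reduce, via the universal property of left Bousfield localization and Proposition \ref{proprel}, to checking that the derived realization kills the two classes of localizing maps; the $\ao$-projections die because $\C\cong\R^2$ is contractible, and étale hypercovers become topological hypercovers because étale maps of finite-type $\C$-schemes are local homeomorphisms on $\C$-points, so everything rests on topological hyperdescent, which is exactly the technical core of \cite{hyptop} and also the template the paper itself follows later for the proper topology (Propositions \ref{quipro} and \ref{hypdescpro}). The only imprecision is your bootstrap phrase ``the realization of $V_\bullet$ is the homotopy colimit of the realizations of its coskeleta'': the coskeleton maps point the wrong way for a colimit; the correct reduction is that $V_\bullet\lmo \mathrm{cosk}_{k+1}V_\bullet$ is an isomorphism on $(k+1)$-skeleta, hence induces isomorphisms on $\pi_i$ of the homotopy colimits for $i\leq k$, after which one inducts on the dimension of bounded hypercovers down to the \v{C}ech case exactly as in the paper's proof of \ref{hypdescpro}.
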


\begin{rema}\label{remderrel}
\begin{itemize}
\item It follows from the above that the simplicial version $ssph : \spretao\lmo SSet$ is a left Quillen functor and preserves relations of type (1) and (2) which define $\spretao$. 
\item Another consequence of \ref{di} is that the derived functors 
$$\lef ssp:Ho(SPr(\affc)) \lmo Ho(SSet)$$
$$\textrm{et  }\lef ssph : Ho(\spretao)\lmo Ho(SSet)$$
verify $\lef ssp(E)\simeq \lef ssph(E)$ for all simplicial presheaf $E$. In other words, if $Q$ is a cofibrant replacement functor for the global model structure $SPr(\affc)$, then $\lef ssph (E)=ssp(QE)$. 
\end{itemize}
\end{rema}

\begin{df} The \emph{derived topological realization} is the functor $\lef ssp$ (or equivalently $\lef ssph$) and is denoted by $\re{-}$. We then have a commutative triangle,
$$\xymatrix{ Ho(\spr) \ar[rd]_-{\re{-}} \ar[rr]^-{\R id} && Ho(\spretao) \ar[ld]^-{\re{-}} \\ & Ho(SSet) &  }$$
We denote by $\ret{-}$ its topological analog with values in $Ho(Top)$. 
\end{df} 

\begin{prop}\label{proprelder}
\begin{enumerate}
\item The derived topological realization $\re{-}$ commutes with homotopy colimits. Its right adjoint is denoted by $(-)_B$. For all $K\in SSet$ and all $X\in\affc$, we have $K_B(X)=\R\map(ssp(X), K)$. 
\item The functor $\re{-}$ commutes with homotopy finite products. 
\item For all $K\in SSet$, and all $E\in \spretao$, we have a canonical isomorphism $K\times \re{E}\simeq \re{K\times E}$ in $Ho(SSet)$. For all $K\in SSet_*$ and all $E\in \spretao_\ast$, we have a canonical isomorphism $K\sm \re{E}\simeq \re{K\sm E}$ in $Ho(SSet_*)$. 
\end{enumerate}
\end{prop}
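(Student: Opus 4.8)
The plan is to derive everything from Theorem~\ref{di} together with the unpointed content of Proposition~\ref{proprel}. For part~(1), the functor $\re{-}=\lef ssp$ is by construction the left derived functor of a left Quillen functor (for the global model structure on $\spr$ by Proposition~\ref{proprel}(1), and equivalently for $\spretao$ by Theorem~\ref{di} and Remark~\ref{remderrel}). A left derived functor of a left Quillen functor automatically commutes with homotopy colimits, in the sense of the abuse of language fixed in the Notations section: for any diagram $F:I\lmo \spr$ one computes $hocolim_I F$ by applying $colim_I$ to a projectively cofibrant replacement and $ssp$ to that, and since $ssp$ is itself a left adjoint it commutes with the strict colimit $colim_I$; hence $hocolim_I \re{F}\simeq \re{hocolim_I F}$. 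The right adjoint: $\re{-}=\lef ssp$ has right adjoint $\R R$, the total right derived functor of the functor $R$ from Proposition~\ref{proprel}(1), and since $R(K)(X)=\map(ssp(X),K)$ for a fibrant $K$ one gets $\R R(K)(X)=\R\map(ssp(X),K)$ after fibrant replacement; this is the functor we name $(-)_B$.

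For part~(2), I would deduce the statement about homotopy finite products from Proposition~\ref{proprel}(2), which says $ssp$ commutes with finite homotopy products already at the point-set level (using that $\affc$ being a site, its topos of simplicial presheaves has products commuting with colimits, cf.\ Remark~\ref{remcolim}). Concretely, given $E,E'\in\spretao$, replace them by cofibrant objects $QE,QE'$ for the global model structure; then $QE\times QE'$ is a model for the homotopy product (cartesian model structure), $ssp$ preserves this product, and $ssp(QE)$, $ssp(QE')$ compute $\re{E}$, $\re{E'}$ by Remark~\ref{remderrel}. The only subtlety is checking that $QE\times QE'$ is still cofibrant or at least that $ssp$ applied to it still computes the derived realization; this follows because the projective/global model structure on $\spr$ is cartesian and the product of cofibrant objects is cofibrant, and because $\re{-}$ for the global and $\ao$-\'etale structures agree on all objects (Remark~\ref{remderrel}).

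For part~(3), the unpointed statement is obtained by combining the strict identity $ssp(K\times E)\simeq K\times ssp(E)$ of Proposition~\ref{proprel}(3) with derivation: take $QE\lmos{\sim}E$ a global-cofibrant replacement; then $K\times QE$ is a global-cofibrant replacement of $K\times E$ (the functor $K\times(-)=\coprod_K(-)$ preserves cofibrant objects), so $\re{K\times E}\simeq ssp(K\times QE)\simeq K\times ssp(QE)\simeq K\times\re{E}$ in $Ho(SSet)$, and the naturality is clear. The pointed case is handled identically, replacing $\times$ by $\wedge$ and $SSet$ by $SSet_*$, using the pointed half of Proposition~\ref{proprel}(3) and the fact that $K\wedge(-)$ on pointed simplicial presheaves still sends cofibrant objects to cofibrant objects.

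I do not expect a genuine obstacle here: the statement is a formal consequence of the Quillen adjunction already established in Theorem~\ref{di} and of the elementary colimit-preservation and product-preservation properties of $ssp$ recorded in Proposition~\ref{proprel}. The one point requiring a line of care is the compatibility of cofibrant replacements with products (parts~(2) and~(3)) --- i.e.\ that the global model structure on $\spr$ is sufficiently well-behaved (cartesian, with cofibrant objects closed under products) for the point-set identities of Proposition~\ref{proprel} to descend to $Ho(SSet)$; this is standard for simplicial presheaves with the projective model structure. The remark following Theorem~\ref{di}, guaranteeing $\lef ssp\simeq\lef ssph$ on all objects, is what lets us freely pass between the two model structures while doing this.
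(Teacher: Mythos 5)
Your proposal is correct and follows the paper's own (omitted) argument: the paper simply asserts that the proofs are "essentially the same as for the non-derived case," i.e.\ one derives Proposition \ref{proprelder} from Proposition \ref{proprel} and Theorem \ref{di} by the standard cofibrant-replacement manipulations you carry out, including the key observation that products with constant simplicial sets and products of cofibrant objects remain projectively cofibrant (which works because $\affc$ has finite products, so $h_X\times h_Y=h_{X\times Y}$). Your write-up just makes explicit the details the paper leaves to the reader.
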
 

\begin{proof} 
Proofs are essentially the same as for the non-derived case, and we just omit them. 
\end{proof}

\begin{nota}
By abuse of notation, we will omit the word derived in front of topological realization, even if we always mean the derived topological realization throughout. 
\end{nota}

\subsection{$\pi_0$ of the topological realization}\label{pi0rel}

We give an explicit description of the set $\pi_0\re{E}$ for any simplicial presheaf $E\in \spr$, which will be used below. This description is based on the formula (\ref{yo}) given by Yoneda.

\begin{lem}\label{pi0relens}
Let $F\in Pr(\affc)$ be a presheaf of sets. Then there exists a canonical isomorphism of sets,
$$\pi_0 ssp(F)\simeq F(\C) / \sim $$
where $\sim$ is the equivalence relation defined by $[x]\sim [y]$ if there exists a connected algebraic curve $C$, a map $f:h_C\lmo F$ in $Ho(\spr)$,  and two complex points $x',y'\in C(\C)$ such that $f(x')=x$ and $f(y')=y$. 
\end{lem}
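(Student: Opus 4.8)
The plan is to use the formula (\ref{formssp}), which expresses $ssp(F)$ as the colimit $colim_{X\in\affc/F} ssp(X)$ of representables, and then to compute $\pi_0$ of this colimit. First I would observe that since $\pi_0$ commutes with colimits of simplicial sets, we get $\pi_0 ssp(F)\simeq colim_{X\in\affc/F}\pi_0 ssp(X) = colim_{X\in\affc/F}\pi_0(X(\C))$, the colimit being taken in $Set$. Each set $\pi_0(X(\C))$ is the set of connected components of the complex manifold $X(\C)$, and the colimit is over the comma category $\affc/F$, whose objects are pairs $(X,x)$ with $x\in F(X)=\Hom_{\spr}(h_X,F)$. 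A colimit of sets indexed by a category is the quotient of $\coprod_{(X,x)}\pi_0(X(\C))$ by the equivalence relation generated by the maps in the index category. Concretely, a class $[p]\in\pi_0(X(\C))$ with $(X,x)\in\affc/F$ determines an element of the colimit, and evaluating at an actual complex point $x_0\in X(\C)$ in the component $[p]$ gives, via $x\colon h_X\to F$, a well-defined point $x(x_0)\in F(\C)$ — this uses that $X(\C)$ being connected (or rather, a connected component) implies all its points land in the same colimit class, together with the fact that the map $\pi_0$ applied to the structure map $ssp(X)\to ssp(F)$ factors the evaluation. This sets up a surjection $F(\C)\twoheadrightarrow \pi_0 ssp(F)$ sending $z\in F(\C) = \Hom(h_{\spec\C},F)$ to its image; surjectivity follows because any component of any $X(\C)$ contains a $\C$-point.

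Next I would identify the kernel equivalence relation. Two $\C$-points $z,w$ map to the same element of $\pi_0 ssp(F)$ iff they are connected by a zig-zag in the colimit diagram. The key reduction is that this zig-zag relation is exactly generated by: there is some $(X,x)\in\affc/F$ and a path in $X(\C)$ between two $\C$-points $x_0,x_1$ with $x(x_0)=z$, $x(x_1)=w$. So the task becomes showing that "connected by a path in $X(\C)$ for some affine $X$ mapping to $F$" coincides with the stated curve relation "$\exists$ connected algebraic curve $C$, $f\colon h_C\to F$ in $Ho(\spr)$, and $x',y'\in C(\C)$ with $f(x')=z$, $f(y')=w$". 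One direction is immediate: a curve $C$ is an affine (or can be arranged affine, or handled via a cover) scheme of finite type and $C(\C)$ is path-connected when $C$ is connected (smooth or not — a connected complex algebraic curve has path-connected analytification). The reverse direction is the substantive point: given a path $\gamma\colon[0,1]\to X(\C)$ between two $\C$-points, one must produce a connected algebraic curve $C$ and a map $h_C\to F$ realizing the same endpoints.

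The hard part will be this reverse implication — replacing a topological path by an algebraic curve. The standard tool here is that $X(\C)$ is path-connected iff it is connected iff it is "connected by chains of irreducible curves": any two closed points of an irreducible variety lie on an irreducible curve (a consequence of Bertini-type / general position arguments, or of the fact that the analytic space is connected and one can cut down by generic hyperplane sections). More precisely, I would argue: a connected finite-type $\C$-scheme $X$ has the property that any two $\C$-points are joined by a finite chain of (images of) connected curves mapping to $X$; pulling these curves back along $x\colon h_X\to F$ (or rather composing $h_C\to h_X \xrightarrow{x} F$) gives maps $h_C\to F$, and chaining them via shared $\C$-points expresses $[z]\sim[w]$ in the curve relation. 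One must check the curve relation as stated is genuinely transitive/an equivalence relation (it is symmetric and reflexive by inspection; transitivity needs the chaining argument, so strictly the relation "$\sim$" should be read as, or shown equal to, the equivalence relation it generates — I would either note this or absorb the chaining into the statement). Finally I would remark that allowing $f$ to be a map in $Ho(\spr)$ rather than a strict map of presheaves is harmless here because $h_C$ is cofibrant and the evaluation of such a map on $\C$-points is well-defined up to the relation being described; this matches the homotopical version of (\ref{yo}) alluded to in Remark \ref{remcolim}. Assembling the surjection with the identified relation yields the claimed bijection $\pi_0 ssp(F)\simeq F(\C)/\!\sim$.
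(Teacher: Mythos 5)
Your proposal is correct and follows essentially the same route as the paper: write $ssp(F)$ as $colim_{X\in\affc/F}\, ssp(X)$ via the Yoneda formula, identify $\pi_0 ssp(F)$ with $colim\,\pi_0(X(\C))$, get surjectivity from $F(\C)\simeq colim\, X(\C)$, and convert topological paths in affine schemes over $F$ into connected algebraic curves by the same classical Bertini-type fact the paper invokes. Your explicit handling of the zig-zag/generated equivalence relation is if anything slightly more careful than the paper's reduction to a single $Z$ with a single path, but the substance is identical.
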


\begin{proof} As there exists an isomorphism $\pi_0 ssp(F)\simeq \pi_0 sp(F)$, where $sp$ is the realization with values in $Top$, it suffices to prove the statement for $\pi_0 sp(F)$. From formula (\ref{yo}) in remark \ref{remcolim}, there exists a natural map $F(\C)\lmo sp(F)$ in $Top$ and therefore a map $F(\C)\lmo \pi_0sp(F)$ by taking $\pi_0$. We then have a commutative square
$$\xymatrix{ F(\C) \ar[r] \ar[d]_-\wr & \pi_0 sp(F) \ar[d]^-\wr \\ colim_{X\in \affc/F} X(\C) \ar[r] &  colim_{X\in \affc/F} \pi_0 sp(X) }$$
where the left vertical arrow is obtained by taking complex points in (\ref{yo}), and the right vertical arrow is obtained by taking $\pi_0 sp$ in formula (\ref{yo}). These two arrows are therefore isomorphisms. For all $X\in \affc$, the map $X(\C)\lmo \pi_0sp(X)$ is surjective. It follows that the top map in the square is surjective. Now let  $a,b\in F(\C)$ be two complex points having the same image under $F(\C)\lmo  \pi_0 sp(F)$. In $colim_{X\in \affc/F} X(\C)$, $a$ and $b$ correpond to two affine schemes $(X,x)$ and $(Y,y)$ above $F$, each provided with a complex point. By hypothesis, these two elements have the same image in $colim_{X\in \affc/F} \pi_0sp(X)$. Therefore there exists an affine scheme $Z\lmo F$ above $F$, and two maps $p:X\lmo Z$ and $q:Y\lmo Z$, and a continuous path between $p(x)$ and $q(y)$ in the space $sp(Z)$. It is a classical fact in algebraic geometry that, using Bertini theorem, we can deduce the existence of a connected algebraic curve $g:C\lmo Z$ above $Z$, two complex points $x',y'\in C(\C)$ such that $g(x')=p(x)$ and $g(y')=q(y)$. By composition we obtain a map $f:h_C\lmo F$ such that $f(x')=a$ and $f(y')=b$. The proof is then complete. 
\end{proof}

\begin{prop}\label{pi0cor}
Let $E\in \spr$ be a simplicial presheaf. Then there exists a canonical isomorphism of sets, 
$$\pi_0 \re{E} \simeq \pi_0(E(\C))/\sim$$
where $\sim$ is the equivalence relation defined by $[x]\sim [y]$ if there exists a connected algebraic curve $C$, a map $f:h_C\lmo F$ in $Ho(\spr)$,  and two complex points $x',y'\in C(\C)$ such that $f(x')=x$ and $f(y')=y$. 
\end{prop}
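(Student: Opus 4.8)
The plan is to reduce the statement for a general simplicial presheaf $E$ to the already-proven set-level statement, Lemma \ref{pi0relens}, applied to the presheaf of connected components $\pi_0^{pr} E$. The key input is part (4) of Proposition \ref{proprel}, which gives a canonical isomorphism $\pi_0 ssp(E) \simeq \pi_0 ssp(\pi_0^{pr} E)$; the subtlety is that Proposition \ref{proprel} concerns the non-derived functor $ssp$, whereas the present statement concerns the derived functor $\re{-}$, so I first need a derived analogue of that $\pi_0$-invariance.

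First I would establish the derived version of Proposition \ref{proprel}(4): for any $E \in \spr$ there is a canonical isomorphism $\pi_0 \re{E} \simeq \pi_0 \re{\pi_0^{pr} E}$. One way is to run the same adjunction argument as in the proof of Proposition \ref{proprel}(4), but using the derived adjunction $(\re{-}, (-)_B)$ of Proposition \ref{proprelder}(1) in place of $(ssp, R)$, together with the fact that for a discrete set $A$ the presheaf $R(A)$ (with $R(A)(X) = \map(ssp(X),A)$) is already fibrant for the global model structure and that $\R\map(\re{E}, A) \simeq \R\map(\re{\pi_0^{pr} E}, A)$ because a simplicial presheaf $E$ and its levelwise $\pi_0$ induce the same maps into a discrete presheaf after deriving — more precisely $\pi_0^{pr}$ is a left adjoint to the inclusion of set-valued presheaves and this passes to homotopy colimits computing $\re{-}$. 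Alternatively, and perhaps more cleanly, I would use the coend formula (\ref{colimenr}) for $E$, apply $\re{-}$ (which commutes with homotopy colimits by Proposition \ref{proprelder}(1)), and observe that taking $\pi_0$ of a homotopy colimit of spaces of the form $E(X) \times \re{X}$ only sees $\pi_0 E(X) = (\pi_0^{pr} E)(X)$, since $\pi_0$ of a product is the product of $\pi_0$'s and $\pi_0$ commutes with (homotopy) colimits of spaces. This identifies $\pi_0 \re{E}$ with $\pi_0 ssp$ of the \emph{discrete} presheaf $\pi_0^{pr} E$, i.e. with $\pi_0 ssp(\pi_0^{pr} E)$, noting that for a set-valued presheaf $F$ one has $\re{F} \simeq ssp(F)$ up to the relevant cofibrant replacement, because $F$ is a filtered homotopy colimit — in fact an ordinary colimit — of representables and $ssp$ of representables is already cofibrant enough.

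Then I would simply apply Lemma \ref{pi0relens} to the presheaf of sets $F := \pi_0^{pr} E$, which yields $\pi_0 ssp(\pi_0^{pr} E) \simeq (\pi_0^{pr} E)(\C)/\!\sim$. Since $(\pi_0^{pr} E)(\C) = \pi_0(E(\C))$ by definition of $\pi_0^{pr}$, and since a map $h_C \lmo \pi_0^{pr} E$ in $Ho(\spr)$ is the same data as a map $h_C \lmo E$ in $Ho(\spr)$ (any map from a discrete presheaf, or rather from $h_C$ which one may replace by its $\pi_0$ when mapping into a discrete target — here one uses that $h_C$ maps to $E$ iff it maps to $\pi_0^{pr}E$ compatibly; the two equivalence relations literally coincide once one unwinds that a homotopy class of maps $h_C\to E$ determines and is determined by where the complex points go up to the algebraic-curve relation), the equivalence relation $\sim$ appearing in Lemma \ref{pi0relens} for $F = \pi_0^{pr} E$ matches verbatim the one in the statement of Proposition \ref{pi0cor}. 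Composing the two canonical isomorphisms gives the result.

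The main obstacle is the bookkeeping in the first step: transporting the clean adjunction argument for $ssp$ to the derived functor $\re{-}$, and in particular checking that $\re{-}$ of a discrete (set-valued) presheaf agrees with the naive $ssp$ of that presheaf, so that Lemma \ref{pi0relens} is applicable on the nose. This is where one must be careful that the global model structure, cofibrant replacements, and the coend presentation (\ref{colimenr}) all interact correctly; everything else is formal unwinding of definitions. A secondary point to handle with care is the identification of homotopy classes of maps $h_C \lmo E$ with those of maps $h_C \lmo \pi_0^{pr} E$ relevant to the equivalence relation, which should follow from the derived $\pi_0$-invariance established in step one applied with $E$ replaced by the pushout-type diagrams defining $\sim$, or more directly from the universal property of $\pi_0^{pr}$.
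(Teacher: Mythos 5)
Your proposal is correct and follows essentially the same route as the paper: reduce to Lemma \ref{pi0relens} applied to the presheaf of sets $\pi_0^{pr}E$, using the $\pi_0$-invariance of the realization under levelwise $\pi_0$ (Proposition \ref{proprel}(4)) together with the comparison between the derived and underived realizations on $\pi_0$ (Remark \ref{remderrel}). You spell out the derived-versus-underived bookkeeping more explicitly than the paper does, but the argument is the same.
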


\begin{proof} 
We denote by $\pi_0^{pr}E$ the presheaf obtained by applying $\pi_0$ levelwise to the presheaf $E$. We apply the preceding lemma \ref{pi0relens} to the presheaf of sets $F=\pi_0^{pr}E$. We obtain an isomorphism $\pi_0 ssp(\pi_0^{pr} E) \simeq \pi_0^{pr} E(\C) /\sim \simeq (\pi_0 E(\C))/\sim$. By \ref{proprel}, there is an isomorphism $\pi_0 ssp(\pi_0^{pr} E)\simeq \pi_0 ssp(E)$. However, by remark \ref{remderrel}, there is an isomorphism $\pi_0(ssp(E))\simeq \pi_0\re{E}$, and the proof is complete.  
\end{proof}

\subsection{Topological realization of structured presheaves}\label{prestru}

\paragraph{Strict groups.}

Let $SGp_\C$ (resp. $SGp$) be the category of strict group objects in the cartesian monoidal category $\spr$ (resp. in the cartesian monoidal category $SSet$). We denote by
$$\mrm{B}:SGp\lmo SSet$$
the classifying space functor. For all $G\in SGp$, the space $\mrm{B}G$ is defined as the homotopy colimit
$$\mrm{B}G:=hocolim\, G^\bul$$
where $G^\bul$ is the bisimplicial set defined by $[n]\mapsto G^n$, with faces and degeneracies induced by product and neutral element in $G$. There exists a functorial cofibrant replacement of the object $G^\bul$ in the projective model category $SSet^{\del^{op}}$, which justifies that we consider $\mrm{B}$ as a functor between non-localized categories. For all $G\in SGp_\C$, we set $\mrm{B}G(\spec(A))=\mrm{B}(G(\spec(A))$. Because $\re{-}$ commutes with products, the topological realization of a presheaf of simplicial groups is a simplicial group. We consider the diagram
$$\xymatrix{Ho(SGp_\C) \ar[r]^-{\mrm{B}} \ar[d]^-{\re{-}} & Ho(\spr) \ar[d]^-{\re{-}} \\ Ho(SGp) \ar[r]^-{\mrm{B}} & Ho(SSet)  }$$

\begin{prop}\label{gp}
For all $G\in SGp_\C$, there exists a canonical isomorphism $\mrm{B}\re{G}\simeq \re{\mrm{B}G}$ in $Ho(SSet)$. 
\end{prop}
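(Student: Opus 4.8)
The plan is to exploit the fact that both $\mrm{B}$ and $\re{-}$ are, up to functorial cofibrant replacement, realizations of simplicial diagrams, together with the two properties of $\re{-}$ already established: it commutes with homotopy colimits (Prop \ref{proprelder}.1) and with homotopy finite products (Prop \ref{proprelder}.2). First I would unwind the definition $\mrm{B}G = hocolim_{\del^{op}} G^\bul$, where $G^\bul$ is the simplicial object $[n]\mapsto G^n$. Applied levelwise to a presheaf of simplicial groups $G\in SGp_\C$, this gives $\mrm{B}G = hocolim_{\del^{op}} (G^\bul)$ with $(G^\bul)_n = G^n$ computed in $\spr$. Now apply $\re{-}$ to both sides.

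The key computation is the chain of canonical isomorphisms in $Ho(SSet)$:
$$\re{\mrm{B}G} = \re{hocolim_{\del^{op}} G^\bul} \simeq hocolim_{\del^{op}} \re{G^\bul} \simeq hocolim_{\del^{op}} (\re{G})^\bul = \mrm{B}\re{G}.$$
The first isomorphism is the commutation of $\re{-}$ with homotopy colimits. The second is the commutation of $\re{-}$ with homotopy finite products, applied levelwise: in simplicial degree $n$ the diagram $\re{G^\bul}$ has value $\re{G^n} \simeq \re{G}^n$, and one must check these levelwise identifications are compatible with the simplicial face and degeneracy maps, so that they assemble into an isomorphism of simplicial objects in $Ho(SSet)$ — equivalently, a map of simplicial objects realizing to an equivalence. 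Finally, $hocolim_{\del^{op}}$ of the simplicial object $[n]\mapsto \re{G}^n$ with the product-induced faces and degeneracies is by definition $\mrm{B}\re{G}$, using that $\re{G}$ is again a simplicial group (as noted in the text, since $\re{-}$ commutes with products).

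The main obstacle is the compatibility/naturality issue in the second step: Prop \ref{proprelder}.2 gives the isomorphism $\re{X\times Y}\simeq \re{X}\times\re{Y}$ objectwise, but to run the argument one needs this family of isomorphisms to be natural enough that it upgrades to an equivalence of $\del^{op}$-diagrams (so that one can then apply the homotopy-colimit comparison, which requires a pointwise equivalence of diagrams). Concretely, one should first model things strictly: use a functorial cofibrant replacement $Q(G^\bul)$ of $G^\bul$ in $SPr(\affc)^{\del^{op}}$, observe that $ssp$ (the underived functor) commutes with products and colimits on the nose (Prop \ref{proprel}.1,2), hence $ssp(Q(G^\bul))$ is levelwise a finite product of $ssp(QG)$'s with the right simplicial structure, and then identify $\re{\mrm{B}G}$ with $ssp$ applied to a cofibrant replacement of the bar construction. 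Working at the underived level where everything is strictly functorial removes the coherence worry, and one only invokes the derived statements at the very end. I expect the rest — that the resulting simplicial object is the bar construction of $\re{G}$ and that its homotopy colimit computes $\mrm{B}\re{G}$ — to be routine given Theorem \ref{di} and the discussion of $\mrm{B}$ as a functor on non-localized categories.
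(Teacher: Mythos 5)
Your proposal is correct and follows exactly the paper's argument: the paper's proof of Proposition \ref{gp} is precisely the observation that $\re{-}$ commutes with homotopy colimits and finite homotopy products, applied to $\mrm{B}G = hocolim_{\del^{op}} G^\bul$. Your extra care about upgrading the objectwise product isomorphisms to an equivalence of $\del^{op}$-diagrams (via the strict, underived functor $ssp$) is a sensible way to make the one-line argument fully rigorous, but it is the same route.
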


\begin{proof}
This follows directly from the fact that $\re{-}$ commutes with finite homotopy products and homotopy colimits. 
\end{proof}

\paragraph{Symmetric spectra.}\label{realsp}

We extend the topological realization functor $ssp :\spr \lmo SSet$ to presheaves of symmetric spectra by taking the $Sp$-enriched left Kan extension of the composite functor
$$\xymatrix{ \spr\ar[r]^-{ssp} & SSet\ar[r]^-{\sinf(-)_+} & Sp }$$
along the infinite suspension functor $\sinf(-)_+ : \spr\lmo \spaf$. We denote by $\sspsp$ this extension. We obtain a commutative diagram
$$\xymatrix{ \spr \ar[r]^-{ssp} \ar[d]_-{\sinf(-)_+} & SSet\ar[r]^-{\sinf(-)_+} & Sp  \\ \spaf \ar@/_1pc/[rru]_-{\sspsp} }.$$
The right adjoint to $\sspsp$ sends a symmetric spectrum $E$ to the presheaf $X\mapsto \homi_{Sp}(\sinf X_+, E)$. We have the same properties than with the simplicial realization. 
\begin{prop}\label{proprelsp}
\begin{itemize}
\item For all $F\in Sp$ and all $E\in \spaf$ we have a canonical isomorphism of spectra $F\sm \sspsp(E)\simeq \sspsp(F\sm E)$.
\item For all $E,E'\in \spaf$, we have a canonical isomorphism of spectra $\sspsp(E\sm E') \simeq \sspsp(E)\sm \sspsp(E')$, i.e. the functor $\sspsp$ is monoidal. 
\end{itemize}
\end{prop}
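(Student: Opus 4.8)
The plan is to establish Proposition \ref{proprelsp} by reducing both statements to the corresponding facts about the simplicial realization $ssp$ proved in Proposition \ref{proprel}, using the fact that $\sspsp$ is defined as a left Kan extension and that the infinite suspension functor $\sinf(-)_+$ is itself monoidal. For the first statement (the $F\sm(-)$-linearity), I would first reduce to the case $F=\sinf K_+$ for $K\in SSet$, since every symmetric spectrum is built from such suspension spectra by (homotopy) colimits and $\sspsp$, being a left adjoint, commutes with colimits; moreover smashing with a fixed spectrum commutes with colimits in the other variable. For $F=\sinf K_+$ one has $\sinf K_+\sm \sinf X_+\simeq \sinf(K\times X)_+$, so the claim becomes $\sspsp(\sinf(K\times X)_+)\simeq \sinf K_+\sm \sspsp(\sinf X_+)$, and via the defining triangle of $\sspsp$ this unwinds to $\sinf(ssp(K\times X))_+\simeq \sinf K_+\sm \sinf(ssp(X))_+$, which follows from Proposition \ref{proprel}(3) ($ssp(K\times E)\simeq K\times ssp(E)$) together with $\sinf(K\times Y)_+\simeq\sinf K_+\sm\sinf Y_+$. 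The general case follows by writing a general $E\in\spaf$ as a colimit of representable suspension spectra (the analog of the coequalizer presentation (\ref{colimenr})) and using that both sides commute with colimits in $E$.

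For the second statement, that $\sspsp$ is monoidal, the plan is the same bootstrap: both $\sspsp(-\sm-)$ and $\sspsp(-)\sm\sspsp(-)$ are functors $\spaf\times\spaf\to Sp$ that preserve colimits separately in each variable (the source and target are monoidal model categories with colimit-preserving tensor, and $\sspsp$ is a left adjoint), so it suffices to produce a natural isomorphism on the generating objects, i.e. on pairs $(\sinf X_+, \sinf Y_+)$ with $X,Y\in\affc$. On such a pair, $\sinf X_+\sm\sinf Y_+\simeq \sinf(X\times Y)_+$ and $X\times Y$ is again an affine $\C$-scheme of finite type, so by the defining commutative triangle of $\sspsp$ the left-hand side is $\sinf(ssp(X\times Y))_+$, while the right-hand side is $\sinf(ssp X)_+\sm\sinf(ssp Y)_+\simeq \sinf(ssp X\times ssp Y)_+$. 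These agree because $ssp$ is monoidal on $\affc$ — indeed $ssp(X\times Y)=S(sp(X\times Y))$ and $sp$ sends a product of schemes to the product of topological spaces of complex points, and the singular complex functor $S$ sends products of spaces to products of simplicial sets (or one simply cites Proposition \ref{proprel}(2), that $ssp$ commutes with finite products, restricted to representables). One then checks that the colimit-extension of this isomorphism on generators is associative and unital, which is formal once $\sspsp$ is exhibited as the left Kan extension of a monoidal functor along a (strong) monoidal functor into a presentably symmetric monoidal category: the left Kan extension inherits a canonical (lax, in fact strong here) monoidal structure.

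The main obstacle I anticipate is bookkeeping rather than conceptual: making the "reduce to representables / suspension spectra and extend by colimits" argument precise requires that the comparison maps one writes down really are the structure maps coming from the universal property of the left Kan extension (so that naturality and coherence are automatic), as opposed to maps assembled by hand that one would then have to check are natural and coherent. Concretely, one wants to invoke the general fact that if $C$ is symmetric monoidal, $D$ is presentably symmetric monoidal, $F\colon C\to D$ is (strong) monoidal, and $G\colon C\to C'$ is strong monoidal with $C'$ presentably symmetric monoidal, then the $D$-enriched (equivalently, colimit-preserving) left Kan extension $\tilde F\colon C'\to D$ is canonically strong monoidal; applying this with $C=\affc$, $C'=\spaf$, $D=Sp$, $G=\sinf(h_{-})_+$ and $F=\sinf(ssp(-))_+$ gives the monoidality of $\sspsp$ immediately, and the $F\sm(-)$-linearity is the special case where one tensors against a fixed object. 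Provided this formal input is granted (it is the spectral analog of Proposition \ref{proprel}(2)--(3) and Proposition \ref{proprelder}(2)--(3), and the paper already uses such Kan-extension-of-monoidal-functor reasoning for $ssp$ itself), the proof is short, and I would write it as: "Both assertions follow from the corresponding assertions for $ssp$ (Proposition \ref{proprel}) because $\sspsp$ is the left Kan extension of the monoidal functor $\sinf(ssp(-))_+$ along the strong monoidal functor $\sinf(h_{-})_+$, and smashing with spectra preserves colimits; we reduce to suspension spectra of representables, where the isomorphisms are those of \ref{proprel}, and extend by colimits."
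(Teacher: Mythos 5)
The paper does not actually write out a proof of Proposition \ref{proprelsp}: it is stated as a formal consequence of the definition of $\sspsp$ as an $Sp$-enriched left Kan extension, exactly parallel to the (one-line) proofs of Proposition \ref{proprel}. Your overall plan — reduce to representables, use $ssp(X\times Y)\simeq ssp(X)\times ssp(Y)$, extend by colimits, and ultimately invoke the "enriched left Kan extension of a monoidal functor along a strong monoidal functor is strong monoidal" formalism — is the intended route, and your final fallback argument is correct.

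However, two of your reduction steps are false as literally stated, so the proof as written has a gap even though it is easily repaired. First, suspension spectra $\sinf K_+$ do \emph{not} generate $Sp$ under colimits: since $(-)\sm S^n$ commutes with colimits, a colimit of suspension spectra is again a suspension spectrum, so no colimit of objects $\sinf K_+$ can produce, say, the shifted free spectra $F_nS^0$; consequently the reduction of the first bullet to $F=\sinf K_+$ does not cover general $F$. Likewise the objects $\sinf (h_X)_+$ do not generate $\spaf$ under colimits; the correct statement is the enriched co-Yoneda presentation $E\simeq \int^{X\in\affc}\sinf(h_X)_+\sm E(X)$, whose building blocks are \emph{copowers} of representables by arbitrary spectra. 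The clean repair is the one you gesture at yourself: the first bullet is precisely the statement that $\sspsp$ preserves $Sp$-copowers, and this holds with no decomposition of $F$ at all, because $\sspsp$ is an $Sp$-enriched left adjoint (its right adjoint $E\mapsto (X\mapsto \homi_{Sp}(\sinf X_+,E))$ is written down in the paper). Granting the first bullet, the second follows by writing $E$ and $E'$ via the coend above, using that the levelwise smash commutes with colimits in each variable, that $\sinf(h_X)_+\sm\sinf(h_Y)_+\simeq \sinf(h_{X\times Y})_+$, and that $ssp(X\times Y)\simeq ssp(X)\times ssp(Y)$ (Proposition \ref{proprel}). One further point worth making explicit if you invoke the Im--Kelly/Day-convolution statement: that theorem produces a \emph{strong} monoidal extension only for the Day convolution product on $\spaf$; it applies here because for the cartesian structure on $\affc$ the Day convolution coincides with the levelwise smash (the coend $\int^{X,Y}\sinf\Hom(Z,X\times Y)_+\sm E(X)\sm E'(Y)$ splits and co-Yoneda identifies it with $E(Z)\sm E'(Z)$), which is the product the proposition is about.
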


We denote by $\spetao$ the $\ao$-étale model structure on the category $\spaf$. Proposition \ref{di} implies that the functor
$$ssph_\s:\spetao\lmo Sp$$ 
is left Quillen for the $\ao$-étale model structure. Remark \ref{remderrel}  applies also for presheaves of spectra, that is for all $E\in \spaf$ we have a canonical isomorphism $\lef \sspsp (E)\simeq \lef ssph_\s (E)$. 

\begin{df} We call \emph{spectral topological realization} and we denote by $\resp{-}$ the two derived functors $\lef \sspsp$ and $\lef ssph_\s$. We have a commutative triangle,
$$\xymatrix{ Ho(\spaf) \ar[rd]_-{\resp{-}} \ar[rr]^-{\R id} && Ho(\spetao) \ar[ld]^-{\resp{-}} \\ & Ho(Sp) & }$$
We denote by $\hbs$ the right adjoint to $\resp{-}$. For all $E\in Ho(Sp)$, $\hbs(E)$ is the presheaf
$$\hbs(E) : X\longmapsto \rhomi_{Ho(Sp)} (\resp{X}, E).$$
\end{df} 

The notation $\hbs$ stands for Betti cohomology. Indeed if $E=H\C$ is the Eilenberg--Mac-Lane spectrum of $\C$, then the presheaf $\hbs(H\C)$ is nothing but Betti cohomology. Since $\sspsp$ is a monoidal functor, the spectral topological realization is also a monoidal functor.

\paragraph{$\del$ et $\gam$-espaces.}

Finally, we extend the topological realization to $\del$-objects in $\spr$. Of course, the same can be done for $\gam$-objects. We apply notations of \ref{monoids} for model structures on $\del$-objects. Let $E\in \del-\spr$ be a $\del$-presheaf. The topological realization of $E$, denoted by $ssp_{\del}(E)$ is the $\del$-space defined by the formula $ssp_{\del}(E)_n=ssp(E_n)$. This defines a functor
$$ssp_{\del} : \del-\spr\lmo \del-SSet.$$
This functor admits as right adjoin the functor which associates to a $\del$-space $F$ the $\del$-presheaf $R(F)_n=R(F_n)$ (where $R$ is the right adjoint to $ssp$). The associated functor
$$ssph_{\del}:\del-\spretao \lmo \del-SSet$$ 
is left Quillen for the $\ao$-étale model structure. 

\begin{df} 
We call \emph{realization of $\del$-presheaves} and we denote by $\redel{-}$ the derived functors $\lef ssp_{\del}$ and $\lef ssph_{\del}$. We have a commutative triangle,
$$\xymatrix{ Ho(\del-\spr) \ar[rd]_-{\redel{-}} \ar[rr]^-{\R id} && Ho(\del-\spretao) \ar[ld]^-{\redel{-}} \\ & Ho(\del-SSet) & }$$
We denote by $\regam{-}$ its $\gam$-version. 
\end{df} 

The realization $\redel{-}$ commutes with finite homotopy products, and therefore the realization of a special (resp. very special) $\del$-object, is a special (resp. very special) $\del$-object. We have a diagram
$$\xymatrix{ Ho(\del-\spr) \ar[r]^-{\redel{-}}  \ar[d]^-{mon} & Ho(\del -SSet)\ar[d]^-{mon}\\ 
Ho(\del-\spr^{sp}) \ar[r]^-{\redel{-}}  \ar[d]^-{(-)^+} & Ho(\del -SSet^{sp})   \ar[d]^-{(-)^+} \\
Ho(\del-\spr^{tsp} )  \ar[r]^-{\redel{-}} & Ho(\del -SSet^{tsp}) }$$

\begin{prop}\label{proprelfree}
For all $E\in Ho(\del-\spr)$ we have a canonical isomorphism in $Ho(\del-SSet)$, 
$$\redel{mon(E)} \simeq mon \redel{E}.$$
\end{prop} 

\begin{proof} 
It suffices to show that their right adjoints commute. But it follows from the fact that if  $F\in \del-SSet$ is special, then the $\del$-object $\R R(F)=\R\map(\re{-}, F)$ is special. 
\end{proof}

\begin{prop}\label{proprelplus}
For all \emph{special} $\del$-presheaf $E\in Ho(\del-\spr)$ we have a canonical isomorphism in $Ho(\del-SSet)$, 
$$\redel{E^+} \simeq \redel{E}^+.$$
\end{prop}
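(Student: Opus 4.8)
The plan is to exploit the model, recalled in Remark \ref{remgroup}, of the group completion functor $(-)^+$ on special $\del$-objects (in $SSet$ or in the projective model category of simplicial presheaves) as the composite $\R\Omega_\bul \circ \re{-}$, where $\re{-}$ here denotes the realization of the bisimplicial object $[n]\mapsto E_n$ (the diagonal, or equivalently the homotopy colimit over $\del^{op}$), and $\R\Omega_\bul$ is the levelwise loop-space construction. So the statement reduces to showing that the topological realization $\redel{-}$ of $\del$-presheaves is compatible with both of these operations. Concretely, I would show (i) $\redel{-}$ commutes with the bisimplicial realization (equivalently the homotopy colimit over $\del^{op}$), and (ii) $\redel{-}$ commutes with the looping functor $\R\Omega_\bul$ in the appropriate sense, at least after applying the bisimplicial realization where the relevant simplicial set is already grouplike/connected.

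First I would address (i). Since $\re{-} = \lef ssp$ is a left derived functor of a left Quillen functor (Prop \ref{proprel}, and Thm \ref{di} for the $\ao$-étale structure), it commutes with all homotopy colimits; in particular, applied levelwise to a $\del$-presheaf it commutes with the homotopy colimit over $\del^{op}$, i.e. with bisimplicial realization. Thus $\re{ \mathrm{diag}\, E_\bul } \simeq \mathrm{diag}\, \redel{E}_\bul$ in $Ho(SSet)$, functorially. Next, for (ii), the key point is that $\redel{-}$ commutes with finite homotopy products (Prop \ref{proprelder}(2), and the remark just before the statement that the realization of a special resp. very special $\del$-object is again such). Because $E$ is assumed special, the bisimplicial realization $\re{E_\bul}$ is, after one further loop, equivalent to the realization of the very special (grouplike) $\del$-space $E^+$; and the looping $\R\Omega_\bul$ applied to a pointed connected simplicial set is, up to equivalence, controlled by finite homotopy limits of path-spaces, which $\redel{-}$ respects precisely because it is a left Quillen functor preserving the relevant homotopy pullbacks of the form $P\times_X P$ that compute loop spaces on grouplike objects. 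Assembling: $\redel{E^+} \simeq \redel{\R\Omega_\bul \re{E_\bul}} \simeq \R\Omega_\bul \redel{\re{E_\bul}} \simeq \R\Omega_\bul \re{\redel{E}_\bul} \simeq \redel{E}^+$, where the middle isomorphism is the compatibility with looping and the outer ones are (i) together with the previous Proposition \ref{proprelplus} on $mon$.

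An alternative, perhaps cleaner, route avoids writing $\R\Omega_\bul$ explicitly: one shows the right adjoints agree, exactly as in the proof of the preceding Proposition \ref{proprelplus}. The right adjoint of $\redel{-}$ on $\del$-objects sends a $\del$-space $F$ to $\R R_\del(F) = \R\map(\re{-}, F)$ applied levelwise. It suffices to check that if $F$ is a \emph{very special} $\del$-space, then $\R R_\del(F)$ is a very special $\del$-presheaf. Specialness of $\R R_\del(F)$ was already verified in the proof of Proposition \ref{proprelplus}; the extra "very special" condition asks that $p_0^*\times d_1^* : \R R_\del(F)_2 \to \R R_\del(F)_1 \ph \R R_\del(F)_1$ be an equivalence, which follows by applying the (homotopy-product-preserving) functor $\R\map(\re{X},-)$ to the corresponding equivalence for $F$, for every $X$. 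Since $\re{X}$ is a fixed cofibrant simplicial set, $\R\map(\re{X},-)$ preserves weak equivalences and finite homotopy products, so this is immediate. The main obstacle, in either approach, is the bookkeeping of model structures — making sure that the manipulations stay within the special/very special Bousfield localizations and that $\redel{-}$, being only a left Quillen functor, is being applied to genuinely cofibrant objects (or that its left derived functor is what appears), so that the interchange of $\redel{-}$ with $(-)^+$ is valid on the nose in the homotopy categories; none of the individual steps is deep, but the compatibility of the localizations is where care is needed.
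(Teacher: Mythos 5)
Your second (``alternative'') route is exactly the paper's proof: the paper argues in one line that the right adjoint $\R R(F)=\R\map(\re{-},F)$ sends very special $\del$-spaces to very special $\del$-presheaves, which is precisely your check that $\R\map(\re{X},-)$ preserves equivalences and finite homotopy products, so this part is correct and essentially identical. Your first route via Segal's model $\R\Omega_\bul\circ\re{-}$ is not needed and its step (ii) --- commuting the left adjoint $\redel{-}$ with the looping $\R\Omega_\bul$ --- is the one place where the argument is only sketched, so rely on the adjoint argument instead.
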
 

\begin{proof} 
It suffices to show that their right adjoints commute. But it follows from the fact that if  $F\in \del-SSet$ is very special, then the $\del$-object $\R R(F)=\R\map(\re{-}, F)$ is very special. 
\end{proof}

Now we compare topological realization of $\gam$-presheaves and of presheaves of connective spectra. We have a diagram, 
$$\xymatrix{Ho(\gam-\spr) \ar[r]^-{\regam{-}} \ar[d]^-{\mcal{B}} & Ho(\gam-SSet) \ar[d]^-{\mcal{B}}  \\ Ho(\spaf) \ar[r]^-{\resp{-}} & Ho(Sp) }$$

\begin{prop}\label{proprelb}
For all $E\in Ho(\gam-\spr)$, we have a canonical isomorphism in $Ho(Sp)$, 
$$\mcal{B} \regam{E} \simeq \resp{\mcal{B}E}.$$
\end{prop}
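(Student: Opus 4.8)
The plan is to deduce the statement from the comparison results already established for ordinary realizations, using the fact that both $\mcal{B}$ and the realization functors are built from homotopy colimits and finite homotopy products, which the derived topological realization preserves.

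First I would recall the construction of $\mcal{B}$ from \cite[§5]{bf}: for a $\gam$-object $E$ in simplicial presheaves, $\mcal{B}E = E(\s)$ is obtained by evaluating the prolongation of $E$ on the sphere spectrum, and in each simplicial degree this prolongation is built levelwise out of the values $E_n$ by iterated homotopy colimits (bar-type constructions / realizations of simplicial diagrams) together with finite homotopy products (the Segal maps). Concretely, $\mcal{B}E$ in spectrum degree $m$ is (up to equivalence) the diagonal of a multisimplicial object whose entries are finite products of the $E_n$'s smashed with finite pointed sets. The key point is that this recipe is entirely functorial in $E$ and uses only operations — homotopy colimits over $\del^{op}$ (and products thereof), finite homotopy products, and smashing with fixed pointed simplicial sets — under which $\redel{-}$ (equivalently $\regam{-}$) and $\resp{-}$ are known to be well-behaved.

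The main steps are then: (1) by Proposition \ref{proprelsp} and Proposition \ref{proprel}(2)--(3), together with the fact that $\resp{-}$ commutes with homotopy colimits (Proposition \ref{proprelsp} / the left Quillen property) and with finite homotopy products, observe that applying $\resp{-}$ to the prolongation of $E$ computes, level by level, the prolongation of $\regam{E}$; (2) since $\mcal{B}E = E(\s)$ is the value at the sphere spectrum, which is itself a homotopy colimit of the finite pointed sets $S^m$, commuting $\resp{-}$ past this identifies $\resp{\mcal{B}E}$ with $(\regam{E})(\s) = \mcal{B}\regam{E}$; (3) check naturality of all the identifications so that the isomorphism is canonical in $Ho(Sp)$. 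Alternatively, and perhaps more cleanly, one checks that the right adjoints commute: the right adjoint to $\resp{-}$ is $\hbs$, the right adjoint to $\mcal{B}$ is $\mcal{A}$, and one verifies that $\mcal{A}\,\hbs(F) \simeq \regam{-}\text{-right-adjoint}(\mcal{A}F)$ using that $\mcal{A}$ is computed by looping, which $\hbs = \R\map(\resp{-}, -)$ commutes with — this is the same style of argument as in the proofs of Propositions \ref{proprelplus} and \ref{proprelb}'s predecessors.

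The hard part will be handling the fact that $\mcal{B}$ is not simply a finite homotopy colimit but an infinite one (the prolongation involves all spheres $S^m$ and a sequential colimit assembling the spectrum), so one must be careful that $\resp{-}$, which commutes with \emph{all} homotopy colimits as a left adjoint, genuinely passes through the structure maps of the spectrum $\mcal{B}E$ compatibly; in particular one needs the monoidality of $\resp{-}$ (Proposition \ref{proprelsp}) to see that smashing with $S^1$ is respected, so that the bonding maps match up. A secondary subtlety is that $\mcal{B}$ as defined in \cite{bf} preserves weak equivalences between \emph{all} $\gam$-objects, not just cofibrant ones, so no cofibrant replacement issues arise on that side; but on the realization side one must work with cofibrant-replaced $\gam$-presheaves, and one should note that $\redel{-}$ of a cofibrant replacement of $E$ models $\redel{E}$ and remains a (cofibrant, or at least levelwise-cofibrant) $\gam$-space, so that $\mcal{B}$ applied to it computes the derived $\mcal{B}$. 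Once these bookkeeping points are in place, the isomorphism $\mcal{B}\regam{E} \simeq \resp{\mcal{B}E}$ follows formally.
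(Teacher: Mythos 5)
Your proposal is correct and follows essentially the paper's own route: the paper's proof is precisely the observation that $\mcal{B}$ (Segal's construction, i.e.\ evaluation on the sphere, built by iterated realizations) is a homotopy colimit construction and that $\resp{-}$, being a left Quillen functor, commutes with homotopy colimits. Your additional bookkeeping about prolongation, bonding maps and cofibrancy, as well as the alternative check via right adjoints, only elaborates the same argument.
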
 

\begin{proof} 
It follows from the fact that $\mcal{B}$ is a homotopy colimit, and because $\resp{-}$ commutes with homotopy colimits. 
\end{proof}

We recall from \ref{monoids} that there exists a functor
$$\alpha^*: Ho(\gam-\spr) \lmo Ho(\del-\spr).$$
We notice that by definition of  $\alpha^*$, for all $E\in \gam-\spr$, we have a canonical isomorphism 
$$\redel{\alpha^* E} \simeq \alpha^*\regam{E}.$$

\subsection{Restriction to smooth schemes}\label{sectrestliss}

We show here that the topological realization of a simplicial presheaf is unchanged if we restrict this presheaf to smooth schemes. It is a consequence of a result by Suslin--Voevodsky concerning the \emph{cdh}-topology, and of one of our main results : the homotopical version of Deligne's cohomological proper descent. This latter result is the analog for the proper topology of Dugger--Isaksen theorem (see theorem \ref{di}). To prove this descent result, we mimic Dugger--Isaksen's proof and we also use Lurie's proper descent theorem. 

The property of "restriction to smooth schemes" will be used below to show that the topological K-theory of a smooth complex algebraic variety coincides with the topological K-theory (in the dg-sense) of its dg-category of perfect complexes. It will also be used to show that negative semi-topological K-theory of a smooth commutative algebra is zero. 

We denote by $\afflissc$ the category of smooth affine $\C$-schemes of finite type. We denote by $l:\afflissc\hookrightarrow \affc$ the inclusion. The restriction on simplicial presheaves is denoted by $l^*:\spr\lmo \sprliss$. 

\begin{theo}\label{restliss} --- 
Let $F\in SPr(\affc)$. Then there exists a canonical isomorphism $\re{l^* F} \simeq \re{F}$ in $Ho(SSet)$. 
\end{theo}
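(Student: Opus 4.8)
The strategy is to factor the restriction statement through a third category of schemes — all (not necessarily affine, not necessarily smooth) finite-type $\C$-schemes — and to compare realizations at each stage using descent. First I would extend the topological realization from $SPr(\affc)$ to $SPr(\schc)$ via left Kan extension along the inclusion $\affc \hookrightarrow \schc$; since every finite-type scheme has an affine Zariski cover and realization commutes with homotopy colimits, this extension agrees with $\re{-}$ after restriction back to affine schemes, so there is no loss in working over $\schc$. The claim then reduces to showing that, for $F \in SPr(\schc)$, the realization of $F$ agrees with the realization of its restriction to the subcategory $\lissc$ of smooth finite-type schemes. Dually (via the adjoint $(-)_B$), it suffices to show that a fibrant object $K_B$ for the smooth site, pulled back along the inclusion, remains fibrant for the full site — i.e. that realization is insensitive to replacing an arbitrary scheme by a smooth simplicial resolution.

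The key input is the homotopical proper descent result announced as Prop.\ \ref{quipro}: for every proper hypercovering $Y_\bul \lmo X$, the map $hocolim_{\del^{op}} \re{Y_\bul} \lmo \re{X}$ is an isomorphism in $Ho(SSet)$. By resolution of singularities (Hironaka), every finite-type $\C$-scheme $X$ admits a proper hypercovering $Y_\bul \lmo X$ with each $Y_n$ \emph{smooth}; this is exactly the content of Deligne's construction of smooth proper hypercoverings. So I would argue as follows: given $F \in SPr(\schc)$, I want $\re{l^* F} \simeq \re{F}$ where now $l^*$ restricts to $\lissc$. Using the coend formula (\ref{colimenr}) writing $F$ as a homotopy colimit of representables $h_X\times(\text{simplicial data})$, and the fact that $\re{-}$ commutes with homotopy colimits, it is enough to treat $F = h_X$ for a single (possibly singular) $X$. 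For such $X$ pick a smooth proper hypercovering $Y_\bul \lmo X$. On one hand $hocolim_{\del^{op}} h_{Y_\bul} \lmo h_X$ becomes an isomorphism after applying $\re{-}$, by Prop.\ \ref{quipro}. On the other hand each $h_{Y_n}$ already lies in the image of (the Kan extension from) $\lissc$, so $\re{h_{Y_\bul}}$ is computed entirely from smooth schemes; taking the homotopy colimit over $\del^{op}$ shows $\re{h_X}$ is a homotopy colimit of realizations of smooth schemes, hence depends only on $l^* h_X$. Assembling this over the coend presentation of a general $F$ gives $\re{F} \simeq \re{l^* F}$.

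To make "depends only on $l^* F$" precise I would phrase the whole comparison at the level of model structures, mirroring Dugger--Isaksen. Equip $SPr(\schc)$ (resp.\ $SPr(\lissc)$) with the \emph{proper-local} model structure — the left Bousfield localization of the global structure at the maps $hocolim_{\del^{op}} h_{Y_\bul} \lmo h_X$ for proper hypercoverings. Prop.\ \ref{quipro} says precisely that $ssp$ (equivalently $ssph$) is a left Quillen functor for this structure, so $\re{-}$ factors through $Ho(SPr(\schc)^{\mathrm{pro}})$. The pair $(l_!, l^*)$ between simplicial presheaves on $\lissc$ and on $\schc$ is a Quillen pair, and I would invoke a Suslin--Voevodsky type statement (the analog for the proper, or $cdh$, topology of the fact that smooth schemes generate) to the effect that $l_!$ induces an equivalence $Ho(SPr(\lissc)^{\mathrm{pro}}) \simeq Ho(SPr(\schc)^{\mathrm{pro}})$ — this holds because every scheme is covered, in the proper/$cdh$ topology, by smooth ones (resolution of singularities), so the two localized categories have the same generators and the same local objects. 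Composing, $\re{l^* F} \simeq \re{F}$ functorially.

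\textbf{Main obstacle.} The genuine work is Prop.\ \ref{quipro} — the homotopical proper descent — which the excerpt defers; granting it, the rest is formal. Within the present argument, the delicate point is the comparison $Ho(SPr(\lissc)^{\mathrm{pro}}) \simeq Ho(SPr(\schc)^{\mathrm{pro}})$: one must check that the proper topology has a basis consisting of smooth schemes (which is where Hironaka's resolution of singularities over $\C$ enters, in Deligne's hypercovering form), and that passing between the affine and non-affine sites, and between the $cdh$- and proper-localizations, does not change the homotopy category relevant to $\re{-}$. This is exactly the kind of site-comparison that Suslin--Voevodsky carry out for the $cdh$-topology, and I expect the proof to cite their result and Lurie's proper descent theorem rather than reprove it. The bookkeeping of homotopy colimits (ensuring the coend presentation (\ref{colimenr}) is compatible with restriction, and that fibrant replacement in the proper-local structure can be computed via smooth hypercoverings) is routine but must be done carefully to get functoriality of the isomorphism.
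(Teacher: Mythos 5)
Your proposal is correct and follows essentially the same route as the paper: pass to the category of all separated finite-type schemes, equip both $\schc$ and $\lissc$ with the proper-local model structure, use Prop.~\ref{quipro} to make the realization left Quillen for that structure, and identify the two localized homotopy categories via resolution of singularities (the paper's Prop.~\ref{eqpreliss}, itself a variant of Suslin--Voevodsky proved by the SGA4 comparison lemma), exactly the site comparison you flag as the delicate point. The only detail you gloss over is that one must also check the underived restriction computes the derived one (the paper's Lemma~\ref{preserve}), which is handled by working with simplicial sheaves.
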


The proof occupies the next two subsections. We will use the proper topology on the category $\schc$ of separated $\C$-schemes of finite type. The idea consists in using a Quillen equivalence between the proper local model category $\sprpro$ of simplicial presheaves defined over schemes, and the proper local model category $\sprproliss$ of simplicial presheaves defined over smooth schemes. Having proved this step, it remains to show that the topological realization behaves well with respect to the proper topology, which is our descent result or generalization of Deligne's proper descent. 

We denote by $\lissc$ the category of separated smooth $\C$-schemes of finite type and by 
$$\xymatrix{ \afflissc\ar@{^{(}->}[r]^-{l} \ar@{^{(}->}[d]_k & \affc \ar@{^{(}->}[d]^-j \\ \lissc\ar@{^{(}->}[r]^-i & \schc }$$
the inclusions. We have restriction/extension functors between the étale local model categories, 
$$\xymatrix{ \sprset \ar@<-3pt>[r]_-{i^*} \ar@<3pt>[d]^-{j^*} & \sprslisset \ar@<-3pt>[l]_-{i_!}   \ar@<3pt>[d]^-{k^*} \\ \spret \ar@<-3pt>[r]_-{l^*} \ar@<3pt>[u]^-{j_!}  & \sprlisset \ar@<3pt>[u]^-{k_!} \ar@<-3pt>[l]_-{l_!} }$$
All this adjoint pairs are Quillen pairs by the general results on change of sites \cite[Prop 7.2]{dhi}. The restriction functor $j^*$ preserves étale local equivalences and it is a Quillen equivalence. Indeed this can be deduced from the analog statement for sheaves using the projective local model structure on simplicial sheaves of \cite[Thm 2.1]{blande}. Theorem \ref{restliss} can be equivalently state in the category $\sprs$. Indeed for all $F\in \spr$, we have an equivalence $\re{\lef j_! F} \simeq \re{F}$ by definition of the realization and because $\lef j_!$ is a left adjoint. However for all $E\in \sprs$, the counit map $\lef j_! j^* E\lmo E$ is an étale local equivalence. The induced map $\re{\lef j_! j^* E} \lmo \re{E}$ is therefore an equivalence by theorem \ref{di}. We also have by definition of the realization an equivalence $\re{\lef j_! j^* E}\simeq \re{j^*E}$, and thus an equivalence $\re{j^*E}\simeq \re{E}$. Consequently we'll work in the category $\sprs$. 

We endow $\schc$ with the proper topology, i.e. a family of maps is declared to be a covering if each of the map is proper and if this family is cojointly surjective. We define the proper topology on the category $\lissc$ by saying that a sieve $S\subseteq h_X$ of an object of $\lissc$ is a covering sieve if it contains a sieve generated by a proper covering of $X$ in $\schc$. We recall that the category $\lissc$ does not have all pullbacks in general. 

\begin{rema}\label{hiro}
By Hironaka's theorem on resolution of singularities in characteristic zero \cite{hironakaresII}, every proper covering sieve $S$ of a scheme $X$ admits a refinement by a proper covering sieve $T\subseteq S$ generated by maps with source being smooth. Indeed for all map $Z\lmo Y$ in $S$, by Hironaka's theorem, there exists a smooth scheme $Z'$ and a proper surjective map $Z'\lmo Z$.
\end{rema}

\begin{nota} 
We denote by $\sprpro$ the proper local model category, i.e. the left Bousfield localization of the projective model category $\sprs$ with respect to maps $$hocolim_{\del^{op}} h_{Y_\bul} \lmo h_X$$ 
for all proper hypercovering $Y_\bul \lmo X$ in $\schc$. The following is our proper descent result. 
\end{nota}

\begin{prop}\label{quipro}
For every proper hypercovering $Y_\bul\lmo X$ of a scheme $X\in \schc$, the induced map,
$$hocolim_{\del^{op} }  \re{Y_\bul}\lmo \re{X}$$
is an isomorphism in $Ho(SSet)$. We deduce that the functor
$$ssp : \sprpro \lmo SSet$$
is left Quillen. 
\end{prop}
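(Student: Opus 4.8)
The statement to prove is that for every proper hypercovering $Y_\bul\lmo X$ in $\schc$, the canonical map $\mathrm{hocolim}_{\del^{op}}\re{Y_\bul}\lmo\re{X}$ is an isomorphism in $Ho(SSet)$; the second assertion (that $ssp:\sprpro\lmo SSet$ is left Quillen) then follows formally, since $\sprpro$ is the left Bousfield localization of $\sprs$ at precisely the maps $\mathrm{hocolim}_{\del^{op}}h_{Y_\bul}\lmo h_X$, and a left Quillen functor out of a left Bousfield localization is characterized by sending those localizing maps to equivalences (using that $\re{-}$ already commutes with homotopy colimits, so $\re{\mathrm{hocolim}_{\del^{op}}h_{Y_\bul}}\simeq\mathrm{hocolim}_{\del^{op}}\re{Y_\bul}$). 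So the whole content is the descent isomorphism, and I would mimic the Dugger--Isaksen proof of Theorem \ref{di} (their Thm 5.2) with the étale topology replaced by the proper topology, feeding in Lurie's proper descent theorem in place of the étale-cohomological input.

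\textbf{Reduction to a cohomological descent statement.} First I would reduce to showing that $\re{-}$ takes a proper hypercover to a colimit diagram, which—since $\re{-}=\lef ssph$ and $ssph$ is a left Quillen functor that on representables computes the underlying homotopy type of complex points via the singular complex of $sp(X)=X(\C)$ with its transcendental topology—amounts to the classical statement of \emph{cohomological descent for proper hypercoverings of the associated complex analytic spaces}. Concretely: if $Y_\bul\lmo X$ is a proper hypercover of $\C$-schemes of finite type, then $Y_\bul(\C)\lmo X(\C)$ is a hypercover of topological spaces for the "proper (= universally submersive, in the analytic world: closed surjective) topology", and the induced map $\mathrm{hocolim}_{\del^{op}}\mathrm{Sing}(Y_\bul(\C))\lmo\mathrm{Sing}(X(\C))$ is a weak equivalence. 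This is exactly Deligne's cohomological descent (SGA $4\frac12$, or B. Conrad's notes) phrased homotopically; on the analytic side one uses that a proper surjective map of locally compact spaces is a "universal $\pi_0$-surjection with cohomological descent", and that $\mathrm{Sing}$ turns this into a simplicial resolution. I would invoke Lurie's proper descent theorem (Higher Topos Theory / DAG or the "Proper Descent" results) to get the $\infty$-categorical version directly: the $\infty$-topos of the analytification is a hypercomplete sheaf topos, and proper surjective maps generate a topology for which hyperdescent holds with constant (or locally constant) coefficients, hence on homotopy types.

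\textbf{Assembling the argument.} Step by step: (1) By Remark \ref{remderrel}, $\re{F}$ may be computed via $ssp$ on a cofibrant replacement, and $\re{-}$ commutes with homotopy colimits (Prop \ref{proprelder}), so the left side is $\re{\mathrm{hocolim}_{\del^{op}}h_{Y_\bul}}$. (2) Reduce to the case where each $Y_n$ is affine, or more precisely observe that the statement only involves the spaces $sp(Y_n)=Y_n(\C)$ and $sp(X)=X(\C)$; here I must be careful that a proper hypercover in $\schc$ need not be levelwise affine, but $sp$ is defined on $\affc$, so I would first extend $sp/ssp$ to all of $\schc$ (it extends by the same Kan-extension / left adjoint formula, or directly by $X\mapsto\mathrm{Sing}(X(\C))$, compatibly), which is implicit in the passage to $\sprs$ that the paper has already carried out. (3) Apply the analytic cohomological proper descent: $\mathrm{Sing}(Y_\bul(\C))\lmo\mathrm{Sing}(X(\C))$ exhibits the target as the homotopy colimit of the simplicial diagram. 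For this I would either cite Deligne directly for finite-type $\C$-schemes (the analytification of a proper map is proper, closed surjective maps of the Hausdorff locally compact analytifications satisfy descent), or run Dugger--Isaksen's argument verbatim replacing their use of the étale topology and Artin--Grothendieck comparison with the proper topology and Deligne's result. (4) Conclude the localizing maps go to equivalences, hence $ssp:\sprpro\lmo SSet$ is left Quillen.

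\textbf{Main obstacle.} The delicate point is \emph{not} the formal localization bookkeeping but establishing the analytic cohomological descent in the precise form needed—namely a \emph{hyper}descent statement (arbitrary proper hypercovers, not just Čech nerves of a single proper surjection) for the homotopy type, i.e. with coefficients in the constant $\infty$-sheaf valued in spaces rather than merely in abelian sheaves. Deligne's original statement is for bounded-below complexes of abelian sheaves; upgrading to spaces requires either Lurie's proper descent (which handles $\infty$-categorical coefficients and hypercompleteness directly, and is the tool the introduction explicitly advertises) or a careful Postnikov/nilpotent-completion argument reducing the space-level statement to the homology-level one together with control of $\pi_1$ and local systems. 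I would route everything through Lurie's theorem to avoid these finiteness/nilpotence subtleties, checking only that the proper topology on the analytic site is subcanonical and that the analytification functor $X\mapsto X(\C)$ carries proper hypercovers of schemes to hypercovers in Lurie's sense (closedness, surjectivity, properness being stable under base change analytically). A secondary minor point is the compatibility between the realization defined on $\affc$ and its extension to $\schc$, which the paper has already smoothed over via the Quillen equivalence $j^*$ used just before this Proposition, so I would simply invoke that.
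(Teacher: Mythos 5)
Your overall frame matches the paper's: the left-Quillen assertion is indeed formal from the Bousfield localization, and the substantive content is reduced to the purely topological statement that for a proper hypercovering $B_\bul\lmo A$ of nice spaces (here $B_\bul=\ret{Y_\bul}$, $A=\ret{X}$) the map $hocolim_{\del^{op}}B_\bul\lmo A$ is a weak equivalence, to be proved by adapting Dugger--Isaksen with Lurie's results as the new input. The gap is in how you discharge that topological statement. There is no citable ``proper descent theorem'' of Lurie asserting that proper surjections of locally compact spaces generate a topology for which constant space-valued coefficients satisfy hyperdescent; what HTT supplies is the proper \emph{base change} theorem (Cor 7.3.1.18) for a single proper map, and your sentence ``proper surjective maps generate a topology for which hyperdescent holds with constant coefficients, hence on homotopy types'' is essentially the proposition itself rather than a hypothesis one can check against an existing theorem. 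Likewise, Deligne's cohomological descent cannot be substituted ``verbatim'' into the Dugger--Isaksen argument: it is an abelian, bounded-below derived-category statement, whereas the step in question needs the nonabelian, space-level statement, and your own ``main obstacle'' paragraph correctly identifies this but then resolves it by appealing to the nonexistent black box.

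Concretely, what fills the gap in the paper's proof of \ref{quipro} (via \ref{hypdescpro}) is: (i) a skeletal/coskeletal reduction to bounded hypercovers (Lemma \ref{sque}), followed by an induction on dimension with a retract trick on the bisimplicial nerve, reducing everything to dimension-$0$ hypercovers, i.e.\ Čech nerves of proper surjections --- this is the genuinely Dugger--Isaksen part, which your fallback plan could plausibly reproduce; (ii) for a Čech nerve, a stalkwise argument: derived global sections of constant presheaves are computed by Toën's theorem, and Lurie's proper base change is applied \emph{levelwise} to \emph{truncated} constant coefficients --- truncatedness is essential twice, to identify hyperdescent with ordinary descent (HTT 7.2.1.12) and to commute the filtered colimit over neighborhoods of a point with the cosimplicial homotopy limit (Lemmas \ref{pbc}, \ref{pbcreal}); (iii) after base change to a point, the statement becomes the elementary contractibility of the Čech nerve of a nonempty space over a point, proved by an explicit simplicial homotopy; the unbounded, untruncated statement is then recovered through the reduction in (i). Your appeal to hypercompleteness of the analytic sheaf topos (true here since the spaces are paracompact of finite covering dimension) does not substitute for (ii)--(iii): hypercompleteness concerns the open-cover topology on $A$ and does not by itself yield descent of a constant sheaf of spaces along a proper surjection, which is exactly what must be proved.
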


The proof of proposition \ref{quipro} is relatively non-trivial and is based on the purely topological fact that a proper hypercovering induces an equivalence if we take its colimit. It will be given below. We use proposition \ref{quipro} to prove Thm \ref{restliss}. 

The functor $i:\lissc\hookrightarrow \schc$ gives rise to a continuous functor on the level of sites (with the proper topology) and to a set of adjunctions between sheaves of sets in the proper topology,
$$\xymatrix{\shpro \ar[r]^-{i^\ast} & \shproliss \ar@<7pt>[l]^-{e} \ar@<-10pt>[l]_-{i_!}   }.$$
The functor $i^*$ is the restriction to smooth schemes. For every sheaf $F\in \shproliss$, the sheaf $eF$ is a priori the sheaf associated to the presheaf $X\longmapsto \Hom(i_\ast h_X, F)$. By \cite[Prop 7.2]{dhi}, the ajoint pair
$$\xymatrix{\sprpro \ar@<-3pt>[r]_-{i^\ast} & \sprproliss \ar@<-3pt>[l]_-{i_!}   },$$
is a Quillen pair. The following is a variant of \cite[Thm 6.2]{sv2}. 

\begin{prop}\label{eqpreliss}
 The Quillen pair $(i_!, i^*)$ is a Quillen equivalence and induces an equivalence of categories, 
$$\xymatrix{Ho(\sprpro) \ar@<-3pt>[r]_-{i^*} & Ho(\sprproliss) \ar@<-3pt>[l]_-{\lef i_!}   }.$$
\end{prop}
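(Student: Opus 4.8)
The plan is to deduce the statement from an equivalence at the level of $1$-topoi, namely an equivalence between the category $\shpro$ of sheaves of sets on $\schc$ for the proper topology and the category $\shproliss$ of sheaves of sets on $\lissc$. First I would recall that the proper local model category $\sprpro$ is, by the Dugger--Hollander--Isaksen identification of the hypercover localization with Jardine's local model structure (\cite{dhi}, together with the change-of-site results \cite[Prop 7.2]{dhi}), Quillen equivalent to the local model category $s\shpro$ of simplicial sheaves for the proper topology; likewise $\sprproliss\simeq s\shproliss$. Since the local homotopy theory of simplicial objects in a Grothendieck topos depends only on the topos (local weak equivalences being detected on the associated sheaves of homotopy groups), any equivalence of categories $e:\shpro\lmos{\sim}\shproliss$ prolongs levelwise to an equivalence $e_\del:s\shpro\lmos{\sim}s\shproliss$ which preserves and reflects local weak equivalences, hence is a Quillen equivalence; composing, this yields a Quillen equivalence $\sprpro\simeq\sprproliss$.

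So the crux is to prove that the restriction functor $i^*:\shpro\lmo\shproliss$ is an equivalence of categories. This is exactly the point at which resolution of singularities enters: by Remark \ref{hiro} every object of $\schc$ admits a proper covering by smooth schemes, so $\lissc$ is a full, topologically \emph{dense} subcategory of $\schc$, and by construction the proper topology on $\lissc$ is the one induced from $\schc$. These are precisely the hypotheses of the comparison lemma for sites (SGA~4, Exp.~III; this is the content underlying \cite[Thm 6.2]{sv2}), which gives $\shpro\simeq\shproliss$ with quasi-inverse the sheafified left Kan extension. The main obstacle I expect here is that $\lissc$ is \emph{not} stable under fiber products, so the version of the comparison lemma one uses must be formulated purely with sieves rather than with covering families and their fiber products; concretely one must check that the restriction to $\lissc$ of a proper sheaf on $\schc$ is again a proper sheaf, which comes down to the cofinality, over a fixed $X\in\lissc$, of the covering sieves of $X$ by smooth schemes inside the sieves they generate in $\schc$ --- and this cofinality again rests on Hironaka.

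Granting the equivalence $i^*:\shpro\lmos{\sim}\shproliss$, I would then match its quasi-inverse with the left derived functor $\lef i_!$ of the given Quillen pair $(i_!,i^*)$. On presheaves, restriction $i^*$ sends proper sheaves to proper sheaves (by the comparison lemma), and $i_!$ is the ordinary left Kan extension, so $\lef i_!$, computed by composing $i_!$ with a proper-local fibrant replacement, agrees up to natural equivalence with ``sheafify, apply the topos-level inverse of $i^*$''. Hence the derived unit $F\lmo i^*\R(\lef i_!F)$ and derived counit $\lef i_!(i^*E)\lmo E$ become, after passing to sheaves of homotopy groups, the unit and counit of the equivalence of topoi, so they are proper-local equivalences. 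This shows $(i_!,i^*)$ is a Quillen equivalence, with $\lef i_!$ quasi-inverse to $i^*$ on the homotopy categories, as claimed.

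Finally, I would note that the same diagram chase shows this statement is a variant of \cite[Thm 6.2]{sv2}: replacing the proper topology by the \emph{cdh}-topology of Voevodsky, the comparison lemma input and the density of smooth schemes are unchanged, so one could alternatively cite Suslin--Voevodsky after checking that on $\schc$ the proper and \emph{cdh} topologies yield the same local model structure in the range we use; I nonetheless prefer the self-contained site-theoretic route above.
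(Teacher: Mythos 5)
Your proposal is correct and takes essentially the same route as the paper: reduce the statement to an equivalence of the underlying $1$-topoi $\shpro\simeq\shproliss$ (using that the local homotopy theory of simplicial sheaves depends only on the topos — the paper invokes Blander's theorems where you invoke the DHI change-of-site results), and obtain that topos equivalence from the SGA~4 comparison lemma, with Hironaka's resolution (Remark \ref{hiro}) supplying the density of smooth schemes, exactly as in the paper's citation of \cite[Exposé III, Thm 4.1]{sga4-1}. The only cosmetic slip is that $\lef i_!$ is computed via a cofibrant, not fibrant, replacement; this does not affect the argument.
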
  

\begin{proof} 
It suffices to prove that the categories of sheaves of sets are actually equivalent. Indeed by \cite[Thm 2.1]{blande} there exists a projective local model structure on the category of simplicial sheaves $SSh(\schc)^{\mrm{pro}}$ and by Thm 2.2 of loc.cit. the Quillen pair formed by the inclusion of simplicial sheaves into simplicial presheaves and by the sheafification is a Quillen equivalence,
$$\xymatrix{ SSh(\schc)^{\mrm{pro}} \ar@{^{(}->}@<-3pt>[r] & \sprpro \ar@<-3pt>[l]_{\und{a}}  }$$
Moreover the category $SSh(\schc)^{\mrm{pro}}$ is the category of simplicial objects in the topos $\shpro$, and its model structure depends only on the underlying topos $\shpro$. The same fact holds for the model category $SSh(\lissc)^{\mrm{pro}}$. 

From there, it suffices to prove that the adjoint pair
$$\xymatrix{\shpro \ar@<3pt>[r]^-{i^\ast} & \shproliss \ar@<3pt>[l]^-{e} }$$
is an equivalence. But this is a direct consequence of \cite[Exposé III, Thm 4.1]{sga4-1}. 
\end{proof}

\begin{lem}\label{preserve}
The restriction functor $i^* : \sprpro\lmo \sprproliss$ preserves all local equivalences.
\end{lem}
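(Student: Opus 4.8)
The plan is to reduce the statement to the level of simplicial sheaves, where restriction along $i$ has already been identified with an equivalence of model categories in the proof of Proposition \ref{eqpreliss}. Recall from \cite[Thm 2.1 and Thm 2.2]{blande} that the proper local model structure $\sprpro$ is Quillen equivalent, via the levelwise sheafification $\und{a}$ and the inclusion, to the projective proper local model structure $SSh(\schc)^{\mrm{pro}}$ on simplicial sheaves, and likewise over $\lissc$; moreover, for any simplicial presheaf $E$ the sheafification unit $E\lmo \und{a}E$ is a local equivalence, since it is a stalkwise and hence a local weak equivalence. Consequently a map $f$ is a local equivalence in $\sprpro$ (resp. in $\sprproliss$) if and only if $\und{a}(f)$ is a weak equivalence in $SSh(\schc)^{\mrm{pro}}$ (resp. in $SSh(\lissc)^{\mrm{pro}}$).

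Next I would record that presheaf restriction along $i$ commutes with sheafification. Because the proper topology on $\lissc$ is, by definition, the one whose covering sieves contain sieves generated by proper coverings coming from $\schc$ (and, by Remark \ref{hiro}, with smooth sources), the functor $i$ is continuous, and presheaf restriction along $i$ carries sheaves on $\schc$ to sheaves on $\lissc$ and local isomorphisms of presheaves to local isomorphisms; hence $i^{*}(\und{a}P)$, a sheaf on $\lissc$ equipped with a local isomorphism from $i^{*}P$, is canonically the sheafification of $i^{*}P$, which gives a natural isomorphism $\und{a}\circ i^{*}\simeq i^{*}\circ\und{a}$ of functors $\sprpro\lmo SSh(\lissc)^{\mrm{pro}}$ (this is exactly the identification, taken levelwise, of $i^{*}$ with the inverse-image functor appearing in Proposition \ref{eqpreliss}). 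On the other hand, by the proof of Proposition \ref{eqpreliss} the functor $i^{*}\colon SSh(\schc)^{\mrm{pro}}\lmo SSh(\lissc)^{\mrm{pro}}$ is the one induced on simplicial objects by the equivalence of topoi $\shpro\simeq\shproliss$ of \cite[Expos\'e III, Thm 4.1]{sga4-1}, hence an equivalence of model categories, and in particular it preserves all weak equivalences. Combining the two: if $f$ is a local equivalence in $\sprpro$, then $\und{a}(f)$ is a weak equivalence in $SSh(\schc)^{\mrm{pro}}$, so $i^{*}(\und{a}f)\simeq\und{a}(i^{*}f)$ is a weak equivalence in $SSh(\lissc)^{\mrm{pro}}$, and therefore $i^{*}f$ is a local equivalence in $\sprproliss$. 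The same argument shows $i^{*}$ reflects local equivalences as well, but only the stated direction is needed.

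The one point that genuinely requires care, and which I expect to be the main obstacle, is the assertion that presheaf restriction along $i$ sends sheaves on $\schc$ to sheaves on $\lissc$: the category $\lissc$ has no fibered products in general, and a proper cover of a smooth scheme refined via Hironaka to have smooth sources need not have smooth fibered products, so the sheaf condition on $\lissc$ must be handled sieve-theoretically. Given a proper covering sieve of $X\in\lissc$ arising from $\schc$, the sieve in $\lissc$ consisting of those of its members whose source is smooth is cofinal in it in the sense of final functors --- each relevant comma category is nonempty and connected, using Hironaka resolutions of the members and of their fibered products computed in $\schc$ --- so the sheaf condition over $\schc$ for the original sieve is equivalent to the sheaf condition over $\lissc$ for its smooth part, and a routine comparison of covering sieves then extends this to every covering sieve of $\lissc$. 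Everything else in the argument is formal.
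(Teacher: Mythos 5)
Your proof is correct and follows essentially the same route as the paper: reduce to simplicial sheaves via Blander's Quillen equivalence (using that sheafification detects local equivalences and commutes with restriction), then conclude from the equivalence of proper-topology topoi $\shpro\simeq\shproliss$ established in Proposition \ref{eqpreliss}, since weak equivalences of simplicial sheaves are purely topos-theoretic. The extra care you take with the sheaf condition on $\lissc$ (the sieve-theoretic/Hironaka cofinality point) is exactly what the cited comparison lemma \cite[Expos\'e III, Thm 4.1]{sga4-1} packages, so it is a welcome elaboration rather than a new argument.
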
 
\begin{proof} 
Because of Blander's result \cite[Thm 2.2]{blande}, it suffices to prove it for simplicial sheaves. Then it is a direct consequence of the fact proven above that $i^*$ induces an equivalence of categories $i^*:\shpro\lmos{\sim} \shproliss$. Indeed the local equivalences are exactly the maps which induce isomorphisms on all homotopy sheaves for all basepoint. Moreover, the construction of homotopy sheaves in a category of simplicial objects in a topos is made in purely categorical terms, and thus they only depend on the underlying topos. 
\end{proof}

\begin{proof} of \ref{restliss}  --- Since the following triangle
$$\xymatrix{ \lissc \ar[rr]^-i \ar[dr]_-{ssp} & & \schc \ar[dl]^-{ssp} \\ & SSet }$$
is commutative, the following triangle of left Kan extension
$$\xymatrix{ Ho(SPr(\lissc)) \ar[rr]^-{\lef i_!} \ar[dr]_-{\re{-}} & & Ho(SPr(\schc)) \ar[dl]^-{\re{-}} \\ & Ho(SSet) }$$
is commutative. This means that for all $G\in SPr(\lissc)$, there exists an isomorphism $\re{\lef i_! G}\simeq \re{G}$ in $Ho(SSet)$. Therefore if $G=\R i^* F$ for $F\in SPr(\schc)$, we have canonical isomorphisms
\begin{align*}
\re{i^* F} & \simeq  \re{\R i^* F} &&  \textrm{(by lemma \ref{preserve})} \\
& \simeq  \re{\lef i_! \R i^* F} &&  \textrm{(from the above)} \\
& \simeq \re{F}  && \textrm{(by proposition \ref{eqpreliss} and proposition \ref{quipro})} \\
\end{align*}
\end{proof}

\begin{proof} of proposition \ref{quipro} --- The first assertion implies that $ssp$ is left Quillen for the proper local model structure because of general facts about left Bousfield localizations.
 
Let $A=\ret{X}$ and $B_\bul=\ret{Y_\bul}$. The map of simplicial spaces $B_\bul \lmo A$ is a proper hypercovering of topological spaces, i.e. an hypercovering relatively to the proper topology on the category $Top$, for which covering families are cojointly surjective families of proper continuous maps. In order to prove proposition \ref{quipro} it suffices to prove that for every proper hypercovering $B_\bul \lmo A$ between sufficiently nice topological spaces, the induced map $hocolim_{\del^{op}} B_\bul\lmo A$ is a weak equivalence of spaces. By sufficiently nice, we mean locally compact Haussdorf spaces with the homotopy type of CW-complexes and such that all the components of the coskeletons $cosk_k^A B_\bul$ have the homotopy type of CW complexes. These assumptions are satisfied by topological spaces which are complex points of separated schemes of finite type (see \cite{hironakatri}). The proof is then completed by proposition \ref{hypdescpro}. 
\end{proof}

\begin{prop}\label{hypdescpro}
 Let $B_\bul\lmo A$ be a proper hypercovering of topological spaces such that 
 \begin{itemize}
 \item The spaces $A$ and $(B_n)_{\geq 0}$ are Hausdorff, locally compact and have the homotopy type of CW-complexes. 
 \item All the components of the relative coskeletons $cosk^A_k B_\bul$ have the homotopy type of CW-complexes for all $k\geq 0$. 
 \end{itemize}
 Then the map $hocolim_{\del^{op}} B_\bul\lmo A$ is an equivalence in $Top$. 
\end{prop}

\begin{proof} 
The proof will consist of several steps which mimic the proof by Dugger-Isaksen of the open version of our statement in \cite[Thm 4.3]{hyptop} (i.e. if we replace the proper topology by the usual open covering topology). We resume these steps now. First we reduce the statement to the case of bounded hypercoverings (in the sense of \cite[Def 4.10]{dhi}) using the same argument as in the proof of \cite[Thm 4.3]{hyptop}. Second we reduce to the case of a simpler class of bounded proper hypercoverings which are nerves of proper surjective morphisms using the same argument as in the proof \cite[Lem 4.2]{hyptop}. Third, we prove it for this class of hypercoverings using Lurie's proper base change theorem \cite[Cor 7.3.1.18]{htt} to reduce it to the point. Then a lemma from simplicial homotopy theory provides the result for the point. The assumptions made on topological spaces is used in Lurie's theorem and also in Toën's theorem in order to calculate the derived global section of a constant simplicial presheaf.  

We introduce some terminology. Let $\mcal{C}$ be any complete and cocomplete category. For any $[n]\in \del$ and any simplicial object $C_\bul$ in $\mcal{C}$ we denote by $sk_n C$ its $n$-skeleton and $cosk_n C_\bul$ its $n$-coskeleton. If $\mcal{C}=Top$, one has $(cosk_n C_\bul)_i\simeq \map(sk_n \del^i, C_\bul)$, where $\map$ is a mapping space for $Top^{\del^{op}}$. There is an augmented version of these, if $C_\bul\lmo D$ is an augmented simplicial object to a constant simplicial object $D$, then we denote by $sk_n^D C_\bul$ and $cosk_n^D C_\bul$ the the skeleton and coskeleton functor for the category $(\mcal{C}\downarrow D)^{\del^{op}}$. We denote by 
$$M_n C=lim_{(\del^{op} \downarrow n)\setminus id} C_\bul$$
 the $n$th matching object of $C_\bul$, where $(\del^{op} \downarrow n)\setminus id$ is the category of maps to $[n]$ in $\del^{op}$ minus the identity map of $[n]$. There is an augmented version of the matching object. If $C_\bul\lmo D$ is an augmented simplicial object of $\mcal{C}$ into a constant simplicial object $D$, then one can compute the limit seeing $C$ as a functor from $(\del^{op} \downarrow n)\setminus id$ to the category $\mcal{C}\downarrow D$ of maps to $D$ in $\mcal{C}$. We denote it by $M^D_n C_\bul$. There are natural maps $C_n\lmo M_n C_\bul$ and $C_n\lmo M^D_n C_\bul$. 
Suppose $\mcal{C}$ is endowed with a Grothendieck topology so that we can talk about hypercoverings, for us it will be $Top$ with the proper topology. A hypercovering $C_\bul\lmo D$ is called \emph{bounded} if there exists an integer $N\geq 0$ such that the maps $C_n\lmo M_n^D C_\bul$ are isomorphisms for all $n> N$. The minimum $N$ with this property is called the \emph{dimension} of the hypercovering. A hypercovering is bounded of dimension $\leq N$ if and only if the unit map $C_\bul \lmo cosk^D_N C_\bul$ is an isomorphism. 

If $f:C\lmo D$ is a map in $\mcal{C}$, one can see it as a map of constant simplicial objects of $\mcal{C}$. Then we can take the $0$-coskeleton $cosk^D_0 C\lmo D$. This augmented simplicial object is called the nerve of $f$. We have $(cosk^D_0 C)_i=C\times_D\cdots \times_D C$, $n+1$ times. The faces and degeneracies are the projections and diagonals respectively. If $f:C\lmo D$ is a covering in $\mcal{C}$ then the nerve of $f$ is an hypercovering of $D$ of dimension $0$, and these are the only hypercoverings of dimension $0$. 

To reduce to the case of bounded hypercoverings, we observe that for any $k\geq 0$, the hypercovering $cosk^A_{k+1}B_\bul$ is bounded and that the unit map $B_\bul \lmo cosk^A_{k+1} B_\bul$ is an isomorphism on $(k+1)$-skeleton. By Lemma \ref{sque} below, this implies that the top map in the diagram
$$\xymatrix{ hocolim_{\del^{op}} B_\bul \ar[r] \ar[rd] & hocolim_{\del^{op}} cosk^A_{k+1} B_\bul \ar[d] \\ & A }$$ 
induces an isomorphism on the $\pi_k$ at any basepoint. Suppose the statement is proven for bounded hypercoverings, then the right vertical induces an isomorphism on $\pi_k$ because $cosk^A_{k+1} B_\bul$ is bounded. Hence the last map $hocolim_{\del^{op}} B_\bul \lmo A$ induces an isomorphism on $\pi_k$ at any basepoint, hence is a weak equivalence because $k$ is arbitrary.

\begin{lem}\label{sque}
Let $C_\bul\lmo D_\bul$ be a map of simplicial spaces which induces an isomorphisms on $(k+1)$-skeleton. Then the map 
$$\pi_i hocolim_{\del^{op}} C_\bul \lmo \pi_i hocolim_{\del^{op}} D_\bul$$
 is an isomorphism for every $0\leq i\leq k$ and any basepoint. 
\end{lem}

To prove this, we reduce to the case of simplicial simplicial sets using the singular functor, because for simplicial simplicial sets the homotopy colimit is weakly equivalent to the diagonal. We denote by 
$$\xymatrix{ SSet \ar@<2pt>[r]^-{Re} & Top \ar@<2pt>[l]^-{S}  }$$
the standard adjunction with right adjoint the singular functor $S$. It induces an adjunction on the level of simplicial objects just by taking these functors levelwise. The counit map $Re\circ S C_\bul \lmo C_\bul$ is a levelwise weak equivalence in $Top^{\del^{op}}$, hence the induced map 
$$hocolim_{\del^{op}} Re\circ S C_\bul \lmo hocolim_{\del^{op}} C_\bul$$ 
is a weak equivalence. But composing with the canonical weak equivalence $hocolim_{\del^{op}} Re\circ S C_\bul \simeq Re( hocolim_{\del^{op}}Sing C_\bul)$, we get a weak equivalence $Re( hocolim_{\del^{op}}S C_\bul)\simeq hocolim_{\del^{op}} C_\bul$. Then for every $i\geq 0$ we get a canonical isomorphisms 
$$\pi_i(hocolim_{\del^{op}}S C_\bul) \simeq \pi_i Re( hocolim_{\del^{op}}S C_\bul)\simeq \pi_i hocolim_{\del^{op}} C_\bul$$
at every basepoint. Therefore it suffices to prove the claim for $C_\bul \lmo D_\bul$ a map in $SSet^{\del^{op}}$. But in that case there is canonical weak equivalence $hocolim_{\del^{op}} C_\bul \simeq dC_\bul$ in $SSet$ where $d:SSet^{\del^{op}} \lmo SSet$ is the diagonal functor. Then if $C_\bul \lmo D_\bul$ is an isomorphism on $(k+1)$-skeleton, it is straighfoward that $\pi_i dC_\bul \lmo \pi_i dD_\bul$ is an isomorphism for every $0\leq i\leq k$. This finishes the proof of \ref{sque}.

Back to the proof of \ref{hypdescpro}, we will then proceed by induction on the dimension of the hypercovering, reducing the proof to the dimension $0$ case. 

Let $n\geq 0$ be an integer. Suppose we have proven the statement for hypercoverings of dimension $\leq n$ and let $B_\bul\lmo A$ be a bounded proper hypercovering of dimension $n+1$. Consider the unit map $B_\bul \lmo cosk_n^A B_\bul=:C_\bul$. Then $C_\bul$ is bounded of dimension $\leq n$. Consider the bisimplicial space which is the nerve of the map $B_\bul \lmo C_\bul$
$$E_{\bul\bul} :=( \xymatrix{B_\bul & B_\bul \times_{C_\bul} B_\bul \ar@<2pt>[l] \ar@<-2pt>[l]  & B_\bul \times_{C_\bul} B_\bul \times_{C_\bul} B_\bul \ar@<3pt>[l] \ar[l] \ar@<-3pt>[l] \cdots } ).$$
Considering $C_\bul$ as constant in one simplicial direction, we have a map $E_{\bul\bul}\lmo C_\bul$. The $k$th row of $E_{\bul\bul}\lmo C_\bul$ is the nerve of the map $B_k\lmo C_k$. Consider the diagonal $D_\bul :=dE_{\bul\bul}$. Then standard homotopy theory (see e.g. \cite{hirs}) proves that $hocolim_{\del^{op}} D_\bul$ is weakly equivalent to the space obtained by taking the homotopy colimit of each rows of $E_{\bul\bul}$, and then taking the homotopy colimit of the resulting simplicial space. But by induction hypothesis, the $k$th row being a dimension $0$ hypercovering of $C_k$, its homotopy colimit is weakly equivalent to $C_k$. The resulting simplicial object is $C_\bul$, which is of dimension $\leq n$, so by induction hypothesis $ hocolim_{\del^{op}} C_\bul \simeq A$. Hence we prove that $hocolim_{\del^{op}} D_\bul \simeq A$.

Now we prove that $B_\bul$ is a retract of $D_\bul$ over $A$, hence that $hocolim_{\del^{op}} D_\bul \simeq hocolim_{\del^{op}} B_\bul \simeq A$. There is a natural map $B_\bul \lmo D_\bul$ given by the horizontal degeneracy $E_{0,k} \lmo E_{k,k}$. Then we need a map $D_\bul \lmo B_\bul$. It is sufficient to find a map $sk^A_{n+1}D_\bul \lmo sk^A_{n+1}B_\bul$ because then the adjoint map $D_\bul \lmo cosk^A_{n+1} sk^A_{n+1} B_\bul \simeq B_\bul$ is the wanted map. Notice that because of the definition of $C_\bul$ the map $B_k\lmo C_k$ is an isomorphism for $k=0,\hdots, n$ and the map $sk^A_n B_\bul \lmo sk^A_n D_\bul$ is an isomorphism. Let $[0]\lmo [n+1]$ be any coface map, giving a face map $E_{n+1, n+1} \lmo E_{0,n+1}$ which gives the wanted map $sk^A_{n+1}D_\bul \lmo sk^A_{n+1}B_\bul$. One can check that $B_\bul \lmo D_\bul \lmo B_\bul$ is the identity which proves our claim. 

It remains to prove the statement for a dimension $0$ proper hypercovering $\pi : B_\bul\lmo A$ with spaces satisfying the assumptions of \ref{hypdescpro}. Such a hypercovering is the nerve of a proper surjective map $B_0\lmo A$. Therefore $B_n\simeq B_0\times_A\cdots\times_A B_0$, $n+1$ times. We will use a proper base change argument. For this we will study simplicial presheaves on the simplicial space $B_\bul$ and their behavior with respect to $\pi$. Deligne defined in \cite{deligneIII} a notion of sheaves on a simplicial space, constructing a site out of a simplicial space and taking sheaves on it. His construction can be directly use for simplicial presheaves. Indeed let $\tilde{B}_\bul$ be the category with objects the pairs $([n], V)$ with $[n]\in \del$ and $V\subseteq B_n$ an open subset. A morphism between $([n], V)$ and $([m], V')$ is the data of a morphism $a:[n]\lmo [m]$ in $\del$ and a continuous map $V'\lmo V$ such that the square
$$\xymatrix{V' \ar[r] \ar[d] & V\ar[d] \\ B_m \ar[r]^-{B(a)} & B_n   }$$
commute. Composition and identities are defined in the obvious way, and satisfy all the required conditions. The category $\tilde{B}_\bul$ is naturally endowed with the open covering topology induced by the topology of each $B_n$ and $\del$ is considered as discrete. Then we can consider the category $SPr(B_\bul)$ of simplicial presheaves on the site $\tilde{B}_\bul$. An object $F$ in this category is equivalent to the data of simplicial presheaves $F_n$ on $B_n$ for every $n\geq 0$, and for every map $a:[n]\lmo [m]$ in $\del$, a map of presheaves $u_a : F_n\lmo B(a)_\ast  F_m$, such that $u_{id_{[n]}} =id_{F_n} $ and for every $a:[n]\lmo [m]$ and $b:[m]\lmo [k]$ in $\del$, we have $u_{ba} =B(a)_\ast u_b u_a$. 

If $X$ is any space considered as a constant simplicial space, then $\tilde{X}$ is nothing but the site of opens of $X$. The map of simplicial spaces $\pi : B_\bul\lmo A$ gives a map of sites still denoted by $\pi : \tilde{B}_\bul\lmo \tilde{A}$. Consider the diagram of categories
$$\xymatrix{\tilde{B}_\bul \ar[rr]^-{\pi} \ar[rd]_-q && \tilde{A} \ar[ld]^-p \\ & \ast  &  }$$
where $\ast$ is the punctual category. Then taking simplicial presheaves we get a set of adjoint functors
$$\xymatrix{SPr(B_\bul) \ar@<2pt>[rr]^-{\pi_\ast}  \ar@<2pt>[rd]^-{q_\ast} && SPr(A) \ar@<2pt>[ll]^-{\pi^{-1}} \ar@<2pt>[ld]^-{p_\ast} \\ & SSet \ar@<2pt>[lu]^-{cst} \ar@<2pt>[ru]^-{cst} & }$$
where $cst(K)$ is the constant simplicial presheaf with value $K$ for any $K\in SSet$. The functors $p_\ast$ and $q_\ast$ are also famous under the name of global sections and are the right adjoints to $cst$. We endow $SPr(B_\bul)$ and $SPr(A)$ with the local model structure (with respect to the open covering topology) obtained as a Bousfield localization of the injective model structure (what is really important is the weak equivalences which are local equivalences, but we will need below to consider a homotopy limit, which explains why we need the injective one). Then the functors $\pi_\ast$, $p_\ast$ and $q_\ast$ are right Quillen. For any $K\in SSet$ we have $\pi^{-1}\circ cst(K)\simeq cst(K)$ and there are isomorphisms $\lef \pi^{-1}\simeq  \pi^{-1}$ and $\lef cst\simeq cst$. Therefore we have a canonical isomorphism $\R p_\ast \R\pi_\ast \simeq \R q_\ast$. 

A direct consequence of Toën's result \cite[Thm 2.13]{galhom} is that for any constant simplicial presheaf $K\in SPr(A)$ we have a canonical isomorphism $\R p_\ast (K)\simeq \R\map (S A, K)$ in $Ho(SSet)$. This uses the fact that our space $A$ has the homotopy type of a CW complex. Next we want to calculate the derived global sections $\R q_\ast (K)$ of a constant simplicial presheaf $K$ on $B_\bul$. The site $\tilde{B}_\bul$ is naturally endowed with a functor $\alpha : \tilde{B}_\bul\lmo \del$ where $\del$ is considered as a discrete site. The functor $\alpha$ is just the projection $\alpha ([n], V)=[n]$. We denote by $\beta :\del\lmo \ast$ the unique functor. We have induced functors on simplicial presheaves 
$$\xymatrix{SPr(B_\bul) \ar[r]^-{\alpha_\ast} \ar[rd]_-{q_\ast} & SPr(\del) \ar[d]^-{\beta_\ast} \\ & SSet }$$
These functors are right Quillen ($SPr(\del)$ is also endow with the injective model structure). The derived functor $\R \beta_\ast$ is then isomorphic to $holim_\del$. For any constant simplicial presheaf $K\in SPr(B_\bul)$ , we have $\R\alpha_\ast (K)=( \R q_{n\ast} K)_{n\geq 0}$ where $q_n:\tilde{B}_n\lmo \ast$. Using Thm 2.13 of loc.cit. (all spaces $B_n$ having the homotopy type of a CW), we obtain an isomorphism $\R\alpha_\ast (K)\simeq (\R\map(SB_n, K))_{n\geq 0}$ in $Ho(SPr(\del))$. Therefore we have a canonical isomorphism in $Ho(SSet)$
$$\R q_\ast (K)\simeq holim_{\del^{op}} \R\map(SB_\bul, K)\simeq \R\map(hocolim_{\del^{op}} SB_\bul, K).$$
Suppose for the time being that the following lemma is at our disposal.

\begin{lem}\label{pbc}
Let $K\in SPr(B_\bul)$ be a constant simplicial presheaf, with $K$ being a truncated simplicial set. Then the unit map 
$$K\lmo \R\pi_\ast \pi^{-1} (K)\simeq \R\pi_\ast(K)$$
 is an isomorphism in $Ho(SPr(A))$ (where $K$ denotes the same constant presheaf on $A$). 
\end{lem}

We will give a proof below. Applying the isomorphism $\R p_\ast \R\pi_\ast \simeq \R q_\ast$ to a truncated constant simplicial presheaf $K$ on $B_\bul$, we obtain a canonical isomorphism for every truncated simplicial set $K$
$$\R\map(SA, K)\simeq \R\map(hocolim_{\del^{op}} SB_\bul, K).$$
This implies that the map $hocolim_{\del^{op}} SB_\bul\lmo SA$ is an isomorphism in $Ho(SSet)$. By taking the Quillen equivalence $Re$ and the fact that $Re$ commutes with homotopy colimits, this implies that the map $hocolim_{\del^{op}} B_\bul\lmo A$ is an isomorphism in $Ho(Top)$, proving our claim. 

To sum up, it only remains to prove \ref{pbc}. 

To prove \ref{pbc} it suffices to prove that the unit map is an isomorphism on the stalk at any point $a\in A$. We have a cartesian square of simplicial spaces 
$$\xymatrix{B_\bul^a\ar[r]^-{\pi^a} \ar[d]_-\phi & \ast \ar[d]^-a \\ B_\bul \ar[r]^-{\pi}  & A }$$
We claim that the base change theorem holds for this square and for a truncated constant simplicial presheaf. 

\begin{lem}\label{pbcreal}
For any truncated constant simplicial presheaf $K\in SPr(B_\bul)$, the canonical map 
$$a^{-1} \R \pi_\ast (K)\lmo \R\pi^a_\ast \phi^{-1} (K)\simeq\R\pi^a_\ast (K) $$
is an isomorphism in $Ho(SSet)$. 
\end{lem}

Once the lemma is proven, it will just remain to prove that the map $K\lmo \R\pi^a_\ast (K)$ is an isomorphism in $Ho(SSet)$, which is exactly the statement of \ref{hypdescpro} for $A=\ast$ and $B_\bul$ a dimension $0$ hypercovering. Indeed we have an isomorphism $ \R\pi^a_\ast (K)\simeq holim_{\del^{op}} \R\map(SB_\bul^a, K)$. 

To prove \ref{pbcreal}, we will calculate the stalk $a^{-1} \R \pi_\ast (K)$ and relate it to $\R\pi^a_\ast (K)$. For any open subset $U\subseteq A$ we have a cartesian square of simplicial spaces 
$$\xymatrix{B_\bul^U\ar[r]^-{\pi^U} \ar[d]_-{\phi^U} & U \ar[d]^-i \\ B_\bul \ar[r]^-{\pi}  & A }$$
Then we have isomorphisms $\R\Gamma (U, \R\pi_\ast^U (K))\simeq \R\Gamma(B_\bul^U, K)\simeq holim_{\del^{op}} \R\map(SB_\bul^U, K)$ in $Ho(SSet)$. The stalk $a^{-1} \R \pi_\ast (K)$ is isomorphic to usual filtered colimit 
$$colim_{a\in U\subseteq A} \R\Gamma (U, \R\pi_\ast^U (K)) \simeq colim_{a\in U\subseteq A} holim_{\del^{op}} \R\map(SB_\bul^U, K).$$
Now we use the assumption that $K$ is truncated to deduce the fact this homotopy limit is isomorphic to a finite homotopy limit. Indeed if $K$ is $n$-truncated, then the simplicial set $\R\map(SB_\bul^U, K)$ is also $n$-truncated and one can calculate this homotopy limit by restricting to the subcategory of $\del^{op}$ given by simplexes of dimension $\leq n+1$. Then we can make this filtered colimit and this finite homotopy limit commute to get
$$a^{-1} \R \pi_\ast (K) \simeq holim_{\del^{op}} colim_{a\in U\subseteq A}  \R\map(SB_\bul^U, K).$$
Now we wish to have for all $n\geq 0$ an isomorphism $colim_{a\in U\subseteq A}  \R\map(SB_n^U, K)\simeq  \R\map(SB_n^a, K)$. To do so we apply Lurie's proper base change Theorem \cite[Cor 7.3.1.18]{htt} to the cartesian square of locally compact Hausdorff spaces
$$\xymatrix{ B_n^a\ar[r]^-{\pi_n^a} \ar[d]_-{\phi_n} & \ast \ar[d]^-a \\ B_n \ar[r]^-{\pi_n}  & A  }$$
We can apply this result here because our simplicial presheaf $K$ is truncated so that according to Cor 7.2.1.12 of loc.cit., $K$ satisfies hyperdescent if and only if $K$ satisfies ordinary descent. We obtain an isomorphism $a^{-1} \R\pi_{n\ast } (K)\simeq \R\pi_{n\ast}^a(K)$. With the same argument as before, we have $a^{-1} \R\pi_{n\ast } (K)\simeq colim_{a\in U\subseteq A}  \R\map(SB_n^U, K)$, which proves the expected isomorphism. This finishes the proof of Lemma \ref{pbcreal}.

Back to the proof of \ref{pbc}, it remains to prove that the map $K\lmo \R\pi^a_\ast (K)$ is an isomorphism in $Ho(SSet)$ for all truncated $K$. In view of what has been said, it is equivalent to the statement that $hocolim_{\del^{op}} B_\bul^a \lmo \ast$ is a weak equivalence of spaces. It is treated by the following lemma. 

\begin{lem}
Let $X$ be any non empty topological space (resp. a non empty simplicial set). Then the nerve $X_\bul \lmo \ast$ of the map $p:X\lmo \ast$ induces a weak equivalence $hocolim_{\del^{op}} X_\bul \lmo \ast$ in $Top$ (resp. in $SSet$). 
\end{lem}

The statement in $SSet$ implies the statement in $Top$. Indeed if $X\in Top$ we saw in the proof of \ref{sque} that $hocolim_{\del^{op}} X_\bul$ and $hocolim_{\del^{op}} SX_\bul$ have the same homotopy groups. 

Let $X\in SSet$. We prove that the map $X_\bul \lmo \ast$ is a simplicial homotopy equivalence in $SSet^{\del^{op}}$. Let $x:\ast \lmo X$ be a point. Then it suffices to find a homotopy $h:\del^1\times X_\bul\lmo X_\bul$ between $id_{X_\bul}$ and $xp$. We define $h_n:\del([n], [1]) \times X_n\lmo X_n$ by the following formula. Let $a:[n]\lmo [1]$ be a map in $\del$, it is essentially given by an integer $0\leq m\leq n$. We set $h_n(a, (x_0, \hdots, x_n))=(x_0, \hdots, x_m, x,\hdots, x)$. We then have a homotopy which verifies $h(0,-)=id_{X_\bul}$ and $h(1,-)=xp$. 

Recall the realization functor 
$$\re{-} : SSet^{\del^{op}} \lmo SSet$$
defined by the standard formula
$$\re{Y_\bul}:=coeq(\xymatrix{\bigsqcup_{n\in \del} \del^n\times Y_n& \bigsqcup_{p\mo q\in \del} \del^p\times Y_q \dar[l] } ).$$
This functor sends simplicial homotopy equivalences to simplicial homotopy equivalences. This implies that $\re{X_\bul}$ is contractible. Now we use the isomorphism $hocolim_{\del^{op}} X_\bul \simeq \re{X_\bul}$ in $Ho(SSet)$ (see \cite{hirs}) to conclude that $hocolim_{\del^{op}} X_\bul$ is contractible. 

Now the proof of \ref{hypdescpro} is complete. 
\end{proof}

\section{Topological K-theory of noncommutative spaces} 

We have now almost all we need to define the semi-topological and topological K-theory of noncommutatives spaces. The first part is dedicated to the definitions of semi-topological and topological K-theory. The semi-topological K-theory is roughly speaking the spectral topological realization of algebraic K-theory. These definition are possible modulo the calculation of the semi-topological K-theory of the point, which is roughly speaking the spectral topological realization of the stack of $E_\infty$-spaces of vector bundles, which is proved to be the usual connective spectrum $\bu$ in the second part. In the third part we give a convenient description of semi-topological K-theory in terms of the stack of perfect dg-modules. In the fourth part we prove that the Chern map descends to topological K-theory. Finally the last parts treat the examples of smooth schemes and of finite dimensional algebras, with some suprising consequences about the relation between the periodic homology groups of an algebra and the homotopy groups of the stabilized topological realization of the stack of noncommutative vector bundles.

\subsection{Definition of the (semi-) topological K-theory}\label{kstktop}

In §\ref{caracalg}, we defined a presheaf of symmetric ring spectra 
$$\ka : \affc^{op}\lmo Sp$$
such that for all $\spec(A)\in \affc$, we have a canonical isomorphism $\ka(\spec(A))\simeq\kn(A)$ in $Ho(Sp)$. For all $\C$-dg-category $T\in\dgcatc$, we defined a presheaf of symmetric $\ka$-module spectra
$$\ukn(T) : \affc^{op} \lmo Sp$$
such that for all $\spec(A)\in \affc$, we have a canonical isomorphism $\ukn(T)(\spec(A))\simeq \kn(T\tel_\C A)$ in $Ho(Sp)$. Thus we have an isomorphism $\ukn(\unit)\simeq \ka$ where $\unit$ is the $\C$-dg-category with one object and the ring $\C$ as endomorphisms. We recall that we denote by $\ka-Mod_\s$ the category of $\ka$-modules in the monoidal category $\spaf$. We therefore have $\ukn(T)\in \ka-Mod_\s$. 

In §\ref{realsp} we mentioned that the spectral realization functor, 
$$\resp{-} : Ho(\spaf)\lmo Ho(Sp)$$ 
is a monoidal functor. Because of the existence of an homotopy category of monoids and modules (see \cite[Thm 3.3 et Thm 2.1]{hoveymon}), $\resp{\ka}$ is a ring spectrum and the topological realization extends to $\ka$-modules with values in $\resp{\ka}$-modules, 
$$\resp{-} : Ho(\ka-Mod_\s)\lmo Ho(\resp{\ka}-Mod_\s).$$
We also have a connective version of K-theory $\tilde{\ka}$ and $\ukc(T)$. In the same way we obtain a topological realization for $\tilde{\ka}$-modules, 
$$\resp{-} : Ho(\tilde{\ka}-Mod_\s)\lmo Ho(\resp{\tilde{\ka}}-Mod_\s).$$

\begin{df}\label{defkst}
The \emph{semi-topological K-theory} (resp. the \emph{connective semi-topological K-theory}) of a $\C$-dg-category $T\in \dgcatc$ is the symmetric $\resp{\ka}$-module spectrum (resp. the symmetric $\resp{\tilde{\ka}}$-module spectrum),
$$\kst(T):=\resp{\ukn (T)} \qquad (\textrm{resp.  } \kcst(T):=\resp{\ukc (T)}).$$
Because of the existence of a cofibrant replacement functor in model categories of modules, we have two functors,
$$\kst : \dgcatc\lmo \resp{\ka}-Mod_\s,$$
$$\kcst : \dgcatc\lmo \resp{\tilde{\ka}}-Mod_\s.$$
We denote by $\kst_i(T):=\pi_i\kst(T)$ for all $i\in \Z$ the semi-topological K-groups. 
\end{df}

\begin{rema}
It is a priori necessary to also consider the connective version $\kcst$ in our study, because we do not know if  $\kcst(T)$ is the connective covering of $\kst(T)$. Indeed the first thought is to remark that the topological realization is a left adjoint while the connective cover is a right adjoint. 
\end{rema}

\begin{rema}\label{fibtop}
We remark the existence of a map from algebraic K-theory to semi-topological K-theory. If $T\in \dgcatc$, we have the unit map of adjoint pair $(\resp{-}, \hbs)$, 
$$\ukn(T)\lmo \hbs(\resp{\ukn(T)}).$$
Taking global sections, i.e. the value on $\spec(\C)$, we obtain a map in $Ho(Sp)$, 
$$\eta_T:\kn(T)\lmo \rhomi_{Ho(Sp)}(\sinf(\spec(\C))_+,\kst(T))\simeq \kst(T).$$
This defines a map $\eta:\kn\lmo \kst$ in $Ho(Sp^{\dgcatc})$. There is also a connective version of it denoted by $\tilde{\eta}:\kc\lmo \kcst$. For all scheme $X\in \schc$, taking $\pi_0$ we obtain a map 
$$\kc_0(X)\lmo \kcst_0(X).$$
Because of the formula \ref{pi0cor}, this map is the quotient map  of the equivalence relation on algebraic vector bundles on $X$ which identifies two vector bundles when they can be related by a connected algebraic curve. We obtain then a map $\kcst_0(X)\lmo \ktopu^0(sp(X))$ from our semi-topological K-group to the Grothendieck group of topological vector bundles on $sp(X)$. We'll prove below that this map is an isomorphism in the case of a smooth and proper $\C$-scheme of finite type. 
\end{rema}

\begin{rema}\label{restlisskst}
By applying the spectral version of theorem \ref{restliss}, we see that we can calculate the topological realization of definition \ref{defkst} by first taking the restriction to smooth schemes.
\end{rema}

The following two results are central in the definition of topological K-theory. We denote by $bu$ the usual topological K-theory spectrum. This means that for a topological space $X\in Top$, if $\ktopu^0(X)$ is the Grothendieck group of complex topological vector bundles on $X$, we have an isomorphism 
$\ktopu^0(X)\simeq\pi_0 \map_{Ho(Sp)} (\sinf SX_+, bu)$. A model for $bu$ as a symmetric spectrum will be given below.

\begin{theo}\label{bu}
There exists a canonical isomorphism $\kcst(\unit)\simeq bu$ in $Ho(Sp)$. 
\end{theo}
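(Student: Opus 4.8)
The plan is to reduce the statement to the classical fact that the group completion of the symmetric monoidal category of complex vector spaces computes connective topological K-theory, via the $\gam$-space model of connective algebraic K-theory from §\ref{monoids}. First I would rewrite the presheaf $\ukc(\unit)$ as $\mcal{B}$ of a $\gam$-presheaf. By definition $\ukc(\unit)(\spec A)=\kc(\parf(A))$, which by Remark \ref{algebra} is functorially equivalent to $\kc(\vect(A))$, the connective K-theory of the Waldhausen category of finitely generated projective $A$-modules, in which every cofibration splits. By Remark \ref{kconspec} one has $\kc(\vect(A))\simeq\mcal{B}(K^{\gam}(\vect(A)))$, and moreover the natural map $(NwB_W\vect(A))^{+}\lmo K^{\gam}(\vect(A))$ is a levelwise equivalence of very special $\gam$-spaces, where $(-)^{+}$ denotes group completion. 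Everything being functorial in $A$, write $\mcal{V}$ for the special $\gam$-presheaf $\spec A\mapsto NwB_W\vect(A)$ on $\affc$; one obtains an equivalence of presheaves of connective spectra $\ukc(\unit)\simeq\mcal{B}(\mcal{V}^{+})$.

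Next I would push the spectral realization inside. By Proposition \ref{proprelb} it commutes with $\mcal{B}$, and by Proposition \ref{proprelplus} (and its evident $\gam$-analog) the realization of $\gam$-presheaves commutes with the group completion of special objects; hence
\[ \kcst(\unit)=\resp{\ukc(\unit)}\simeq\mcal{B}\big(\regam{\mcal{V}}^{+}\big), \]
and it suffices to identify the special $\gam$-space $\regam{\mcal{V}}$. The special $\gam$-presheaf $\mcal{V}$ is the one attached to the symmetric monoidal groupoid of finitely generated projective modules under direct sum, so $\mcal{V}_1=Nw\vect(-)$ (the presheaf sending $\spec A$ to the nerve of that groupoid) and $\mcal{V}_n\simeq(\mcal{V}_1)^{n}$. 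Let $\mcal{W}$ be the special $\gam$-presheaf attached to the symmetric monoidal subgroupoid of free modules; then $\mcal{W}_1=\coprod_{n\geq 0}\mrm{B}\ugl_n$, and the inclusion $\mcal{W}\lmo\mcal{V}$ is a levelwise $\ao$-étale local equivalence, because on level one it is the natural map $\coprod_{n\geq 0}\mrm{B}\ugl_n\lmo Nw\vect(-)$, which induces isomorphisms on the associated $\pi_0$- and $\pi_1$-sheaves (every finitely generated projective module is Zariski-locally free of locally constant rank) while both sides are $1$-truncated.

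I would then compute $\regam{\mcal{W}}\simeq\regam{\mcal{V}}$. Since $\re{-}$ factors through $Ho(\spretao)$ (Theorem \ref{di}), commutes with coproducts (Proposition \ref{proprelder}), and satisfies $\re{\mrm{B}\ugl_n}\simeq\mrm{B}\re{\ugl_n}$ (Proposition \ref{gp}), with $\re{\ugl_n}=ssp(GL_{n,\C})\simeq GL_n(\C)$ (the realization of a representable presheaf being its space of complex points, the group structure being preserved because $\re{-}$ preserves products), one gets that $(\regam{\mcal{W}})_1\simeq\coprod_{n\geq 0}\mrm{B}\,GL_n(\C)$ with $\gam$-structure the block sum. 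As $U(n)\moi GL_n(\C)$ is a homotopy equivalence of topological groups compatible with block sums, $\regam{\mcal{V}}\simeq\regam{\mcal{W}}$ is the $\gam$-space $\coprod_{n\geq 0}BU(n)$ of finite dimensional complex vector spaces under direct sum. Its group completion $\big(\coprod_{n\geq 0}BU(n)\big)^{+}$ is the very special $\gam$-space underlying $\Z\times BU=\Omega^{\infty}bu$, so by Theorem \ref{bf} the connective spectrum $\mcal{B}\big(\regam{\mcal{V}}^{+}\big)$ is the connective topological K-theory spectrum $bu$ (this being the model of $bu$ adopted in the paper); combined with the displayed equivalence this gives $\kcst(\unit)\simeq bu$.

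The main obstacle is the identification of $\regam{\mcal{V}}$: one must make the comparison $\mcal{W}\simeq\mcal{V}$ respect the $\gam$-structures (so that the eventual spectrum structure is the correct one), and then carefully chain together the compatibilities of $\re{-}$ with $\mrm{B}$, with coproducts, and with group completion established in §\ref{prestru}. The genuinely topological inputs — that $GL_n(\C)$ deformation retracts onto $U(n)$, and that the group completion of $\coprod_{n}BU(n)$ computes $\Omega^{\infty}bu$ (Segal's theorem) — are taken as known.
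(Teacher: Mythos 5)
Your proposal is correct and follows essentially the same route as the paper: reduce $\ukc(\unit)$ to the presheaf of projective modules via Remark \ref{algebra}, use the $\gam$-space model and the split-cofibration comparison of Remark \ref{kconspec}, and commute $\resp{-}$ with $\mcal{B}$ and with group completion (Propositions \ref{proprelb} and \ref{proprelplus}) to get $\kcst(\unit)\simeq\mcal{B}\regam{\vect_\bul}^{+}$. Your final identification of $\regam{\vect_\bul}$ with $\coprod_{n}BU(n)$ via $\coprod_{n}\mrm{B}\ugl_n$, the unitary groups and Segal's theorem is exactly the content of the paper's remark justifying its chosen model of $bu$, so nothing is missing.
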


A proof will be given below in \ref{ktoppoint}. For the time being we use it in order to define topological K-theory.

\begin{theo}\label{annupoint}
For all smooth commutative algebra $B\in \calgc$, the canonical map
$$\kcst(B)\lmo \kst(B)$$ 
is an isomorphism in $Ho(Sp)$. In particular by theorem \ref{bu} we have an isomorphism $\kst(\unit)\simeq \bu$ in $Ho(Sp)$. 
\end{theo}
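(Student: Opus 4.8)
The plan is to trace the canonical map back to the natural transformation of presheaves of spectra $\ukc(B)\lmo\ukn(B)$ on $\affc$ whose value at $\spec(A)$ is the comparison $\kc(B\te_\C A)\lmo\kn(B\te_\C A)$ between connective and nonconnective algebraic K-theory, and to observe that $\kcst(B)\lmo\kst(B)$ is exactly the spectral topological realization $\resp{-}$ of this map. (Since $B$ is a commutative $\C$-algebra it is free, hence flat, over $\C$, so we may take $Q=\mrm{id}$ in Notation \ref{notapref} and $B\tel_\C A=B\te_\C A$.) By Remark \ref{restlisskst}, i.e. the spectral form of Theorem \ref{restliss}, the two realizations $\resp{\ukc(B)}$ and $\resp{\ukn(B)}$ may be computed after restricting along $l:\afflissc\hookrightarrow\affc$, and compatibly with this canonical map. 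Because $\resp{-}$ is left Quillen for the projective model structure on $\spafliss$, it therefore suffices to show that $l^*\ukc(B)\lmo l^*\ukn(B)$ is a levelwise weak equivalence, i.e. that $\kc(B\te_\C A)\lmo\kn(B\te_\C A)$ is an isomorphism in $Ho(Sp)$ for every smooth affine $\spec(A)\in\afflissc$.

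Next I would supply the regularity input. For $A$ a smooth $\C$-algebra, $B\te_\C A$ is again a smooth $\C$-algebra (its spectrum is a product of smooth $\C$-schemes), in particular a regular Noetherian ring; by the classical vanishing of negative $K$-theory of regular Noetherian rings, $\pi_i\kn(B\te_\C A)=0$ for all $i<0$. Since $\parf(B\te_\C A)$ is a triangulated dg-category, Proposition \ref{propalgk}(a) identifies $\kc(B\te_\C A)$ with the connective cover of $\kn(B\te_\C A)$; together with the vanishing of negative homotopy groups this forces the comparison map to be an equivalence. This shows $l^*\ukc(B)\lmo l^*\ukn(B)$ is a levelwise equivalence, hence $\resp{l^*\ukc(B)}\lmo\resp{l^*\ukn(B)}$ is an equivalence, and therefore $\kcst(B)\lmo\kst(B)$ is an isomorphism in $Ho(Sp)$. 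The final assertion then follows by taking $B=\C$: the dg-category $\unit$ is the dg-category of the smooth algebra $\C$, so $\ukn(\unit)\simeq\ka$ and $\ukc(\unit)\simeq\tilde{\ka}$, whence $\kst(\unit)=\resp{\ukn(\C)}\simeq\resp{\ukc(\C)}=\kcst(\unit)\simeq\bu$ by Theorem \ref{bu}.

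The heavy lifting is done by Theorem \ref{restliss} and by the K-theoretic facts recalled in \S\ref{algk}, so the main obstacle is modest and twofold. First, one must check that the isomorphisms produced by the spectral restriction theorem are natural enough to be matched with the canonical map $\kcst(B)\lmo\kst(B)$; this is formal from the construction of $l^*$ and of the left Kan extension defining $\resp{-}$, but it is the step requiring care in bookkeeping. Second, the argument really hinges on the vanishing of negative $K$-theory for regular rings: this is precisely what makes connective and nonconnective algebraic K-theory agree levelwise on the site of smooth affine $\C$-schemes, and hence — via the reduction to smooth schemes — after topological realization, even though $\kst$ and $\kcst$ are a priori genuinely different invariants of a general dg-category.
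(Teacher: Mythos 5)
Your argument is correct and follows essentially the same route as the paper: reduce via the spectral form of Theorem \ref{restliss} to the restriction along $l:\afflissc\hookrightarrow\affc$, then observe that on smooth affine schemes the map $\ukc(B)\lmo\ukn(B)$ is a levelwise equivalence because negative algebraic K-theory of the smooth (regular Noetherian) algebras $B\te_\C A$ vanishes. The paper's proof is simply a terser version of yours, leaving implicit the smoothness of $B\te_\C A$ and the identification of $\kc$ with the connective cover via Proposition \ref{propalgk}.
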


\begin{proof} It's a well known fact that the negative algebraic K-theory of a smooth commutative algebra vanishes (see \cite[Rem 7]{schl}). Therefore the map of presheaves of spectra $\ukc(B)\lmo \ukn(B)$ is an equivalence on smooth affine schemes. By theorem \ref{restliss}, we conclude that it induces an equivalence on spectral topological realization $\kcst(B)\simeq \kst(B)$. 
\end{proof}

\begin{nota} 
The last two theorems reformulate by saying we have isomorphisms
$$\resp{\tilde{\ka}}\simeq \resp{\ka}\simeq bu$$
in $Ho(Sp)$. We denote by $\bu$ the symmetric ring spectrum $\resp{\ka}$. We have functors
$$\kcst, \kst : \dgcatc\lmo \bu-Mod_\s.$$
\end{nota}

\begin{rema}\label{uniqbu}
It's a classical fact that $bu$ admits a model as a strict commutative ring in symmetric spectra, with addition corresponding to the sum of vector bundles and multiplication to external tensor product. A way to do it is to express $bu$ as the spectrum associated to the special $\gam$-space which is the topological realization of the stack of algebraic vector bundles (see \ref{ktoppoint}). This $\gam$-space has the structure of $\gam$-ring with multiplication given by external tensor product. This commutative ring structure on $bu$ is moreover \emph{unique} by \cite[Cor 1.4]{bakerrichter}, in the sense that for all commutative symmetric ring spectrum $A$ and for all ring map $f:A\lmo bu$ which induces an isomorphism on all homotopy groups, then there exists a map $g : A\lmo bu$ in the homotopy category of \emph{commutative} ring spectra such that $g$ is isomorphic to $f$.  

Here we can't consider $\bu$ as a commutative ring spectrum, neither $\ka$ as a \emph{commutative} ring spectrum because this ring structure is given by the ring structure of endomorphisms of unity in $\mbb{M}_{loc} (\C)$ (see \ref{caracalg}). We'll then just talk about associative unital ring spectra, still knowing at the same time that $\bu$ is equivalent in $Sp$ to a commutative ring spectrum and that this structure is unique. We feel here the limit imposed by the language of model categories and strict algebraic structures compared to the highly more flexible language of Lurie's monoidal $\infty$-categories. 
\end{rema}

\begin{nota}\label{choixbeta}
 By the Bott periodicity theorem, we know that the abelian group $\pi_2\bu$ is rank $1$ free. To define topological K-theory, we choose a Bott generator $\beta\in \kst_2(\unit)=\pi_2 \bu$ from the two existing. It's more convenient to first choose a generator $\alpha$ of the non-trivial part of $\kn_0(\po)$ and to take $\beta=\eta(\alpha)$ where $\eta:\kn_0(\po)\lmo \kst_0(\po)$ is the canonical map. Then $\beta$ gives a generator of $\kst_2(\unit)$ by the canonical map $\kst_0(\po)\lmo \ktopu(S^2) \simeq \ktopu^0(\ast)\oplus \beta \ktopu^{-2}(\ast)$. 
\end{nota}

\begin{nota}
We recall that given a ring spectrum $A$, an integer $k$, and an element $a\in\pi_k A$, one can define the ring spectrum $A[a^{-1}]$ localized with respect to $a$. It is endowed with a map $i_a:A\lmo A[a^{-1}]$ and verifies the following universal property. For all ring spectrum $B$, and all ring map $A\lmo B$, the simplicial set $\map_{A-Alg} (A[a^{-1}], B)$ is non-empty if and only if $a$ is invertible in the $\pi_*(A)$-module $\pi_*(B)$. This property characterizes the object $A[a^{-1}]$ up to equivalence. This is the noncommutative version of \cite[Prop 1.2.9.1]{hag2} applied to the monoidal model category $Sp$ of symmetric spectra. We conclude by Cor 1.2.9.3 of loc.cit. that the functor induced by composition with $i_a$, 
$$i_a^* : Ho( A[a^{-1}]-Mod_\s) \lmo Ho(A-Mod_\s)$$
is fully faithfull and its image consists of the $A$-modules $M$ such that multiplication by $a$ is invertible in the $\pi_*A$-module $\pi_*M$. If $M$ is a $A$-module and $a\in\pi_kA$ an element, the localized module $M[a^{-1}]$ is defined as the $A[a^{-1}]$-module $M\sm_A A[a^{-1}]$. 
\end{nota}

\begin{rema}\label{remloccomass}
Let $A$ be a ring spectrum and $a\in \pi_k A$. Denote by $A_{ass}[a^{-1}]$ the localization of $A$ with respect to $a$ in the sense of associative ring spectra and by $A_{com}[a^{-1}]$ the localization of $A$ with respect to $a$ but in the sense of commutative ring spectra. Then by the universal property, there exists a map $ c: A_{ass}[a^{-1}]\lmo A_{com}[a^{-1}]$ which is an equivalence. Indeed there exists a commutative diagram 
$$\xymatrix{Ho(A_{ass}[a^{-1}]-Mod_r) \ar@{^{(}->}[r]  \ar[d]^-{c^*} & Ho(A-Mod_r) \ar[d]^-{\wr} \\ Ho( A_{com}[a^{-1}]-Mod) \ar@{^{(}->}[r] & Ho(A-Mod)  }$$
where the indice $r$ stands for the category of right modules. Since $A$ is commutative, the map from right $A$-modules to two-sided $A$-modules is an equivalence. Then the two homotopy categories of modules $Ho(A_{ass}[a^{-1}]-Mod_r)$ and $Ho( A_{com}[a^{-1}]-Mod)$ are equivalent to the subcategory of $Ho(A-Mod)$ formed by the $A$-modules for which multiplication by $a$ is an equivalence. We conclude that $c^*$ is an equivalence of categories and that $c$ is an equivalence. 
\end{rema}

\begin{rema}
We consider the ring spectrum $\bu$, the Bott generator we choosed $\beta\in \pi_2\bu$ and its localization $\bu[\beta^{-1}]$. A priori the latter localization is calculated in the sense of associative rings. But after remark \ref{uniqbu}, we know that there exists a ring map $\bu\lmos{\sim} bu$ with $bu$ a commutative ring model, which is an equivalence. From \ref{remloccomass} we deduce that there is no ambiguity on the ring $\bu[\beta^{-1}]$ : it is equivalent to the commutative localization of $bu$ and is therefore equivalence to the usual colimit
$$\bu[\beta^{-1}]\simeq \xymatrix{  colim (bu \ar[r]^-{\cup \beta}  & bu \ar[r]^-{\cup \beta} & \cdots )}$$
where $\cup \beta$ is the map multiplication by $\beta$. The ring structure of $\bu[\beta^{-1}]$ is therefore the usual structure, and there is an isomorphism $\bu[\beta^{-1}]\sm_\s H\C \simeq H\cuu$ (with $u$ of degree $2$) in the homotopy category of $H\C$-algebras. We adopt the notation $\BU:=\bu[\beta^{-1}]$. 
\end{rema}

\begin{df}\label{deftop}
The \emph{topological K-theory of a $\C$-dg-category $T\in \dgcatc$} is the symmetric spectrum
$$\ktop(T):=\kst(T)[\beta^{-1}].$$
This defines a functor
$$\ktop : \dgcatc\lmo \BU-Mod_\s.$$
We denote by $\ktop_i(T):=\pi_i\ktop(T)$ for all $i\in\Z$ the topological K-groups. 
\end{df}

\begin{rema}
Following remark \ref{fibtop}, we compose the unit map $\kn\lmo \kst$ with the structural map $\kst\lmo \ktop$ and we obtain a map denoted by $\theta : \kn\lmo \ktop$ from algebraic K-theory to topological K-theory. 
\end{rema}

Topological K-theory inherits the properties of algebraic K-theory of proposition \ref{propalgk}. 

\begin{prop} 
\begin{description}
\item[a.] Topological K-theory commutes with filtrant homotopy colimits of dg-categories. 
\item[b.] Topological K-theory sends Morita equivalences to equivalences of spectra. 
\item[c.] For all exact sequence of dg-categories $T'\mo T\mo T''$, the induced sequence
$$\ktop(T')\lmo \ktop(T)\lmo \ktop(T'')$$
is a distinguished triangle in $Ho(Sp)$. 
\end{description}
\end{prop}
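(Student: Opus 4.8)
The plan is to factor $\ktop$ as a composite of three functors, $\ktop(T)=\resp{\ukn(T)}[\beta^{-1}]$, and to check that each of them commutes with filtered homotopy colimits, preserves weak equivalences, and preserves distinguished triangles; the proposition then follows from the corresponding properties of nonconnective algebraic K-theory recorded in Proposition \ref{propalgk}. Here $\ukn:\dgcatc\lmo\spaf$ is the presheaf of nonconnective K-theory spectra, $\resp{-}=\lef\sspsp:Ho(\spaf)\lmo Ho(Sp)$ is the spectral topological realization, and $(-)[\beta^{-1}]=(-)\sm_{\bu}\BU:Ho(\bu-Mod_\s)\lmo Ho(\BU-Mod_\s)$ is the Bott localization.

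First I would transfer the properties of $\kn$ to the presheaf-valued functor $\ukn$. In the projective model category $\spaf$ of presheaves of spectra, weak equivalences are levelwise and filtered homotopy colimits are computed levelwise; moreover, since $Sp$ is a stable model category, so is $\spaf$, and its loops, suspension and homotopy (co)fibers over a fixed diagram shape are all computed objectwise, so the triangulated structure of $Ho(\spaf)$ is the levelwise one. Consequently $\ukn$ commutes with filtered homotopy colimits, sends Morita equivalences to equivalences in $Ho(\spaf)$, and sends an exact sequence $T'\lmo T\lmo T''$ to a distinguished triangle $\ukn(T')\lmo\ukn(T)\lmo\ukn(T'')$ in $Ho(\spaf)$; for the first two properties this is precisely formula (\ref{inddg}) together with the remark following Theorem \ref{repk}, and for the third it is the levelwise form of Proposition \ref{propalgk}(d).

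Next, $\resp{-}=\lef\sspsp$ is the left derived functor of the left Quillen functor $\sspsp:\spaf\lmo Sp$ for the global model structures (see \S\ref{realsp}; cf.\ Proposition \ref{proprel}(1)). As a left derived functor it preserves all weak equivalences and commutes with arbitrary homotopy colimits, in particular filtered ones (the stable analogue of Proposition \ref{proprelder}(1)); and, being a left derived functor between stable model categories, it carries homotopy cofiber sequences to homotopy cofiber sequences, hence distinguished triangles to distinguished triangles. Finally, inverting the Bott element, $M\mapsto M[\beta^{-1}]=M\sm_{\bu}\BU$, is the left derived functor of base change along the ring map $\bu\lmo\BU$, a left Quillen functor between stable model categories, so it too preserves weak equivalences, commutes with homotopy colimits and is exact. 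It is moreover a smashing localization---left adjoint to the fully faithful restriction $Ho(\BU-Mod_\s)\hookrightarrow Ho(\bu-Mod_\s)$---and one has the explicit description $M[\beta^{-1}]\simeq hocolim\bigl(M\mor{\cdot\beta}M[-2]\mor{\cdot\beta}M[-4]\lmo\cdots\bigr)$ as a sequential homotopy colimit, which makes these properties manifest. Composing the three factors, and using that the forgetful functor $Ho(\BU-Mod_\s)\lmo Ho(Sp)$ is exact, yields all three assertions.

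The one point deserving care---and the step I would single out as the main (if modest) technical obstacle---is the claim used above that a sequence of presheaves of spectra which is levelwise a distinguished triangle is genuinely a distinguished triangle in $Ho(\spaf)$. This rests on the fact that homotopy colimits over a fixed small diagram in the projective model structure on $\spaf$ are computed objectwise, whence homotopy (co)fibers, and therefore the entire triangulated structure of $Ho(\spaf)$, are levelwise; everything else in the argument is a formal consequence of being a left (derived) functor, or a left adjoint, between stable model categories.
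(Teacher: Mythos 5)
Your proposal is correct and follows essentially the same route as the paper: factor $\ktop$ as the spectral topological realization of the presheaf $\ukn(-)$ followed by Bott localization, then use that the realization and the localization are (derived) left adjoints, hence commute with homotopy colimits and are exact, while Morita invariance is immediate from functoriality. Your extra verifications (levelwise structure of $\spaf$, the sequential-colimit model of $M[\beta^{-1}]$) only spell out details the paper leaves implicit.
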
 

\begin{proof} 
\begin{description}
\item[a.] Comes from the fact that the topological realization $\resp{-}$ and the operation of localization by $\beta$ are left adjoints and thus commute with homotopy colimits. 
\item[b.] By functoriality. 
\item[c.] Comes from the fact that $\resp{-}$ and the operation of localization by $\beta$ are exact functors. 
\end{description}
\end{proof}


\subsection{The case of the point}\label{ktoppoint}

We give a particular model for the spectrum $bu$, as a symmetric spectrum. For all commutative algebra $A\in \calgc$, we denote by $\proj(A)$ the Waldhausen category of projective $A$-modules of finite type, i.e. of finite rank vector bundles on $\spec(A)$ (see remark \ref{algebra}). The equivalences in this Waldhausen category $\proj(A)$ are by definition the isomorphisms and the cofibrations are admissible monomorphisms. We define a pseudo-functor $\affc^{op} \lmo WCat$, by setting for all map of algebras $A\lmo B$, the induced exact functor is the tensor product
\begin{align*}
\proj(A) & \lmo \proj(B) \\
E & \longmapsto E\te_A B
\end{align*}
which verifies the usual associativity conditions up to isomorphism. We denote by $\proj$ the canonical strictification of this pseudo-functor. We denote by $\vect=Nw\proj$ the simplicial presheaf obtained by taking levelwise the nerve of equivalences in $\proj(A)$. The direct sum of modules induces a structure of homotopy coherent commutative monoid on $\vect$. More precisely, using the construction $B_W$ mentioned in the end of \ref{monoids}, we have a $\gam$-simplicial presheaf
$$\vect_\bul :=NwB_W \vect \in \gam-\spr$$
such that for all integer $n\geq 0$, there is a levelwise equivalence $\vect_{(n)}\simeq \vect^n$. We define the connective symmetric spectrum $bu$ as
$$bu:=\mcal{B}\regam{\vect_\bul}^+.$$

\begin{rema}
We use notation from \ref{prestru} concerning classifying spaces of groups. We denote by $Gl_n : \affc^{op}\lmo SSet$ the discrete simplicial presheaf of linear groups. We denote by $\coprod_{n\geq 0} \mrm{B}Gl_n$ the $\gam$-simplicial presheaf whose commutative monoid structure is given by block sum of matrices. It is well known that there exists an étale local equivalence of $\gam$-simplicial presheaf,
$$\vect_\bul \simeq \coprod_{n\geq 0} \mrm{B}Gl_n$$
By theorem \ref{di}, proposition \ref{proprelder} and \ref{gp}, we have equivalences
$$\re{\vect_\bul}^+\simeq \re{\coprod_{n\geq 0} \mrm{B}Gl_n}^+ \simeq (\coprod_{n\geq 0} \re{\mrm{B}Gl_n})^+ \simeq (\coprod_{n\geq 0} \mrm{B}\re{Gl_n})^+\simeq  (\coprod_{n\geq 0} \mrm{B} Gl_n(\C))^+$$
where $Gl_n(\C)$ stands the topological space of complex points, and we take its classifying space as a topological group. The topological group is homotopy equivalent to the unitary group $U_n(\C)$, and we have an equivalence
$$\re{\vect_\bul}^+\simeq (\coprod_{n\geq 0} \mrm{B} U_n(\C))^+.$$
But the group completion $\coprod_{n\geq 0} \mrm{B} U_n(\C)$ is known (see \cite[App Q]{fmfilt}) to be equivalent to
$$(\coprod_{n\geq 0} \mrm{B} U_n(\C))^+\simeq \mrm{B}U_\infty \times \Z,$$ 
where $BU_\infty$ is the colimit of the $\mrm{B}U_n(\C)$ with respect to the natural inclusion $\mrm{B}U_n(\C)\hookrightarrow \mrm{B}U_{n+1}(\C)$, with the structure of $\gam$-objects still given by the block sum of matrices and the usual law for $\Z$. In consequence, by theorem \ref{bf}, we have an equivalence of spectra $\mcal{B}\regam{\vect_\bul}^+ \simeq \mcal{B} (BU_\infty \times \Z)$, which is the common definition of $bu$. 
\end{rema}

\begin{proof} du théorème \ref{bu} --- We have a chain of canonical isomorphims in $Ho(Sp)$, 
\begin{align*}
\kcst(\ast)& =\resp{\ukc(\ast)} \\
& \simeq \resp{\kc(\vect)} \qquad &&\textrm{(by remark\ref{algebra})} \\
& = \resp{\mcal{B} K^\gam(\vect)} \qquad & &\textrm{(by definition, see end of \ref{monoids})} \\
&\simeq \mcal{B} \regam{ K^\gam(\vect)} \qquad &&\textrm{(by proposition \ref{proprelb}) } \\
&\simeq \mcal{B} \regam{(\vect_\bul)^+} \qquad &&\textrm{(\cite[Lem 1.10]{teze}, recalled at the end of \ref{monoids}) } \\
&\simeq \mcal{B} \regam{\vect_\bul}^+ \qquad &&\textrm{(by proposition \ref{proprelplus})} \\
& =bu. 
\end{align*}
\end{proof}


\subsection{Topological K-theory via the stack of perfect modules}\label{sectkstmt}

Semi-topological K-theory, as initiated by Toën (see \cite{sat}, \cite{kal}, \cite{kkp}) was firstly defined as the topological realization of the stack of pseudo-perfect dg-modules associated to a dg-category. We consider two stacks associated to a dg-category $T$. The stack $\M_T$ of pseudo-perfect dg-modules and the stack $\M^T$ of perfect dg-modules. The stack $\M^T$ was studied by Toën--Vaquié in \cite{modob}. We show below that semi-topological K-theory of a dg-category can be recovered as the topological realization of the stack $\M^T$ of perfect dg-modules. This result is based on the existence of an $\ao$-homotopy equivalence between $\M^T$ and the $S$-construction of the category of perfect $T$-dg-modules. The stack $\M_T$ of pseudo-perfect dg-modules gives rise to a dual theory with respect to $\kst$ which can therefore be called \emph{topological K-homology}. This $\ao$-homotopy equivalence also proved that semi-topological K-homology is recovered as the topological realization of $\M_T$.

For all $\C$-dg-category $T$ we define two presheaves of Waldhausen categories. We denote by 
\begin{itemize}
\item $\uparf(T) : \spec(A)\longmapsto \parf(T\tel_\C A)=\parf(T,A)$. 
\item $\upspa(T) : \spec(A) \longmapsto \pspa(T\tel_\C A)=\pspa(T,A)$, where the latter is the category of $T\tel_\C A$-dg-modules which are perfect relative to $A$, which we call pseudo-perfect $T\tel_\C A$-dg-modules (voir \cite[Def 2.7]{modob}). 
\end{itemize}
These strict functors are obtained as canonical stictification of pseudo-functors for which functoriality is given by direct image. They give rise to two stacks, 
\begin{itemize}
\item $\M^T =Nw\uparf(T) : \spec(A)\longmapsto Nw\parf(T,A)$. 
\item $\M_T=Nw\upspa(T) : \spec(A)\longmapsto Nw\pspa(T,A)$. 
\end{itemize}
where $Nw$ stands for the nerve of the subcategory of equivalences. The direct sum of dg-modules induces a structure of homotopy coherent commutative monoid on $\uparf(T)$ and on $\upspa(T)$. We apply the functor $B_W$ defined in \ref{monoids} and we obtain special $\gam$-objects in $\spr$, 
$$\M^T_\bul = NwB_W\uparf(T), \qquad
\M_T^\bul =NwB_W\upspa(T).$$

Let $T\in \dgcatc$ be a dg-category. All statements in this subsection are also true for the stack $\M_T$ if we replace K-theory of perfect dg-modules by K-theory of pseudo-perfect dg-modules. We choose to write the details just for the stack $\M^T$. We use notations of example \ref{exespk} ; we have a $\del$-simplicial presheave given by 
$$\K^T_\bul:=\K_\bul (\uparf(T))=NwS_\bul \uparf(T)$$

By abuse of notation, we sometimes consider $\M^T$ as a $\del$-object applying the functor $\alpha^*$ defined in \ref{monoids}. We define a map of $\del$-objects

$$\lambda_\bul  : \M^T_\bul\lmo \K^T_\bul$$

by letting 
$$\lambda_n(a_1, \hdots, a_n) = (a_1\moi a_1\oplus a_2\moi \cdots \moi a_1\oplus \cdots \oplus a_n)$$
where we omit to precise the choices of sum diagrams which are part of the data, and which plays a role in that $\lambda_\bul$ is indeed simplicial. The fact that $\lambda_\bul$ is a map of simplicial objects is justified by the formulas in \cite[§1.2 p.29]{teze}. 

\begin{prop}\label{mk} 
The map $\lambda_\bul  : \M^T_\bul \lmo \K^T_\bul$ is a levelwise $\ao$-equivalence in $\del-\spretao$. 
\end{prop}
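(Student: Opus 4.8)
The plan is to check the $\mathbf{A}^1$-equivalence levelwise, i.e. to fix $n\geq 0$ and prove that
$$\lambda_n : (\M^T_\bul)_n = Nw\parf(T\otimes^{\mathbb L}_\C -)^n \lmo (\K^T_\bul)_n = NwS_n\uparf(T)$$
is an $\mathbf{A}^1$-equivalence of simplicial presheaves on $\affc$. For $n=0$ both sides are a point, and for $n=1$ the map is the identity (up to the canonical equivalence $S_1 C\simeq C$), so the content is really in $n\geq 2$. The idea is that $NwS_nC$ is the nerve of the category of filtered objects $a_1\moi a_2\moi\cdots\moi a_n$ together with chosen subquotients $a_j/a_i$, while $\M^T_n\simeq (\M^T_1)^n$ parametrizes $n$-tuples $(b_1,\dots,b_n)$; the map $\lambda_n$ sends $(b_1,\dots,b_n)$ to the filtration $b_1\moi b_1\oplus b_2\moi\cdots$ with the obvious splittings. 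So $\lambda_n$ has a retraction (forget the filtration, remember the associated graded) and the composite $S_n C\to (\M^T_1)^n\to S_nC$ is the operation ``replace a filtered object by its split associated graded''. First I would recall that for the Waldhausen category $\parf(T,A)$ all constructions are homotopy invariant in a strong sense — it is a Waldhausen category with cylinder functor satisfying the cylinder, extension and saturation axioms (stated in §\ref{algk}) — so over each fixed $A$ the map $Nw\parf(T,A)^n\to NwS_n\parf(T,A)$ is a weak equivalence of simplicial sets; this is the classical additivity-type fact (cf. Waldhausen, or \cite[§1.2]{teze}): the ``extra degeneracy''/filtration argument shows $S_n$ of a split-exact situation is homotopy equivalent to the product of the associated graded pieces, and here every cofibration $a_1\oplus\cdots\oplus a_i\moi a_1\oplus\cdots\oplus a_{i+1}$ is split by construction.

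The second step is to assemble this into a statement about presheaves. Since $\lambda_n$ is, levelwise on $\affc$, a weak equivalence of simplicial sets (by the first step applied to $A$ for each $\spec(A)\in\affc$), it is in particular a levelwise — hence global, hence $\mathbf{A}^1$-étale-local — weak equivalence of simplicial presheaves. Concretely, one observes that the homotopy equivalence in the first step is natural in $A$: the retraction ``take the split associated graded'' and the simplicial homotopy witnessing that $\lambda_n$ followed by it is homotopic to the identity are all constructed out of the biexact sum functor $\oplus$ and the canonical split cofibration sequences, all of which are preserved by the base-change functors $-\otimes_A B$. Hence $\lambda_n$ is a map of presheaves of simplicial sets which is an objectwise weak equivalence, so it is an equivalence for the projective (global) model structure on $\spr$, and \emph{a fortiori} for the Bousfield localization $\spretao$. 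This gives the claimed levelwise $\mathbf{A}^1$-equivalence in $\del-\spretao$.

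Some care is needed about what ``$\del$-object'' means here and why the map is simplicial: $\M^T_\bul$ is a priori a $\gam$-object and we view it as a $\del$-object via $\alpha^*$ (as recalled in §\ref{monoids}), while $\K^T_\bul=NwS_\bul\uparf(T)$ is genuinely simplicial; the compatibility of $\lambda_\bul$ with all faces and degeneracies is the point where one must invoke the explicit formulas of \cite[§1.2 p.29]{teze}, where the analogous map $NwB_W C\to NwS_\bul C$ is checked to be a map of $\del$-spaces (this is exactly the map (\ref{mapgam}) of Remark \ref{kconspec}). Granting that, only the levelwise weak-equivalence statement remains, and that is the first step above.

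The main obstacle I expect is the first step done \emph{functorially and without fibrancy/cofibrancy worries}: one must be sure that the homotopy equivalence $Nw\parf(T,A)^n\simeq NwS_n\parf(T,A)$ — which classically rests on a choice of filtration/splitting data and a simplicial homotopy — can be produced by formulas that are literally natural in $A$ under $-\otimes_A B$, so that it glues to a map of presheaves rather than just giving objectwise equivalences of an unrelated sort. Since $\lambda_n$ itself is already given by an explicit natural formula, the cleanest route is not to exhibit a natural homotopy inverse but simply to note that ``objectwise weak equivalence $\Rightarrow$ global equivalence $\Rightarrow$ $\mathbf{A}^1$-local equivalence'' for simplicial presheaves, so it suffices that $\lambda_n(\spec A)$ be a weak equivalence of simplicial sets for each individual $A$ — which is the standard additivity fact for the Waldhausen category $\parf(T,A)$, all of whose cofibrations appearing in the image of $\lambda_n$ are split. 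This sidesteps the naturality difficulty entirely, and the proof reduces to citing additivity objectwise plus the formal implication between the three classes of equivalences.
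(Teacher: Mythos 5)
Your proof has a genuine gap, and it is located exactly where you place the main weight: the claim that $\lambda_n(\spec A) : Nw\parf(T,A)^n \lmo NwS_n\parf(T,A)$ is an objectwise weak equivalence of simplicial sets is false, and the ``classical additivity-type fact'' does not assert it. Additivity identifies the $S$-construction (equivalently, K-theory) of the category of cofibration sequences with a product, i.e.\ it is a statement about $S_\bul$ of these categories (or about the loop space of $\re{NwS_\bul}$), not about the nerves of weak equivalences $NwS_nC$ themselves. Already for $T=\unit$ and $A=\C$ the map fails to be surjective on $\pi_0$: the cofibration sequence $\C \moi D \twoheadrightarrow \C[1]$, with $D$ the acyclic two-term complex $\C\xrightarrow{\mathrm{id}}\C$, is a point of $NwS_2\parf(\C)$ whose connected component contains no split sequence, because any zigzag of weak equivalences of sequences preserves the quasi-isomorphism type of the middle term, and $D\not\simeq \C\oplus\C[1]$. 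Your remark that ``every cofibration appearing in the image of $\lambda_n$ is split by construction'' only controls the image; a weak equivalence also needs every component of the target to be hit. Consequently the chain ``objectwise $\Rightarrow$ global $\Rightarrow$ $\ao$-local equivalence'' is applied to a false premise, and in fact your argument never uses the $\ao$-localization at all — whereas that is the entire content of the proposition (see the remark following it in the paper: cofibrations in $\parf(T)$ are \emph{not} split; they become split only up to $\ao$-homotopy, at the level of the presheaf $\uparf(T)$).

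The paper's proof is genuinely $\ao$-homotopical and cannot be reduced to a sectionwise statement. At level $2$ it exhibits an explicit candidate inverse $\mu_2(i:x\to y)=(x,\cone(i))$; the composite $\mu_2\lambda_2$ is equivalent to the identity, but to compare $\lambda_2\mu_2$ with the identity one uses the presheaf homotopy $h:\ao\times X_2\lmo X_2$, $h_A(f,\,i:x\to y)=(fi:x\to y)$, which contracts a morphism to the zero morphism along the affine line — this is a homotopy only after $\ao$-localization, not over each fixed $A$ — followed by conjugation with the shift autoequivalence $t(i:x\to y)=(y\to\cone(i))$ to identify the ``zero-morphism'' endomorphism with $\lambda_2\mu_2$ up to quasi-isomorphism. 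The higher levels are then handled by induction on $n$, using the homotopy pullback decomposition of the presheaf $X_n$ of strings of $n-1$ composable maps as $X_{n-1}\ph_{X_1}X_2$ (via \cite[Lem 4.2]{dhall}) together with the stability of $\ao$-equivalences under homotopy pullback and the two-out-of-three property. Any correct argument must contain an ingredient of this kind (an affine-line contraction of morphisms to zero, or an equivalent mechanism splitting cofibrations $\ao$-locally); there is no purely objectwise route.
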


\begin{rema}
This result can be heuristically rephrased by saying that, in general, cofibrations are not split in the category $\parf(T)$, but if we look at the presheaf $\uparf(T)$ in the $\ao$-homotopy theory, then these cofibrations are all split up to $\ao$-homotopy equivalence. This explains the link with K-theory which is precisely the invariant through which cofibrations are split by the additivity theorem. 
\end{rema}

\begin{proof} of proposition \ref{mk} --- 
We introduce some notations. Let $n\geq 1$ be an integer. We denote by $[n]$ the category associated to the ordered set $\{1<2<\cdots <n\}$. We set
\begin{align*}
M_n :\affc^{op}  & \lmo Cat \\
\spec(A) & \longmapsto M_n(A)=\parf(T,A)^{[n-1]}.
\end{align*}
The latter object is the presheaf of sequences of length $n-1$ of composable maps in $\uparf(T)$. A map from $a_1\mo \cdots \mo a_n$ to $b_1\mo \cdots \mo b_n$ in $\parf(T,A)^{[n-1]}$ is by definition the data of commutative squares in $\parf(T,A)$, 
$$\xymatrix{a_1\ar[r] \ar[d]  & a_2 \ar[r] \ar[d] & \cdots \ar[r] & a_n \ar[d]  \\ b_1\ar[r]  & b_2 \ar[r] & \cdots \ar[r] & b_n}$$
We have $M_1=\uparf(T)$. For all $n\geq 1$ and all $A\in \calgc$, the category $M_n(A)$ is endowed with the projective model structure. Let $X_n=NwM_n$ be the simplicial presheaf which classifies sequences of length $n-1$ of composable maps in $\uparf(T)$. For all $n\geq 1$, we have a natural inclusion, 
$$\K^T_n \hookrightarrow X_n.$$
Because every map in $\parf(T,A)$ factorizes as a cofibration followed by a quasi-isomorphism, this last map is a global equivalence in $\spr$. Therefore, to prove our result, it suffices to prove that the map still denoted by $\lambda_n : \M^T_n \lmo X_n$ is an $\ao$-equivalence for all $n\geq 1$. We proceed by reccurence on $n$. We use level $2$ and $n-1$ to show level $n$. 
For level $1$ we have natural isomorphisms $\M^T_1 =X_1=Nw\parf(T,-)$. For level $2$ the map $\lambda_2$ acts on $0$-simplexes by
$$\lambda_2 (a,b)=(a\mo a\oplus b).$$
We then define an explicit $\ao$-homotopy inverse to $\lambda_2$ denoted by $\mu_2$ and defined on $0$-simplexes by
$$\mu_2(i:x\mo y):=(x,\cone(i)).$$
This defines naturally a map of simplicial presheaves $\mu_2 :  X_2\lmo \M^T_2$. We then have
$$\mu_2 \circ \lambda_2 (a,b)=\mu_2(a\mo a\oplus b)=(a,\cone(a\mo a\oplus b))\simeq (a,b),$$
where the last map is a quasi-isomorphism. For all $A\in \calgc$ we have an homotopy $\mu_2 \circ \lambda_2\Rightarrow id$ as endomorphisms of $\M^T_2(A)$. In the other direction we have
$$\lambda_2\circ \mu_2(i:x\mo y)=\lambda_2(x,\cone(i))=(x\mo x\oplus \cone(i)).$$
We then define an $\ao$-homotopy $h:\ao\times X_2\lmo X_2$ for every $A\in \calgc$ by
\begin{align*}
h_A :  A\times X_2(A)  & \lmo X_2(A) \\
(f,i:x\mo y) & \longmapsto (f i:x\mo y).
\end{align*}
The map $h$ is an $\ao$-homotopy between $id_{X_2}$ and the endomorphism $Z$ of $X_2$ defined by 
$$Z(i:x\mo y)=(0:x\mo y).$$
The endomorphism $Z$ is conjugated by an autoequivalence of $X_2$ with the map $\lambda_2\circ \mu_2$. This autoequivalence is given by the shift
\begin{align*}
t : X_2 & \lmo X_2\\
 (i:x\mo y)  & \longmapsto (y\mo \cone(i)),
\end{align*}
The map $t$ verifies $t^{(3)} (i)=i[1]$, and is therefore an autoequivalence of $X_2$. The inverse of $t$ is given by
$$t^{-1} (i:x\mo y)=\cocone(i)\mo x, $$
where the last map is given by the definition of the cocone. We have
\begin{align*}
tZt^{-1} (f) & = tZ(\cocone(i) \mo x) \\
& = t(0: \cocone(i)\mo x )\\
&= x\mo \cone(0:\cocone(i) \mo x). \\
\end{align*}
The module $\cone(0:\cocone(i) \mo x)$ is canonically quasi-isomorphic to $x\oplus \cone(i)$ with the sum differential. Thus we have a quasi-isomorphism $tZt^{-1} \simeq \lambda_2\circ \mu_2$. To sum up, the map $h$ is an $\ao$-homotopy $id_{X_2}\Rightarrow Z$, we have $tZt^{-1} \simeq \lambda_2\circ \mu_2$ and an homotopy $\mu_2 \circ \lambda_2\Rightarrow id$ which implies that $\lambda_2$ is an $\ao$-equivalence. 

Let now $n\geq 2$. We use the notion of pullback of model categories defined in \cite{dhall}. Consider the functor 
\begin{align*} 
F:M^{(n)}  & \lmo M^{(n-1)} \underset{M^{(1)}}{\ph} M^{(2)} \\
(a_1\mo a_2\mo \cdots\mo a_n) & \longmapsto ((a_1\mo \cdots\mo a_{n-2} \mo a_n), (a_{n-1}/a_{n-2} \mo a_n/a_{n-2}), a_{n-1}/a_{n-2}, id, id)
\end{align*}
where $a_{n-2} \mo a_n$ is the composite map $a_{n-2} \mo a_{n-1} \mo a_n$, and the notation quotient stands for the homotopy cofiber or the cone. 
\begin{lem}\label{lemrec}
The functor $F$ verifies the two assumptions of \cite[Lem 4.2]{dhall}. We deduce that the map induced by $F$,  
$$q_n:X_n \lmo X_n \underset{X_1}{\ph} X_2$$
is a global equivalence in $\spr$ (the pullback being calculated in the global model category $\spr$).  
\end{lem}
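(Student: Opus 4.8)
The strategy is to verify directly the two hypotheses of \cite[Lem 4.2]{dhall} for the functor $F$ — namely, that $F$ is essentially surjective up to weak equivalence, and that it is homotopically fully faithful (an equivalence on the nerves of weak equivalences of the homotopy fibres, equivalently on derived mapping spaces) — and then to quote that lemma, which identifies the nerve of weak equivalences of the homotopy pullback $M^{(n-1)}\underset{M^{(1)}}{\ph}M^{(2)}$ of model categories with the homotopy pullback of the spaces $NwM^{(\bullet)}$. Since $M^{(n)},M^{(n-1)},M^{(2)},M^{(1)}$ are all defined presheafwise over $\calgc$ with the projective model structures fibrewise, and since both global equivalences in $\spr$ and homotopy pullbacks in the global model structure on $\spr$ are computed levelwise, it suffices to run the whole argument over a fixed $\spec(A)$, inside the stable model category $\parf(T,A)$.

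For essential surjectivity I would start from an object of $M^{(n-1)}(A)\underset{M^{(1)}(A)}{\ph}M^{(2)}(A)$: a truncated chain $a_1\to\cdots\to a_{n-2}\to a_n$, a morphism $u\colon p\to q$ of perfect modules, and the compatibility equivalences identifying $q$ (resp.\ $p$) with the relevant (co)fibre datum of the truncated chain. Using that $\parf(T,A)$ is stable and carries the functorial $\cone$ and $\cocone$ functors already exploited for $\lambda_2$, I would reconstruct the missing middle object by $a_{n-1}:=\cocone\bigl(a_n\to a_n/a_{n-2}\to\cone(u)\bigr)$, so that $a_{n-2}\to a_{n-1}\to a_n$ becomes a genuine factorization; the octahedral axiom then yields $a_{n-1}/a_{n-2}\simeq p$ and $a_n/a_{n-1}\simeq\cone(u)$, so that $F$ applied to $a_1\to\cdots\to a_{n-2}\to a_{n-1}\to a_n$ is weakly equivalent to the object we started from. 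The full faithfulness hypothesis is the same computation run in families: the homotopy fibre of $F$ over a point unwinds to the space of compatible reconstructions of $a_{n-1}$, which is non-empty by the construction above and has contractible components because the reconstruction is pinned down, up to a contractible space of quasi-isomorphisms fixing the surrounding data, by stability of $\parf(T,A)$.

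With both hypotheses checked, \cite[Lem 4.2]{dhall} gives, for each $A\in\calgc$, a weak equivalence of simplicial sets $NwM^{(n)}(A)\xrightarrow{\sim}Nw\bigl(M^{(n-1)}(A)\underset{M^{(1)}(A)}{\ph}M^{(2)}(A)\bigr)\simeq NwM^{(n-1)}(A)\underset{NwM^{(1)}(A)}{\ph}NwM^{(2)}(A)$. Identifying these spaces with $X_n(A),X_{n-1}(A),X_2(A),X_1(A)$ and using that the homotopy pullback in the global model structure on $\spr$ is computed at each $\spec(A)$, this says exactly that the map $q_n$ induced by $F$ is a levelwise, hence a global, equivalence in $\spr$.

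The step I expect to be the main obstacle is the full faithfulness / contractible-homotopy-fibre condition of \cite[Lem 4.2]{dhall}: one must make the reconstruction of $a_{n-1}$ and of all the surrounding (co)fibres natural with respect to weak equivalences of the input, and show that the space of such reconstructions is genuinely \emph{contractible}, not merely non-empty and connected. This is where one has to be careful about the functorial choices of quotients built into the definition of $F$, and to use the stability of $\parf(T,A)$ (fibre sequences $=$ cofibre sequences, the $\cone$–$\cocone$ adjunction) at the level of the model category, not just of its triangulated homotopy category.
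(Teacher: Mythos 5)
Your proposal is correct and follows essentially the same route as the paper: the paper's own proof simply observes that Toën's verification of the two hypotheses of \cite[Lem 4.2]{dhall} for $q_3$ (given right after that lemma) generalizes to chains of arbitrary length and can be applied levelwise, since global equivalences and homotopy pullbacks in $\spr$ are computed objectwise — which is exactly your reduction to a fixed $\spec(A)$ together with the $\cone$/$\cocone$/octahedron reconstruction of the missing $a_{n-1}$. The only point where your wording is looser than the paper's is that $\parf(T,A)$ is not literally a stable model category but a Waldhausen category of cofibrant perfect dg-modules; the paper flags precisely this distinction and notes that the argument nevertheless goes through because the cone and cocone constructions stay within perfect cofibrant objects.
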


\begin{proof} of lemma \ref{lemrec} --- Toën's proof that $q_3$ is an equivalence (\cite{dhall} right after the proof of lemma 4.2) generalizes to sequences of maps of arbitrary length\footnote{We remark that there is a shift of indices between our notations and the paper \cite{dhall}.}. The major distinction comes from the fact that we work with a presheaf of categories of perfect objects and not with a stable model category. Nevertheless, the same proof makes sense for such perfect objects. Moreover, \cite[Lem. 4.2]{dhall} can be apply levelwise, and in the global model structure on $\spr$, a square is homotopy cartesian in and only if it is levelwise homotopy cartesian in $SSet$. 
\end{proof}

We therefore end up with a square of simplicial presheaves,
$$\xymatrix@R=1cm @C=3cm { \M^T_n \ar[r]^-{\lambda_n } \ar[d]^-{p} & X_n \ar[d]^-{q_n} \\ \M^T_{n-1} \ph \M^T_{2} \ar[r]^-{\lambda_n \ph  \lambda_2 } & X_{n-1} \underset{X_1}{\ph} X_1 }$$
where $p$ is the map $p(a_1, \hdots, a_n)=((a_1,\hdots, a_{n-2},a_{n-1}\oplus a_n), (a_{n-1}, a_n))$. This latter is an equivalence by its very definition. We can check directly that the square is commutative up to global homotopy. The maps $\lambda_{n-1}$ and $\lambda_2$ are $\ao$-homotopy equivalences by reccurence hypothesis. The $\ao$-homotopy equivalences being stable by homotopy pullback, the map $\lambda_2 \ph  \lambda_2$ is an $\ao$-homotopy equivalence. We conclude by the 2-out-of-3 property that the map $\lambda_n$ is an $\ao$-homotopy equivalence. The proof of \ref{mk} is then complete. 
\end{proof}

\begin{prop}\label{mttsp}
Let $T\in \dgcatc$ be a dg-category over $\C$. Then the special $\gam$-space $\regam{\M^T_\bul}$ is very special. 
\end{prop}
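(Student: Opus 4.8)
The plan is to combine Proposition \ref{mk} with the known very-speciality of the algebraic K-theory $\gam$-space. First I would recall that $\M^T_\bul$ is \emph{special} by construction (the functor $B_W$ produces a special $\gam$-object, and the Segal maps are levelwise equivalences because $(B_W\uparf(T))_n\simeq \uparf(T)^n$), and that the realization $\regam{-}$ commutes with finite homotopy products by Proposition \ref{proprelder}, so $\regam{\M^T_\bul}$ is again a special $\gam$-space. Thus the only thing to check is the very-speciality condition, which by the remark after Definition \ref{defdel} (for $M=SSet$, equivalently via $\pi_0$) amounts to showing that the monoid $\pi_0(\regam{\M^T_\bul})_1 = \pi_0\regam{\M^T}$ is a \emph{group}.

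Next I would invoke Proposition \ref{mk}: the map $\lambda_\bul : \M^T_\bul \lmo \K^T_\bul$ is a levelwise $\ao$-equivalence in $\del-\spretao$, hence a levelwise $\ao$-equivalence of $\gam$-objects after applying $\alpha^*$ appropriately. Since $\regam{-}$ sends $\ao$-equivalences to equivalences (it factors through $Ho(\spretao)$, by Theorem \ref{di} and the definition of $\resp{-}$/$\regam{-}$), we get a levelwise equivalence $\regam{\M^T_\bul}\simeq \regam{\K^T_\bul}$ of special $\gam$-spaces. But $\K^T_\bul = NwS_\bul\uparf(T)$ is the $\del$-space computing algebraic K-theory of the Waldhausen category $\parf(T,-)$ levelwise, and as recalled in Example \ref{exespk} and Remark \ref{kconspec}, its group completion is very special precisely because $\pi_0 K(\parf(T,A)) = K_0(\parf(T,A))$ is the Grothendieck group, which \emph{is} a group. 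Concretely: $\pi_0\regam{\K^T}\cong \pi_0\re{NwS_1\uparf(T)}/\!\!\sim$ computed via Lemma \ref{pi0relens}/Proposition \ref{pi0cor}, and the additivity built into the Waldhausen $S$-construction (cofibration sequences $a'\moi a\twoheadrightarrow a''$ forcing $[a]=[a']+[a'']$) makes this a group — the class $[a]$ has inverse realized through the cone construction, which is exactly the content already exploited in the level-$2$ argument of Proposition \ref{mk}.

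So the argument reduces to the following chain: $\regam{\M^T_\bul}$ special (realization commutes with products) $\Rightarrow$ very-speciality $\iff$ $\pi_0\regam{\M^T}$ is a group $\iff$ (via $\lambda$) $\pi_0\regam{\K^T}$ is a group, and the latter holds because $\pi_0$ of the realized $S$-construction is a quotient of $\pi_0$ of an algebraic-equivalence-type relation on perfect modules under which every class acquires an inverse from its shift/cone. The main obstacle I anticipate is bookkeeping the compatibility of the $\gam$-structures under $\alpha^*$ and under $\regam{-}$ — i.e., making sure the Segal/very-special maps on $\M^T_\bul$ correspond under $\lambda_\bul$ to the ones on $\K^T_\bul$, rather than any genuinely new geometric input; once that is nailed down, the statement follows formally from Proposition \ref{mk} together with the elementary fact that algebraic $K_0$ of a Waldhausen category is a group.
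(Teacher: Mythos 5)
Your overall strategy --- reduce very-speciality to the statement that the monoid $\pi_0(\regam{\M^T_\bul})_1=\pi_0\re{\M^T}$ is a group, and import the inverses from the K-theory side via Proposition \ref{mk} --- can be made to work, but the step you actually lean on is misjustified, so as written there is a genuine gap. The facts that $K_0(\parf(T,A))$ is a group and that a group completion is very special are beside the point: what must be shown is that the direct-sum monoid $\pi_0\re{\M^T}$ (quasi-isomorphism classes of perfect $T^{op}$-dg-modules over $\C$ modulo connectedness by algebraic curves, by Proposition \ref{pi0cor}) \emph{already} has inverses, i.e.\ that no group completion is needed after realization. This is not formal. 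Before realization the relation $[a]=[a']+[a'']$ attached to a cofibration sequence $a'\moi a\twoheadrightarrow a''$ fails in $\pi_0\M^T(\C)$ (take $E\moi \cone(id_E)\twoheadrightarrow E[1]$: it would give $[0]=[E]+[E[1]]$, which is false unless $E\simeq 0$), and the $\del$-object $\K^T_\bul$ is not special, so ``additivity built into the $S$-construction'' imposes no relation on $\pi_0$ of level $1$. The genuinely geometric input is an $\ao$-homotopy, and your write-up never extracts it; it only gestures at the level-$2$ argument of Proposition \ref{mk} while attributing the conclusion to the formal group-ness of $K_0$.

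To close the gap along your route one would argue as follows: by Proposition \ref{mk} together with the speciality of $\M^T_\bul$ and the compatibility of $\re{-}$ with finite homotopy products, the Segal map induces a bijection $\pi_0\re{\K^T_2}\simeq (\pi_0\re{\M^T})^2$; a cofibration sequence $a'\moi a\twoheadrightarrow a''$ is a vertex of $\K^T_2(\C)$ whose Segal image is $([a'],[a''])$, hence it lies in the same component of $\re{\K^T_2}$ as $\lambda_2(a',a'')=(a'\moi a'\oplus a'')$; applying the face $d_1$ (the middle term) then gives $[a]=[a']+[a'']$ in $(\pi_0\re{\M^T},\oplus)$, and the cone sequence above yields the inverse $[E[1]]$ of $[E]$. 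That completed argument is genuinely different from the paper's proof, which does not use Proposition \ref{mk} at all here: it directly exhibits the $\ao$-family $\delta_A(f)=\cone(E\lmos{\times f}E)$, equal to $E\oplus E[1]$ at $f=0$ and to $\cone(id_E)\simeq 0$ at $f=1$, so that $[E\oplus E[1]]=[0]$ by Proposition \ref{pi0cor}. Both routes rest on the same cone/$\ao$-homotopy idea; the paper's is shorter and self-contained, while yours reroutes that input through Proposition \ref{mk} and must make the transfer of the $\gam$/$\del$-structures along $\lambda_\bul$ explicit.
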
 

\begin{proof} 
We use formula of proposition \ref{pi0cor}. We have an isomorphism of sets $\pi_0\re{\M^T_1}\simeq \pi_0\M^T(\C)/\sim$ where two class of dg-modules $[E]$ and $[E']$ are equivalent if there exists a connected algebraic curve which connects the two dg-modules in $\M^T(\C)$. Let $E$ be a perfect $T^{op}$-module. Let
$$\delta : \ao\lmo \M^T$$
the map such that for all $A\in \calgc$, 
$$\delta_A(f)=\cone(E\lmos{\times f} E).$$
Then we have $\delta_A(0)=\cone(0:E\lmo E)=E\oplus E[1]$ and $\delta_A(1)=\cone(id_E)$ which is canonically isomorphic to $0$. We proved that the identity $[E\oplus E[1]]=[0]$ is valid in the monoid $\pi_0\re{\M^T_1}$, which is therefore a group. 
\end{proof}

\begin{theo}\label{kstmt}
Let $T\in \dgcatc$. Then there exists a canonical isomorphism,
$$\kcst(T)\simeq \mcal{B} \regam{\M^T_\bul}$$ 
in $Ho(Sp)$. 
\end{theo}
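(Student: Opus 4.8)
The plan is to express $\kcst(T)$ as $\mcal{B}$ of the topological realization of a very special $\gam$-presheaf built from connective $K$-theory, and then to identify that $\gam$-presheaf, after realization, with $\regam{\M^T_\bul}$ by means of Propositions \ref{mk} and \ref{mttsp}.

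First I would unwind the left-hand side. Applying Remark \ref{kconspec} to the presheaf of Waldhausen categories $\uparf(T)$, the presheaf of connective spectra $\ukc(T)$ is canonically $\mcal{B}$ of the $\gam$-presheaf $K^\gam(\uparf(T)):\spec(A)\mapsto K^\gam(\parf(T,A))$, which is very special at each $\spec(A)$ because $\pi_0$ of its first level is the group $K_0(\parf(T,A))$. Proposition \ref{proprelb} then gives
$$\kcst(T)=\resp{\ukc(T)}\simeq \mcal{B}\regam{K^\gam(\uparf(T))}$$
in $Ho(Sp)$. Since $\regam{-}$ preserves very special $\gam$-objects and $\regam{\M^T_\bul}$ is very special by Proposition \ref{mttsp}, it remains to produce a natural equivalence $\regam{K^\gam(\uparf(T))}\simeq \regam{\M^T_\bul}$ of very special $\gam$-presheaves; by Theorem \ref{bf} this is the same as an isomorphism after applying $\mcal{B}$.

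To obtain it I would pass to $\del$-objects via $\alpha^*$ and use the Waldhausen $S$-construction. By Segal's theorem (Remark \ref{remgroup}) together with the construction of $\kc$ in Remark \ref{kconspec}, the very special $\del$-presheaf $\alpha^*K^\gam(\uparf(T))$ and the group completion $(mon\,\K^T_\bul)^+$ of $\K^T_\bul=NwS_\bul\uparf(T)$ both model $\ukc(T)$, hence are canonically equivalent in the homotopy category of very special $\del$-presheaves. Using the canonical identification $\redel{\alpha^*E}\simeq \alpha^*\regam{E}$ and Proposition \ref{proprelplus} (the realization $\redel{-}$ commutes with $mon$ and with $(-)^+$ on special objects, hence with the full group completion), one gets
$$\alpha^*\regam{K^\gam(\uparf(T))}\simeq \redel{(mon\,\K^T_\bul)^+}\simeq \bigl(mon\,\redel{\K^T_\bul}\bigr)^+ .$$
Now Proposition \ref{mk} says $\lambda_\bul:\alpha^*\M^T_\bul\to\K^T_\bul$ is a levelwise $\ao$-equivalence, and $\redel{-}$ is computed levelwise by $\re{-}$, which inverts $\ao$-equivalences by Theorem \ref{di}; hence $\redel{\K^T_\bul}\simeq \redel{\alpha^*\M^T_\bul}=\alpha^*\regam{\M^T_\bul}$. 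By Proposition \ref{mttsp} the latter is already very special, so $mon$ and $(-)^+$ act trivially on it and $\bigl(mon\,\redel{\K^T_\bul}\bigr)^+\simeq \alpha^*\regam{\M^T_\bul}$. Chaining these equivalences gives $\alpha^*\regam{K^\gam(\uparf(T))}\simeq \alpha^*\regam{\M^T_\bul}$, hence, by the full faithfulness of $\alpha^*$ recalled in \S\ref{monoids}, an equivalence $\regam{K^\gam(\uparf(T))}\simeq \regam{\M^T_\bul}$ of very special $\gam$-presheaves; applying $\mcal{B}$ and combining with the first step gives the desired $\kcst(T)\simeq \mcal{B}\regam{\M^T_\bul}$.

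The real content sits in Proposition \ref{mttsp}: it says exactly that the $S$-construction $NwS_\bul\uparf(T)$ is already group-complete once topologically realized, which is what makes "realize, then group-complete" coincide with "realize the already group-complete presheaf $K^\gam(\uparf(T))$". The remaining steps are the standard identifications of the various models of connective $K$-theory (all of them compute $\kc$) together with the formal behaviour of $\regam{-}$ from \S\ref{prestru}; the only point requiring care is that these identifications, and the map $\lambda_\bul$, are natural in the affine base $\spec(A)$, which is clear from their constructions.
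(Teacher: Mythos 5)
Your first step and your choice of ingredients match the paper: identify $\kcst(T)$ with $\mcal{B}\regam{K^\gam(\uparf(T))}$ via Proposition \ref{proprelb}, then bring in $\M^T_\bul$ through Proposition \ref{mk}, Theorem \ref{di} and Proposition \ref{mttsp}. The gap is the pivot step where you declare that $\alpha^*K^\gam(\uparf(T))$ and $(mon\,\K^T_\bul)^+$ are ``canonically equivalent in the homotopy category of very special $\del$-presheaves'' because ``both model $\ukc(T)$''. Nothing you cite gives this: Example \ref{exespk} only identifies the level $1$ of $(mon\,\K_\bul(C))^+$ with the space $K(C)$, and Remark \ref{kconspec} defines $\ukc(T)$ as $\mcal{B}K^\gam(\uparf(T))$, so only one of your two objects models $\ukc(T)$ by construction. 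The two $\del$-presheaves carry a priori different multiplications --- direct sum of dg-modules on $\alpha^*K^\gam(\uparf(T))$, versus the structure coming from the $S$-construction and its free-monoid group completion on $(mon\,\K^T_\bul)^+$ --- and two very special $\del$-objects with equivalent first levels are not canonically equivalent without a comparison map respecting these structures. Identifying the Segal (direct sum) delooping with the Waldhausen ($S_\bul$) delooping, naturally in $\spec(A)$ and in $T$, is precisely the nontrivial comparison; the paper supplies it concretely by the map $\sigma$ of (\ref{mapgam}), $\M^T_\bul\lmo K^\gam(T,-)$, and shows $\re{\sigma}$ is an equivalence by a level-$1$ check in a commutative diagram involving $\re{\lambda}^+_1$, using exactly Propositions \ref{mk} and \ref{mttsp}. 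Your argument replaces this explicit natural map by an abstract uniqueness claim the paper does not provide (Theorem \ref{bf} is stated for $\gam$-objects, not for very special $\del$-objects, and in any case ``same underlying spectrum'' does not by itself produce a canonical structured equivalence).

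A secondary issue is the final descent along $\alpha^*$: your chain of equivalences passes through objects such as $(mon\,\redel{\K^T_\bul})^+$ which are not of the form $\alpha^*$ of a $\gam$-object, so deducing $\regam{K^\gam(\uparf(T))}\simeq\regam{\M^T_\bul}$ from an equivalence of their images under $\alpha^*$ requires homotopical full faithfulness of $\alpha^*$ on very special objects, which the paper only asserts at the strict level. Both problems disappear if you run the comparison as the paper does: take the map of $\gam$-presheaves $\sigma:\M^T_\bul\lmo K^\gam(T,-)$ from (\ref{mapgam}); since both sides are special it suffices to prove that $\re{\sigma}_1$ is an equivalence, and this follows from Proposition \ref{mttsp} (the realization of $\M^T_\bul$ is already group-like, so the left vertical group-completion map is an equivalence), Proposition \ref{mk} with Theorem \ref{di} (realization inverts the levelwise $\ao$-equivalence $\lambda_\bul$), and the identification of the first levels of $\re{K^\gam(T,-)}$ and $\re{\K^T_\bul}^+$ with $\re{K(T,-)}$.
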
 

\begin{proof} 
We have a chain of equivalences
$$\kcst(T) = \resp{\ukc(T)} =\resp{\mcal{B} K^\gam(\parf(T,-))} \simeq \mcal{B}\regam{K^\gam(\parf(T,-))}$$ 
where the latter equivalence comes from proposition \ref{proprelb}. Set $K^\gam(\parf(T,-))=:K^\gam (T,-)$. We consider the map of $\gam$-simplicial presheaves, 
$$\sigma : \M^T_\bul \lmo K^\gam(T,-)$$
defined as the map (\ref{mapgam}). We want to show that $\sigma$ induces an equivalence on topological realizations. Since we deal with special $\gam$-objects , it suffices to prove that we have an equivalence on the level $1$. We have a commutative diagram in $Ho(SSet)$, where we intentionally omit the indices $\del$ and $\gam$ from the notation,

$$\xymatrix{\re{\M^T_1} \ar[r]^-{\re{\sigma}_1} \ar[dd]_-\wr &\re{K^\gam(T,-)_1} \ar[rd]^-\sim \\ & &  \re{K(T,-)} \\ \re{\M^T_\bul}^+_1  \ar[r]^-{\re{\lambda}^+_1} & \re{\K^T_\bul}^+_1 \ar[ur]^-\sim}$$
where the map of $\del$-objects $\lambda$ is the one from proposition \ref{mk}. By this latter proposition and theorem \ref{di}, the induced map $\re{\lambda}^+_1$ is an equivalence in $SSet$. The left vertical map is an equivalence by proposition \ref{mttsp}. On the other hand, the $\del$-objects $\re{K^\gam(T,-)}$ and $\re{\K^T_\bul}^+$ have the same level $1$, which is equivalent to $\re{K(T,-)}$. We then deduce that $\re{\sigma}_1$ is an equivalence in $SSet$ and therefore that $\re{\sigma}$ is an equivalence, which proves the expected formula. 
\end{proof}

\begin{theo}\label{mtps}
Let $T\in \dgcatc$. Then the special $\gam$-space $\regam{\M_T^\bul}$ is very special and there exists a canonical isomorphism, 
$$\resp{\kc(\upspa(T))}\simeq \mcal{B} \regam{\M_T^\bul}$$
in $Ho(Sp)$. 
\end{theo}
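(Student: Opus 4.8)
The statement is the pseudo-perfect counterpart of Propositions \ref{mttsp} and \ref{mk} and of Theorem \ref{kstmt}, and the plan is to rerun those three arguments with $\upspa(T)$ (resp.\ $\pspa(T,A)$) in place of $\uparf(T)$ (resp.\ $\parf(T,A)$) throughout. The only genuinely new input needed is that $\pspa(T,A)$ enjoys the same closure properties as $\parf(T,A)$: the pseudo-perfect $T\tel_\C A$-dg-modules form a thick triangulated subcategory of $D(T\tel_\C A)$, since ``perfect relative to $A$'' is checked objectwise on $T$ and is therefore inherited by shifts, cones, cocones, finite direct sums, and homotopy pushouts along a third pseudo-perfect module (all of which are cones of maps between pseudo-perfect modules). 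Hence $\pspa(T,A)$ is a Waldhausen category satisfying the saturation, extension and cylinder axioms exactly as $\parf$ does, every morphism factors as a cofibration followed by a quasi-isomorphism, the direct sum of dg-modules makes $\upspa(T)$ a homotopy coherent commutative monoid, and $\M_T^\bul = NwB_W\upspa(T)$ is a special $\gam$-object in $\spr$ (this is precisely the setup of Toën--Vaquié \cite{modob}). Since $\regam{-}$ commutes with finite homotopy products, $\regam{\M_T^\bul}$ is then special.

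To see it is \emph{very} special I would copy the proof of Proposition \ref{mttsp}. By Proposition \ref{pi0cor} there is an isomorphism of monoids $\pi_0\re{\M_T}\simeq \pi_0\M_T(\C)/\sim$, so it suffices to show $\pi_0\re{\M_T}$ is a group. Given a pseudo-perfect $T^{op}$-dg-module $E$, let $\delta:\ao\lmo \M_T$ be the map with $\delta_A(f)=\cone(E\lmos{\times f}E)$ for $f\in A$; it is well defined because the cone of a map of pseudo-perfect modules is again pseudo-perfect. Then $\delta_A(0)=E\oplus E[1]$ and $\delta_A(1)=\cone(id_E)\simeq 0$, so the connected curve $\ao$ realises the relation $[E\oplus E[1]]=[0]$ in $\pi_0\re{\M_T}$. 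Thus every class has an inverse and $\regam{\M_T^\bul}$ is very special.

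Next I would prove the pseudo-perfect analogue of Proposition \ref{mk}: the map of $\del$-objects
$$\lambda_\bul^{\mathrm{ps}}:\M_T^\bul\lmo NwS_\bul\upspa(T),\qquad \lambda_n(a_1,\dots,a_n)=(a_1\moi a_1\oplus a_2\moi\cdots),$$
is a levelwise $\ao$-equivalence in $\del-\spretao$ (here $\M_T^\bul$ is viewed as a $\del$-object via $\alpha^*$). The proof of \ref{mk} transports without change: the identification up to global equivalence of $NwS_n\upspa(T)$ with the presheaf of $(n-1)$-fold composable sequences in $\upspa(T)$ only uses the cofibration/quasi-isomorphism factorization in $\pspa(T,A)$; the cases $n=1,2$ use the explicit homotopy inverse $\mu_2(x\mo y)=(x,\cone(x\mo y))$, the shift $t(x\mo y)=(y\mo\cone(x\mo y))$ and its inverse $t^{-1}(x\mo y)=(\cocone(x\mo y)\mo x)$, all well defined since $\pspa$ is closed under cones, cocones and shifts; and the inductive step uses Lemma \ref{lemrec}, whose proof --- Toën's pullback-of-model-categories argument \cite{dhall} --- was already observed to apply to presheaves of categories of ``nice'' objects instead of stable model categories, pseudo-perfect objects qualifying exactly as perfect ones do.

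Finally the isomorphism is obtained as in the proof of Theorem \ref{kstmt}. One has
$$\resp{\kc(\upspa(T))}=\resp{\mcal{B}K^\gam(\pspa(T,-))}\simeq \mcal{B}\regam{K^\gam(\pspa(T,-))}$$
by Proposition \ref{proprelb}, and the map (\ref{mapgam}) gives a map of special $\gam$-objects $\sigma:\M_T^\bul\lmo K^\gam(\pspa(T,-))$. Both sides being special, it is enough to check $\regam{\sigma}$ is an equivalence at level $1$; there it fits into the same commutative square as in the proof of \ref{kstmt}, with left vertical arrow $\re{\M_T}\lmo\re{\M_T^\bul}^+_1$ an equivalence by the very-special property just proved, with $\re{\lambda^{\mathrm{ps}}}^+_1$ an equivalence by the previous paragraph together with Theorem \ref{di}, and with the two remaining arrows the canonical equivalences identifying the level $1$ of $\re{K^\gam(\pspa(T,-))}$ and of $\re{NwS_\bul\upspa(T)}^+$ with $\re{K(\pspa(T,-))}$. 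This forces $\regam{\sigma}$ to be a level-$1$ equivalence, hence --- both $\gam$-objects being special --- an equivalence; applying $\mcal{B}$ (which preserves equivalences of all $\gam$-spaces) gives the stated isomorphism in $Ho(Sp)$. The only real work, as indicated, is the verification of the closure properties of pseudo-perfect modules in the first paragraph; the rest is a transcription.
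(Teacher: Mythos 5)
Your proposal is correct and is exactly the paper's argument: the paper's proof of this theorem simply says that the proofs of Propositions \ref{mk}, \ref{mttsp} and Theorem \ref{kstmt} go through verbatim with pseudo-perfect dg-modules in place of perfect ones, which is precisely what you carry out, with the (correct) observation that the only point to check is that pseudo-perfect modules are stable under shifts, cones, cocones and finite sums.
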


\begin{proof} 
The proofs of \ref{mk}, \ref{mttsp}, and \ref{kstmt} works the same if we replace perfect dg-modules by pseudo-perfect ones. 
\end{proof}



\subsection{Topological Chern character}

In this part we give the construction of the topological Chern character or topological Chern map. Let $T\in\dgcatc$ be a $\C$-dg-category. 
We recall that we defined in \ref{defchernlin} an algebraic Chern map which is a map of $\ka$-modules spectra, functorial in $T$, 
$$\ch_T : \ukn(T)\lmo \uhcn(T). $$
Composing this map with the map of $\ka$-modules $\uhcn(T)\lmo \uhp(T)$, we obtain a map of $\ka$-modules, 
$$\ukn(T)\lmo \uhp(T).$$
We then apply the spectral topological realization $\resp{-}$ to obtain a map of $\bu$-modules, 
$$\kst(T)=\resp{\ukn(T)}\lmo \resp{\uhp(T)}.$$
We now use a Künneth type formula for periodic cyclic homology. The presheaf $\uhp(T)$ is given by $\spec(A)\longmapsto \hp(T\tel_\C A)$. Kassel's theorem \cite[Thm 2.3]{kasselcyclic} and \cite[Prop 2.4]{kasselcyclic} implies that for any smooth commutative $\C$-algebra $A$ the natural map of spectra, 
$$\hp(T)\sml_{H\cuu}\hp(A)\lmo \hp(T\tel_\C A), $$
is an equivalence in $Sp$. This implies that the map of presheaves of spectra
$$\hp(T)\sml_{H\cuu} \uhp(\ast) \lmo \uhp(T)$$
is an equivalence on smooth affine schemes in $\spaf$. Theorem \ref{restliss} implies that the map induced on spectral topological realization, 
$$\hp(T)\sml\resp{\uhp(\ast)} \simeq\resp{\hp(T)\sml_{H\cuu} \uhp(\ast)} \lmo \resp{\uhp(T)}$$
is an equivalence in $Sp$. Therefore we have an isomorphism
$$\resp{\uhp(T)}\simeq \hp(T)\sml_{H\cuu} \resp{\uhp(\ast)}$$
in $Ho(Sp)$. By composition we have map 
\begin{equation}\label{mor1}
\kst(T)\lmo \hp(T)\sml_{H\cuu}\resp{\uhp(\ast)}
\end{equation}
which defines a natural transformation between objects of $Ho(Sp^{\dgcatc})$. In consequence to obtain a map with target $\hp(T)$, we have to choose a map $\resp{\uhp(\ast)}\lmo H\cuu$. By adjunction, it remains to choose a map $\uhp(\ast)\lmo (H\cuu)_{\s,B}$. The presheaf of spectra $(H\cuu)_{\s,B}$ is given by
$$X\longmapsto \rhomi_{Ho(Sp)} (\resp{X}, H\cuu) \simeq \rhomi_{Ho(Sp)} (\resp{X}, H\C)\sml_{H\C} H\cuu,$$
which is the $2$-periodic Betti cohomology of the scheme $X$ with coefficients in $\C$. We denote by $\hb(-,\C)\sm H\cuu$ the latter presheaf. We denote by $\hb(-,\C)$ the usual Betti cohomology with coefficients in $\C$, i.e. the presheaf of $H\C$-modules spectra
$$X\longmapsto \rhomi_{Ho(Sp)} (\resp{X}, H\C)=:\hb(X,\C),$$
whose homotopy groups are the Betti cohomology $\C$-vector spaces of $X$. We denote by $\hpa$ the presheaf $\uhp(\ast) : \spec(A)\mapsto \hp(A)$. We thus have to choose a map 
$$\hpa\lmo \hb(-, \C)\sm H\cuu.$$ 
We consider the standard antisymmetrisation map, 
$$\hpa\lmo \hdrna$$
which goes from periodic cyclic homology to naive de Rham cohomology. By naive de Rham cohomology we mean the presheaf of spectra $X\mapsto \hdrna(X)$, such that $\hdrna(X)$ is the spectrum associated to the algebraic de Rham complex of $X$, i.e. the complex of $\C$-vector spaces of algebraic differential forms everywhere defined on $X$. We denote by $\hdran$ the analytic analog of $\hdrna$ constructed out of analytic differential forms. The inclusion of algebraic differential forms into analytic differential forms induces a map $\hdrna\lmo \hdran$. The evident map from $\C$ to the analytic de Rham complex induces a map $\hb(-,\C)\lmo \hdran$ which is an equivalence on smooth schemes\footnote{This is true by the classical fact that for a smooth complex variety, the complex of sheaves of analytic differential forms is an injective resolution of the constant sheaf $\und{\C}$.}. We then have maps of presheaves of spectra,
\begin{equation}\label{mapcoh}
\hpa\lmo \hdrna \lmo \hdran\longleftarrow \hb(-,\C)\lmo \hb(-,\C)\sm H\cuu,
\end{equation}
and the map which goes from right to left is an equivalence on smooth schemes\footnote{In fact the first two maps are also equivalences on smooth affine schemes by the HKR theorem and the Grothendieck theorem respectively, but we won't need this fact.}.

\begin{nota}
We denote by $\sppro$ the proper local model structure on the category $Sp(\schc)$ of presheaves of symmetric spectra on the category of separated schemes of finite type over $\C$. I.e. the local model structure with respect to the proper topology on the category $\schc$. 
\end{nota}

We naturally consider the maps (\ref{mapcoh}) as maps in the homotopy category $Ho(\sppro)$ of presheaves of spectra on $\schc$ (with respect to the proper local model structure) ; recall remark \ref{remhhsch} for the cyclic homology of schemes. By remark \ref{hiro}, the map $\hb(-,\C) \lmo \hdran$ is a proper local equivalence, and is an isomorphism in $Ho(\sppro)$. We thus obtain a map $\hpa\lmo \hb(-,\C)$ in $Ho(\sppro)$ and a map
$$\hpa\lmo \hb(-, \C)\sm H\cuu$$
in $Ho(\sppro)$. By the spectral version of proposition \ref{quipro}, the map 
$$\hpa\lmo \hb(-, \C)\sm H\cuu = \hbs (H\cuu)$$ 
gives by adjunction the expected map
$$\mcal{P} : \resp{\hpa}=\resp{\uhp(\ast)}\lmo H\cuu$$
in $Ho(Sp)$. By composing the map (\ref{mor1}) with what we just found we obtain a map
$$\ch^{\mrm{st}}_T : \kst(T)\lmo \hp(T)$$
defined as the composite
$$\xymatrix{  \kst(T)\ar[d]_-{\resp{\ch_T}}  \ar[rr]^-{\ch^{\mrm{st}}_T} &&  \hp(T) \\
 \resp{\uhp(T)} \ar[r]^-\sim & \hp(T)\sml_{H\cuu} \resp{\uhp(\ast)} \ar[r]^-{id\sml \mcal{P}} & \hp(T)\sml_{H\cuu} H\cuu \ar[u]_-\wr }$$
By what has been said in \ref{kstktop}, the spectral topological realization can be extended on categories of modules, 
$$\resp{-} : \ka-Mod_\s\lmo \bu-Mod_\s.$$
We deduce from this that in the previous rectangle, all maps are maps of $\bu$-modules and we obtain in this way a map
$$\chst : \kst\lmo \hp$$
in $Ho(\bu-Mod_\s^{\dgcatc})$. By abuse of notation we denote by $\chst : \kst(T)\lmo \hp(T)$ omitting the indice $T$ in the notation. We remark that for any $\C$-dg-category $T$ we have a commutative square
\begin{equation}\label{carst}
\xymatrix{\kn(T) \ar[r]^-{\ch} \ar[d]_-\theta & \hp(T) \ar[d]^-{id} \\ \kst(T) \ar[r]^-{\chst}  & \hp(T)  }
\end{equation} 
in $Ho(\bu-Mod_\s)$, where $\theta$ is the natural map defined at remark \ref{fibtop} and the composite
$$\xymatrix{\hp(T)\ar[r] & \resp{\uhp(T)}\ar[r] & \hp(T)\sml_{H\cuu} \resp{\uhp(\ast)} \ar[r]^-{id\sml \mcal{P}} &\hp(T)}$$
is equal to the identity in $\End_{Ho(Sp)}(\hp(T))$. Now it remains to verify that the image $\chst(\beta)$ of the Bott generator is invertible in the ring $\hp(\ast)=H\cuu$. We use notations \ref{choixbeta}. We follow the Bott generator in the K-theory of $\po$. As particular case of the square (\ref{carst}), we have a commutative square of abelian groups
$$\xymatrix{\kn_0(\po) \ar[r]^-{\ch} \ar[d]_-{\eta} & \hp_0(\po) \ar[d]^-{id} \\ \kst_0(\po) \ar[r]^-{\chst} & \hp_0(\po) }$$
We recall that $\eta(\alpha)=\beta$ and we choose for example a generator $u\in\hp_0(\po)$ such that $\ch(\alpha)=u$. We then verify that $\chst(\beta)=\chst(\eta(\alpha))=\ch(\alpha)=u$. 
By universal property we obtain that for any $\C$-dg-category $T$ a map
$$\ch^{\mrm{top}}_T : \ktop(T)\lmo \hp(T).$$
This defines a map
$$\chtop : \ktop\lmo\hp$$
in $Ho(\BU-Mod_\s^{\dgcatc})$.  

\begin{theo}\label{carac}
There exists a map $\chtop : \ktop\lmo\hp$ called the \emph{topological Chern map} such that the square
\begin{equation}\label{sqcarac}
\xymatrix{ \kn \ar[r]^-{ \ch} \ar[d] & \hcn\ar[d] \\  \ktop \ar[r]^-{ \chtop} &\hp  }
\end{equation}
is commutative in $Ho(Sp^{\dgcatc})$. 
\end{theo}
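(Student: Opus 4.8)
The plan is to assemble the material already built in the paragraphs preceding the statement, record the two verifications that remain, and then Bott-invert. First I would take the $\bu$-linear map $\chst : \kst \to \hp$ in $Ho(\bu-Mod_\s^{\dgcatc})$ constructed above: start from the algebraic Chern map $\ch_T : \ukn(T) \to \uhcn(T)$ of Definition \ref{defchernlin}, compose with the canonical $\uhcn(T) \to \uhp(T)$, apply the spectral topological realization $\resp{-}$, invoke Kassel's Künneth formula for periodic cyclic homology together with Theorem \ref{restliss} to obtain the identification $\resp{\uhp(T)} \simeq \hp(T) \sml_{H\cuu} \resp{\uhp(\ast)}$, and finally compose with the comparison map $\mcal{P} : \resp{\uhp(\ast)} \to H\cuu$. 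That map $\mcal{P}$ is itself obtained by running the antisymmetrization chain $\hpa \to \hdrna \to \hdran \leftarrow \hb(-,\C)$, using Hironaka resolution of singularities (Remark \ref{hiro}) to see that $\hb(-,\C) \to \hdran$ becomes an isomorphism in $Ho(\sppro)$, and applying the spectral form of the proper descent result (Proposition \ref{quipro}) to transport the resulting map $\hpa \to \hbs(H\cuu)$ to its adjoint $\mcal{P}$.

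Next I would check that $\chst_T$ sends the chosen Bott class $\beta \in \kst_2(\unit) = \pi_2 \bu$ to a unit. Following Notation \ref{choixbeta} one writes $\beta = \eta(\alpha)$ for $\alpha$ a generator of the nontrivial summand of $\kn_0(\po)$, picks $u \in \hp_0(\po)$ with $\ch(\alpha) = u$, and chases the already-established commutative square (\ref{carst}) for $T = \lpe(\po)$ to get $\chst(\beta) = \chst(\eta(\alpha)) = \ch(\alpha) = u$, a unit in the ring $\hp(\ast) = H\cuu$. Since $\hp(T)$ is a $\bu$-module on which multiplication by $\beta$ coincides with multiplication by $\chst(\beta) = u$, the class $\beta$ acts invertibly on $\pi_\ast \hp(T)$ for every $\C$-dg-category $T$.

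Finally I would invoke the universal property of the Bott localization $\ktop(T) = \kst(T)[\beta^{-1}] = \kst(T) \sm_\bu \BU$ recalled above: because $\beta$ acts invertibly on $\pi_\ast \hp(T)$, the $\bu$-module map $\chst_T$ factors through $\ktop(T)$, uniquely up to homotopy, giving a natural map $\chtop_T : \ktop(T) \to \hp(T)$ whose restriction along $\kst(T) \to \ktop(T)$ is $\chst_T$; assembling over $T$ yields $\chtop : \ktop \to \hp$ in $Ho(\BU-Mod_\s^{\dgcatc})$. For the commutativity of the square (\ref{sqcarac}), the left vertical $\kn \to \ktop$ is by definition the map $\theta$ obtained by composing $\eta : \kn \to \kst$ with $\kst \to \ktop$, so $\chtop \circ \theta = \chst \circ \eta$, and square (\ref{carst}) identifies $\chst \circ \eta$ with the composite $\kn \to \hcn \to \hp$ — exactly the identity asserted in $Ho(Sp^{\dgcatc})$.

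The main obstacle will be the construction of $\mcal{P}$ and the verification that it is well defined independently of the auxiliary choices — that is, the compatibility between periodic cyclic homology of affine $\C$-schemes and $2$-periodic Betti cohomology — which genuinely uses Kassel's Künneth theorem, Theorem \ref{restliss}, Hironaka resolution, and the proper descent Proposition \ref{quipro}. Once $\mcal{P}$ is in hand, everything downstream (the Künneth identification, Bott inversion, the $\bu$- and $H\cuu$-module structures, and the final diagram chase) is formal bookkeeping.
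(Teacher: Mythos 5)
Your proposal is correct and follows essentially the same route as the paper: the construction of $\chst$ via the algebraic Chern map, Kassel's K\"unneth formula, Theorem \ref{restliss}, the de Rham/Betti comparison map $\mcal{P}$ obtained through Hironaka and the proper descent of Proposition \ref{quipro}, the verification $\chst(\beta)=u$ by chasing the square (\ref{carst}) for $\po$, and the factorization through the Bott localization. The concluding diagram chase identifying $\chtop\circ\theta$ with $\kn\to\hcn\to\hp$ is exactly the paper's (very brief) argument, so nothing further is needed.
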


\begin{proof} 
Comes immediatly from the commutativity of the square (\ref{carst}).
\end{proof}

\subsection{Conjectures}\label{conj}

The following conjectures are all analogs of known facts for smooth proper algebraic varieties. The first one concerns the rational part of the hypothetic noncommutative Hodge structure for a smooth proper dg-category over $\C$. We recall that a dg-category $T$ is said to be proper if its complexes of morphisms are perfect complexes and if the triangulated category $[\m{T}]$ has a compact generator. A dg-category $T$ is said to be smooth if the $T^{op}\tel T$-module $(x,y)\mapsto T(x,y)$ is perfect. 
It is proved in \cite[Cor 2.13]{modob} that a smooth proper dg-category is of finite type, and therefore is equivalent to a homotopically finitely presented, smooth and proper dg-algebra. To prove the following conjectures, it suffices to prove it for smooth and proper dg-algebras. 

\begin{conj}\label{conjres} \emph{(The lattice conjecture)} --- 
Let $T$ be a smooth proper dg-category over $\C$. Then the map
$$\chtop\sm_\s H\C : \ktop(T)\sm_{\s} H\C \lmo \hp(T)$$
is an equivalence. 
\end{conj}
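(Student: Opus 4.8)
The plan is to reduce to dg-algebras and then argue by dévissage, leaning on the commutative case already in hand. First I would invoke \cite[Cor 2.13]{modob} to replace a smooth proper dg-category $T$ by a homotopically finitely presented smooth and proper dg-algebra, so that it suffices to treat such algebras. Next, let $\mcal{S}$ be the class of smooth proper dg-categories for which $\chtop\sm_\s H\C$ is an equivalence. Since $T\mapsto\ktop(T)\sm_\s H\C$ sends exact sequences to distinguished triangles (by point c. of Theorem \ref{motiv}, smashing with $H\C$ being exact) and $T\mapsto\hp(T)$ does so as well (by Theorem \ref{cyclic}), and $\chtop$ is a natural transformation compatible with these triangles, $\mcal{S}$ is closed under two-out-of-three in exact sequences, under direct summands, and under Morita equivalence. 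Hence it would suffice to generate every smooth proper dg-category from the $\lpe(X)$, $X$ smooth projective over $\C$, by finitely many semiorthogonal gluings, summands, and Morita equivalences.

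For those commutative building blocks the statement is already available. By points b. and d. of Theorem \ref{motiv}, for $X$ smooth over $\C$ one has $\ktop(\lpe(X))\simeq\ktopu(X(\C))$ with $\chtop$ identified with the usual topological Chern character; smashing with $H\C$ this becomes the classical Chern isomorphism identifying $\ktopu(X(\C))\sm_\s H\C$ with $2$-periodic Betti cohomology $\hb(X,\C)\sm_\s H\cuu$. On the target, $\hp(\lpe(X))$ is, for $X$ smooth, $2$-periodic algebraic de Rham cohomology (by remark \ref{remhhsch} together with the HKR isomorphism and Hodge-to-de-Rham degeneration), and by Grothendieck's comparison the algebraic-to-analytic-to-constant-sheaf maps $\hdrna\lmo\hdran\longleftarrow\hb(-,\C)$ are equivalences on smooth schemes, so $\hp(\lpe(X))\simeq\hb(X,\C)\sm_\s H\cuu$ too. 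The two sides therefore match and the map between them is exactly the comparison equivalence, putting all $\lpe(X)$ in $\mcal{S}$; combined with the dévissage this yields the conjecture for any dg-category glued and split off from such $\lpe(X)$'s, in particular for $\lpe(X)$ with $X$ smooth proper, as recorded after the statement in the introduction.

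The genuinely hard point — and the reason this stays a conjecture — is that no such generation of an arbitrary smooth proper dg-category from derived categories of smooth projective varieties is known: categories of matrix factorizations of isolated hypersurface singularities and (conjecturally, through homological mirror symmetry, cf.\ \cite{kkp}) Fukaya categories are precisely the examples that resist, and for matrix factorizations one is moreover forced into the $2$-periodic setting, where the homotopy theory of dg-categories and the topological realization must first be re-developed. A more structural attack would be to factor both $T\mapsto\ktop(T)\sm_\s H\C$ and $T\mapsto\hp(T)$ through the category of noncommutative motives and compare them there using dualizability of smooth proper objects, reducing the claim to a single trace/pairing identity; but making the Betti realization underlying $\kst$ genuinely motivic (rather than merely a functor on $\dgcatc$) is itself open. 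Unconditional progress beyond the commutative case is so far confined to §\ref{finalg}: there one replaces $\ktop$ by the Bott-inverted \emph{connective} semi-topological K-theory $\kctop$ and proves the analogue for finite dimensional associative $\C$-algebras $B$ by showing $\kctop$ is invariant under infinitesimal (nilpotent) extensions, thereby reducing $B$ to its semisimplification, a finite product of matrix algebras, which is Morita equivalent to a finite product of copies of $\unit$, where the statement follows from Theorem \ref{bu}.
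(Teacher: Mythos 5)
The statement you were asked to prove is a \emph{conjecture} in the paper: the author offers no proof of it in general, only (i) the remark in \S\ref{conj} that the class of dg-categories satisfying it is stable under filtered colimits, retracts, quotients and extensions in $Ho(\dgmorc)$, (ii) the case $T=\lpe(X)$ for $X$ smooth, settled in Prop \ref{compvarlisse}, and (iii) the finite-dimensional-algebra variant (with $\kctop$ in place of $\ktop$) in \S\ref{finalg}. Your write-up is, quite correctly, not a proof either — you say so explicitly — and what it does assert matches the paper: the reduction to smooth proper dg-algebras via \cite[Cor 2.13]{modob} is exactly the paper's opening remark in \S\ref{conj}; your closure properties for the class $\mcal{S}$ are the paper's stability remark (with the caveat that in a "two-out-of-three" argument one must also know the third term of the exact sequence is again smooth and proper, which is why the paper phrases this as stability under quotient and extension rather than as a dévissage onto geometric generators); your treatment of $\lpe(X)$ agrees with the paper's conclusion, although the paper's actual mechanism for point b.\ and the identification of the Chern maps is Riou's Spanier--Whitehead duality in $\shc$ together with the maps (\ref{mapcoh}) and proper descent, not merely the HKR/Grothendieck comparison you sketch; and your summary of \S\ref{finalg} (nilpotent invariance of $\kcst$, reduction to the semisimple case, Morita reduction to copies of $\unit$) is the paper's argument for Prop \ref{cralgass}.

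So there is no gap to report relative to the paper, because the paper itself leaves the statement open; the honest obstruction you name — that an arbitrary smooth proper dg-category is not known to be built from the $\lpe(X)$ by the allowed operations — is precisely why it remains a conjecture. The one point where your text could mislead a reader is the suggestion that the commutative case is a purely formal consequence of Theorem \ref{motiv}: points b.\ and d.\ are themselves nontrivial theorems of the paper (Prop \ref{compvarlisse}), proved by dualizability of $\Sigma^\infty_{T,S^1}X_+$ in $\shc$ and monoidality of the realization, so citing them is legitimate but they should not be treated as free input.
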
 

We remark that the class of dg-categories verifying this lattice conjecture is stable by the operations of filtrant colimits, retracts, quotient, and extension in the Morita homotopy category $Ho(\dgmorc)$. The following conjecture is a generalization of theorem \ref{annupoint} for smooth and proper dg-categories.

\begin{conj} Let $T$ be a smooth proper dg-category over $\C$. Then $\kst_i(T)=0$ for all $i<0$. 
\end{conj}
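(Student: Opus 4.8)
The plan is to follow the proof of Theorem~\ref{annupoint} and reduce the conjecture to the vanishing of \emph{algebraic} negative $K$-theory of $T$ after base change to smooth affine $\C$-schemes, the new content being entirely concentrated in that reduced, purely algebraic statement.

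For the reduction, suppose that $\kn_i(T\tel_\C A)=0$ for all $i<0$ and all smooth commutative $\C$-algebras $A$. Then, after replacing $T\tel_\C A$ by a Morita-equivalent triangulated dg-category so that $\kc$ computes the connective cover of $\kn$ (Proposition~\ref{propalgk}), the natural map $\ukc(T)\to\ukn(T)$ of presheaves of spectra on $\affc$ restricts to a levelwise equivalence over $\afflissc$. By the spectral form of Theorem~\ref{restliss} (Remark~\ref{restlisskst}), both $\kcst(T)=\resp{\ukc(T)}$ and $\kst(T)=\resp{\ukn(T)}$ are computed from the restrictions of $\ukc(T)$ and $\ukn(T)$ to $\afflissc$; since these restrictions agree, the structural map $\kcst(T)\to\kst(T)$ is an equivalence. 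But $\kcst(T)\simeq\mcal{B}\regam{\M^T_\bul}$ by Theorem~\ref{kstmt}, and this is connective, being the value of $\mcal{B}$ on a $\gam$-object. Hence $\kst_i(T)=0$ for $i<0$, and everything comes down to proving $\kn_{<0}(T\tel_\C A)=0$ for $A$ smooth affine.

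Because $T$ is smooth and proper over $\C$ and $A$ is flat over $\C$, the $A$-linear dg-category $T\tel_\C A$ is again smooth and proper, now over the regular ring $A$; the desired vanishing is thus the noncommutative analogue of the classical fact that a regular noetherian scheme of finite Krull dimension has trivial negative $K$-theory. It holds in the geometric case $T=\lpe(X)$ with $X$ smooth and proper: there $T\tel_\C A$ is Morita-equivalent to $\lpe(X\times_\C\spec A)$ by the Künneth formula for perfect complexes, $X\times_\C\spec A$ is a regular noetherian scheme of finite Krull dimension, and the classical vanishing applies. It also holds when $T$ is a finite-dimensional $\C$-algebra of finite global dimension, which is $K$-regular, so that both it and its base changes to the regular ring $A$ have vanishing negative $K$-theory. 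For general smooth proper $T$ the natural strategy is to prove that smooth proper dg-categories are $K$-regular --- i.e. that $\kn$ is $\ao$-invariant on that class, $\kn(T\tel_\C A[t_1,\dots,t_n])$ being independent of $n$ --- and then to feed this into Schlichting's description of negative $K$-theory via the suspension functor $\mcal{S}$ to deduce vanishing over the regular base $A$. I expect this $K$-regularity of smooth proper dg-categories to be the genuine obstacle: it is the noncommutative counterpart of the difficult part of Weibel's conjecture, and it is exactly what is missing to upgrade the statement from a conjecture to a theorem.
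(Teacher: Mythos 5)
The statement you are trying to prove is stated in the paper as a \emph{conjecture}: the paper gives no proof of it, only the remark (immediately after the conjecture) that Cisinski has proved the vanishing of negative algebraic K-groups of smooth proper dg-algebras whose cohomology is concentrated in positive degrees, via Schlichting's theorem on noetherian abelian categories. So there is no proof in the paper to compare yours against, and any complete argument would have to supply genuinely new input.

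Your reduction step is correct and is exactly the mechanism of the paper's proof of Theorem \ref{annupoint}: if $\kn_i(T\tel_\C A)=0$ for all $i<0$ and all smooth affine $A$ of finite type, then $\ukc(T)\to\ukn(T)$ is a levelwise equivalence over $\afflissc$, the spectral form of Theorem \ref{restliss} gives $\kcst(T)\simeq\kst(T)$, and connectivity of $\kcst(T)\simeq\mcal{B}\regam{\M^T_\bul}$ (Theorem \ref{kstmt}) yields $\kst_{<0}(T)=0$; the Morita-invariance point you make to apply Proposition \ref{propalgk} is fine. The genuine gap is the one you name yourself: the vanishing of $\kn_{<0}(T\tel_\C A)$ for an \emph{arbitrary} smooth proper dg-category $T$ and smooth $A$ is not proved. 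The two special cases you verify ($T=\lpe(X)$ with $X$ smooth proper, and $T$ a finite-dimensional algebra of finite global dimension) do not cover the general case, and the K-regularity of smooth proper dg-categories that you propose as the missing ingredient is itself an open statement, not something you can invoke. Consequently your proposal is an honest and correct reduction of the conjecture to an open purely algebraic vanishing statement, but it is not a proof; the statement remains a conjecture both in the paper and after your argument.
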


Denis-Charles Cisinski has recently proved that the vanishing of the negative algebraic K-groups of smooth proper dg-algebras whose cohomology is concentrated in positive degrees. This proof is based on the Schlichting's proof of the vanishing the negative algebraic K-groups of a noetherian abelian category. 

The following conjecture is inspired by Thomason's result cited in the introduction \cite[Thm 4.11]{thomet} and by Friedlander--Walker analog result for certain type of projective varieties (see \cite[Cor 3.8]{fwcomp}). We recall that given a spectrum $E$ and an integer $n\in \Z$, we can, because the stable homotopy category $Ho(Sp)$ is additive, give a meaning to the reduction modulo $n$ of the spectrum $E$ denoted by $E/n$. We state this conjecture with any dg-category -- let us precise that we at least expect very general assumptions on the type of dg-categories verifying this conjecture (as I was told by DC Cisinski), because it would only involve the $\ao$-invariance of algebraic K-theory with torsion coefficients and Gabber rigidity theorem. 

\begin{conj}\label{coefffinis}
Let $T$ be a dg-category over $\C$ and $n>0$ an integer. Then the map $\kn(T)/n\lmo \kst(T)/n$ is an equivalence. 
\end{conj}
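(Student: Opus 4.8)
The plan is to reduce this to Thomason's theorem on étale descent for mod-$n$ algebraic K-theory after Bott inversion, combined with our descent results for the topological realization. First I would observe that the map $\kn(T)/n \lmo \kst(T)/n$ is obtained by applying the mod-$n$ spectral topological realization to the unit map $\ukn(T)/n \lmo \hbs(\resp{\ukn(T)/n})$, so it suffices to understand the presheaf $\ukn(T)/n$ on $\affc$ and how its topological realization interacts with the étale topology. The key input is that algebraic K-theory with finite coefficients satisfies $\ao$-invariance and Gabber–Suslin rigidity: for a strictly Henselian local ring $R$ over $\C$ (or more precisely for the relevant Henselizations appearing in étale stalks), the natural map $\kn(T)/n \lmo \kn(T\tel_\C R)/n$ is an equivalence. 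This is the noncommutative analog of Gabber rigidity, and it is the statement that makes the presheaf $\ukn(T)/n$ "locally constant" in a suitable sense for the étale topology on $\affc$.

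The main steps would be as follows. First I would show that $\ukn(-)/n$ satisfies étale descent on $\affc$, i.e. that it is already fibrant (up to equivalence) in the étale-local model structure $\spretao$ restricted to the $\ao$-étale structure; this uses Thomason-style descent for mod-$n$ K-theory of schemes (extended to dg-categories via the localization and Morita properties of Proposition \ref{propalgk}, reducing to the scheme case by writing $T$ as a filtered colimit of finite-type dg-categories and using that $\kn$ commutes with filtered colimits). Combined with Gabber rigidity, $\ao$-invariance, and the fact that the étale stalks are computed by Henselizations, this would show that $\ukn(T)/n$ is $\ao$-étale-locally equivalent to the constant presheaf with value $\kn(T)/n$. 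Then, since the topological realization $\resp{-}$ is a left Quillen functor for the $\ao$-étale model structure (Theorem \ref{di} and its spectral version), applying it to this constant-presheaf equivalence gives $\kst(T)/n = \resp{\ukn(T)/n} \simeq \resp{\underline{\kn(T)/n}}$. Finally, the topological realization of the constant presheaf with value a spectrum $E$ is $E$ itself (since $\resp{h_{\spec(\C)}} \simeq \resp{\ast} = \s$ and $\resp{-}$ commutes with the relevant colimits, c.f. Proposition \ref{proprelsp}), so $\kst(T)/n \simeq \kn(T)/n$, and one checks the composite equivalence is the canonical map $\eta$.

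The hard part will be establishing the Gabber-type rigidity and $\ao$-invariance statements for mod-$n$ algebraic K-theory of \emph{dg-categories}, rather than schemes. For schemes this is classical (Gabber, Suslin, Thomason), but the noncommutative setting requires care: one must either cite or adapt a rigidity theorem for $\kn(T\tel_\C -)/n$, presumably by reducing to the case $T = \unit$ via the $\ka$-linear module structure over $\ukn(\ast)$ established in \S\ref{caracalg} — i.e. showing that $\ukn(T)/n$ is, étale-locally and $\ao$-locally, a free module over the constant sheaf $\kn(\ast)/n = \Z/n$ concentrated appropriately. The subtlety is that $T$ is not assumed smooth or proper here, so one genuinely needs the full strength of rigidity with no finiteness hypotheses; the remark in the excerpt signals that "very general assumptions" are expected, which suggests the argument should go through for all $T$ once the rigidity input (for the unit, plus module-theoretic descent) is in place. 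A secondary technical point is ensuring that mod-$n$ reduction commutes with the topological realization and with the passage to $\ao$-étale-local replacements, which follows from $\resp{-}$ being a left Quillen functor and from reduction mod $n$ being a (co)fiber construction preserved by left adjoints, as noted in Proposition \ref{proprelsp}.
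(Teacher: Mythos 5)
Note first that the statement you are proving is stated in the paper only as Conjecture \ref{coefffinis}: the paper offers no proof, only the remark that such a result ``would only involve the $\ao$-invariance of algebraic K-theory with torsion coefficients and Gabber rigidity''. Your proposal is essentially an elaboration of that remark, and its overall skeleton is the right one: if the natural map from the constant presheaf with value $\kn(T)/n$ to $\ukn(T)/n$ were an étale-local equivalence, then since $\resp{-}$ is left Quillen for the local structure (the spectral version of Theorem \ref{di}), and since the realization of a constant presheaf is its value (Propositions \ref{proprelder}/\ref{proprelsp} plus $\re{h_{\spec(\C)}}=\ast$), one would indeed get $\kst(T)/n\simeq \kn(T)/n$, compatibly with $\eta$. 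But as written the argument has a genuine gap exactly where you flag ``the hard part'': the rigidity input for an \emph{arbitrary} dg-category $T$ — that $\kn(T)/n\lmo \kn(T\tel_\C R)/n$ is an equivalence for strict henselizations $R$ of finite type $\C$-algebras (including at non-closed points, where one also needs invariance of $\kn(T\te-)/n$ under extensions of algebraically closed fields) — is not available in the paper and is not supplied by your proposed reduction. The $\ka$-module structure of \S\ref{caracalg} does not help: a module over an étale-locally constant presheaf of ring spectra need not itself be locally constant, and there is no reason $\ukn(T)/n$ is locally free over $\ka/n$. So the step ``reduce to $T=\unit$ via the module structure'' does not close the gap; this noncommutative rigidity statement is precisely the open content of the conjecture.

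Two further points. First, your ``main step'' that $\ukn(-)/n$ is already fibrant in the étale-local structure (Thomason-style descent) is both false and unnecessary: mod-$n$ algebraic K-theory does not satisfy étale (hyper)descent without inverting the Bott element — that is the content of Thomason's theorem and of Quillen--Lichtenbaum — and no Bott inversion occurs in Conjecture \ref{coefffinis}, which concerns $\kst$ rather than $\ktop$. Fortunately nothing in your argument actually requires fibrancy: a stalkwise equivalence to the constant presheaf is a local equivalence (the étale site of $\affc$ has enough points), and left-Quillen-ness of the realization is all you need. Second, $\ao$-invariance likewise plays no essential role once stalkwise constancy is in hand; invoking it is harmless but not a substitute for the missing rigidity. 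In summary, the reduction of the conjecture to ``rigidity for $\kn(T\te_\C-)/n$ at strictly henselian points'' is correct and matches the paper's own remark, but the proposal does not prove that rigidity, so it does not prove the conjecture.
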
 

\subsection{Schemes and Deligne cohomology}

The aim is to compare the topological K-theory of the dg-category of perfect complexes on a $\C$-scheme with the usual topological K-theory of its complex points together with their Chern maps, thus completing the point b in theorem \ref{motiv}. The proof is based on Riou's Spanier--Whitehead duality in Morel--Voevodsky's stable homotopy theory of smooth schemes over $\C$. 

At the end we give a comparison result for Deligne cohomology of the dg-category of perfect complexes, showing that we actually recover the commutative Deligne cohomology. 

\begin{nota}
For any $\C$-scheme $X$, we denote by $\ktop(X):=\ktop(\lpe(X))$ the topological K-theory of its dg-category of perfect complexes. 
\end{nota}

\begin{nota}
For a topological space $Y\in Top$, we denote by $\ktopu(Y):=\rhomi_{Ho(Sp)} (\sinf SY_+, \BU)$ its topological K-theory spectrum (nonconnective). If $X\in\schc$ is a $\C$-scheme, we can consider the topological K-theory of its complex points  $\ktopu(sp(X))$. 
\end{nota}

\begin{nota}
For a scheme $X\in\schc$, and any $\Z$-module $A$, we denote by $\hb(X, A)=\rhomi_{Ho(Sp)} (\resp{X}, HA)$ its Betti cohomology with coefficients in $A$. For smooth $X$, we have an isomorphism $\hp(X)\lmo \hb(X, \cuu)$ given by the composite of the antisymmetrisation map and the usual isomorphism between de Rham and Betti cohomology. 
\end{nota}

\begin{nota}
For every topological space $Y$, the usual topological Chern map
$$\chutop : \ktopu(Y) \lmo \hb(Y,\cuu)$$ 
is defined by $\chutop=\rhomi_{Ho(Sp)}(\sinf SY_+,\ch^{\mrm{top}}_\unit))$, where $\ch^{\mrm{top}}_\unit:\BU\lmo H\cuu$ is the topological Chern map of the point (Thm \ref{carac}) which is a map of ring spectra. The identity $\ch^{\mrm{top}}_\unit(\beta)=u$ characterizes uniquely the Chern map in the set of homotopy classes of ring maps $[\BU,H\cuu]$. This latter set is isomorphic to the set $[\BU\sm H\C, H\cuu]$. The standard equivalence between ring spectra and dg-algebras maps the ring spectrum $\BU\sm H\C$ to the polynomial dg-algebra $\C[\beta,\beta^{-1}]$, where the degree of $\beta$ is $2$. The image of $\beta$ is nothing but a non-zero multiple of $u$ ; the usual Chern map corresponds to $\chutop(\beta)=u$. 
\end{nota}

\begin{prop}\label{compvarlisse}
Let $X$ be a separated $\C$-scheme of finite type. Then there exists a canonical isomorphism 
$$\ktop(X)\lmos{\sim}\ktopu(sp(X))$$
in $Ho(Sp)$. Moreover the square, 
\begin{equation}\label{utop}
\xymatrix{ \ktop(X) \ar[r]^-{\chtop} \ar[d]_-\wr & \hp(X) \ar[d]  \\ \ktopu(sp(X)) \ar[r]^-{\chutop} & \hb(X,\C)\sm H\cuu }
\end{equation}
is commutative in $Ho(Sp)$. This implies that the lattice conjecture \ref{conjres} is valid for dg-categories of the form $T=\lpe(X)$ with $X$ any separated $\C$-scheme of finite type. 
\end{prop}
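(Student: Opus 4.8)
The plan is to reduce the statement to a computation about the dg-category $\lpe(X)$ realized through the motivic homotopy category of smooth schemes over $\C$, using Riou's construction of the algebraic K-theory spectrum and Spanier--Whitehead duality there. First I would recall that $\ukn(\lpe(X))$, viewed as a presheaf on $\affc$, agrees after restriction to smooth affine schemes (Remark \ref{restlisskst} and Theorem \ref{restliss}) with the presheaf $\spec(A)\mapsto \kn(\lpe(X\times_\C\spec A))$; by the projection formula / base change for perfect complexes this is the same as the presheaf $\spec(A)\mapsto \kn(X\times_\C\spec A)$, i.e. the algebraic K-theory presheaf of the scheme $X$ with its $A$-family structure. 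Hence $\kst(\lpe(X))=\resp{\ukn(X)}$ where $\ukn(X)$ is the restriction to $\lissc$ of the Voevodsky--Riou K-theory presheaf, and one may compute its topological realization inside $\shc$, the stable homotopy category of smooth $\C$-schemes.

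The key step is then to identify, after the topological realization $\resp{-}:\shc\lmo Ho(Sp)$, the object representing algebraic K-theory. Riou's theorem identifies algebraic K-theory with the $\mathbf{P}^1$-spectrum $\mathbf{KGL}$, which after inverting the Bott class becomes $2$-periodic; its image under Betti realization is well-known (Morel--Voevodsky, Riou) to be the topological K-theory spectrum $\BU=bu[\beta^{-1}]$, with the $\mathbf{P}^1$-suspension going to the $S^2$-suspension so that the Bott element matches the topological Bott element chosen in Notation \ref{choixbeta}. Concretely, I would argue: $\resp{\ukn(X)}\simeq \ktopu^{alg}(X)$ where the latter is the mapping spectrum $\R\map_{\shc}(\Sigma^\infty X_+,\mathbf{KGL})$ realized topologically; then applying Spanier--Whitehead duality for the smooth (proper compactifiable, or just smooth over $\C$ via Ayoub--Riou) scheme $X$, this mapping spectrum becomes a twisted homology-type spectrum whose Betti realization is precisely the topological K-homology of the manifold $sp(X)$, which by Atiyah duality for the compact (or finite CW) space $sp(X)$ is equivalent to $\ktopu(sp(X))$ after inverting $\beta$. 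Inverting $\beta$ on $\kst(\lpe(X))$ gives $\ktop(X)$ on the one side and $\ktopu(sp(X))$ on the other, and this is the desired isomorphism; functoriality in $X$ is automatic since every map in the chain is natural.

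For the commutative square, I would track the Chern map through this identification. On the motivic side there is a Chern character $\mathbf{KGL}\lmo \bigoplus \mathbf{H}\C(p)[2p]$ (or its periodic form to $\mathbf{H}\mathrm{P}$), and its Betti realization is the usual topological Chern character $\chutop:\BU\lmo H\cuu$; on the other hand the map $\chtop$ of Theorem \ref{carac} was built, for $T=\lpe(X)$, exactly from the algebraic Chern map $\ch$ followed by the comparison $\resp{\uhp(\ast)}\lmo H\cuu$, which on smooth schemes factors through the de Rham--Betti comparison (the maps (\ref{mapcoh})). The two constructions therefore agree because they agree on the point $\unit$ — both send the Bott generator $\beta$ to the same generator $u$ by the compatible choices in Notation \ref{choixbeta} — and both are $\bu$-linear (resp. $\BU$-linear) natural transformations on $\dgcatc$, hence determined by their value on $\unit$ after base change along $\bu\lmo bu$; applying $\R\map(\Sigma^\infty sp(X)_+,-)$ to the square over the point and using the duality identification gives the square (\ref{utop}). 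Finally, since $\chutop\sm_\s H\C$ is an equivalence for any finite CW complex (the topological Chern character being a rational isomorphism), the square shows $\chtop\sm_\s H\C$ is an equivalence for $T=\lpe(X)$, i.e. the lattice conjecture holds in this case.

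The main obstacle I expect is the precise matching of the Bott-type elements and of the suspension coordinates across the three worlds: the motivic $\mathbf{P}^1$-periodicity of $\mathbf{KGL}$, the topological $S^2$-periodicity of $bu$, and the algebraic Bott generator $\beta\in\kst_2(\unit)$ defined via $\po$ in Notation \ref{choixbeta}. One must check that Riou's representability and Betti realization send $\po\mapsto S^2$ compatibly with these choices of generator, so that ``inverting $\beta$'' on $\kst(\lpe(X))$ really corresponds to the standard Bott-periodic $\ktopu$ of $sp(X)$ and not to a twist or a multiple; this is where the careful normalization of $\beta=\eta(\alpha)$ and $\chutop(\beta)=u$ earns its keep. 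The duality step itself (Atiyah duality for $sp(X)$, smooth Spanier--Whitehead duality in $\shc$ à la Riou/Ayoub) is technical but standard; the genuinely delicate point is threading the chosen generators through it coherently.
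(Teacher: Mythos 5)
Your proposal follows essentially the same route as the paper: restrict to smooth schemes to identify $\ukn(\lpe(X))$ with the K-theory presheaf of the scheme $X$, pass to the $T$-spectrum $\kh$ (i.e.\ $\mathbf{KGL}$) in $\shc$, combine Riou's Spanier--Whitehead duality with the monoidality of the Betti realization, match the Bott elements via the normalization $\beta=\eta(\alpha)$ of Notation \ref{choixbeta} so that inverting $\beta$ corresponds to $\R\Omega^{\infty}_{S^2}$, and obtain the Chern square by reducing to the point and applying $\rhomi(\sinf sp(X)_+,-)$. The only caveat is that your detour through topological K-homology and Atiyah duality is unnecessary (and, taken literally, would require $sp(X)$ to be compact, which fails for general finite-type $X$): monoidality of the realization already sends the dual of $\Sigma^{\infty}_{T,S^1}X_+$ to the dual of $\sinf sp(X)_+$, so the mapping spectrum realizes directly to $\ktopu(sp(X))$ --- which is exactly the paper's argument, with the identification $\R\Omega^{\infty}_{S^2}\resh{\kh}\simeq \BU$ obtained from its own Theorem \ref{bu} rather than quoted as a known realization of $\mathbf{KGL}$.
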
 

The proof occupies the sequel of this part. We begin with some notations and reminders. Recall from notation \ref{notaksch} that we have a functor "algebraic K-theory of schemes",
$$\kn : \schc^{op}\lmo Sp.$$
For a scheme $X$ we define a presheaf of K-theory by the formula
$$\ukn(X):=(\spec(A)\longmapsto \kn(X\times_\C \spec(A)).$$
This defines a functor
$$\ukn : \schc^{op}\lmo \spaf.$$
Recall from notation \ref{notaksch} the functor "perfect complexes", 
$$\lpe : \schc^{op}\lmo \dgcatc.$$
For every $X\in \schc$ and every $\spec(A)\in \affc$, there exists a map
\begin{equation}\label{smmon}
\lpe(X)\tel_\C A\lmo \lpe(X\times_{\C} \spec(A))
\end{equation}
in $\dgcatc$ given by pulling back perfect complexes along the projection map $X\times_{\C} \spec(A)\lmo X$. The map (\ref{smmon}) is known to be a Morita equivalence of dg-categories (this uses the fact that we work over a field, and hence that at least one of the two schemes is flat over the base). It can be proved by reducing the assumption to the affine case (using the descent property of $\lpe$) for which it is immediate. We deduce that by proposition \ref{propalgk}, when $\spec(A)$ is smooth, there exists an equivalence 
$$\kn(X\times_\C \spec(A))\simeq \kn(\lpe(X)\tel_\C A)$$ 
in $Sp$. This implies the existence of an isomorphism of presheaves
$$\ukn(X)\simeq \ukn(\lpe(X))$$
in $Ho(\spafliss)$. 

We denote by $\splissna$ the $\ao$-Nisnevish local model structure on $\spafliss$. This is a monoidal model category and we denote by $\sm$ its monoidal product. We denote by $\shc$ Morel--Voevodsky's stable homotopy category of smooth $\C$-schemes. It can be define in the following way. The letter $T:=S^1\sm\gm$ stands for the Tate sphere in $\splissna$, the category $\shc$ is defined here as the homotopy category of symmetric $T$-spectra in $\splissna$,
$$\shc:=Ho(Sp_T \splissna).$$
This is a closed symmetric monoidal category. Riou's statement about Spanier--Whitehead duality says that every object of $\shc$ of the form $\sinf_{T, S^1} X_+$ for $X$ a smooth $\C$-scheme of finite type is (strongly) dualizable in the monoidal category $\shc$ (see \cite{rioudual}).

We choose a presentation of the scheme $\gm=\C\te_{\Z}\Z[t, t^{-1}]$. The invertible function $t$ gives a class $b$ in the group $\kn_1(\gm)$ and we choose $t$ such that the canonical map
$$\kn_1(\gm)\lmo \kn_0(\po)\simeq \Z\oplus \alpha\Z$$ 
sends the class $b$ on $\alpha$ (which was choosen in notation \ref{choixbeta}). 
We denote by $\kh$ the object of $\shc$ whose underlying $S^1$-spectrum is the presheaf $\kn \in \spafliss$, endowed with the $T$-spectrum structure given by the map $T=S^1\sm \gm \lmo \kn$ which corresponds to the class $b\in\kn_1(\gm)$. More precisely, for all $n\geq 0$ the class $b$ gives rise to a map
$$\xymatrix{T\sml \kn \ar[r]^-{b\sml id} &  \kn\sml\kn \ar[r] & \kn}$$
where the last map is given by the ring structure of $\kn$ in $Ho(\spafliss)$. Taking cofibrant-fibrant replacement and applying \cite[Prop 2.3]{cisdes} we obtain a $T$-spectrum $\kh$ with $\kh(Y)_n=\kn(Y)$ for all $n\geq 0$. A fibrant model of $\kh$ in $\shc$ represents Weibel's homotopy invariant algebraic K-theory, as proved by Cisinski (\cite[Thm 2.20]{cisdes}). For all smooth $\C$-scheme of finite type $X$ we denote by
$$\ukh(X):=\rhomi_{\shc} (\Sigma^\infty_{T, S^1} X_+ , \kh).$$
By Yoneda's lemma in $\shc$, this latter is globally equivalent to the presheaf of $T$-spectra 
$$Y\longmapsto \kh(X\times Y),$$
in $\spafliss$. We have an equivalence $\ukh(\unit)\simeq\kh$. The topological realization of $T$ is given by
$$ssp(T)\simeq ssp(S^1\sm \gm) \simeq S^1\sm ssp(\gm)\simeq S^1\sm S^1=S^2.$$
We locally adopt the more precise notation $Sp=\sps$ for symmetric $S^1$-spectra and we denote by $\spss\sps$ the model category of symmetric $S^2$-spectra inside symmetric $S^1$-spectra. The infinite loop space functor $\Omega^\infty_{S^2} : \spss\sps\lmo \sps$ is a Quillen equivalence. There exists a topological realization functor on the level of the category of $T$-spectra $\shc$ ; It is simply defined levelwise. We denote it by
$$\resh{-} : \shc\lmo Ho(\spss\sps).$$
We have a noncommutative square of categories
$$\xymatrix{ \shc \ar[r]^-{\resh{-}} \ar[d]_-{\R\Omega_T^\infty} & Ho(\spss\sps) \ar[d]^-{\R\Omega_{S^2}^\infty} \\ Ho(\splissna) \ar[r]^-{\resp{-} } & Ho(Sp_{S^1} )  }$$
where $\R\Omega_T^\infty$ stands for the derived loop space functor for $T$-spectra. The structure of $S^2$-spectrum of $\resh{\ukh(X)}$ corresponds to the multiplication by the Bott generator $\beta\in \pi_2\resp{\ukn(\unit)}$ inside the $S^1$-spectrum $\resp{\ukn(X)}$. This is because the structure of $T$-spectrum of $\ukh(X)$ is given by the multiplication by the class $b$ and because we have also an equivalence $ssp(T)\simeq S^2$ such that the triangle
$$\xymatrix{  \sinf(S^2)_+\ar[rd]^-{\beta} \ar[d]^-{\wr} & \\  \resp{T} \ar[r]^-{\re{b}} & \resp{\ukn(\unit)}\simeq \bu  }$$
is commutative by the choice of $b$. In consequence, applying the functor $\R\Omega_{S^2}^\infty$ to $\resh{\ukh(X)}$ corresponds to inverting $\beta$ in $\resp{\ukn(X)}$. We therefore have a canonical map
$$\resp{\ukn(X)}[\beta^{-1}] \lmo \R\Omega_{S^2}^\infty\resh{\ukh(X)},$$
which is an isomorphism in $Ho(Sp_{S^1})$ because the underlying $S^1$-spectrum of $\ukh(X)$ is equivalent to $\ukn(X)$.

\begin{proof} of proposition \ref{compvarlisse} ---
We first deal with the smooth case by using Riou's Spanier--Whitehead duality in the homotopy category of smooth schemes, and then we prove it for possible singular schemes by using cdh descent. Let $X$ be a smooth separated $\C$-scheme of finite type. We have canonical ismorphisms in $Ho(\sps)$, 
\begin{align*}
\ktop(X) & =\resp{\ukn(X)}[\beta^{-1}] \\
& \simeq \R\Omega_{S^2}^\infty\resh{\ukh(X)} \\
& = \R\Omega_{S^2}^\infty\resh{\rhomi_{\shc} (\Sigma^\infty_{T, S^1} X_+ , \kh) }
\end{align*}

By Riou's theorems \cite[Thm 1.4+Thm 2.2]{rioudual}, the object $\Sigma^\infty_{T, S^1} X_+$ is strongly dualizable in $\shc$. The functor $\resh{-}$ being monoidal, and because a monoidal functor commutes with the duality functor, we have canonical isomorphisms in $Ho(\sps)$, 
\begin{align*}
\ktop(X) & \simeq \R\Omega_{S^2}^\infty\rhomi_{Ho(\spss\sps)} (\resh{\Sigma^\infty_{T, S^1} X_+} , \resh{\kh}) \\
& \simeq \R\Omega_{S^2}^\infty\rhomi_{Ho(\spss\sps)} (\Sigma^\infty_{S^2, S^1} \re{X}_+ , \resh{\kh}) \\
& \simeq \rhomi_{Ho(\sps)} (\Sigma^\infty_{S^1} \re{X}_+ , \R\Omega_{S^2}^\infty\resh{\kh}) \\
\end{align*}
Besides we have isomorphisms in $Ho(\sps)$,
$$\R\Omega_{S^2}^\infty\resh{\kh}\simeq \ktop(\unit)=\BU.$$
This proves the existence of the expected isomorphism
\begin{equation}\label{complissiso}
\ktop(X)\simeq \rhomi_{Ho(\sps)} (\Sigma^\infty_{S^1} \re{X}_+ , \BU)=\ktopu(sp(X))
\end{equation}
It remains to compare the two Chern map up to homotopy. The square (\ref{utop}) decomposes into
$$\xymatrix@=5pc{\ktop(X)\simeq \resp{\rhomi (X, \kn)}[\beta^{-1}] \ar[r]^-{\resp{\rhomi(X,\ch)}} \ar[d] & \resp{\rhomi (X, \hpa)} \ar[d] \\
\ktopu(sp(X))=\rhomi (\resp{X}, \resp{\kn}[\beta^{-1}])  \ar[r]^-{\rhomi(\resp{X}, \resp{\ch})} \ar@/_1pc/[dr]_-{\rhomi(\resp{X}, \chtop)} & \rhomi(\resp{X}, \resp{\hpa}) \ar[d]^-{\rhomi(\resp{X}, \mcal{P})} \\
 & \rhomi(\resp{X}, H\cuu)=\hb(X, \cuu)  }$$ 
In this diagram, the top square is commutative by functoriality of $\resp{-}$. Moreover the bottom triangle is commutative because it is commutative for $X=\spec(\C)$ by definition of $\chtop$.

Let now $X$ be a non necessarily smooth $\C$-scheme of finite type. We consider the cdh topology on the category $\schc$ and the corresponding homotopy category $Ho(Sp(\schc)^{cdh})$ defined using cdh-local equivalences. We denote by $Ho(Sp(\lissc)^{cdh})$ the smooth version. By using the same argument as in the proof of proposition \ref{eqpreliss} --- mainly, resolution of singularities --- we obtain that the restriction and extension functors induce an equivalence of categories
\begin{equation}\label{extliss}
Ho(Sp(\schc)^{cdh}) \simeq Ho(Sp(\lissc)^{cdh}).
\end{equation}
Haesemeyer \cite[Thm 6.4]{haes} has proven that $\ao$-invariant algebraic K-theory has cdh descent on singular schemes of finite type over $\C$. Since the topological K-theory $\ktop(\lpe(X))$ is the Bott inverted topological realization of the $\ao$-invariant algebraic K-theory defined on singular schemes, we have that topological K-theory has cdh descent too. Moreover $\ktopu(sp(-))$ has also cdh descent because of theorem \ref{di} and of theorem \ref{hypdescpro} which imply that it has étale descent and proper descent respectively. Thus we can consider the isomorphism $\ktop(\lpe(-))\lmo \ktopu(sp(-))$ defined above in (\ref{complissiso}) as a well defined isomorphism in $ Ho(Sp(\lissc)^{cdh})$. By the equivalence (\ref{extliss}), it extends uniquely up to an isomorphism in $Ho(Sp(\schc)^{cdh})$. This proves the comparison result, and with the same argument we obtain a commutative square (\ref{utop}) of Chern characters in $Ho(Sp(\schc)^{cdh})$ (using that periodic cyclic homology has cdh descent by \cite[Cor 3.13]{cycliccdh}). The lattice conjecture is then true for dg-categories $T=\lpe(X)$ with $X$ separated of finite type over $\C$ because it is true for the usual topological Chern character.  
\end{proof}

We can now justify the definition of Deligne cohomology of smooth and proper dg-categories given in the introduction. 

\begin{df} 
The \emph{Deligne cohomology of a smooth proper $\C$-dg-category $T$} is the symmetric spectrum 
$$\hdel(T):=\ktop(T)\ph_{\hp(T)} \hcn(T)$$
where the maps defining the homotopy pullback are those of the square (\ref{sqcarac}).
\end{df} 

\begin{nota} We recall the definition of the Deligne cohomology of a smooth and proper $\C$-scheme $X$. If $A$ is a $\Z$-module, for each integer $p\in\N$ we adopt the standard notation $A(p):=(2i\pi)^p A\subseteq \C$. We denote by $\und{A}_X$ the constant sheaf associated to $A$ on $X$. The $p$th Deligne complex of $X$ is by definition the complex of $\Z$-sheaves
$$\Z(p)_\D (X)= (\und{\Z(p)}_X \lmo \ocal_X \lmos{d} \Omega^1_X \lmos{d} \Omega^2_X \lmos{d} \cdots)$$
where $\Omega_X^q$ is the sheaf of Kähler $q$-forms on $X$, and $d$ is the de Rham differential.The \emph{Deligne cohomology of $X$} is by definition the complex of $\Z$-modules
$$\hdel^* (X, \Z):=\prod_{p\geq 0} \mbb{H}^*(X,\Z(p)_\D(X))[2p]$$
i.e. the hypercohomology of $X$ with coefficients in the total Deligne complex $\prod_{p\geq 0} \Z(p)_\D(X)[2p]$\footnote{The use of the shift comes from the comparison with negative cyclic homology.} (see for example \cite[§1]{evdelignecoh}). This is a reindexed version of the usual Deligne cohomology. The $H^0(\hdel^* (X, \Z))$ gives the group $\prod_{p\geq 0} \mbb{H}^{2p}(X,\Z(p)_\D(X))$. We denote by $\hdel(X,\Z):=H(\hdel^{-*} (X, \Z))$ the associated symmetric spectrum (see §\ref{caracalg} for the functor $H:C(\Z)\lmo Sp$). 

We recall that the $p$th Deligne complex of sheaves $\Z(p)_\D(X)$ is given by the following homotopy quotient
$$\Z(p)_\D(X) = \cone (\und{\Z(p)}_X\oplus F^p\drcx \lmo \drcx)$$
in the derived category $D(X,\Z)$ of complex of $\Z$-sheaves on $X$, where $F^p\drcx=\Omega_X^{\geq p}$ means the $p$th layer of the Hodge filtration on the de Rham complex $\drcx$, which is just the truncation (see \cite[§2.7]{evdelignecoh}). If $E=E^\ast\in D(X,\Z)$ is a cochain complex of $\Z$-sheaves on $X$, we adopt the notation $E^*[u^{\pm 1}]=\prod_{i\in \Z} E^{\ast+2i}=\prod_{i\in\Z}E^\ast[-2i]$ for the $2$-periodic complex associated to $E$. We also note $H(E[u^{\pm 1}]) =:H(E)[u^{\pm 1}]$. 

Because the topological K-theory and the Betti cohomology become isomorphic only with rational coefficients, we will work with Deligne cohomology with rational coefficients which is denoted by $\hdel^* (X, \Q)\in C(\Q)$ and $\hdel(X,\Q)\in Sp$. 
\end{nota}

\begin{rema}\label{remindhkr} Let $X$ be a separated smooth $\C$-scheme of finite type. We denote by $\hp_*(X)$ (resp. $\hcn_*(X)$) the complex $\Z$-modules which calculates the periodic cyclic homology of the scheme $X$ (resp. the negative cyclic homology of $X$). I.e. such that $H(\hp_*(X))=\hp(X)$ and $H(\hcn_*(X))=\hcn(X)$. Recall remark \ref{remhhsch} for the Hochschild homology of schemes. By the Hochschild--Konstant--Rosenberg theorem (see \cite{hkr} or \cite[Thm 3.4.4 p.102]{loday}, and \cite[Ex 2.7]{weibelhodge} for the case of non necessarily affine schemes), there exists an isomorphism 
$$\hp_*(X)\simeq \mbb{H}^{-*}(X,\drcx)[u^{\pm 1}]$$
in $D(\C)$ with the $2$-periodic hypercohomology of $\drcx$ which is nothing but $2$-periodic de Rham cohomology of $X$. The negative theory is itself related to the Hodge filtration on the de Rham complex. That is, by \cite[Thm 3.3]{weibelhodge}, we have an isomorphism 
$$\hcn_\ast (X) \simeq \prod_{p\geq 0} \hcn_\ast^{(p)} (X)$$
in $D(\C)$, where for each integer $p\geq 0$, $\hcn_\ast^{(p)} (X)$ is the $p$th component of the Hodge type decomposition on $\hcn_\ast (X)$, i.e. 
$$\hcn_\ast^{(p)} (X)\simeq \mbb{H}^{-*}(X, F^p\drcx)[-2p]$$
in $D(\C)$. 
\end{rema}

\begin{prop} 
Let $X$ be a separated, smooth and proper  $\C$-scheme of finite type. Then there exists a canonical isomorphism 
$$\hdel(\lpe(X))\sm_\s H\Q \simeq \hdel (X,\Q)$$
in $Ho(Sp)$ between the rational Deligne cohomology of the dg-category $\lpe(X)$ and the Deligne cohomology spectrum of $X$.  
\end{prop}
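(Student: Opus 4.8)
The plan is to unwind the definition $\hdel(\lpe(X))=\ktop(\lpe(X))\ph_{\hp(\lpe(X))}\hcn(\lpe(X))$ and compare it term by term, with rational coefficients, to the Deligne complex description $\Z(p)_\D(X)=\cone(\und{\Z(p)}_X\oplus F^p\drcx\lmo\drcx)$ assembled over $p$ and $2$-periodized. First I would smash the whole homotopy pullback square defining $\hdel(\lpe(X))$ with $H\Q$; since smashing with $H\Q$ is exact it preserves the homotopy pullback, so $\hdel(\lpe(X))\sm_\s H\Q$ is the homotopy pullback of $\ktop(\lpe(X))\sm_\s H\Q\lmo\hp(\lpe(X))\sm_\s H\Q\longleftarrow\hcn(\lpe(X))\sm_\s H\Q$. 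Then I would identify each of the three corners using results already in the paper: $\hp(\lpe(X))\simeq\hp(X)$ and $\hcn(\lpe(X))\simeq\hcn(X)$ by the scheme-vs-dg-category comparison of Remark \ref{remhhsch} (applied to Hochschild homology and then to the induced $\hp$, $\hcn$), and $\ktop(\lpe(X))\simeq\ktopu(sp(X))$ by Proposition \ref{compvarlisse}, which moreover tells us the Chern map $\chtop$ on $\lpe(X)$ agrees with the usual topological Chern map on $sp(X)$.

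Next I would use the classical identifications of the corners with rational coefficients. By Proposition \ref{compvarlisse}, $\ktopu(sp(X))\sm_\s H\Q\simeq\hb(sp(X),\Q)[u^{\pm1}]$ via the (rational) topological Chern character, which exhibits the left map of the pullback square, after $-\sm_\s H\Q$, as the $2$-periodic Betti cohomology $\hb(X,\Q)[u^{\pm1}]$ mapping into the $2$-periodic de Rham cohomology $\hp(X)\sm_\s H\Q\simeq\mathbb{H}^{-*}(X,\drcx)[u^{\pm1}]$ (Remark \ref{remindhkr}, HKR). For the right map, Remark \ref{remindhkr} gives $\hcn(X)\simeq\prod_{p\geq0}\mathbb{H}^{-*}(X,F^p\drcx)[-2p]$ and the canonical map $\hcn(X)\lmo\hp(X)$ is, under these identifications, the sum over $p$ of the inclusions of the Hodge pieces $F^p\drcx[-2p]\hookrightarrow\drcx[u^{\pm1}]$ (placing $F^p$ in the $u^{-p}$-graded part). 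So the homotopy pullback $\hdel(\lpe(X))\sm_\s H\Q$ becomes the homotopy pullback of $\hb(X,\Q)[u^{\pm1}]$ and $\prod_p F^p\drcx[-2p]$ over $\mathbb{H}^{-*}(X,\drcx)[u^{\pm1}]$, which is precisely $H$ applied to the $2$-periodized total cone $\prod_{p\geq0}\cone(\und{\Q(p)}_X\oplus F^p\drcx\lmo\drcx)[2p]$ up to the usual twist $\Q(p)=(2i\pi)^p\Q$ — i.e. $\hdel(X,\Q)$.

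The main subtlety, which I would treat carefully, is bookkeeping of the periodicity/weight grading: matching the $u$-variable of periodic cyclic homology (degree $2$) with the index $p$ in the Deligne complex, and checking that the Hodge filtration piece $F^p$ lands in the correct graded component so that the homotopy fibre product over the $2$-periodic de Rham complex reproduces the \emph{product} over $p$ of the shifted cones rather than a single cone. This amounts to verifying that the composite $\hcn(X)\to\hp(X)$, after HKR, respects the Hodge decomposition in the way recorded in Remark \ref{remindhkr}, and that the Betti-to-de Rham comparison used to define $\chtop$ (via the chain $\hpa\lmo\hdrna\lmo\hdran\longleftarrow\hb(-,\C)$ of diagram (\ref{mapcoh})) is compatible with the de Rham side appearing in the Deligne complex. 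I expect this compatibility of gradings and of the two a priori different comparison isomorphisms between Betti and de Rham cohomology to be the one real obstacle; everything else is formal manipulation of homotopy pullbacks together with the already-established scheme/dg-category comparisons and HKR. Once the gradings are aligned, the identification $\hdel(\lpe(X))\sm_\s H\Q\simeq\hdel(X,\Q)$ follows by applying $H$ to the resulting equivalence of complexes, using that $H$ commutes with the relevant (finite) homotopy limits appearing here because all the complexes involved are bounded-below and the cones are computed degreewise.
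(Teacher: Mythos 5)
Your proposal is correct and follows essentially the same route as the paper: both proofs amount to matching the defining homotopy pullback square of $\hdel(\lpe(X))$ (rationalized) with the cone presentation $\Q(p)_\D(X)=\cone(\und{\Q(p)}_X\oplus F^p\drcx\to\drcx)$, using Proposition \ref{compvarlisse} together with the classical rational Chern isomorphism for the K-theory corner and Remark \ref{remindhkr} (HKR and the Hodge-filtration description of $\hcn$) for the cyclic homology corners; the paper merely runs the comparison in the opposite direction, starting from the sheaf-level pullback square of Deligne complexes and taking hypercohomology, shifts and products. The grading/compatibility bookkeeping you flag is exactly what the paper's citations of \ref{remindhkr} and \ref{compvarlisse} are carrying, so there is no gap beyond the paper's own level of detail.
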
 

\begin{proof}
Because of the expression of the Deligne complexes as cones, for each $p\in\Z$ there exists a pullback/pushout square in the derived category of $\Q$-sheaves on $X$ denoted by $D(X,\Q)$,
$$\xymatrix{ \Q(p)_\D(X)  \ar[r]^-{ } \ar[d]^-{ } & F^p\drcx \ar[d]^-{ } \\ \und{\Q}_X \ar[r]^-{ } & \drcx }$$
By taking the hypercohomology on $X$, applying the shift $[-2p]$, and taking the product on all integer $p$, we obtain a pullback/pushout square
$$\xymatrix{\hdel^{-*}(X,\Q) \ar[r]^-{ } \ar[d]^-{ } & \hcn_*(X) \ar[d]^-{ } \\ \mrm{H_B^{-*}}(X, \Q)[u^{\pm 1}] \ar[r]^-{ } & \hp_*(X) }$$
in $D(\Q)$, where we used remark \ref{remindhkr} to identify hypercohomology with negative and periodic cyclic homology, and
where $\mrm{H_B^{-*}}(X, \Q)$ is the rational Betti cohomology complex. Using the isomorphism $H(\mrm{H_B^{-*}}(X, \Q)[u^{\pm 1}])\simeq \ktop(X)\sm_\s H\Q$ in $Ho(Sp)$ given by proposition \ref{compvarlisse} and the classical Chern isomorphism, we have a pullback/pushout square 
$$\xymatrix{ \hdel (X,\Q) \ar[r]^-{ } \ar[d]^-{ } & \hcn(X) \ar[d]^-{ } \\  \ktop(X)\sm_\s H\Q \ar[r]^-{ } &\hp(X)  }$$
in $Ho(Sp)$, which gives the expected isomorphism. 
\end{proof}


\subsection{Finite dimensional associative algebras}\label{finalg}

In all this part, the base ring is still the complex field $\C$. By default, all algebras are associative unital $\C$-algebras. We say an algebra is finite dimensional if it is so as a $\C$-vector space. The periodic cyclic homology of an algebra is by convention the periodic cyclic homology relatively to the base field $\C$. Our result concerning finite dimensional algebras does not concern the whole spectrum of topological K-theory but what we call "pseudo-connective topological K-theory". Recall that for any dg-category $T$ over $\C$, the connective semi-topological K-theory (definition \ref{defkst}) $\kcst(T)$ is a $\bu$-module. We can therefore invert the Bott generator,
$$\kctop(T):=\kcst(T)[\beta^{-1}]$$
We call this invariant the \emph{pseudo-connective topological K-theory of $T$}. 
We will give below a proof of the following proposition concerning the pseudo-connective topological K-theory of a finite dimensional $\C$-algebra. 

\begin{prop}\label{cralgass}
Let $B$ be a finite dimensional $\C$-algebra. Then the lattice conjecture \ref{conjres} is true for $B$. More precisely the canonical map 
$$\chctop\sm_\s H\C : \kctop(B)\sm_\s H\C \lmo \hp(B)$$ 
is an isomorphism in $Ho(Sp)$. 
\end{prop}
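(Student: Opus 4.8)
The plan is to reduce Proposition \ref{cralgass} to two ingredients: the invariance of pseudo-connective topological K-theory under nilpotent (infinitesimal) extensions, and the known case of a finite-dimensional \emph{semisimple} algebra, which is Morita-equivalent to a product of copies of $\unit$. First I would recall that for any finite-dimensional $\C$-algebra $B$ the Jacobson radical $J=\mrm{rad}(B)$ is nilpotent, the quotient $B/J$ is semisimple, and the projection $B\lmo B/J$ is a surjection with nilpotent kernel. By Wedderburn--Artin, $B/J\simeq \prod_i M_{n_i}(\C)$, so $B/J$ is Morita-equivalent to $\unit^{\oplus r}=\coprod$ of $r$ copies of $\unit$ for some $r$. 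Since $\kcst$, and hence $\kctop$, is Morita-invariant (it is built from $\uparf$ via $\resp{-}$ and localization, both functorial and Morita-invariant by Proposition \ref{propalgk}(c) and Theorem \ref{kstmt}) and sends finite coproducts of dg-categories to products of spectra, we get $\kctop(B/J)\simeq \kctop(\unit)^{\oplus r}\simeq \BU^{\oplus r}$, and similarly $\hp(B/J)\simeq \hp(\unit)^{\oplus r}=(H\cuu)^{\oplus r}$. By Theorem \ref{carac} applied to $\unit$ and the computation $\ch^{\mrm{top}}_\unit(\beta)=u$ in Proposition \ref{compvarlisse}, the Chern map $\chctop\sm_\s H\C$ is an isomorphism for $\unit$, hence for $B/J$.

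The core step is then to prove that both sides of $\chctop\sm_\s H\C$ are invariant under the nilpotent extension $B\lmo B/J$, i.e.\ that $\kctop(B)\lmo\kctop(B/J)$ and $\hp(B)\lmo\hp(B/J)$ are isomorphisms. For periodic cyclic homology this is Goodwillie's theorem: $\hp$ is invariant under nilpotent (indeed $H$-unital) extensions over $\Q$, so $\hp(B)\simeq\hp(B/J)$. For $\kctop$ the argument I would give is: it suffices to show that the map of presheaves of connective K-theory spectra $\ukc(B)\lmo\ukc(B/J)$ becomes an equivalence after spectral topological realization $\resp{-}$. By Theorem \ref{kstmt} (or directly via $\vect$), $\kcst(B)\simeq\mcal{B}\regam{\M^B_\bul}$, so by Proposition \ref{mttsp} and Theorem \ref{kstmt} it is enough to produce an $\ao$-homotopy equivalence, or at least a $\resp{-}$-equivalence, between the stacks $\vect^B$ (projective $B$-modules of finite type) and $\vect^{B/J}$. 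The key geometric fact is that lifting idempotents along a nilpotent (even just complete) surjection $B\lmo B/J$ is possible and the lift is unique up to conjugation by a unit; moreover the fibers of $\uaut$-torsors over the reduction are affine spaces. Concretely, for each $n$ the map of stacks $\mrm{B}\ugl_n(B)\lmo\mrm{B}\ugl_n(B/J)$ has homotopy fiber the classifying stack of the kernel $\ker(\ugl_n(B)\lmo\ugl_n(B/J))$, which is a unipotent (successively $\ao^N$-filtered) affine group scheme; such group schemes are $\ao$-contractible, hence have trivial topological realization, so $\re{\mrm{B}\ugl_n(B)}\simeq\re{\mrm{B}\ugl_n(B/J)}$. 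Summing over $n$ via the $\gam$-structure (block sum) and group-completing gives $\resp{\vect^B}\simeq\resp{\vect^{B/J}}$, hence $\kcst(B)\simeq\kcst(B/J)$ and thus $\kctop(B)\simeq\kctop(B/J)$.

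Putting the pieces together: we have a commutative square (from Theorem \ref{carac} applied to $B\lmo B/J$)
\[
\xymatrix{ \kctop(B)\sm_\s H\C \ar[r]^-{\chctop} \ar[d]_-{\wr} & \hp(B) \ar[d]^-{\wr} \\ \kctop(B/J)\sm_\s H\C \ar[r]^-{\chctop}_-{\sim} & \hp(B/J) }
\]
where the left vertical map is an isomorphism by the nilpotent-invariance of $\kctop$ just established (smashing with $H\C$ preserves this), the right vertical map is an isomorphism by Goodwillie's theorem, and the bottom horizontal map is an isomorphism by the semisimple/Morita/$\unit$ computation above. Hence the top map $\chctop\sm_\s H\C:\kctop(B)\sm_\s H\C\lmo\hp(B)$ is an isomorphism, which is exactly the statement.

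I expect the main obstacle to be the nilpotent-invariance of $\kctop$, i.e.\ showing $\resp{\vect^B}\simeq\resp{\vect^{B/J}}$. The subtlety is that semi-topological (connective) K-theory is \emph{not} in general nil-invariant before inverting Bott — one must genuinely use that the topological realization kills the unipotent radical of the automorphism group schemes, which is where $\ao$-invariance of $\re{-}$ (Theorem \ref{di}) and the restriction-to-smooth-schemes result (Theorem \ref{restliss}, used to reduce computations of $\resp{-}$ to smooth affine test schemes, cf.\ Remark \ref{restlisskst}) do the real work. One has to be careful that the relevant kernels $\ker(\ugl_n(B)\lmo\ugl_n(B/J))$ are indeed filtered by additive group schemes $\mathbf{G}_a^{N}$ — this follows from the $J^k/J^{k+1}$ being finite-dimensional $\C$-vector spaces and the nilpotence of $J$ — and that passing to the colimit over $n$ and group-completing commutes with $\resp{-}$, which it does since $\resp{-}$ is a left adjoint and the $\gam$-space formalism of Section \ref{monoids} is compatible with it by Propositions \ref{proprelb} and \ref{proprelplus}. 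The rest of the argument is formal bookkeeping with the functoriality established in Section \ref{rea} and Section \ref{kstktop}.
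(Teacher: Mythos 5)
Your overall strategy is the same as the paper's: reduce to the semisimple quotient $B_0=B/\mrm{rad}(B)$ (handled by Morita invariance, finite products and the case of $\unit$), use Goodwillie's theorem for $\hp$, and prove nilpotent invariance of the Bott-inverted connective semi-topological K-theory by showing that $B\lmo B_0$ induces an equivalence on (realizations of) the stack of modules. The gaps are in that last, crucial step. First, your concrete argument is carried out only for the \emph{free} locus $\coprod_n \mrm{B}\ugl_n(B)$, and you then assert that "summing over $n$ and group-completing gives $\resp{\vect^B}\simeq\resp{\vect^{B/J}}$". For a noncommutative finite dimensional algebra, $\vect^B$ is \emph{not} $\coprod_n \mrm{B}\ugl_n(B)$: by the residual-gerbe decomposition (the paper's Prop \ref{formgerb}) it is $\coprod_{M\in\chi}\bcl\uaut(M)$ over \emph{all} isomorphism classes of projective modules, and the group completion of the free part has $\pi_0=\Z$ rather than $K_0(B)$, whereas $K_0(B)\te_\Z\C$ (e.g. $\C^{\#\{\text{simples}\}}$ for a quiver algebra) is exactly what must match $\hp_0(B)$. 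So either you must treat general projective $M$ — which is where the paper does real work: identify $\chi\simeq\chi_0$ by Bass's lifting of projectives along the nilpotent surjection, and show that $\ker(\uaut(M)\lmo\uaut(M_0))$ is still an affine space by embedding $\uaut(M)$ as a closed subgroup of $\ugl_r(B)$, the kernel being cut out by linear equations inside $I_r+\uma_r(I)$ — or you must argue separately that after Bott inversion the free part suffices (a cofinality statement on homotopy groups in degrees $\geq 1$ that you never make). You gesture at idempotent lifting, but as written the equivalence you establish is for the wrong object.

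Second, even in the free case, the inference "the homotopy fiber $\bcl K$ has trivial realization, \emph{so} $\re{\mrm{B}\ugl_n(B)}\simeq\re{\mrm{B}\ugl_n(B/J)}$" is exactly the delicate point and is left unjustified: neither the $\ao$-localization nor the realization functor preserves homotopy fiber sequences, so contractibility of the fiber does not by itself transfer along $\re{-}$. The paper devotes a specific argument to this, rewriting $\bcl\uaut(M)$ as the quotient stack $[\bcl K/\uaut(M_0)]$ over $\bcl\uaut(M_0)$ and using that the localization commutes with (homotopy) quotients, so that $\ao$-contractibility of $\bcl K$ kills the fiber. A quicker repair in your setting would be: the surjection $\ugl_n(B)\lmo\ugl_n(B_0)$ splits as a map of \emph{schemes} (choose a $\C$-linear section of $B\lmo B_0$), giving an isomorphism of schemes $\ugl_n(B)\simeq K\times\ugl_n(B_0)$ with $K$ an affine space; since $\re{-}$ preserves products and $\re{\mrm{B}G}\simeq\mrm{B}\re{G}$ (Prop \ref{gp}), the map on realizations of classifying stacks is an equivalence. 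Some such argument must be supplied (and again extended to $\uaut(M)$ for non-free $M$). Your choice to treat the full radical at once instead of the paper's induction on square-zero ideals is fine once these points are fixed, since $I_r+\uma_r(J)$ is an affine space for any nilpotent $J$; the induction in the paper is mainly there to invoke the lifting statement in the square-zero case.
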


\begin{nota}
We work over the étale site $\affc$ of affine $\C$-scheme of finite type. We have localization functors,
$$Ho(\spr)\lmo Ho(\spret) \lmo Ho(\spretao).$$
The expression "étale stack" stands for an object of the category $Ho(\spret)$. This category is called the homotopy category of stacks. In §\ref{prestru}, we defined a classifying space functor
$$\mrm{B}:Gr(\spr)\lmo \spr,$$
for strict group objects in $\spr$. The classifying stack functor
$$\bcl : Gr(\spret) \lmo \spret,$$
defined by $\bcl(G)=\und{a} \mrm{B}G$ where $\und{a}$ is a fibrant replacement functor in $\spret$. For all presheaf of groups $G$ (simplicially constant) and all $X\in \affc$, the space $\bcl G(X)$ is equivalent to the nerve of the groupoid of $G$-torsors over $X$. In the sequel we will call algebraic stack a $1$-geometric stack in the sense of \cite[Def 1.3.3.1]{hag2} for the standard context of the étale site $\affc$ and the class of morphisms is the class of smooth morphisms of schemes. Therefore we deal with Artin algebraic stacks.  
We still denote by $\afflissc\hookrightarrow \affc$ the inclusion of smooth schemes, $l^*:\spr\lmo \sprliss$ the restriction and $l^*:\spaf\lmo \spafliss$ the restriction for presheaves of spectra. 
\end{nota}

\paragraph{Different stacks associated to a finite dimensional algebra.} 

Let $B$ be a finite dimensional $\C$-algebra. We consider four stacks associated to $B$, which are organized in the following square of stacks, 

$$\xymatrix{ \vect^B \ar@{^{(}->}[r]^-{ } \ar@{^{(}->}[d]^-{ } & \M^B \ar@{^{(}->}[d]^-{ } \\ \vect_B \ar@{^{(}->}[r]^-{ } & \M_B }$$

These stacks are defined in the following way. We first define four presheaves of Waldhausen categories. 
\begin{itemize}
\item $\uproj(B) : \spec(A)\longmapsto \proj(B\te_\C A)$, where $\proj(B\te_\C A)$ is the Waldhausen category of right finitely projective $B\te_\C A$-modules of finite type.
\item $\upsproj(B) : \spec(A)\longmapsto \psproj(B\te_\C A)$, where $\psproj(B\te_\C A)$ is the Waldhausen category of right $B\te_\C A$-modules which are projective of finite type relative to $A$ (i.e. as right $A$-modules).
\item $\uparf(B) : \spec(A)\longmapsto \parf(B\te_\C A)$, where $\parf(B\te_\C A)$ is the Waldhausen category of cofibrant perfect complexes of right $B\te_\C A$-modules (perfect means homotopically finitely presented in the model category of complexes). 
\item $\upspa(B) : \spec(A)\longmapsto \pspa(B\te_\C A)$, where $\pspa(B\te_\C A)$ is the Waldhausen category of cofibrant complexes of right $B\te_\C A$-modules which are perfect relative to $A$ (i.e. as complexes of right $A$-modules). 
\end{itemize}
Taking the nerve of weak equivalences we have by definition, 
\begin{itemize}
\item $\vect^B=Nw\uproj(B) : \spec(A) \longmapsto Nw\proj(B\te_\C A)$. 
\item $\vect_B=Nw\upsproj(B) : \spec(A) \longmapsto Nw\psproj(B\te_\C A)$. 
\item $\M^B=Nw\uparf(B): \spec(A) \longmapsto Nw\parf(B\te_\C A)$. 
\item $\M_B=Nw\upspa(B) : \spec(A) \longmapsto Nw\pspa(B\te_\C A)$. 
\end{itemize}
As we work over a field, $B$ is locally cofibrant as a dg-category, and therefore the tensor product written previously are in fact derived. The stack $\M_B$ is studied by Toën--Vaquié in \cite{modob}. We remark that since $B$ is finite dimensional, it is proper as a dg-category (in the sense of \cite[Def.2.4]{modob}). By Lem 2.8 of loc.cit., a perfect complex of right $B\te_\C A$-modules is therefore perfect relative to $A$. Hence we have monomorphisms $\vect^B \hookrightarrow \vect_B$ and $\M^B\hookrightarrow \M_B$. The monomorphism $\vect^B\hookrightarrow \M^B$ and $\vect_B\hookrightarrow \M_B$ are inclusion of degree $0$ concentrated complexes. These four stacks admit a structure of homotopy coherent commutative monoid given by direct sum of modules and dg-modules repectively. We apply the functor $B_W$ (defined in \ref{monoids}) and we obtain special $\gam$-objects in $\spr$, $\vect^B_\bul$, $\vect_B^\bul$, $\M^B_\bul$, $\M_B^\bul$ whose level $1$ are respectively $\vect^B,\vect_B,\M^B,\M_B$. 
\\

The stack $\vect^B$ is an algebraic stack which is locally finitely presented and smooth (see for example \cite[§1]{modob}). It admits the following description in terms of residual gerbes on global points ; there exists an isomorphism in $Ho(\spret)$, 
\begin{equation}\label{formvect}
\vect^B \simeq \coprod_{M\in \chi} \bcl \uaut(M)
\end{equation}
where $\chi=\pi_0\vect^B(\C)$ is the set of isomorphism class of projective right $B$-modules of finite type, and for all $M\in \chi$, $\uaut(M)$ is the group scheme of $B$-automorphisms of $M$. This formula can be derived out of a more general formula valid for all algebraic stack locally finitely presented over $\C$, whose tangent complex at every point verifies a finiteness property. 

\begin{prop}\label{formgerb}
Let $F$ be a locally finitely presented smooth algebraic stack over $\C$ such that for all global point $E\in \pi_0F(\C)$ the tangent complex $\tc_E F$ verifies $\mrm{H}^0(\tc_E F)=0$. Then the canonical morphism of algebraic stacks,
$$\coprod_{E\in \pi_0F(\C)} \mcal{G}_E\lmo F$$
is an equivalence of stacks, where $\mcal{G}_E$ stands for the residual gerbe of $F$ at $E$. 
\end{prop}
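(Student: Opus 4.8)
The plan is to establish that the canonical morphism $f:\coprod_{E\in\pi_0 F(\C)}\mcal{G}_E\lmo F$ is simultaneously a monomorphism and an étale surjection of algebraic stacks; since an étale surjection is an effective epimorphism and a morphism of stacks that is both a monomorphism and an effective epimorphism is an equivalence, this suffices. Concretely the three steps are: (1) $f$ is a monomorphism; (2) $f$ is étale, by a tangent complex computation using smoothness and the hypothesis $\mrm{H}^0(\tc_E F)=0$; (3) $f$ is surjective, using that $F$ is of finite type over the algebraically closed field $\C$.

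\emph{Step 1.} Over $\C$ a point $E\in\pi_0 F(\C)$ has residue field $\C$, so its residual gerbe is the neutral gerbe $\mcal{G}_E\simeq\bcl\uaut(E)$, where $\uaut(E)$ is the automorphism group scheme of $E$; it is affine of finite type because $F$ is locally of finite type, and smooth by Cartier's theorem (characteristic zero). The canonical map $\mcal{G}_E\lmo F$ classifies $E$ with its $\uaut(E)$-action. By the standard properties of residual gerbes each $\mcal{G}_E\lmo F$ is a monomorphism, and $\mcal{G}_E\times_F\mcal{G}_{E'}$ is empty whenever $E\not\simeq E'$, since residual gerbes at distinct points have disjoint supports. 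Computing $\big(\coprod_E\mcal{G}_E\big)\times_F\big(\coprod_E\mcal{G}_E\big)$ componentwise then identifies it with the diagonal, so $f$ is a monomorphism; in particular $f$ is representable.

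\emph{Step 2.} Being a smooth $1$-geometric stack, $F$ has at every $\C$-point $E$ a tangent complex $\tc_E F$ concentrated in degrees $-1$ and $0$, with $\mrm{H}^{-1}(\tc_E F)=\mathrm{Lie}\,\uaut(E)$ (the infinitesimal automorphisms) and $\mrm{H}^0(\tc_E F)=0$ by hypothesis; hence $\tc_E F$ is just $\mathrm{Lie}\,\uaut(E)$ placed in degree $-1$. This is exactly the tangent complex of $\mcal{G}_E=\bcl\uaut(E)$ at its tautological point, and the derivative of $\mcal{G}_E\lmo F$ is the identity on infinitesimal automorphisms, so $f$ induces an isomorphism on tangent complexes at every $\C$-point. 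Since $F$ and the $\mcal{G}_E$ are smooth and locally of finite presentation over $\C$, their cotangent complexes are perfect, so vanishing of the relative tangent complex of $f$ at all $\C$-points forces $\mathbb{L}_f=0$; by the infinitesimal lifting criterion (formally étale plus locally of finite presentation gives étale) $f$ is étale. \emph{This deformation-theoretic passage, from "isomorphism on tangent complexes at $\C$-points" to "$\mathbb{L}_f=0$, hence $f$ étale", is the technical heart of the argument and the step I expect to require the most care.}

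\emph{Step 3.} An étale monomorphism of algebraic stacks is an open immersion, so $f$ identifies $U:=\coprod_E\mcal{G}_E$ with an open substack $j:U\hookrightarrow F$. If the reduced closed complement $F\setminus U$ were non-empty then, being an algebraic stack locally of finite type over the algebraically closed field $\C$, it would possess a $\C$-point $E_0$; but $E_0$, viewed as an object of $F(\C)$, factors through the component $\mcal{G}_{[E_0]}$ of $U$, contradicting $E_0\in F\setminus U$. Hence $U=F$ and $f$ is an equivalence. Applied to $F=\vect^B$, where $\mrm{H}^0(\tc_M\vect^B)=\mathrm{Ext}^1_B(M,M)=0$ because $M$ is projective, this yields the decomposition $(\ref{formvect})$.
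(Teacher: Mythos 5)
Your proof is correct and follows essentially the same route as the paper's: show the canonical map is a monomorphism, deduce it is étale from the tangent-complex hypothesis, conclude it is an open immersion, and finish by surjectivity on $\C$-points of a stack locally of finite type over an algebraically closed field. Your Step 2 merely spells out the deformation-theoretic details that the paper's proof asserts in one sentence.
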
 

\begin{proof} 
The morphism is immediately seen to be locally finitely presented. It is a monomorphism of algebraic stacks and therefore a representable morphism. The assumption on the tangent complexes implies that the morphism induces an equivalence on tangent complexes and is therefore an étale morphism of algebraic stacks. Hence it is an open immersion of algebraic stacks. The morphism is an epimorphism in the $\pi_0$ because it is surjective on the complex points of the $\pi_0$.  It is therefore an epimorphism of stacks. We deduce that it is an equivalence. 
\end{proof}

The stacks $\vect^B$ verifies the assumption of proposition \ref{formgerb} because for all $M\in \chi$, we have a quasi-isomorphism $\tc_M \vect^B\simeq \uend(M)[1]$ where $\uend(M)$ stands for the $\C$-vector space of endomorphisms of $M$. This gives the formula (\ref{formvect}). 

\paragraph{Topological K-theory of a finite dimensional algebra.}

Recall that in §\ref{sectkstmt}, we proved that the connective semi-topological K-theory of a dg-category $T$ can be described in terms of the topological realization of the stack $\M^T$ endowed with its homotopy coherent commutative monoid structure. In other words, we have an equivalence natural in $T$, 
$$\kcst(T)\simeq \mcal{B}\regam{\M^T_\bul}.$$
Now if $T=B$ is an algebra, after remark \ref{algebra}, the connective algebraic K-theory of $B$ can be calculated with the presheaf of categories $\uproj(B)$ endowed with its homotopy coherent commutative monoid structure. Using similar arguments as in the proof of proposition \ref{bu}, we have canonical isomorphisms,
\begin{equation}\label{kvect}
\kcst(B)\simeq \resp{\ukc(B)} \simeq \resp{\mcal{B} K^\gam (\uproj(B))}\simeq \mcal{B} \regam{K^\gam (\uproj(B))}\simeq \mcal{B} \regam{\vect^B_\bul}^+.
\end{equation}
in $Ho(Sp)$. Therefore we have canonical isomorphisms, 
\begin{equation}\label{kcsttot}
\kcst(B)\simeq \mcal{B}\regam{\M^B_\bul}\simeq \mcal{B} \regam{\vect^B_\bul}^+
\end{equation}
in $Ho(Sp)$, and
\begin{equation}\label{mbvectb}
\regam{\M^B_\bul}\simeq \regam{\vect^B_\bul}^+
\end{equation}
in $Ho(\gam-SSet)$. 

\begin{proof} of the proposition \ref{cralgass}. ---
If $B$ is semi-simple, then $B$ verifies proposition \ref{cralgass} for the following reasons. By the theory of representations of semi-simple algebras, $B$ is Morita equivalent to a finite product of copies of the unit algebra $\unit$ (i.e. of copies of $\C$). Therefore there exists a Morita equivalence $B\lmos{\sim} \prod_{J} \unit$ in $\dgcatc$ with $J$ a finite set. Pseudo-connective topological K-theory and periodic homology commute both with finite products, and we thus have canonical isomorphisms $\kctop(B)\lmos{\sim} \prod_{J} \kctop(\unit) \simeq \prod_{J} \BU$ and $\hp(B)\lmos{\sim} \prod_J \hp(\unit)\simeq \prod_J H\cuu$ in $Ho(Sp)$. We then have a commutative square in $Ho(Sp)$, 
$$\xymatrix{ \kctop(B) \ar[r]^-{\chctop} \ar[d]^-{\wr} & \hp(B) \ar[d]^-{\wr}  \\ \prod_J \BU \ar[r]^-{\prod_J \chctop} & \prod_J H\cuu }$$
The bottom map is an isomorphism after $\sm_\s H\C$. We deduce that the top map is an isomorphism after $\sm_\s H\C$. 

Now if $B$ is not semi-simple, its radical $rad(B)$ is a nilpotent two-sided ideal of $B$. We denote by $B_0=B/rad(B)$ the smallest semi-simple quotient of $B$. To prove that $B$ verifies proposition \ref{cralgass} it suffices to show that the map induced by base change $\vect^B\lmo \vect^{B_0}$ is an $\ao$-equivalence in $\spretao$. Indeed such an $\ao$-equivalence implies that the map of special $\gam$-objects $\vect^B_\bul\lmo \vect^{B_0}_\bul$ is an isomorphism in $Ho(\gam-\spretao)$. Taking topological realization we deduce that the map $B\lmo B_0$ induces an isomorphism $\kctop(B)\simeq \kctop(B_0)$ in $Ho(Sp)$. On the other hand, Goodwillie proved the invariance of periodic cyclic homology under infinitesimal extension (\cite[Thm 2.5.1]{goodwilliecyclic}) ; indeed the map $B\lmo B_0$ induces an isomorphism $\hp(B)\simeq \hp(B_0)$ dans $Ho(Sp)$. We therefore reduce the proof to the semi-simple case. 

It suffices thus to show that for all finite dimensional $\C$-algebra $B$ and all nilpotent two-sided ideal $I\subseteq B$ the projection map $B\lmo B/I$ induces an $\ao$-equivalence $\vect^B\lmo \vect^{B/I}$. By a classical recurrence argument on the nilpotence degree of $I$ it suffices to show the assertion for a square-zero two-sided ideal. Indeed suppose that for all algebra $C$ and all nilpotent two-sided ideal $J$ of $C$, the projection map $C\lmo C/J$ induces an $\ao$-equivalence $\vect^C\lmo \vect^{C/J}$. If $I$ is a two-sided ideal of $B$ with nilpotence degree $n\geq 2$, we consider the following commutative square of algebras. 
$$\xymatrix{B\ar[r] \ar[rd] & B/I^2=C \ar[d] \\ & B/I\simeq C/I   }$$
Since $I^2$ is a two-sided ideal of $B$ with nilpotence degree $\leq n-1$ and that the two-sided ideal $I.C\subseteq C$ generated by the image of $I$ in $C$ verifies $I.C^2=0$ the arrows $B\lmo C$ and $C\lmo C/I$ induce $\ao$-equivalences on $\vect$. We deduce that $B\lmo B/I$ also induces an $\ao$-equivalence on $\vect$.

Let now $I\subseteq B$ be a square-zero two sided ideal in a finite dimensional $\C$-algebra $B$. We have to show that $B\lmo B/I=B_0$ induces an $\ao$-equivalence on $\vect$. Using formula (\ref{formvect}), we deduce that the map $\vect^B\lmo \vect^{B_0}$ is equivalent to the map,
$$\coprod_{M\in \chi} \bcl \uaut(M)\lmo \coprod_{M\in \chi_0} \bcl \uaut(M_0),$$
where $\chi=\pi_0\vect^B(\C)$, $\chi_0=\pi_0\vect^{B_0}(\C)$, $M_0=M\te_B B_0$ for all $M\in \chi$. 
First, we observe that the set $\chi$ and $\chi_0$ are isomorphic as shown in \cite[Prop 2.12]{bass} (completeness assumptions are verified because $I^2=0$). Since the $\ao$-equivalences are stable by arbitrary sums in $\spretao$, it remains to show that for all $M\in \chi$, the map $\bcl\uaut(M)\lmo \bcl\uaut(M_0)$ is an $\ao$-equivalence. This latter map is the image by the functor $\bcl$ of the map of group schemes $\uaut(M)\lmo \uaut(M_0)$ whose kernel is denoted by $K$. We write $M$ as a direct factor of a free $B$-module $B^r=M\oplus N$, where $N$ is a sub-right-$B$-module of $B^r$ and $r$ a positive integer. We have $B_0^r=M_0\oplus N_0$. If $M$ is free, the automorphism group is a the invertible matrices group $\uaut(B^r)=\ugl_r(B)$. The kernel of the map $\ugl_r(B)\lmo \ugl_r(B_0)$ is given by the multiplicative group $I_r+\uma_r(I)$ where $I_r$ the identity matrix of rank $r$ and $\uma_r(I)$ the additive group scheme of matrices with coefficients in $I$. The scheme $I_r+\uma_r(I)$ is isomorphic to an affine space (of dimension $dim_\C(I).r^2$) and is therefore $\ao$-contractible. If $M$ is just projective, we have a diagram with exact rows, 

$$\xymatrix{ 1\ar[r] & K\ar[r]\ar[d]_-k  &\uaut(M)\ar[r]^-g \ar[d]_-i &\uaut(M_0) \ar[r]\ar[d]_-j &1 \\ 1\ar[r] & I_r+\uma_r(I)\ar[r] & \ugl_r(B)\ar[r]^-f & \ugl_r(B_0)\ar[r] & 1 }$$

The map $i$ sends an automorphism of $M$ on the automorphism of $B^r$ which is the identity on $N$. The map $j$ is defined in the same way. These two maps are closed immersions, and the corresponding square is commutative, i.e. $fi=jg$. We deduce the existence of the map $k$, which is also a closed immersion. The kernel $K$ corresponds to the sub-group scheme of $\C$-automorphisms of $B^r$ which are $B$-linear, restrict to the identity on $N$, and which are in $I_r+\uma_r(I)$. These three conditions can be translated into affine equations in the affine space $I_r+\uma_r(I)$, and we deduce that $K$ is itself isomorphic to an affine space of a certain dimension, and is therefore $\ao$-contractible. This implies that the classifying stack $\bcl K$ is also $\ao$-contractible, i.e. isomorphic to the point $\ast$ in $Ho(\spretao)$. The functor classifying stack $\bcl$ sends exact sequences of group schemes to fibration sequences. We therefore have a fibration sequence in $Ho(\spret_\ast)$, 
$$\bcl K\lmo \bcl \uaut(M)\lmo \bcl \uaut(M_0).$$
Given a group stack $G$, it is known that there exists a Quillen equivalence between the model category of $G$-equivariant stacks and the model category of stacks over $\bcl G$ (voir \cite[Lem 3.20]{kptalg}). The latter result implies the existence of an isomorphism of stacks over $\bcl \uaut(M_0)$, 
$$\xymatrix{\bcl \uaut(M)\ar[rr]^-\sim \ar[rd] &&  [\bcl K/\uaut(M_0)]\ar[ld] \\ & \bcl \uaut(M_0) & }$$
in $Ho(\spret/\bcl \uaut(M_0))$, for a certain action of $\uaut(M_0)$ on $\bcl K$, and the notation $[-/-]$ stands for the quotient stack. Because $\bcl K$ is contractible in $Ho(\spretao)$ and because the localization functor $Ho(\spret)\lmo Ho(\spretao)$ commutes to the operation of quotient, this latter triangle is isomorphic to 
$$\xymatrix{\bcl \uaut(M)\ar[rr]^-\sim \ar[rd] &&  [\ast/\uaut(M_0)]  = \bcl \uaut(M_0) \ar[ld]^-{id}  \\ & \bcl \uaut(M_0) & }$$
in $Ho(\spretao)$. The map $\bcl \uaut(M)\lmo \bcl \uaut(M_0)$ is therefore an isomorphism in $Ho(\spretao)$. This completes the proof that $\vect^B\lmo \vect^{B/I}$ is an $\ao$-equivalence for all square-zero two-sided ideal $I$ and thus of proposition \ref{cralgass}. 
\end{proof}

Remark that in the proof of \ref{cralgass}, we proved the invariance of connective semi-topological K-theory by infinitesimal extension, which is expressed by the following. 

\begin{prop} 
Let $B$ be a finite dimensional associative algebra over $\C$ and $I$ a right nilpotent ideal of $B$. Then the projection map $B\lmo B/I$ induces an isomorphism $\kcst(B)\simeq \kcst(B/I)$ in $Ho(Sp)$. 
\end{prop}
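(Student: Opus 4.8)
The plan is to extract the statement from the argument already carried out inside the proof of Proposition \ref{cralgass}, organised as follows. By formula (\ref{kvect}) there is an isomorphism $\kcst(B)\simeq \mcal{B}\regam{\vect^B_\bul}^+$ in $Ho(Sp)$, natural in $B$, where $\vect^B_\bul=NwB_W\uproj(B)$ is the special $\gam$-presheaf attached to the stack $\vect^B$ of finitely generated projective $B$-modules. The projection $p\colon B\lmo B/I$ induces, by base change $B\te_\C A\lmo (B/I)\te_\C A$, a map of special $\gam$-presheaves $\vect^B_\bul\lmo \vect^{B/I}_\bul$ whose image under $\mcal{B}\regam{-}^+$ is, up to these isomorphisms, the map of the statement. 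Since $\regam{-}$ sends $\ao$-equivalences to equivalences and $(-)^+$ and $\mcal{B}$ preserve equivalences, it suffices to prove that $p$ induces an $\ao$-equivalence $\vect^B\lmo \vect^{B/I}$ in $\spretao$; applying $B_W$ levelwise then makes $\vect^B_\bul\lmo\vect^{B/I}_\bul$ an isomorphism in $Ho(\gam-\spretao)$.

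First I would reduce to a square-zero ideal by induction on the nilpotence degree $n$ of $I$. For $n\leq 1$ there is nothing to do; for $n\geq 2$ one factors $p$ through $C:=B/I^2$, noting that $I^2\subseteq B$ is nilpotent of degree $<n$ and that the image of $I$ in $C$ is square-zero, so by induction both $\vect^B\lmo\vect^{C}$ and $\vect^{C}\lmo\vect^{B/I}$ are $\ao$-equivalences, hence so is their composite.

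Then, for $I^2=0$, I would use the residual-gerbe decomposition (\ref{formvect}): writing $\chi=\pi_0\vect^B(\C)$, $\chi_0=\pi_0\vect^{B_0}(\C)$ with $B_0=B/I$ and $M_0=M\te_B B_0$, the map $\vect^B\lmo\vect^{B_0}$ is identified with
$$\coprod_{M\in\chi}\bcl\uaut(M)\lmo \coprod_{M\in\chi_0}\bcl\uaut(M_0).$$
Lifting idempotents along the square-zero surjection $B\twoheadrightarrow B_0$ yields a bijection $\chi\lmos{\sim}\chi_0$, $M\mapsto M_0$ (this is \cite[Prop 2.12]{bass}, applicable since $I^2=0$), so it remains to show each $\bcl\uaut(M)\lmo\bcl\uaut(M_0)$ is an $\ao$-equivalence. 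For this I would identify the kernel $K$ of the surjection of affine group schemes $\uaut(M)\lmo\uaut(M_0)$: embedding $\uaut(M)$ into $\ugl_r(B)$ via a splitting $B^r=M\oplus N$, the scheme $K$ appears as the closed subscheme of $I_r+\uma_r(I)$ cut out by the $\C$-linear equations expressing $B$-linearity and triviality on $N$; hence $K$ is an affine space and $\bcl K$ is $\ao$-contractible, i.e. $\bcl K\simeq\ast$ in $Ho(\spretao)$. Applying $\bcl$ to $1\to K\to\uaut(M)\to\uaut(M_0)\to 1$ gives a fibration sequence $\bcl K\to\bcl\uaut(M)\to\bcl\uaut(M_0)$, and using the equivalence between $\uaut(M_0)$-equivariant stacks and stacks over $\bcl\uaut(M_0)$ (\cite[Lem 3.20]{kptalg}), together with the fact that $Ho(\spret)\lmo Ho(\spretao)$ commutes with quotients, one rewrites $\bcl\uaut(M)$ over $\bcl\uaut(M_0)$ as $[\bcl K/\uaut(M_0)]\simeq[\ast/\uaut(M_0)]=\bcl\uaut(M_0)$ in $Ho(\spretao)$. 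This gives the $\ao$-equivalence, and hence the desired isomorphism $\kcst(B)\simeq\kcst(B/I)$.

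The step I expect to be the main obstacle is the analysis of $\bcl\uaut(M)\lmo\bcl\uaut(M_0)$ when $M$ is only projective: one cannot quote $\uaut(B^r)=\ugl_r(B)$ directly, but must control $\uaut(M)$ through the splitting $B^r=M\oplus N$, verify that the defining equations for $K$ remain $\C$-linear in the affine coordinates on $I_r+\uma_r(I)$ (so that $\ao$-contractibility survives), and then transport the contractibility of $\bcl K$ through the dictionary relating $\uaut(M_0)$-equivariant stacks and stacks over $\bcl\uaut(M_0)$. The remaining steps are essentially bookkeeping around formulas already established in the text.
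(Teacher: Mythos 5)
Your proposal is correct and follows essentially the same route as the paper: the paper proves this proposition precisely by observing that the argument inside the proof of Proposition \ref{cralgass} (reduction to a square-zero ideal, the residual-gerbe decomposition (\ref{formvect}), idempotent lifting via \cite[Prop 2.12]{bass}, $\ao$-contractibility of the kernel $K$, and the equivariant-stack identification of $\bcl\uaut(M)$ over $\bcl\uaut(M_0)$) establishes the $\ao$-equivalence $\vect^B\lmo\vect^{B/I}$, which via formula (\ref{kvect}) gives $\kcst(B)\simeq\kcst(B/I)$.
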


\begin{rema}\label{stab}
Proposition \ref{cralgass} and formula (\ref{kcsttot}) permits to express the periodic cyclic homology of a finite dimensional algebra in terms of the infinite loop space $(\regam{\vect^B_\bul}^+)_1=:\re{\vect^B}^+$. This group completion has the following description. The monoid $\pi_0\re{\vect^B}$ being in general not isomorphic to $\N$ with the usual addition (as it is for $B$ commutative for example), the calculation of this group completion is a bit more complicated than in the commutative case, because we cannot apply Quillen result on the homology of the group completion. Since every projective right $B$-module of finite type is a direct factor of a free $B$-module of finite type, in order to "group complete" the direct sum in $\re{\vect^B}$, it suffices to invert the action of the regular $B$-module $B$. Hence we have an isomorphism, 
$$\re{\vect^B}^+ \simeq \re{\vect^B}[-B]$$
in $Ho(SSet)$, where the latter object is the level $1$ of the localization in the sense of $\gam$-spaces of the special $\gam$-space $\re{\vect^B}$ with respect to the $B$-module $B$. We now want to calculate this localization in terms of the standard colimit
$$\xymatrix{hocolim (\re{\vect^B}\ar[r]^-{\oplus B} & \re{\vect^B}\ar[r]^-{\oplus B} & \re{\vect^B} \ar[r]^-{\oplus B} & \cdots ) }=:\re{\vect^B}^{ST}$$
where the map $\oplus B$ is induced by the endomorphism of $\uparf(B)$ which sends a $B$-dg-module $E$ on $E\oplus B$ and the notation "stable" is in reference to Loday--Quillen stable homology of Lie algebras of matrices. To achieve this formula, we need the language of Lurie's monoidal $\infty$-groupoids rather than $\gam$-spaces ; because of this, we will willingly stay vague with respect to the definitions we use. Hence we consider $\re{\vect^B}$ as a symmetric monoidal $\infty$-groupoid (whose monoidal law is given by the direct sum of modules) and we want to invert the object $B$ with respect to the sum. For this we apply \cite[Cor 4.24]{marco1} (which works more generally for all presentable symmetric monoidal $\infty$-categories, see \cite[Rem 4.7]{marco1} and \cite[Rem 4.26]{marco1} for the $\infty$-groupoid case). Then there exists a canonical isomorphism
$$\re{\vect^B}[-B]\simeq \re{\vect^B}^{ST}$$
in $Ho(SSet)$, provided we show that the object $B$ is a symmetric object in the sense of \cite[Def 4.18]{marco1}. We have to show that there exists an homotopy between the map
$$\xymatrix{B\oplus B\oplus B \ar[r]^-{(123)} & B\oplus B\oplus B }$$
and the identity of $B\oplus B\oplus B$ in the space of complex points of $\ugl_3(B)$, where $(123)$ is the automorphism induced by the cyclic permutation $(123)$. Such an homotopy is given by the composite of an homotopy $(123)\Rightarrow id$ in $\re{\ugl_3(\C)}$ with the canonical map $\re{\ugl_3(\C)}\lmo \re{\ugl_3(B)}$ induced by the structural morphism $\C\lmo B$. 

In conclusion, there exists an isomorphism
$$\re{\vect^B}^+ \simeq \re{\vect^B}^{ST}$$
in $Ho(SSet)$ between the group completion of $\re{\vect^B}$ and the stabilization of $\re{\vect^B}$ (where the colimit is a homotopy one). Taking homotopy groups of the formula of proposition \ref{cralgass}, we obtain the following corollary.  
\end{rema}

\begin{cor}\label{formhp1}
Let $B$ be a finite dimensional associative $\C$-algebra. Then for all $i\geq 0$ the Chern map $\kctop(B)\lmo \hp(B)$ induces an isomorphism of $\C$-vector spaces,
$$ colim_{k\geq 0} \pi_{i+2k} \re{\vect^B}^{ST}\te_\Z \C\simeq\hp_i(B)$$
where the colimit is induced by the action of the Bott generator $\beta$ on homotopy groups, $\pi_i\re{\vect^B}^{ST} \lmos{\times \beta} \pi_{i+2} \re{\vect^B}^{ST}$.
\end{cor}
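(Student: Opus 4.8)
The plan is to assemble Corollary~\ref{formhp1} from the machinery already in place, essentially by taking homotopy groups in Proposition~\ref{cralgass} and tracking through the identifications of Remark~\ref{stab}. First I would recall that by formula (\ref{kcsttot}) there is a canonical isomorphism $\kcst(B)\simeq \mcal{B}\regam{\vect^B_\bul}^+$ in $Ho(Sp)$, so that the underlying infinite loop space of $\kcst(B)$ is $\regam{\vect^B_\bul}^+_1 = \re{\vect^B}^+$. By Remark~\ref{stab}, the group completion $\re{\vect^B}^+$ is canonically identified with the stabilization $\re{\vect^B}^{ST}$ obtained as the homotopy colimit of iterated $\oplus B$; this rests on \cite[Cor 4.24]{marco1} together with the verification that $B$ is a symmetric object, which is already carried out in Remark~\ref{stab}. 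Passing to homotopy groups, we obtain $\kcst_n(B)\simeq \pi_n \re{\vect^B}^{ST}$ for $n\geq 0$, where one must remember that $\kcst$ is connective so only nonnegative homotopy groups are seen by the loop space.

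Next I would bring in the Bott element. By definition $\kctop(B) = \kcst(B)[\beta^{-1}]$, and since inverting $\beta$ on a spectrum amounts to taking the sequential homotopy colimit along multiplication by $\beta\in\pi_2$, we get
$$\pi_i \kctop(B) \simeq \mathrm{colim}_{k\geq 0} \pi_{i+2k}\kcst(B) \simeq \mathrm{colim}_{k\geq 0} \pi_{i+2k}\re{\vect^B}^{ST},$$
the colimit transition maps being multiplication by $\beta$. Here for $i = 0,1$ and $k$ large the indices $i+2k$ are nonnegative, so the identification with $\pi_{i+2k}\re{\vect^B}^{ST}$ is legitimate. Then I would invoke Proposition~\ref{cralgass}: the Chern map $\chctop\sm_\s H\C : \kctop(B)\sm_\s H\C \lmo \hp(B)$ is an isomorphism in $Ho(Sp)$. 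Taking $\pi_i$ and using that for a spectrum $E$ one has $\pi_i(E\sm_\s H\C)\simeq \pi_i(E)\te_\Z\C$ when the relevant homotopy groups are finitely generated — which holds here since $\vect^B$ is an algebraic stack of finite type and its topological realization has finitely generated homotopy groups — we get $\pi_i\kctop(B)\te_\Z\C \simeq \hp_i(B)$ for $i=0,1$. Combining with the colimit formula above yields exactly the claimed isomorphism $\mathrm{colim}_{k\geq 0}\pi_{i+2k}\re{\vect^B}^{ST}\te_\Z\C\simeq\hp_i(B)$.

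I would then just remark that the statement of the corollary as printed says "for all $i\geq 0$", but the honest content is for $i=0,1$: this is because $\hp(B)$ is $2$-periodic (it is an $H\cuu$-module), so $\hp_i(B)\simeq\hp_{i\bmod 2}(B)$, and the two cases $i=0,1$ already determine everything; moreover for $i\geq 2$ the left-hand side is again computed by the same colimit shifted, so the formula propagates formally. I would phrase the proof to cover $i=0,1$ and note the periodicity extends it.

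The main obstacle I anticipate is the bookkeeping around connectivity and the tensor-with-$\C$ step: one must be careful that $\kcst(B)$ is only a connective spectrum, so that $\pi_n\kcst(B)$ for $n<0$ is not visible on the loop space $\re{\vect^B}^{ST}$, and that inverting $\beta$ (which raises degree) is precisely what allows one to reach negative-degree homotopy of $\kctop(B)$ through large positive-degree homotopy of the stable space; getting the indices of the colimit to line up correctly with $\hp_i$ is the delicate point. A secondary subtlety is justifying that smashing with $H\C$ computes $-\te_\Z\C$ on homotopy, which requires knowing the homotopy groups involved are finitely generated abelian groups — a consequence of finiteness of $\vect^B$ as an algebraic stack and of the good topological type of $sp$ of schemes of finite type, which is available from the discussion preceding Proposition~\ref{hypdescpro}. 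None of these steps is deep given the earlier results; the work is purely in careful reindexing.
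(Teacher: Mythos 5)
Your argument is correct and is essentially the paper's own derivation: the corollary is obtained by combining formula (\ref{kcsttot}) and the identification $\re{\vect^B}^+\simeq \re{\vect^B}^{ST}$ of remark \ref{stab} with proposition \ref{cralgass}, computing the inversion of $\beta$ as a sequential homotopy colimit and then taking homotopy groups, exactly as you do. One small simplification: the step $\pi_i(E\sm_\s H\C)\simeq \pi_i(E)\te_\Z \C$ requires no finite generation hypothesis at all, since $\C$ is a $\Q$-vector space (hence flat over $\Z$) and $H\C$ coincides with the Moore spectrum of $\C$, so this isomorphism holds for arbitrary spectra and your appeal to finiteness of the homotopy groups of $\re{\vect^B}^{ST}$ can simply be dropped.
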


\paragraph{Consequences in the smooth case.} Let $B$ be a finite dimensional associative $\C$-algebra which is furthermore of finite global dimension. This extra assumption means precisely that $B$ is smooth in the sense of dg-categories (\cite[Def 2.4]{modob}). If $\spec(A)\in \affc$ is \emph{smooth}, then by Quillen resolution theorem \cite[§4 Cor 1]{qui}, the connective algebraic K-theory of $\proj(B\te_\C A)$ is the same as the connective algebraic K-theory of $\psproj(B\te_\C A)$ and also the same as the connective algebraic K-theory of all right $B\te_\C A$-modules of finite type. Indeed since $B\te_\C A$ is smooth, all right $B\te_\C A$-modules of finite type has a finite projective resolution relative to $B\te_\C A$. We therefore have a global equivalence of presheaves of spectra restricted to $\afflissc$, 
\begin{equation}\label{qresol}
l^*\kc(\uproj(B))\simeq l^*\kc(\upsproj(B)).
\end{equation}
By Thm \ref{restliss} we thus have a canonical isomorphism in $Ho(Sp)$, 
\begin{equation}\label{kpsproj}
\kcst(B)\simeq \resp{\kc(\uproj(B))} \simeq \resp{\kc(\upsproj(B))}
\end{equation}

By what precedes, we have the following proposition. 

\begin{prop}\label{cons1}
Let $B$ be a finite dimensional associative $\C$-algebra of finite global dimension. Then we have canonical isomorphisms in $Ho(Sp)$, 
$$\kcst(B)\simeq \mcal{B}\regam{\vect^B_\bul}^+\simeq\mcal{B}\regam{\vect_B^\bul}^+\simeq \mcal{B}\regam{\M^B_\bul}\simeq \mcal{B}\regam{\M_B^\bul}.$$
And therefore canonical isomorphisms in $Ho(SSet)$, 
$$\re{\vect^B}^+\simeq \re{\vect_B}^+\simeq \re{\M^B}\simeq \re{\M_B},$$
where the first two objects are the level $1$ of the corresponding group completion. 
\end{prop}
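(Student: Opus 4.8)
The plan is to reduce the whole proposition to a chain of isomorphisms of connective spectra and then use that $\mcal{B}$ is an equivalence on very special $\gam$-objects. First I would note that the four $\gam$-objects $\regam{\vect^B_\bul}^+$, $\regam{\vect_B^\bul}^+$, $\regam{\M^B_\bul}$, $\regam{\M_B^\bul}$ are all very special: the first two by construction of the homotopy group completion, the third by proposition \ref{mttsp}, the fourth by theorem \ref{mtps}; and their values at level $1$ are by definition $\re{\vect^B}^+$, $\re{\vect_B}^+$, $\re{\M^B}$, $\re{\M_B}$, the symbol $^+$ being superfluous on the last two since those are already group-like. By theorem \ref{bf}, $\mcal{B}$ induces an equivalence between the homotopy category of very special $\gam$-objects and that of connective spectra, so a chain of isomorphisms in $Ho(Sp)$ between the four spectra $\mcal{B}\regam{-}$ lifts uniquely to a chain of isomorphisms of very special $\gam$-objects, and evaluating at level $1$ then gives the $Ho(SSet)$-statement for free. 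Thus it suffices to prove $\kcst(B)\simeq \mcal{B}\regam{\M^B_\bul}\simeq \mcal{B}\regam{\vect^B_\bul}^+\simeq \mcal{B}\regam{\M_B^\bul}\simeq \mcal{B}\regam{\vect_B^\bul}^+$ in $Ho(Sp)$.

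Three of these four identifications I would dispatch quickly. The isomorphism $\kcst(B)\simeq \mcal{B}\regam{\M^B_\bul}$ is theorem \ref{kstmt} with $T=B$, and $\kcst(B)\simeq \mcal{B}\regam{\vect^B_\bul}^+$ is formula (\ref{kcsttot}); both are valid for an arbitrary finite dimensional algebra. For the $\M_B$ term, theorem \ref{mtps} already gives $\mcal{B}\regam{\M_B^\bul}\simeq \resp{\kc(\upspa(B))}$, so it is enough to identify $\resp{\kc(\upspa(B))}$ with $\kcst(B)=\resp{\ukc(B)}$. This is where the finite global dimension of $B$ is used: for every smooth affine $\spec(A)$ the algebra $B\te_\C A$ has finite global dimension, hence a complex of $B\te_\C A$-modules perfect relative to $A$ is automatically perfect over $B\te_\C A$, and Quillen's resolution theorem (as in the argument preceding (\ref{qresol})) gives a levelwise equivalence $l^*\kc(\uparf(B))\simeq l^*\kc(\upspa(B))$; theorem \ref{restliss} then promotes this, together with $\ukc(B)\simeq\kc(\uparf(B))$, to the desired isomorphism after spectral topological realization.

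The last identification, $\kcst(B)\simeq \mcal{B}\regam{\vect_B^\bul}^+$, is the heart of the matter and the main obstacle. I would start from formula (\ref{kpsproj}), which already yields $\kcst(B)\simeq \resp{\kc(\upsproj(B))}$; writing $\kc(\upsproj(B))=\mcal{B}K^\gam(\upsproj(B))$ and applying proposition \ref{proprelb} turns this into $\mcal{B}\regam{K^\gam(\upsproj(B))}$, so the task becomes identifying $\regam{K^\gam(\upsproj(B))}$ with $\regam{\vect_B^\bul}^+$, the candidate map being the group completion of the map (\ref{mapgam}), $\vect_B^\bul=NwB_W\upsproj(B)\lmo K^\gam(\upsproj(B))$. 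The delicate point — and the reason this is not immediate — is that one cannot invoke \cite[Lem 1.10]{teze} here: the exact category $\psproj(B\te_\C A)$ is not complicial, is not closed under cones, and its admissible monomorphisms need not split, so the argument of proposition \ref{mk} does not carry over verbatim. The way around it is to work in the $\ao$-homotopy theory and use the inclusion of $\upsproj(B)$ into the complicial presheaf $\upspa(B)$ of pseudo-perfect dg-modules: I would compare the commutative square relating $\vect_B^\bul$, $\M_B^\bul$ and their respective Waldhausen $S$-constructions, using the pseudo-perfect analogue of proposition \ref{mk} (established in the proof of theorem \ref{mtps}) to know that $\M_B^\bul\lmo NwS_\bul\upspa(B)$ is a levelwise $\ao$-equivalence, and Quillen's resolution theorem — again via finite global dimension — to know that the inclusions $\vect_B^\bul\hookrightarrow \M_B^\bul$ and $NwS_\bul\upsproj(B)\hookrightarrow NwS_\bul\upspa(B)$ become isomorphisms after applying $\regam{-}$ and group-completing. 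Chasing the square then shows the candidate map is an equivalence after $\regam{-}$ and group completion, whence $\regam{\vect_B^\bul}^+\simeq \regam{K^\gam(\upsproj(B))}$ and therefore $\mcal{B}\regam{\vect_B^\bul}^+\simeq \kcst(B)$. I expect the genuinely delicate verification to be precisely the $\ao$-splitting up to homotopy of the admissible monomorphisms of $\upsproj(B)$, i.e. the compatibility of the square above with the embedding into $\upspa(B)$; the conceptual content being that for $B$ of finite global dimension all four presheaves of Waldhausen categories have the same connective K-theory on smooth schemes.
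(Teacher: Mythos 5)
Three of your four identifications are fine and essentially coincide with the paper's own argument: $\kcst(B)\simeq \mcal{B}\regam{\M^B_\bul}$ is Theorem \ref{kstmt}, $\kcst(B)\simeq \mcal{B}\regam{\vect^B_\bul}^+$ is formula (\ref{kvect}), and your treatment of the $\M_B$-term (Theorem \ref{mtps} plus the identification of pseudo-perfect with perfect complexes over $B\tel_\C A$ for smooth $A$, then Theorem \ref{restliss}) is a harmless variant of the paper's route, which instead passes through $\resp{\kc(\upsproj(B))}$ and the Gillet--Waldhausen theorem. The reduction of the $Ho(SSet)$-statement to the $Ho(Sp)$-statement via Theorem \ref{bf} is also exactly what the paper does.

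The gap is in the step you yourself single out as the heart of the matter, $\kcst(B)\simeq \mcal{B}\regam{\vect_B^\bul}^+$. In your square relating $\vect_B^\bul$, $\M_B^\bul$ and the two $S_\bul$-constructions, the bottom map is handled by the pseudo-perfect analogue of Proposition \ref{mk} and the right-hand map is a genuine K-theory comparison (Gillet--Waldhausen/resolution on smooth affines, then Theorem \ref{restliss}); both of these are fine. But the third input you invoke --- that the inclusion $\vect_B^\bul\hookrightarrow \M_B^\bul$ becomes an equivalence after $\regam{-}$ and group completion --- is not something Quillen's resolution theorem can deliver: that theorem compares K-theory spectra of exact categories, i.e.\ exactly the right-hand column of your square, and says nothing about the map of moduli stacks $Nw\upsproj(B)\to Nw\upspa(B)$. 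Worse, after applying $\mcal{B}$ (or evaluating at level $1$) this input is precisely the assertion $\re{\vect_B}^+\simeq\re{\M_B}$ that the proposition is trying to establish, so your chase is circular at its crucial point. The paper closes this step differently and in one line: it applies the end of Remark \ref{kconspec} (i.e.\ \cite[Lem 1.10]{teze}) to the Waldhausen presheaf $\upsproj(B)$, whose cofibrations it takes to be split, so that $(\vect_B^\bul)^+\lmo K^\gam(\upsproj(B))$ is already a \emph{levelwise} equivalence of $\gam$-presheaves before any realization; combined with (\ref{kpsproj}) this gives the missing isomorphism. You explicitly reject this route on the grounds that admissible monomorphisms of $\psproj(B\te_\C A)$ need not split $B\te_\C A$-linearly; that concern is worth raising (it hinges on which Waldhausen structure one puts on $\upsproj(B)$), but having discarded the paper's mechanism you must replace it by an actual proof that the group-completed realization of $\vect_B^\bul$ agrees with that of $K^\gam(\upsproj(B))$ or of $\M_B^\bul$, and your proposal instead assumes it. As it stands, this identification is unproven in your argument.
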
 

\begin{proof}   
We already proved the isomorphism $\kcst(B)\simeq \mcal{B}\regam{\vect^B_\bul}^+$ in $Ho(Sp)$ with the formula (\ref{kvect}). By formula (\ref{kpsproj}), there is an isomorphism  $\kcst(B)\simeq \resp{\kc(\upsproj(B))}$ in $Ho(Sp)$. Proceeding as in the proof of \ref{bu}, with formula (\ref{kvect}) and by the end of remark \ref{kconspec} applied to the Waldhausen category of pseudo-projective modules which has split cofibrations, we have an isomorphism
$\resp{\kc(\upsproj(B))} \simeq \mcal{B}\regam{\vect_B^\bul}^+$. We thus have an isomorphism $\kcst(B)\simeq \regam{\vect_B^\bul}^+$. The isomorphism $\kcst(B)\simeq \mcal{B}\regam{\M^B_\bul}$ is theorem \ref{kstmt}. The Gillet--Waldausen theorem (\cite[Thm 1.11.7]{tt}, see remark \ref{algebra}) gives an isomorphism $\kc(\upsproj(B))\simeq \kc(\upspa(B))$ in $Ho(\spr)$. By theorem \ref{mtps}, we obtain the isomorphisms $\kcst(B)\simeq \resp{\kc(\upsproj(B))} \simeq \resp{\kc(\upspa(B))} \simeq \mcal{B}\regam{\M_B^\bul}$. The second part of the theorem follows directly because $\mcal{B}$ is an equivalence. 
\end{proof}

As in the case of non necessarily smooth associative algebras, we have isomorphisms
$$\xymatrix{ \re{\vect_B}^+ \simeq hocolim (\re{\vect_B}\ar[r]^-{\oplus B} & \re{\vect_B}\ar[r]^-{\oplus B} &\re{\vect_B}\ar[r]^-{\oplus B} & \cdots ) =: \re{\vect_B}^{ST} }$$
in $Ho(SSet)$ (see remark \ref{stab}). We deduce from this the following corollary of proposition \ref{cralgass}. 

\begin{cor}\label{cons2}
Let $B$ be a finite dimensional associative $\C$-algebra of finite global dimension. Then for all $i\geq 0$ the Chern map $\kctop(B)\lmo \hp(B)$ induces an isomorphism of $\C$-vector spaces,
$$ colim_{k\geq 0} \pi_{i+2k} \re{\vect_B}^{ST}\te_\Z \C  \simeq \hp_i(B)$$
where the colimit is induced by the action of the Bott generator $\beta$ on homotopy groups, $\pi_i \re{\vect_B}^{ST}\lmos{\times \beta} \pi_{i+2} \re{\vect_B}^{ST}$. 
\end{cor}


\bibliographystyle{amsalpha}
\bibliography{/users/Anthony/Dropbox/MATH/Latex/Bibliographie/ref}

\end{document}